\newtheorem{theorem}{Theorem}[section]
\newtheorem{proposition}[theorem]{Proposition}
\newtheorem{corollary}[theorem]{Corollary}
\newtheorem{lemma}[theorem]{Lemma}
\theoremstyle{definition}
\newtheorem{conjecture}[theorem]{Conjecture}
\newtheorem{definition}[theorem]{Definition}
\newtheorem{example}[theorem]{Example}
\newtheorem{question}[theorem]{Question}
\newtheorem{remark}[theorem]{Remark}
\numberwithin{equation}{section}
\numberwithin{theorem}{section}
\newcommand{\wtilde}[1]{\widetilde{#1}}
\newcommand{\msf}[1]{\mathsf{#1}}
\newcommand{\C}{\mathbb{C}}
\newcommand{\bbH}{\mathbb{H}}
\newcommand{\Hy}{\mathbb{H}}
\newcommand{\bbP}{\mathbb{P}}
\newcommand{\R}{\mathbb{R}}
\newcommand{\bbS}{\mathbb{S}}
\newcommand{\bbW}{\mathbb{W}}
\newcommand{\Z}{\mathbb{Z}}
\newcommand{\calA}{\mathcal{A}}
\newcommand{\calB}{\mathcal{B}}
\newcommand{\calC}{\mathcal{C}}
\newcommand{\calD}{\mathcal{D}}
\newcommand{\calF}{\mathcal{F}}
\newcommand{\calG}{\mathcal{G}}
\newcommand{\calI}{\mathcal{I}}
\newcommand{\calJ}{\mathcal{J}}
\newcommand{\calL}{\mathcal{L}}
\newcommand{\calP}{\mathcal{P}}
\newcommand{\calS}{\mathcal{S}}
\newcommand{\calT}{\mathcal{T}}
\newcommand{\calU}{\mathcal{U}}
\newcommand{\calW}{\mathcal{W}}
\newcommand{\calX}{\mathcal{X}}
\newcommand{\calY}{\mathcal{Y}}
\newcommand{\calQC}{\mathcal{QC}}
\newcommand{\scrC}{\mathscr{C}}
\newcommand{\scrD}{\mathscr{D}}
\newcommand{\scrF}{\mathscr{F}}
\newcommand{\scrP}{\mathscr{P}}
\newcommand{\scrQ}{\mathscr{Q}}
\newcommand{\scrT}{\mathscr{T}}
\newcommand{\scrV}{\mathscr{V}}
\newcommand{\frakg}{\mathfrak{g}}
\newcommand{\frakh}{\mathfrak{h}}
\newcommand{\frakn}{\mathfrak{n}}
\newcommand{\al}{\alpha}
\newcommand{\G}{\Gamma}
\newcommand{\Gam}{\Gamma}
\newcommand{\gam}{\gamma}
\newcommand{\Del}{\Delta}
\newcommand{\del}{\delta}
\newcommand{\ep}{\epsilon}
\renewcommand{\epsilon}{\varepsilon}
\newcommand{\thet}{\theta}
\newcommand{\Lam}{\Lambda}
\newcommand{\lam}{\lambda}
\newcommand{\Sig}{\Sigma}
\newcommand{\sig}{\sigma}
\newcommand{\Om}{\Omega}
\newcommand{\om}{\omega}
\DeclareMathOperator{\PSL}{PSL}
\DeclareMathOperator{\SO}{SO}
\DeclareMathOperator{\PSp}{PSp}
\DeclareMathOperator{\Id}{Id}
\DeclareMathOperator{\Isom}{Isom}
\DeclareMathOperator{\Out}{Out}
\DeclareMathOperator{\Dil}{Dil}
\DeclareMathOperator{\supp}{supp}
\DeclareMathOperator{\CAT}{CAT}
\DeclareMathOperator{\Curr}{\calC urr}
\DeclareMathOperator{\Int}{Int}
\DeclareMathOperator{\Hull}{Hull}
\DeclareMathOperator{\proj}{proj}
\DeclareMathOperator{\Gr}{Gr}
\DeclareMathOperator{\Area}{Area}
\DeclareMathOperator{\area}{area}
\DeclareMathOperator{\Jac}{Jac}
\DeclareMathOperator{\diam}{diam}
\DeclareMathOperator{\Comm}{Comm}
\newcommand{\genby}[1]{\langle #1\rangle}
\def\({\left(}
\def\){\right)}
\newcommand{\bs}{\backslash}
\newcommand{\ra}{\rightarrow}
\newcommand{\loopra}{\looparrowright}
\newcommand{\ov}[1]{\overline{#1}}
\newcommand{\inn}[2]{\langle #1,#2\rangle}
\def\1{{\bf 1}}
\title{Approximating hyperbolic lattices by cubulations}
\author{\small{Nic Brody and Eduardo Reyes}}
\begin{document}
\maketitle

\begin{abstract}
We show that an isometric action of a torsion-free uniform lattice $\G$ on hyperbolic space $\Hy^n$ can be metrically approximated by geometric actions of $\G$ on $\CAT(0)$ cube complexes, provided that either $n$ is at most three, or the lattice is arithmetic of simplest type.
This solves a conjecture of Futer and Wise.

Our main tool is the study of a space of co-geodesic currents, consisting of invariant Radon measures supported on codimension-1 hyperspheres in the Gromov boundary of $\Hy^n$. By pairing co-geodesic currents and geodesic currents via an intersection number, we show that asymptotic convergence of geometric actions can be deduced from the convergence of their dual co-geodesic currents. 

For surface groups, our methods also imply approximation by cubulations for actions induced by non-positively curved Riemannian surfaces with singularities, Hitchin and maximal representations, and quasiFuchsian representations. 

\end{abstract}

\section{Introduction}
Geometric group theory studies groups through their isometric actions on metric spaces. These actions may originate from geometric topology, combinatorial group theory, dynamics, or beyond, and different actions might reveal different properties of the group of interest.

For example, if $\G$ is the fundamental group of a Riemannian manifold $(M,\frakg)$, then the universal cover $\wtilde{M}$  admits an isometric action of $\G$ by deck transformations, when endowed with the pullback metric. Curvature bounds on $\frakg$ have strong consequences on $\G$. For instance, if $(M,\frakg)$ is closed and negatively curved, then  $\G$ is word-hyperbolic. Of special interest is the case in which $(M,\frakg)$ has constant negative curvature, since then $\G$ acts as a (torsion-free) uniform lattice on the real hyperbolic space $\Hy^n$.
 

There is another interesting class of isometric actions, which is that of cubical actions on $\CAT(0)$ cube complexes, when considering the combinatorial metric on their 1-skeletons. By the work of Sageev in the 1990s \cite{sageev,sageev.cod}, we know that these actions are related to the existence and behavior of codimension-1 subgroups. Nowadays, it is known that many groups admit geometric actions on such complexes (and that many groups do not), see, for instance, \cite{wise.qch,bergeron-wise,martin-steenbock,DKM,futer-wise,niblo-roller,delzant-py}.

One would like to understand how various actions of a given group $\G$ might be related quantitatively, at least from a large-scale perspective. By the Milnor-Schwarz lemma, any two geometric actions of $\G$ on geodesic metric spaces are equivariantly quasi-isometric. In this work, we will consider $\lam$-quasi-isometries, nomenclature which we adopt from \cite{futer-wise}.  Given two geometric actions of a group $\G$ on (roughly) geodesic metric spaces $X$ and $Y$, a \emph{$\lam$-quasi-isometry} $(\lam\geq 1)$ is a $\G$-equivariant map $f:X \ra Y$ satisfying 
\begin{equation}\label{eq.lam-qi}
    \lam_1^{-1}d_{X}(x,y)-A\leq d_{Y}(fx,fy)\leq \lam_2d_{X}(x,y)+A
\end{equation}
for all $x,y\in X$, where $\lam_1,\lam_2,A\geq 0$ satisfy $\lam_1\lam_2=\lam$ (note that $f$ is coarsely surjective by cocompactness of the actions).

The quantity $\lam$ reflects how far the actions on $X$ and $Y$ are from being homothetic up to a uniformly bounded additive perturbation. If $\G$ is a hyperbolic group, then \eqref{eq.lam-qi} holds for some $A$ if and only if for all $g\in \G$ we have
\begin{equation}\label{eq.lam-qi.mls}
    \lam_1^{-1}\ell_{X}[g]\leq \ell_{Y}[g]\leq \lam_2\ell_{X}[g],
\end{equation}
where
\[\ell_{X}[g]=\lim_{m \to \infty}{\frac{d_{X}(g^m x,x)}{m}} \ \ \text{ and } \ \ \ell_{Y}[g]=\lim_{m \to \infty}{\frac{d_{Y}(g^m y,y)}{m}}\]
are the \emph{stable translation length functions} associated to these actions, which are independent of the base points $x\in X,y\in Y$, see e.g. \cite{krat,cantrell-tanaka}. 

Suppose $M$ is a closed $n$-dimensional hyperbolic manifold with fundamental group $\G$. The goal of this paper is to compare the action of $\G$ on $\wtilde{M}=\Hy^n$ with geometric actions of $\G$ on $\CAT(0)$ cube complexes (with the combinatorial metric), and is motivated by the following conjecture of Futer and Wise \cite[Conjecture~7.8]{futer-wise}.

\begin{conjecture}[Futer-Wise]\label{conj:futer.wise}
Let $M=\Gam \bs \Hy^n$ be a closed hyperbolic $n$-manifold with fundamental group $\G$, so that either $n\leq 3$ or $M$ is arithmetic of simplest type. Then for every $\lam>1$, $M$ is homotopy equivalent to a compact non-positively curved cube complex $\calX$ such that there is a $\G$-equivariant $\lam$-quasi-isometry from $\wtilde{\calX}$ to $\wtilde{M}$.
\end{conjecture}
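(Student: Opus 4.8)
The plan is to realize the cube complexes in \Cref{conj:futer.wise} as \emph{duals of wall spaces built from finite families of $\G$-orbits of totally geodesic hyperplanes in $\Hy^n$}, and to control the resulting combinatorial metrics through the intersection pairing between co-geodesic and geodesic currents.

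\textbf{Step 1: the wall-counting pseudometric and its translation lengths.} Let $\calH$ denote the space of totally geodesic hyperplanes of $\Hy^n$, i.e.\ of codimension-$1$ round hyperspheres in $\partial\Hy^n$. Given a $\G$-invariant, locally finite collection $\calW\subseteq\calH$, Sageev's construction \cite{sageev,sageev.cod} produces a $\CAT(0)$ cube complex $\wtilde{\calX}_{\calW}$ carrying a $\G$-action whose combinatorial distance between the vertices dual to two complementary regions of $\Hy^n\ssm\calW$ equals the number of walls of $\calW$ separating them. Hence, along the orbit map $\Hy^n\to\wtilde{\calX}_{\calW}$, the combinatorial metric agrees up to bounded error with $d_{\calW}(x,y)=\#\{H\in\calW:H\text{ separates }x\text{ from }y\}$, so that $\ell_{\calX_{\calW}}[g]=i(\mu_{\calW},\eta_g)$, where $\eta_g\in\Curr(\G)$ is the geodesic current of $g$, $\mu_{\calW}=\sum_{H\in\calW}\delta_H$ is the associated co-geodesic current, and $i$ is the intersection number that integrates the number of crossings of a wall against a co-geodesic current. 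When $\calW$ consists of finitely many $\G$-orbits of hyperplanes that moreover separate points with linear separation, this action is geometric.

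\textbf{Step 2: Crofton's formula and approximation of the Liouville current.} Classical integral geometry gives $i(\mu_{\mathrm{Liou}},\eta_g)=c_n\,\ell_{\Hy^n}[g]$, where $\mu_{\mathrm{Liou}}$ is the (essentially unique) $\Isom(\Hy^n)$-invariant Radon measure on $\calH$ and $c_n>0$ is a dimensional constant; note $\mu_{\mathrm{Liou}}$ has full support, so $i(\mu_{\mathrm{Liou}},\eta)>0$ for every nonzero geodesic current. The heart of the argument is to approximate $\mu_{\mathrm{Liou}}$ weakly-$\ast$ in the space of co-geodesic currents by \emph{cubulable} ones: one produces $\G$-invariant wall spaces $\calW_k$, each with finitely many $\G$-orbits of totally geodesic hyperplanes and geometric dual action, together with scalars $a_k>0$ with $a_k^{-1}\mu_{\calW_k}\to c\,\mu_{\mathrm{Liou}}$. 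This requires, on one hand, an equidistribution input showing that suitable averages over $\G$-orbits of totally geodesic hyperplanes spread toward the Liouville measure, and, on the other hand — crucially — that enough such hyperplanes arise from honest codimension-$1$ totally geodesic subgroups with the linear-separation property so that the Sageev complexes are proper as well as cocompact. This is precisely where the hypothesis ``$n\leq 3$ or $\G$ arithmetic of simplest type'' enters: it provides the rich supply of codimension-$1$ subgroups (Bergeron--Haglund--Wise in the arithmetic case; Kahn--Markovic together with Wise and Agol for $n=3$; the surface case $n=2$ being classical, where co-geodesic and geodesic currents in fact coincide).

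\textbf{Step 3: from current convergence to $\lam$-quasi-isometry, and homotopy equivalence.} By continuity of $i(\cdot,\cdot)$ in the co-geodesic variable and compactness of $\PCurr(\G)$, the convergence $a_k^{-1}\mu_{\calW_k}\to c\,\mu_{\mathrm{Liou}}$ forces the ratios $i(a_k^{-1}\mu_{\calW_k},\eta)/i(\mu_{\mathrm{Liou}},\eta)$ to $c$ uniformly over $\eta\in\PCurr(\G)$, so that for every $\epsilon>0$ there is $k$ with
\[(1-\epsilon)\,b_k\,\ell_{\Hy^n}[g]\;\leq\;\ell_{\calX_k}[g]\;\leq\;(1+\epsilon)\,b_k\,\ell_{\Hy^n}[g],\qquad b_k:=a_k c\,c_n,\]
for all $g\in\G$ simultaneously. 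Taking $\lam_1=(b_k(1-\epsilon))^{-1}$ and $\lam_2=b_k(1+\epsilon)$, the overall scale $b_k$ cancels and $\lam=\lam_1\lam_2=\tfrac{1+\epsilon}{1-\epsilon}$; choosing $\epsilon$ with $\tfrac{1+\epsilon}{1-\epsilon}<\lam$ and invoking the equivalence \eqref{eq.lam-qi.mls} $\Leftrightarrow$ \eqref{eq.lam-qi} for hyperbolic groups yields a $\G$-equivariant $\lam$-quasi-isometry $\wtilde{\calX}_k\to\wtilde{M}=\Hy^n$. Finally, since $\G$ is torsion-free and acts freely, properly, and cocompactly on the contractible complex $\wtilde{\calX}_k$, the quotient $\calX_k$ is a compact non-positively curved $K(\G,1)$, hence homotopy equivalent to $M$, as required by \cite{futer-wise}.

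\textbf{Main obstacle.} The principal difficulty is Step 2: simultaneously arranging that the approximating wall spaces converge weakly-$\ast$ to the Liouville current \emph{and} that each is cubulationally well-behaved (finitely many orbits, proper cocompact dual action). Convergence wants hyperplanes distributed like a generic, invariant measure, whereas properness of the Sageev complex wants hyperplanes coming from codimension-$1$ subgroups with quantitative separation; reconciling these, and then upgrading weak-$\ast$ convergence of co-geodesic currents to the uniform-over-conjugacy-classes control of translation lengths via $i(\cdot,\cdot)$, is the technical core of the argument.
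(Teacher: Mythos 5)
Your overall architecture — build cube complexes by Sageev duality from a discrete, $\G$-invariant family of walls, identify $\ell_{\wtilde\calX}$ with an intersection number $i(\mu,\cdot)$ of a discrete co-geodesic current against geodesic currents, show the Liouville/Haar co-geodesic current is dual to $\ell_{\Hy^n}$ by Crofton, approximate the Haar current weak-$\ast$ by discrete ones, and use continuity of $i$ plus compactness of $\PCurr(\G)$ to upgrade this to uniform control of length ratios — is the same as the paper's, and Steps~1 and~3 match Propositions~\ref{prop.dualcubulation} and~\ref{prop.contcogeodesic}. There are, however, two genuine gaps.

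\textbf{Totally geodesic walls do not suffice in dimension~$3$.} You fix $\calH$ to be the space of \emph{round} hyperspheres in $\partial\Hy^n$ and require the walls to be totally geodesic hyperplanes bounded by codimension-$1$ totally geodesic subgroups. This is exactly right for the arithmetic case (where the paper uses Ratner/Mozes--Shah on $\SO^+(n,1)/\SO^+(n-1,1)$), and trivially for $n=2$. But for a generic closed hyperbolic $3$-manifold there are no, or only finitely many, immersed totally geodesic surfaces; the Kahn--Markovic surface subgroups you invoke for $n=3$ are \emph{quasiFuchsian}, with limit sets that are genuine quasicircles, not round circles. So the discrete currents $\mu_{\calW_k}$ they produce do not live on $\calH$ and cannot weak-$\ast$ approximate $\mu_{\mathrm{Liou}}$ \emph{inside} $\calH$; your Step~2 is internally inconsistent here. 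The paper's fix is to enlarge the wall space to $K$-quasiconformal hyperspheres $\calQC_K$, prove $\calQC_K$ is still a co-geodesic system (\cref{prop.QCKcogeodesic}), and — this is the technical core — show that as the quasiFuchsian constants $K_m\to 1$, the normalized discrete currents on quasicircles converge in $\calQC_K\Curr(\G)$ to the Haar current on $\calQC_1\subset\calQC_K$ (\cref{thm.3dimcurrtocurr}). That step requires lifting Labourie/Lowe--Neves/Al Assal equidistribution of area measures on $\Gr_2(M)$ to convergence of boundary currents, via careful estimates (in the spirit of Seppi) comparing minimal disks with nearly-round quasicircle boundary to the totally geodesic disk they shadow (\cref{prop.convergenceintegral}). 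Your proposal has no analogue of this.

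\textbf{Continuity of the intersection number is not free.} In Step~3 you invoke ``continuity of $i(\cdot,\cdot)$ in the co-geodesic variable'' as if it were automatic, but for a co-geodesic current paired with a geodesic current it can fail: the fundamental domain for the $\G$-action on the set of separating pairs is not compact, with the non-compactness coming from pairs $(S,(p,q))$ where $q\in S$. The paper establishes continuity only at pairs $(\al,\eta)$ satisfying the measure-zero condition \eqref{eqnmeasu} (\cref{prop.criterioncontinuous}), and then checks this holds at the Haar current against any geodesic current (\cref{lem.Haarintersectionctns}). Without such an argument, your passage from weak-$\ast$ convergence of $a_k^{-1}\mu_{\calW_k}$ to uniform convergence of the ratios $\ell_{\calX_k}[g]/\ell_{\Hy^n}[g]$ has a gap.
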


This conjecture cannot be strengthened to ask for the existence of 1-quasi-isometries. Indeed, every automorphism $g$ of the $\CAT(0)$ cube complex $\wtilde{\calX}$ either fixes a point or has an invariant (combinatorial) geodesic on its first cubical barycentric subdivision \cite{haglund}, and hence $2\ell_{\wtilde{\calX}}[g]\in\Z$ for any $g\in \G$ and any $\G$ acting geometrically on $\wtilde{\calX}$. On the other hand, it is known that the marked length spectrum $\{\ell_{\Hy^n}[g]: g\in \G\}$ of any uniform lattice action of $\G$ on $\Hy^n$ is not contained in a discrete subgroup of $\R$ (for instance, this follows from \cite[Prop.~3.2]{GMM}). Therefore  \eqref{eq.lam-qi.mls} cannot hold with $\lam_1\lam_2=1$.

\cref{conj:futer.wise} is meaningful since any $\G$ as in its statement admits many geometric actions on $\CAT(0)$ cube complexes; see \cite{bergeron-wise,BHW}. The conjecture predicts that there are ``enough'' cubulations to approximate geometric actions on $\Hy^n$. Now we explain the main results of this work, which imply \cref{conj:futer.wise}.




\subsection{Approximating negatively curved metrics on surfaces}

Suppose $M$ is a closed orientable surface of negative Euler characteristic and fundamental group $\G$, and equip $M$ with a negatively curved Riemannian metric $\frakg_0$. Bonahon \cite{bonahon} introduced the space $\Curr(\G)$ of \emph{geodesic currents} on $\G$, which can be interpreted as $\G$-invariant locally invariant measures on the space of geodesics in the universal cover of $(M,\frakg_0)$ with the pullback metric. Looking at the endpoints at infinity of these geodesics, we see that the space $\Curr(\G)$ is independent of the metric $\frakg_0$. Bonahon also introduced the \emph{intersection number}, which is a continuous, $\R_+$-bilinear function 
\[i:\Curr(\G) \times \Curr(\G) \ra \R\]
that extends the geometric intersection number of closed geodesics in $M$. Any negatively curved Riemannian metric $\frakg$ on $M$ has associated a \emph{Liouville current} $\al_{\frakg}$, characterized by the following property: if $g\in \G$ is non-torsion and $\eta_{[g]}$ is the corresponding rational current, then 
$i(\al_{\frakg},\eta_{[g]})=\ell_{\frakg}[g]$, 
where $\ell_{\frakg}[g]$ is the length of the unique geodesic in $(M,\frakg)$ in the free homotopy class of $[g]$. See \cref{sec.preliminaries} for the definitions involved.

Bonahon proved that any current is a limit of real multiples of rational currents in the weak-$\ast$ topology, and in particular there exists a sequence $(t_m\eta_{[g_m]})_m$ of real multiples of rational currents converging to $\al_{\frakg}$. Then $\eta_{[g_m]}$ is filling for $m$ large enough, and by Sageev's construction there exists a geometric action of $\G$ on a $\CAT(0)$ cube complex $\wtilde{\calX}_m$ with hyperplanes stabilized by the conjugates of subgroups commensurable to $\langle g_m \rangle$. Continuity of the intersection number then allows us to promote the convergence $t_m\eta_{[g_m]} \xrightharpoonup{\ast} \al_{\frakg}$ to the existence of $\G$-equivariant $\lam_m$-quasi-isometries from $\wtilde{\calX}_m$ to $\wtilde{M}$ with $\lam_m$ converging to 1, see \cref{prop.contcogeodesic}. 

In \cref{sec.2-dim}, we will formalize this argument and prove the following, which confirms \cref{conj:futer.wise} when $n=2$.

\begin{theorem}\label{thm.2-dimensions}
Let $M$ be a closed, smooth Riemannian surface of negative curvature and fundamental group $\G$. Then for any $\lam>1$, there exists a geometric action of $\G$ on a $\CAT(0)$ cube complex $\wtilde{\calX}$ and a $\G$-equivariant $\lam$-quasi-isometry from $\wtilde{\calX}$ to $\wtilde{M}$.
\end{theorem}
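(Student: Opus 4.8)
The plan is to realize the given metric on $M$ as a weak-$\ast$ limit of rescaled rational currents, to cubulate each rational current via Sageev's construction, and then to upgrade convergence of currents to convergence of the quasi-isometry constants using the continuity of Bonahon's intersection number --- the last step being the content of \cref{prop.contcogeodesic}. Concretely, write $\frakg$ for the negatively curved metric on $M$ and $\al=\al_{\frakg}\in\Curr(\G)$ for its Liouville current, so that $i(\al,\eta_{[g]})=\ell_{\frakg}[g]=\ell_{\wtilde{M}}[g]$ for every nontrivial $g\in\G$ (the second equality because $(M,\frakg)$ is negatively curved), and note that $\al$, being the Liouville current of a negatively curved metric, is \emph{filling}: $i(\mu,\al)>0$ for all $\mu\in\Curr(\G)\ssm\{0\}$. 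By Bonahon's density theorem there are scalars $t_m>0$ and nontrivial $g_m\in\G$ with $t_m\eta_{[g_m]}\xrightharpoonup{\ast}\al$. Since being filling is an open condition on $\Curr(\G)$ (by compactness of $\bbP\Curr(\G)$ and joint continuity of $i$), for all large $m$ the current $\eta_{[g_m]}$ is filling, i.e. the closed $\frakg$-geodesic $c_m$ representing $[g_m]$ fills $M$; applying Sageev's construction to the wall space on $\wtilde{M}$ formed by the $\G$-translates of a complete geodesic lift of $c_m$ then gives a geometric action of the torsion-free hyperbolic group $\G$ on a finite-dimensional $\CAT(0)$ cube complex $\wtilde{\calX}_m$, and since crossing a hyperplane of $\wtilde{\calX}_m$ amounts to crossing a $\G$-translate of $c_m$, the combinatorial stable translation length satisfies $\ell_{\wtilde{\calX}_m}[g]=i(\eta_{[g]},\eta_{[g_m]})$ for all $g\in\G$.

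It remains to promote $t_m\eta_{[g_m]}\xrightharpoonup{\ast}\al$ to a uniform two-sided bound. For $m$ large, consider the continuous function $F_m\colon\bbP\Curr(\G)\to\R_{>0}$ given by $F_m([\mu])=i(\mu,t_m\eta_{[g_m]})/i(\mu,\al)$, which is well defined because $\al$ is filling. I claim $F_m\to 1$ uniformly: otherwise, passing to a subsequence, there is $[\mu_m]\to[\mu_\infty]$ in the compact space $\bbP\Curr(\G)$ with $F_m([\mu_m])$ bounded away from $1$, but joint continuity of $i$ and $t_m\eta_{[g_m]}\xrightharpoonup{\ast}\al$ force $F_m([\mu_m])\to i(\mu_\infty,\al)/i(\mu_\infty,\al)=1$, a contradiction. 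Hence, given $\lam>1$, I fix $\lam_1,\lam_2>1$ with $\lam_1\lam_2<\lam$ and then $m$ with $\lam_1^{-1}\le F_m\le\lam_2$ on all of $\bbP\Curr(\G)$; evaluating at $[\eta_{[g]}]$ and using $i(\eta_{[g]},t_m\eta_{[g_m]})=t_m\ell_{\wtilde{\calX}_m}[g]$ and $i(\eta_{[g]},\al)=\ell_{\wtilde{M}}[g]$ yields
\[
  \lam_1^{-1}\,\ell_{\wtilde{M}}[g]\leq t_m\,\ell_{\wtilde{\calX}_m}[g]\leq\lam_2\,\ell_{\wtilde{M}}[g]\qquad\text{for all }g\in\G.
\]
Absorbing the scalar $t_m$ into the multiplicative constants and invoking the marked-length-spectrum characterization \eqref{eq.lam-qi.mls} for the hyperbolic group $\G$ (with additive error supplied by the Milnor--Schwarz lemma), this inequality produces a $\G$-equivariant $\lam$-quasi-isometry $\wtilde{\calX}_m\to\wtilde{M}$; taking $\wtilde{\calX}=\wtilde{\calX}_m$ with the above action finishes the argument.

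I expect the substantive difficulties to be twofold. The first is precisely the passage from weak-$\ast$ convergence of currents to the uniform estimate on $\bbP\Curr(\G)$: this is where the compactness of projectivized current space and the filling property of $\al$ --- and of $\eta_{[g_m]}$ for large $m$ --- are essential, and it is what \cref{prop.contcogeodesic} isolates. The second is the bookkeeping inside Sageev's construction: one must check that a closed filling geodesic produces a \emph{proper} and \emph{cocompact} action on a \emph{finite-dimensional} cube complex, and that the combinatorial translation length function is exactly $g\mapsto i(\eta_{[g]},\eta_{[g_m]})$ rather than this quantity up to some uncontrolled multiplicative or additive distortion --- for if such distortion appeared, it would have to be absorbed before taking the limit.
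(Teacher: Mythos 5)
Your proposal follows the same route as the paper: take the Liouville current $\al_{\frakg}$, approximate it by rescaled rational currents via Bonahon's density theorem, use continuity of the intersection number and compactness of $\bbP\Curr(\G)$ to see the approximating rational currents are eventually filling and to upgrade weak-$\ast$ convergence to a two-sided marked-length-spectrum bound, and invoke Sageev's construction to realize each rational current as a geometric cubulation with the right translation length function. The difference is purely organizational: the paper states \cref{prop.contcogeodesic} and \cref{prop.dualcubulation} once in the general co-geodesic framework and applies them via \cref{prop.generalsurfaces}, whereas you inline the continuity-to-uniform-bound argument (your $F_m\to1$ claim is exactly the content of \cref{prop.contcogeodesic}(2), and openness of filling is \cref{prop.contcogeodesic}(1)) and treat the Sageev bookkeeping informally. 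You correctly flag both inlined steps as the two places where real work has to happen; the duality formula $\ell_{\wtilde\calX_m}[g]=i(\eta_{[g]},\eta_{[g_m]})$ in particular is not automatic and is what \cref{prop.dualcubulation} establishes (via finite height, the existence of a combinatorial axis, and a counting argument), so in a complete write-up that step would need the proposition or an equivalent argument.
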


By the same argument, many other geometric structures on surface groups can be approximated by cubulations, including non-positively curved Riemannian metrics on surfaces with finitely many singularities, and positively ratioed Ansosov representations (which include Hitchin representations and maximal representations). See \cref{sec.2-dim} for the precise results.


\subsection{Approximating 3-dimensional and arithmetic hyperbolic lattices}
The strategy sketched above for the proof of the 2-dimensional case of \cref{conj:futer.wise} is the template we follow to solve the higher dimensional cases. If $\G$ is our hyperbolic group of interest, then the goal is to construct a reasonable topological space $\calS\Curr(\G)$ of \emph{co-geodesic currents} and a bilinear \emph{intersection number} $i_\calS:\calS\Curr(\G) \times \Curr(\G)\ra \R$. For this space we require the following.

\begin{itemize}
      \item[i)] If we want to approximate the geometric action of $\G$ on the metric space $X$, we need a co-geodesic current $\al_X\in \calS\Curr(\G)$ dual to $X$, so that it verifies
   \begin{equation*}
        \ell_{X}[g]=i_\calS(\al_{X},\eta_{[g]})
    \end{equation*}
    for all $g\in \G$.
    \item[ii)] If $\al\in \calS\Curr(\G)$ is a ``discrete'' co-geodesic current, we want a cocompact action of $\G$ on a $\CAT(0)$ cube complex $\wtilde\calX$ that is ``dual'' to $\al$ in the sense that
    \begin{equation*}
        \ell_{\wtilde{\calX}}[g]=i_\calS(\al,\eta_{[g]})
    \end{equation*}
for any $g\in \G$. 

   \item[iii)] We want a sequence $(t_m\al_m)_m\subset \calS\Curr(\G)$ of (real multiples of) discrete co-geodesic currents dual to geometric actions on $\CAT(0)$ cube complexes $\wtilde\calX_m$ and converging to $\al_X$ in $\calS\Curr(\G)$. 
   \item[iv)] We want the intersection number $i_\calS$ to be continuous at $(\al_X,\eta)$ for any $\eta\in \Curr(\G)$.
\end{itemize}
If the pair $(\calS\Curr(\G),i_\calS)$ satisfies these conditions, then \cref{prop.contcogeodesic} allows us to promote the convergence $\al_m \to \al_X$ in $\calS\Curr(\G)$ to a sequence of $\G$-equivariant $\lam_m$-quasi-isometries $\wtilde{\calX}_m \to X$ with $\lam_m \to 1$.

For $\G$ a uniform lattice on the real hyperbolic space $\bbH^n$ ($n\geq 3)$, the space $\calS\Curr(\G)$ will consist of locally finite $\G$-invariant measures on the space of quasiconformal codimension-1 hyperspheres in the Gromov boundary $ \partial \bbH^n =\bbS^{n-1}$. When $n=3$, in  \cite{labourie} these measures are referred to as conformal currents. The intersection number is defined in the expected way, using the fact that complements of quasiconformal codimension-1 hyperspheres have exactly two connected components. The action on $\bbH^n$ is represented by the homogeneous (Haar) measure supported on round codimension-1 hyperspheres.

In the case of an arithmetic manifold of simplest type, $M=\G \bs \bbH^n$ contains infinitely many immersed totally geodesic codimension-1 hypersurfaces. The lifts of these hypersurfaces in $\Hy^n$ were used in \cite{BHW} to produce geometric actions of $\G$ on $\CAT(0)$ cube complexes. For each such immersed hypersurface in $M$, the limit sets in $\bbS^{n-1}$ of all its lifts in $\Hy^n$ determine a co-geodesic current dual to a (non-necessarily proper) action on a $\CAT(0)$ cube complex, as in \cite{BHW}. By considering an appropriate sequence of immersed hypersurfaces, Ratner's measure classification theorem \cite{ratner} and the ergodicity theorem of Mozes-Shah \cite{mozes-shah} imply that (after normalizing), the sequence of corresponding currents converge to the Haar co-geodesic current. With this data as our input, we deduce the arithmetic case of \cref{conj:futer.wise}.

\begin{theorem}\label{thm.main.arithmetic}
    Let $M=\Gam \bs \Hy^n$ be a closed hyperbolic $n$-manifold that is arithmetic of simplest type. Then for any $\lam>1$ there exists a geometric action of $\G$ on a $\CAT(0)$ cube complex $\wtilde{\calX}$ and a $\G$-equivariant $\lam$-quasi-isometry from $\wtilde{\calX}$ to $\wtilde{M}=\Hy^n$.
\end{theorem}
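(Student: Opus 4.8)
The plan is to verify the four conditions (i)--(iv) listed above for the space $\calS\Curr(\G)$ of locally finite $\G$-invariant Radon measures on the space of quasiconformal codimension-1 hyperspheres in $\bbS^{n-1}=\partial\Hy^n$, and then invoke \cref{prop.contcogeodesic}. Conditions (i) and (ii) are essentially structural: the Haar co-geodesic current $\al_{\Hy^n}$ is the $\Isom(\Hy^n)$-invariant measure on round hyperspheres, normalized so that $i_\calS(\al_{\Hy^n},\eta_{[g]})$ recovers the hyperbolic translation length (here one uses that a round hypersphere separates $\bbS^{n-1}$ into two disks and the intersection number counts separations); and for an immersed totally geodesic hypersurface $N\looparrowright M$, the atomic current $\al_N$ supported on the $\G$-orbit of limit sets of lifts of $N$ is dual, via Sageev's construction applied to the wall structure of \cite{BHW}, to a cocompact cubical action whose hyperplane stabilizers are commensurable to $\pi_1 N$ and its conjugates. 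The real content is in (iii) and (iv).

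For condition (iii) the idea is the one sketched before the theorem: one wants a sequence of immersed totally geodesic hypersurfaces $N_m\looparrowright M$ whose associated co-geodesic currents $t_m\al_{N_m}$, after normalization, converge weak-$\ast$ to $\al_{\Hy^n}$. Concretely I would translate this into a statement about the $\Orth(n-1,1)$-orbits (equivalently $\spin$ / $\Spin(n-1,1)$-orbits after passing to a finite cover) inside the homogeneous space $\G\bs\Isom(\Hy^n)$, or rather inside the frame bundle viewed as $\G\bs G$ with $G=\SO(n,1)^\circ$: each immersed geodesic hypersurface is a closed $H$-orbit with $H\cong\SO(n-1,1)^\circ$, and the co-geodesic current $\al_{N_m}$ corresponds to the $H$-invariant measure on that orbit pushed to the space of hyperspheres. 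Arithmeticity of simplest type is exactly what guarantees infinitely many such closed $H$-orbits (this is how \cite{BHW} produce their walls), and moreover that there is a sequence of them that equidistributes. The equidistribution is where Ratner's theorem \cite{ratner} and the Mozes--Shah theorem \cite{mozes-shah} enter: any weak-$\ast$ limit of the normalized $H$-invariant measures on the closed orbits $H\cdot x_m$ is a homogeneous measure supported on a closed orbit of a subgroup $L\supseteq H$, and since $H$ is already a maximal connected subgroup of $G$ with the only closed $L$-orbit for $L=G$ being all of $\G\bs G$, the limit must be the Haar measure (one must rule out escape of mass, which follows because the ambient space is compact, and rule out the limit being supported on a single closed $H$-orbit, which is the place one genuinely needs infinitely many distinct orbits going to infinity in an appropriate sense). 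Pushing this equidistribution forward to the space of hyperspheres gives $\al_{N_m}/t_m\xrightharpoonup{\ast}\al_{\Hy^n}$.

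For condition (iv), continuity of $i_\calS(\cdot,\eta)$ at $\al_{\Hy^n}$, the point is that $\al_{\Hy^n}$ gives zero mass to the set of hyperspheres passing through either endpoint of $\eta_{[g]}$ (or more generally tangent to a fixed geodesic), so the integrand in the definition of $i_\calS$ is $\al_{\Hy^n}$-almost everywhere continuous, and the portmanteau theorem upgrades weak-$\ast$ convergence to convergence of the integrals. This requires knowing that the space of quasiconformal hyperspheres, with the topology making $i_\calS$ continuous, behaves well under the relevant limits — in particular that the limiting objects really are genuine separating hyperspheres and not degenerate — which is presumably handled in the section constructing $\calS\Curr(\G)$.

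I expect the main obstacle to be condition (iii), and within it the equidistribution step. Producing \emph{one} sequence of closed $H$-orbits is not enough; one needs control that forces the Ratner/Mozes--Shah limit to be Haar rather than some intermediate homogeneous measure, and this requires a genuine input from the arithmetic structure — e.g. that the hypersurfaces $N_m$ can be chosen so that no infinite subsequence lies in a single proper algebraic subvariety, or equivalently that the orbits $H\cdot x_m$ leave every compact part of the ``Hecke'' structure. The passage between the frame bundle picture $\G\bs G$ and the boundary picture on $\calS\Curr(\G)$, while conceptually clean, also needs care: one must check that weak-$\ast$ convergence of the $H$-invariant measures upstairs is preserved under the (proper, continuous) map to the space of hyperspheres, and that the normalization constants $t_m$ behave, i.e. that the total mass on a fixed compact set does not collapse. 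Once equidistribution is in hand, conditions (i), (ii), (iv) and the final appeal to \cref{prop.contcogeodesic} are comparatively routine.
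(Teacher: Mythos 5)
Your overall strategy matches the paper's, and you have correctly isolated the real difficulty: condition (iii), the equidistribution of the normalized totally geodesic-hypersurface currents toward the Haar current, viewed as convergence of $\SO^+(n-1,1)$-homogeneous measures on closed orbits in $\Gam\backslash G$ toward Haar. But your treatment of this step has a genuine gap. You appeal to maximality of $H=\SO^+(n-1,1)$ among connected subgroups of $G=\SO^+(n,1)$ to force any Mozes--Shah limit to be Haar or a single closed $H$-orbit, and then you only gesture at ruling out the latter (``infinitely many distinct orbits going to infinity,'' ``leave every compact part of the Hecke structure''). This is not a proof: having infinitely many pairwise-distinct closed $H$-orbits does not by itself prevent the chosen sequence from converging, in the Chabauty--Hausdorff sense, to yet another closed $H$-orbit, and the Mozes--Shah support-containment conclusion $\supp(\mu_m)\subset\supp(\mu_\infty)g_m$ is perfectly consistent with that scenario unless you control the limit orbit.

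The paper's actual mechanism is different and more constructive. It does \emph{not} invoke maximality of $H$. Instead, in \cref{lem.seqarith} it uses density of the commensurator $\Comm_G(\Gam)$ (the arithmetic input) to produce a sequence of round hyperspheres $S_m$, each stabilized cocompactly by a quasiconvex subgroup of $\Gam$, converging to an $S_\infty$ that is \emph{deliberately chosen not to be} the limit set of any quasiconvex subgroup (possible since the set $\calL$ of such limit sets is countable while $\calQC_1$ is not). Shah's closure theorem then gives that $\pi(S_\infty)$ is dense in $\Gam\backslash G$. With that in hand, the Mozes--Shah support containment immediately forces $\pi(S_\infty)\subset\supp(\mu_\infty)$, so $\mu_\infty$ has full support, and a second application of Mozes--Shah identifies $\mu_\infty$ as the unique homogeneous measure with full support, namely Haar. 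So the arithmetic input you need is not a vague escape-to-infinity property of the sequence; it is density of the commensurator, which lets you \emph{prescribe} a limit circle with dense closed-up orbit. The rest of your proposal --- conditions (i), (ii), the discrete currents being dual to the BHW cubulations via \cref{prop.dualcubulation}, the portmanteau-style continuity in condition (iv), and the final appeal to \cref{prop.contcogeodesic} --- agrees with the paper.
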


Incidentally, our approach using \cref{prop.criterioncontinuous} gives an alternative way of certifying properness of cubulations. Thus, we provide another proof of cubulability of uniform arithmetic lattices of simplest type.

The 3-dimensional case is, in some sense, the most interesting one, since in general we do not expect the manifold $M=\G \bs \Hy^3$ to contain infinitely many totally geodesic immersed surfaces \cite[Sec.~5.3]{maclachlan-reid}. Indeed, if this is the case then $M$ is arithmetic \cite{BFMS}. However, closed hyperbolic 3-manifolds have plenty of \emph{almost} totally geodesic immersed surfaces, as established by Kahn and Markovic in their proof of the surface subgroup conjecture \cite{kahn-markovic}. 

At the level of measures on the 2-plane Grassmannian bundle $\Gr_2(M)$, Labourie \cite[Thm.~5.7]{labourie} proved that the Haar measure is the limit of a sequence of normalized area measures induced by (possibly disconnected) asymptotically Fuchsian minimal surfaces. Lowe and Neves \cite[Prop.~6.1]{lowe-neves} proved that we can choose a sequence of connected minimal surfaces. A recent result of Al Assal \cite[Thm.~1.1]{alassal} describes all the possible limits of sequences of such measures. We upgrade these convergences to convergence of the associated rational co-geodesic currents. Applying this to the Haar co-geodesic current, we settle the remaining case of \cref{conj:futer.wise}.

\begin{theorem}\label{thm.main.3manifold}
    Let $M=\Gam \bs \Hy^3$ be a closed hyperbolic 3-manifold. Then for any $\lam>1$ there exists a geometric action of $\G$ on a $\CAT(0)$ cube complex $\wtilde{\calX}$ and a $\G$-equivariant $\lam$-quasi-isometry from $\wtilde{\calX}$ to $\wtilde{M}=\Hy^3$.
\end{theorem}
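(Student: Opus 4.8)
The plan is to realize, for $\G=\pi_1 M$ with $X=\Hy^3$, the template recorded in conditions (i)--(iv), and then conclude via \cref{prop.contcogeodesic}. Here $\calS\Curr(\G)$ is the space of locally finite $\G$-invariant Radon measures on the space of quasiconformal circles in $\partial\Hy^3=\bbS^2$ (each such circle bounding exactly two complementary disks), $i_\calS$ is the associated intersection number with geodesic currents, and $\al_{\Hy^3}\in\calS\Curr(\G)$ is the Haar co-geodesic current carried by round circles, i.e.\ by the boundaries of totally geodesic planes. Conditions (i) and (ii) are structural and will have been established beforehand: (i) is a Crofton-type identity asserting that $\ell_{\Hy^3}[g]$ equals, up to a fixed constant, the $\al_{\Hy^3}$-measure of the geodesic planes crossed by an axis of $g$, so $\ell_{\Hy^3}[g]=i_\calS(\al_{\Hy^3},\eta_{[g]})$; and (ii) is Sageev's construction \cite{sageev,sageev.cod} together with the Bergeron--Haglund--Wise machinery \cite{BHW}, which attaches to a discrete co-geodesic current — one supported on the $\G$-orbit of finitely many quasicircles arising as limit sets of quasiconvex (hence codimension-one) surface subgroups, with atomic transverse weights — a cocompact cubical action whose wall-crossing count computes $i_\calS(\al,\eta_{[g]})$.

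The heart of the argument is condition (iii): building $t_m\al_m\to\al_{\Hy^3}$ with each $\al_m$ discrete and dual to a \emph{geometric} (proper cocompact) action on a $\CAT(0)$ cube complex $\wtilde\calX_m$. For the input we use the nearly geodesic immersed surfaces of Kahn--Markovic \cite{kahn-markovic}: for each $m$ fix a $(1+\epsilon_m)$-quasiFuchsian closed minimal surface $S_m\looparrowright M$ (possibly disconnected, or connected by Lowe--Neves \cite{lowe-neves}) with $\epsilon_m\to 0$, chosen so that finitely many $\G$-orbits of the limit quasicircles of the lifts $\wtilde S_m\subset\Hy^3$ form a wall space that is cocompact and, via the properness criterion of \cref{prop.criterioncontinuous} (of Bergeron--Haglund--Wise type), yields a geometric cubical action $\wtilde\calX_m$; let $\al_m$ be the resulting discrete co-geodesic current (the normalized sum of Dirac masses on those limit quasicircles), so $\ell_{\wtilde\calX_m}[g]=i_\calS(\al_m,\eta_{[g]})$ by (ii). It remains to prove $t_m\al_m\to\al_{\Hy^3}$ in $\calS\Curr(\G)$ for a suitable scalar $t_m$. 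For this we import the equidistribution, on the Grassmann bundle $\Gr_2(M)$, of the normalized area (Gauss) measures of asymptotically Fuchsian minimal surfaces towards the Liouville/Haar measure — Labourie \cite{labourie}, Lowe--Neves \cite{lowe-neves}, with Al Assal's description of all subsequential limits \cite{alassal} guaranteeing the sequence can be taken to converge to Haar — and upgrade it to the level of currents. Concretely, one constructs a continuous ``de-Grassmannification'' map from the relevant measures on $\Gr_2(M)$ to $\calS\Curr(\G)$, sending a point-with-tangent-$2$-plane to its osculating totally geodesic plane and hence to a circle at infinity; this map carries the area measure of $S_m$ to $\al_m$ up to an error that vanishes with $\epsilon_m$ (a $(1+\epsilon_m)$-quasiFuchsian surface agrees with its osculating planes to order $\epsilon_m$), and carries Haar on $\Gr_2(M)$ to $\al_{\Hy^3}$; reconciling the area normalization with the Crofton constant from (i) determines $t_m$.

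Condition (iv), continuity of $i_\calS$ at $(\al_{\Hy^3},\eta)$ for all $\eta\in\Curr(\G)$, follows from the genericity of round circles: $\al_{\Hy^3}$ assigns zero mass to the set of circles tangent to, or passing through an endpoint of, a fixed geodesic, so the ``boundary of intersection'' locus is null and the pairing is continuous there — the same mechanism underlying continuity of Bonahon's intersection number at Liouville currents used for \cref{thm.2-dimensions}. Granting (i)--(iv), \cref{prop.contcogeodesic} upgrades $\al_m\to\al_{\Hy^3}$ to $\G$-equivariant $\lam_m$-quasi-isometries $\wtilde\calX_m\to\Hy^3$ with $\lam_m\to 1$; taking $m$ large with $\lam_m\le\lam$ proves the theorem.

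The step I expect to be the main obstacle is the upgrade inside (iii): turning weak-$\ast$ convergence of measures on the finite-dimensional bundle $\Gr_2(M)$ into convergence of measures on the infinite-dimensional space of quasicircles. This requires controlling simultaneously (a) the \emph{location} of the limit sets of $S_m$ in the space of quasicircles — showing they concentrate, at rate $\epsilon_m$, near the round circles and equidistribute among them; (b) the \emph{transverse weights}, so that total mass neither escapes to infinity nor collapses to zero, which is precisely what pins down the area-versus-Crofton normalization $t_m$; and (c) \emph{local finiteness} of the limit — that only finitely many lifted limit quasicircles separate the endpoints of any compact family of geodesics — so that $\al_{\Hy^3}$ is genuinely the limit in $\calS\Curr(\G)$ and not merely a vague limit of measures on a larger space. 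Packaging (a)--(c) into a single continuous de-Grassmannification functor is the technical core of the proof.
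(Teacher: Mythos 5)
Your high-level template matches the paper closely: the Haar co-geodesic current carried by round circles, its duality with $\ell_{\Hy^3}$ via a Crofton identity (the paper's \cref{lem.duallattice}), Sageev's construction attaching a cocompact cubulation to a discrete co-geodesic current (\cref{prop.dualcubulation}), the equidistribution inputs of Labourie, Lowe--Neves and Al Assal, and continuity of $i_\calS$ at the Haar current by genericity of round circles (\cref{lem.Haarintersectionctns}). However, the one step you flag as the technical core — upgrading weak-$\ast$ convergence on $\Gr_2(M)$ to convergence in $\calS\Curr(\G)$ — is where your sketch has a genuine gap.

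The ``de-Grassmannification map'' you describe, sending a point with its tangent $2$-plane to the round circle at infinity of the osculating totally geodesic plane, cannot carry the area measure of $\wtilde S_m$ to $\al_m$ ``up to an error vanishing with $\epsilon_m$'' in any useful topology: the pushforward of the (smooth, absolutely continuous) area measure of a lift along a continuous map is itself non-atomic and supported on a continuum of \emph{round} circles, whereas $\al_m$ is a sum of Dirac masses at the genuine \emph{quasicircle} limit sets of the lifts. These are measures of qualitatively different type and do not approximate each other. The paper goes the opposite way: it \emph{disintegrates}. For a quasicircle $C$ near a round circle it takes Seppi's unique minimal disk $\wtilde\Sigma_C$, defines a fiber measure $\wtilde\nu_C$ on $\Gr_2(\Hy^3)$, and proves (Eq.\ \eqref{eq.intnu_m}) that the lifted Grassmannian measure $\wtilde\nu_m$ disintegrates over $\calQC_K$ with respect to $\hat\al_{\Sigma_m}$ with kernel $\hat F(C)=\int F\,d\wtilde\nu_C$; the technical core is \cref{prop.convergenceintegral}, establishing that $\hat F(C_m)\to\hat F(C)$ when $K_m$-quasicircles $C_m$ converge to a round $C$ with $K_m\to 1$, which relies on Seppi's uniform curvature and foliation estimates for nearly-Fuchsian minimal disks. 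Combined with a soft measure-theoretic lemma (\cref{lem.weakconvergencenice}), this yields $\hat\al_{\Sigma_m}\rightharpoonup\hat\al_{Haar}$ directly. Your sketch, as stated, does not supply anything playing the role of \cref{prop.convergenceintegral}, and the pushforward picture you propose cannot be patched into it.

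Two smaller points. First, you attribute properness of the actions on $\wtilde\calX_m$ to a ``properness criterion'' in \cref{prop.criterioncontinuous}; that proposition is a continuity criterion for $i_\calS$, not a properness statement. The paper instead proves cocompactness first (\cref{prop.dualcubulation}) and deduces properness a posteriori: since $\hat\al_{\Sigma_m}\to\hat\al_{Haar}$ and the Haar current is filling, \cref{prop.contcogeodesic}~(1) shows the $\al_m$ are eventually filling, and \cref{prop.dualcubulation}~(3) converts weak-fillingness into properness. This is cleaner than trying to certify properness up front. Second, the Kahn--Markovic surfaces themselves are not what feeds the argument; what is needed is the equidistribution of Lowe--Neves or the limit classification of Al Assal for $\nu_{\Sigma_m}\to\nu_{Haar}$, which you do eventually cite, but the logic should hinge on those rather than on surface-subgroup existence.
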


\begin{remark}\label{rmk.oneorbithyperplane}
    In all three theorems that imply \cref{conj:futer.wise}, we can choose the approximating cubulation $\wtilde{\calX}$ to have a single orbit of hyperplanes. See \cref{prop.generalsurfaces} and Remarks \ref{rmk.singleorbit3} and \ref{rmk.singleorbitarith}. This complements a result of Fioravanti and Hagen \cite{fioravanti-hagen}, who proved that any cubulable hyperbolic group admits such a cubulation.
\end{remark}

We deduce an interesting consequence of \cref{thm.main.3manifold}, concerning convex-cocompact representations. A remarkable result of Brooks \cite{brooks} asserts that if $\pi:\G \ra \PSL(2,\C)$ is a \emph{convex-cocompact} representation (i.e. a representation inducing a quasi-isometric embedding of $\G$ into $\Hy^3$) then there exist arbitrarily small perturbations $\pi_\ep$ of $\pi$, such that $\pi_\ep$ can be extended to a proper and cocompact representation $\hat\pi_\ep:\hat \G \ra \PSL(2,\C)$ so that $\G<\hat\G$ is a quasiconvex subgroup. 

If we apply \cref{thm.main.3manifold} to the action of $\hat \G$ on $\Hy^3$, then $\G$ is convex-cocompact for any geometric action of $\hat\G$ on a $\CAT(0)$ cube complex \cite[Prop.~13.7]{haglund-wise.special}. Since the restriction of a $\lam$-quasi-isometry is also a $\lam$-quasi-isometry (onto its image), we deduce the following approximation result for the representation $\pi$. 

\begin{proposition}\label{prop.main.quasifuchsian}
    Let $\pi:\G \ra \PSL(2,\C)$ be a convex-cocompact representation of the torsion-free group $\G$, and let $\Hull(\Lam_{\pi(\G)})\subset \Hy^3$ be the convex hull of the limit set of $\pi(\G)$. Then for any $\lam>1$, there exists a geometric action of $\G$ on a $\CAT(0)$ cube complex $\wtilde{\calX}$ and a $\G$-equivariant $\lam$-quasi-isometry from $\wtilde{\calX}$ to $\Hull(\Lam_{\pi(\G)})$.
\end{proposition}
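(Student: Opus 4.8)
The plan is to derive \cref{prop.main.quasifuchsian} as an essentially formal consequence of \cref{thm.main.3manifold}, combined with Brooks' density theorem and the standard behavior of quasiconvex subgroups under cubulations, exactly along the lines sketched in the paragraph preceding the statement. The only real content to supply is the bookkeeping of constants, and the verification that the restriction of an equivariant $\lam$-quasi-isometry to an invariant quasiconvex subspace is again a $\lam$-quasi-isometry.

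First I would fix $\lam>1$ and invoke Brooks' theorem: since $\pi:\G\ra\PSL(2,\C)$ is convex-cocompact and $\G$ is torsion-free, there is an arbitrarily small perturbation $\pi_\ep$ of $\pi$ which is still convex-cocompact, which extends to a discrete cocompact faithful representation $\hat\pi_\ep:\hat\G\ra\PSL(2,\C)$ of a torsion-free group $\hat\G\supset\G$ with $\G$ quasiconvex in $\hat\G$. Here one has to be slightly careful: the statement I want is for $\pi$ itself, not for the perturbed $\pi_\ep$. The point is that the convex hull $\Hull(\Lam_{\pi(\G)})$ depends continuously (in the Hausdorff sense on compact sets, equivariantly) on $\pi$, and more to the point the $\G$-actions on $\Hull(\Lam_{\pi(\G)})$ and on $\Hull(\Lam_{\pi_\ep(\G)})$ differ by a $\lam'$-quasi-isometry with $\lam'\to1$ as $\ep\to0$; since the class of $\lam$-quasi-isometries is closed under composition with loss $\lam\lam'$, it suffices to prove the statement for $\pi_\ep$ with $\ep$ small enough that $\lam'\lam''<\lam$, where $\lam''$ is the constant produced below for $\hat\G$. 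So I reduce to the case where $\pi$ itself extends to a cocompact $\hat\pi:\hat\G\ra\PSL(2,\C)$ with $\G\le\hat\G$ quasiconvex.

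Next I apply \cref{thm.main.3manifold} to the closed hyperbolic $3$-manifold $\hat\G\bs\Hy^3$: given $\lam''\in(1,\lam)$ (chosen so that the composition estimate above works), there is a geometric action of $\hat\G$ on a $\CAT(0)$ cube complex $\wt\calY$ and a $\hat\G$-equivariant $\lam''$-quasi-isometry $F:\wt\calY\ra\Hy^3$. Now I restrict to $\G$. Since $\G$ is quasiconvex in $\hat\G$ acting geometrically on $\wt\calY$, by \cite[Prop.~13.7]{haglund-wise.special} the subgroup $\G$ acts cocompactly on a convex (in particular, isometrically embedded and quasiconvex) subcomplex — or more precisely on a combinatorially convex $\G$-cocompact subcomplex — $\wt\calX\subseteq\wt\calY$; call this the desired cube complex. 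On the hyperbolic side, $F$ sends $\wt\calX$ coarsely onto a $\G$-invariant quasiconvex subset of $\Hy^3$ whose limit set is $\Lam_{\pi(\G)}$, hence coarsely onto $\Hull(\Lam_{\pi(\G)})$ (a $\G$-invariant quasiconvex subset of $\Hy^3$ with prescribed limit set lies within bounded Hausdorff distance of the convex hull of that limit set). Composing $F|_{\wt\calX}$ with the nearest-point projection $\Hy^3\to\Hull(\Lam_{\pi(\G)})$ — which is $\G$-equivariant, distance non-increasing, and a quasi-isometry onto its image for a quasiconvex set — produces a $\G$-equivariant map $\wt\calX\ra\Hull(\Lam_{\pi(\G)})$. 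One then checks the two-sided inequality \eqref{eq.lam-qi}: the upper Lipschitz bound follows since both restriction and projection do not worsen the multiplicative constant, and the lower bound follows because $\wt\calX\hookrightarrow\wt\calY$ is a quasi-isometric embedding onto a quasiconvex set and $\Hull(\Lam_{\pi(\G)})\hookrightarrow\Hy^3$ likewise, so passing to subspaces preserves the constant $\lam''$. Tracking multiplicative constants, $F|_{\wt\calX}$ composed with the projection is a $\lam''$-quasi-isometry, and then the reduction step gives a $\lam$-quasi-isometry for the original $\pi$.

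The main obstacle — really the only place where care is needed — is verifying the claim that restricting a $\lam$-quasi-isometry to a quasiconvex invariant subspace on each side again yields a $\lam$-quasi-isometry with the \emph{same} multiplicative constant $\lam=\lam_1\lam_2$, rather than merely some quasi-isometry with degraded constants. This is transparent at the level of stable translation lengths via \eqref{eq.lam-qi.mls}: for every infinite-order $g\in\G\le\hat\G$ one has $\ell_{\wt\calX}[g]=\ell_{\wt\calY}[g]$ and $\ell_{\Hull(\Lam_{\pi(\G)})}[g]=\ell_{\Hy^3}[g]$, because translation length on a $\CAT(0)$ space is computed on any invariant convex subset containing the axis, and likewise the translation length of $\pi(g)$ on $\Hy^3$ equals its translation length on the convex hull of the limit set (which contains the axis of $\pi(g)$). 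Hence the marked length inequalities \eqref{eq.lam-qi.mls} transfer verbatim from the pair $(\wt\calY,\Hy^3)$ to the pair $(\wt\calX,\Hull(\Lam_{\pi(\G)}))$ with identical constants, and since $\G$ is hyperbolic the equivalence of \eqref{eq.lam-qi} and \eqref{eq.lam-qi.mls} recorded in the introduction upgrades this back to a genuine $\lam''$-quasi-isometry. The remaining details — continuity of $\Hull(\Lam_{\pi(\G)})$ in $\pi$ for the reduction to $\pi_\ep$, and the existence of a $\G$-cocompact convex subcomplex of $\wt\calY$ — are standard and cited above.
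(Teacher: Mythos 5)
Your overall skeleton is the same as the paper's: Brooks' theorem to embed (a perturbation of) $\pi$ into a uniform lattice $\hat\G$, \cref{thm.main.3manifold} applied to $\hat\G$, passage to a $\G$-cocompact convex core via \cite[Prop.~13.7]{haglund-wise.special}, and transfer of the multiplicative constant through stable translation lengths and the equivalence of \eqref{eq.lam-qi} and \eqref{eq.lam-qi.mls}. That part of your argument is fine and is essentially how the paper phrases it (as $\Del(\rho_{\pi_m},\rho_{\wtilde\calY_m})\leq 1/m$ in $\scrD_\G$).

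The genuine gap is the reduction from $\pi$ to the perturbed representation $\pi_\ep$. You assert that the $\G$-actions on $\Hull(\Lam_{\pi(\G)})$ and on $\Hull(\Lam_{\pi_\ep(\G)})$ differ by a $\G$-equivariant $\lam'$-quasi-isometry with $\lam'\to 1$ as $\ep\to 0$, justified only by Hausdorff continuity of the convex hulls, and you dismiss the rest as ``standard.'' That claim is precisely the continuity of the map $\scrC\scrC^{3}_\G \ra \scrD_\G$, i.e.\ \cref{prop.homeoccpt}, which is a substantive result occupying most of \cref{sec.quasifuchsian}; it is not a formal consequence of Hausdorff convergence of limit sets or hulls. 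What you need is the uniform multiplicative statement $\bigl(\sup_{g\neq o}\ell_{\pi_\ep}[g]/\ell_{\pi}[g]\bigr)\cdot\bigl(\sup_{h\neq o}\ell_{\pi}[h]/\ell_{\pi_\ep}[h]\bigr)\to 1$, i.e.\ $\Del(\rho_{\pi_\ep},\rho_{\pi})\to 0$, taken over \emph{all} of $\G$; Hausdorff continuity of $\Hull(\Lam_{\pi_\ep(\G)})$ on compact sets only yields pointwise convergence of orbit maps and of $\ell_{\pi_\ep}[g]$ for each fixed $g$, which does not control the ratios uniformly. The paper's proof of \cref{prop.homeoccpt} has to work for this: it uses McMullen's convergence of limit sets and Bishop--Jones to get convergence of critical exponents, uniform coboundedness of Dirichlet domains (\cref{lem.unifcobounded}), uniform rough geodesicity, and a precompactness argument in $\scrD_\G$ (\cref{lem.precompactness}) to upgrade pointwise convergence of length functions to convergence in $(\scrD_\G,\Del)$. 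Unless you supply such an argument (or an equivalent one, e.g.\ via quasiconformal stability with dilatation tending to $1$ together with a quantitative extension theorem), the step on which your whole reduction rests is unsubstantiated, and this is exactly the ingredient the paper singles out as necessary alongside Brooks' theorem and \cref{thm.main.3manifold}.
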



\subsection{Interpretation in terms of metric structures}

The results already mentioned can be described in terms of metric structures, introduced by Furman in \cite{furman} and studied by the second author in \cite{oregon-reyes.ms}. If $\G$ is a non-elementary hyperbolic group, let $\calD_\G$ denote the space of all left-invariant and Gromov hyperbolic pseudo metrics on $\G$ that are quasi-isometric to a word metric for a finite generating set. The \emph{space of metric structures} on $\G$, denoted by $\scrD_\G$, is the space of equivalence classes of $\calD_\G$ under \emph{rough similarity}. That is, two pseudo metrics $d_1,d_2\in \calD_\G$ represent the same metric structure if there exist $\lam,A>0$ such that 
\begin{equation}\label{eq.roughsimilarity}
    |d_1(g,h)-\lam d_2(g,h)|\leq A 
\end{equation} for all $g,h\in \G$. We let $[d]$ denote the rough-similarity class containing $d\in \calD_\G$.

A geometric action of $\G$ on a Gromov hyperbolic geodesic metric space $X$ induces a metric structure $\rho_X\in \scrD_\G$ as follows. For any base point $x\in X$ the pseudo metric $d_X^x(g,h):=d_X(gx,hx)$ belongs to $\calD_\G$, and the metric structure $\rho_X=[d_X^x]$ is independent of the point $x$. This is the case of geometric actions on negatively curved Riemannian manifolds or $\CAT(0)$ cube complexes. 

The space $\scrD_\G$ can be endowed with the metric $\Del$ defined by 
\[\Del([d_1],[d_2]):=\log{\inf \{\lam_1\lam_2 \colon \exists A>0 \text{ s.t. } \lam_1^{-1}d_1-A \leq d_2\leq \lam_2d_1+A\}}.\]
If $\rho_X,\rho_Y\in \scrD_\G$ are induced by the geometric actions of $\G$ on $X$ and $Y$, then $\Del(\rho_X,\rho_Y)$ is the infimum $\log {\lam}$ such that there exists a $\G$-equivariant $\lam$-quasi-isometry from $X$ to $Y$.

In this setting, \cref{conj:futer.wise} can be described as follows. If $\G$ is a torsion-free and acts geometrically on $\Hy^n$ where $n\leq 3$ or $\G$ is arithmetic of simplest type, then the induced metric structure $\rho_{\Hy^n}$ belongs to the closure in $(\scrD_\G,\Del)$ of the subspace
\[\scrD^{cub}_\G:=\{\rho_{\wtilde{\calX}}:\G \text{ acts geometrically on the } \CAT(0) \text{ cube complex }\wtilde{\calX}\},\]
when these actions are considered with the combinatorial metric. We say a metric structure $\rho\in\scrD_\G$ is \emph{approximable by cubulations} if it lies in the closure of $\scrD_\G^{cub}$ in $(\scrD_\G,\Del)$.


In the 2-dimensional case we have a clearer picture, since for a surface group $\G$, the space $\bbP\Curr_f(\G)$ of (projective) filling geodesic currents naturally embeds into $\scrD_\G$, see \cite[Cor.~6.10]{cantrell-reyes.manhattan} and \cite{derosa-martinezgranado}. In this case, \cref{thm.2-dimensions} generalizes to the following characterization of $\bbP\Curr_f(\G)$ in terms of cubulations.

\begin{corollary}\label{coro.fillingcurrents=cubulations}
    The space $\bbP\Curr_f(\G)\subset \scrD_\G$ equals the closure in $\scrD_\G$ of the space of metric structures represented by geometric actions of $\G$ on $\CAT(0)$ cube complexes with cyclic hyperplane stabilizers.
\end{corollary}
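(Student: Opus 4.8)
The plan is to prove the two inclusions of \cref{coro.fillingcurrents=cubulations} separately. Throughout I identify $\bbP\Curr_f(\G)$ with its image under the embedding $\bbP\Curr_f(\G)\hookrightarrow\scrD_\G$ of \cite{cantrell-reyes.manhattan,derosa-martinezgranado}, which is a homeomorphism onto its image and sends a filling current $\eta$ to the metric structure whose marked length spectrum is $g\mapsto i(\eta,\eta_{[g]})$; recall that two metric structures on a hyperbolic group agree as soon as their marked length spectra agree up to a scalar. Write $\scrD_\G^{cyc}$ for the set of metric structures $\rho_{\wtilde\calX}$ induced by geometric actions of $\G$ on $\CAT(0)$ cube complexes with cyclic hyperplane stabilizers, so the goal is $\bbP\Curr_f(\G)=\ov{\scrD_\G^{cyc}}$.

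First I would show $\scrD_\G^{cyc}\subseteq\bbP\Curr_f(\G)$. Given a geometric action of $\G$ on a $\CAT(0)$ cube complex $\wtilde\calX$ with cyclic hyperplane stabilizers, cocompactness gives finitely many hyperplane orbits; I would pick representatives $\hat h_1,\dots,\hat h_r$ and generators $g_i$ of the cyclic groups $\mathrm{Stab}_\G(\hat h_i)$. Since the action is geometric, each $\hat h_i$ is convex (hence uniformly quasiconvex) in the hyperbolic complex $\wtilde\calX$ and $\mathrm{Stab}_\G(\hat h_i)$ acts cocompactly on $\hat h_i$, so $\hat h_i$ is a quasi-line whose limit set in $\partial\G=\bbS^{1}$ is the two-point set $\Lam_{\langle g_i\rangle}=\{g_i^+,g_i^-\}$. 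I would set $\eta_{\wtilde\calX}:=\sum_{i=1}^{r}\eta_{[g_i]}\in\Curr(\G)$ and verify the dictionary $\ell_{\wtilde\calX}[g]=i(\eta_{\wtilde\calX},\eta_{[g]})$ for all $g\in\G$: the combinatorial stable translation length counts the $\langle g\rangle$-orbits of hyperplanes skewered by a combinatorial axis of $g$, a $\G$-translate of $\hat h_i$ is skewered exactly when its two-point limit set separates the endpoints $g^{\pm}$ on $\bbS^{1}$, i.e.\ when the associated geodesics link, and counting such translates modulo $\langle g\rangle$ yields $\sum_i i(\eta_{[g_i]},\eta_{[g]})$. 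It then follows that $\rho_{\wtilde\calX}$ is the image of $\eta_{\wtilde\calX}$, and $\eta_{\wtilde\calX}$ is filling because $g\mapsto\ell_{\wtilde\calX}[g]$ is bi-Lipschitz to a word metric by Milnor--Schwarz, which forces $i(\eta_{\wtilde\calX},\mu)>0$ for every nonzero current $\mu$.

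Next I would show $\bbP\Curr_f(\G)\subseteq\ov{\scrD_\G^{cyc}}$, which is essentially the $n=2$ case already treated (cf.\ \cref{thm.2-dimensions} and \cref{prop.generalsurfaces}). Given a filling current $\eta$, Bonahon's density theorem \cite{bonahon} gives $t_m\eta_{[g_m]}\xrightharpoonup{\ast}\eta$; replacing each $g_m$ by its primitive root (which only rescales $\eta_{[g_m]}$) and using that the set of filling currents is open, $\eta_{[g_m]}$ is filling for all large $m$, so the closed geodesic of $g_m$ fills the surface. Sageev's construction \cite{sageev} applied to the wall dual to the maximal cyclic group $\langle g_m\rangle$ and its $\G$-translates then produces a geometric action of $\G$ on a $\CAT(0)$ cube complex $\wtilde\calX_m$ with a single (hence cyclic) orbit of hyperplanes, properness being exactly the filling hypothesis; as in the previous step its dual current is $\eta_{[g_m]}$, so $\rho_{\wtilde\calX_m}$ is the image of $[t_m\eta_{[g_m]}]$. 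Since $t_m\eta_{[g_m]}\to\eta$ with all terms filling, continuity of the embedding --- equivalently, \cref{prop.contcogeodesic}, using that in dimension two Bonahon's intersection number is everywhere continuous --- gives $\rho_{\wtilde\calX_m}\to[\eta]$ in $(\scrD_\G,\Del)$.

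These two inclusions give $\ov{\scrD_\G^{cyc}}=\ov{\bbP\Curr_f(\G)}$, so what remains is that $\bbP\Curr_f(\G)$ is closed in $(\scrD_\G,\Del)$, and this is the step I expect to be the main obstacle (together with the skewered-hyperplane/linking-geodesic dictionary of the first step, which hinges on cocompactness of hyperplane stabilizers). For closedness I would argue as follows: if $\rho_{\eta_m}\to\rho\in\scrD_\G$ with each $\eta_m$ filling, normalize so that $i(\eta_m,\eta_{[g]})=\ell_{\rho_{\eta_m}}[g]$; the $\Del$-convergence forces $\ell_{\rho_{\eta_m}}[g]\asymp_{\lam_m}\ell_\rho[g]$ uniformly in $g$ with $\lam_m\to1$. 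Passing to a subsequence, $[\eta_m]\to[\eta_\infty]$ for some $\eta_\infty\neq0$ in the compact space $\bbP\Curr(\G)$, and continuity of the intersection number yields $i(\eta_\infty,\eta_{[\,\cdot\,]})=\kappa\,\ell_\rho[\,\cdot\,]$ for some $\kappa>0$. Since $\rho\in\scrD_\G$ gives $\ell_\rho[g]\asymp|g|$, and since $\|\eta_{[g]}\|\asymp|g|$, Bonahon's density theorem forces $i(\eta_\infty,\mu)>0$ for all $\mu\neq0$; thus $\eta_\infty$ is filling and its image is $\rho$, so $\rho\in\bbP\Curr_f(\G)$. (Alternatively, closedness of $\bbP\Curr_f(\G)$ in $\scrD_\G$ may be quoted directly from \cite{cantrell-reyes.manhattan,derosa-martinezgranado}.) This completes \cref{coro.fillingcurrents=cubulations}.
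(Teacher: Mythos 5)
Your proposal is correct and follows essentially the same route as the paper: the paper likewise reduces to showing that $\bbP\Curr_f(\G)$ is closed in $\scrD_\G$ and contains every $\rho_{\wtilde\calX}$ arising from a cyclic-hyperplane-stabilizer cubulation (invoking \cref{prop.generalsurfaces} for the reverse inclusion), constructs the dual current $\alpha_{\wtilde\calX}$ as a sum of Dirac masses on axis endpoints indexed by hyperplanes — which coincides with your $\sum_i\eta_{[g_i]}$ once you unwind the orbit/stabilizer bookkeeping — and proves closedness by passing to a projective limit $[\eta_\infty]$ and applying continuity of $i$ together with \cref{rmk.filling=proper}. The only minor caveats are that the skewered-hyperplane/linking dictionary you sketch really delegates to the argument in \cref{prop.dualcubulation} (including Haglund's combinatorial axis after passing to a power), and that the parenthetical suggestion to quote closedness directly from the cited references is not something those references actually state — the paper proves it as part of this corollary — but your inline argument for closedness is sound modulo the small details on normalizing the scalars $s_m$.
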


Similarly, if $\G$ is a surface group, then there is a natural injection of the quasiFuchsian space $\scrQ\scrF_\G$ into $\scrD_\G$ \cite{burger}, and \cref{prop.main.quasifuchsian} asserts that the image of this embedding is contained in the closure of $\scrD_\G^{cub}$. The next theorem gives a precise description of the intersection of the subspaces $\scrQ\scrF_\G$ and $\bbP\Curr_\G$ inside $\scrD_\G$.

\begin{proposition}\label{prop.QFintCurr=Teich}
    If $\G$ is a surface group, then in $\scrD_\G$ we have
    \[\scrQ\scrF_\G \cap \bbP\Curr_{f}(\G)=\scrT_\G,\]
   where $\scrT_\G$ is the Teichm\"uller space of $\G$. Moreover, we have:
    \begin{enumerate}
        \item for any $\rho\in \scrQ\scrF_\G \bs \scrT_\G$ there exists $\lam_\rho>0$ such that
    \[\Del(\rho,\rho_{[\eta]})\geq \lam_{\rho}\]
    for every $[\eta]\in \bbP\Curr_{f}(\G)$, where $\rho_{[\eta]}\in \scrD_\G$ represents $[\eta]$; and,
        \item for any $[\eta]\in \bbP\Curr_{f} (\G) \bs \scrT_\G$ there exists $\lam_{[\eta]}>0$ such that
    \[\Del(\rho,\rho_{[\eta]})\geq \lam_{[\eta]}\]
    for every $\rho\in \scrQ\scrF_\G$.
    \end{enumerate}
\end{proposition}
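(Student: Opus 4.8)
The plan is to show both inclusions for the set equality, then deduce the two quantitative separation statements from a compactness/rigidity argument. For the inclusion $\scrT_\G \subseteq \scrQ\scrF_\G \cap \bbP\Curr_f(\G)$, recall that a Fuchsian representation is in particular quasiFuchsian, so $\scrT_\G \subseteq \scrQ\scrF_\G$; and a hyperbolic metric on the surface is negatively curved, so it has a Liouville current $\al_\frakg \in \Curr_f(\G)$, giving $\scrT_\G \subseteq \bbP\Curr_f(\G)$ (both embeddings being compatible with the maps into $\scrD_\G$ by the discussion preceding the statement). The content is the reverse inclusion: suppose $\rho \in \scrQ\scrF_\G \cap \bbP\Curr_f(\G)$, represented by a quasiFuchsian representation $\pi \colon \G \to \PSL(2,\C)$ and also rough-similar to a metric coming from a filling current $[\eta]$. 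The key point is that a current-induced metric structure $\rho_{[\eta]}$ has a marked length spectrum $\ell_{[\eta]}[g] = i(\eta, \eta_{[g]})$ that satisfies a rigidity property: since $\eta$ is a geodesic current on the \emph{surface} group, its intersection function agrees up to scale with that of a point in Teichm\"uller space precisely when $\eta$ is (a multiple of) a Liouville current. Meanwhile, for a quasiFuchsian $\pi$, the translation length function $\ell_\pi[g]$ on $\Hull(\Lam_{\pi(\G)})$ is, up to rough similarity, realized by a geodesic current if and only if $\pi$ is Fuchsian — this is essentially the statement that the limit set of $\pi(\G)$ is a round circle iff $\pi$ preserves a totally geodesic plane. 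I would make this precise by invoking the characterization of Liouville currents among geodesic currents (Otal-type marked-length-spectrum rigidity on surfaces, via \cite{bonahon}) together with the fact that the Hausdorff dimension of $\Lam_{\pi(\G)}$ equals $1$ iff $\pi$ is Fuchsian (Bowen's theorem), so a non-Fuchsian $\pi$ cannot have its length function matched — even up to bounded error and scaling — by any geodesic current, because the exponential growth rate of the length spectrum would be forced to differ. Hence $\rho \in \scrT_\G$.

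For part $(1)$: fix $\rho \in \scrQ\scrF_\G \ssm \scrT_\G$ and suppose for contradiction that there is a sequence $[\eta_k] \in \bbP\Curr_f(\G)$ with $\Del(\rho, \rho_{[\eta_k]}) \to 0$. I would first argue that the $[\eta_k]$ stay in a compact region of $\bbP\Curr_f(\G)$: since $\Del(\rho,\rho_{[\eta_k]})$ is bounded, the length spectra of the $\eta_k$ are comparable (uniformly in $k$, up to scale) to that of $\rho$, which is bi-Lipschitz to a word metric; by properness of the intersection form on filling currents (Bonahon) this confines $[\eta_k]$ to a compact set, so after passing to a subsequence $[\eta_k] \to [\eta_\infty] \in \bbP\Curr_f(\G)$. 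Continuity of $\Del$ with respect to weak-$\ast$ convergence of currents (again via continuity of $i$) then gives $\Del(\rho, \rho_{[\eta_\infty]}) = 0$, i.e. $\rho = \rho_{[\eta_\infty]} \in \bbP\Curr_f(\G)$. Combined with the set equality just proved, $\rho \in \scrQ\scrF_\G \cap \bbP\Curr_f(\G) = \scrT_\G$, contradicting the choice of $\rho$. So such a $\lam_\rho > 0$ exists. Part $(2)$ is symmetric: fix $[\eta] \in \bbP\Curr_f(\G) \ssm \scrT_\G$, suppose $\rho_k \in \scrQ\scrF_\G$ with $\Del(\rho_k, \rho_{[\eta]}) \to 0$; here I would use that $\scrQ\scrF_\G$, as a subset of $\scrD_\G$, has the property that a sequence with length spectra uniformly comparable to a fixed $\rho_{[\eta]}$ subconverges within $\scrD_\G$ to a point of $\overline{\scrQ\scrF_\G}$ — and since $\scrQ\scrF_\G$ is already realized as an open subset of the closed set of convex-cocompact structures with a continuous length spectrum, and the limit has length spectrum matching $\rho_{[\eta]}$, quasiFuchsian space's closure consideration (or directly: Brooks/Sullivan stability shows $\scrQ\scrF_\G$ is open and its length functions vary continuously) forces the limit to be some $\rho_\infty \in \scrQ\scrF_\G$ with $\rho_\infty = \rho_{[\eta]}$, again landing in the intersection and hence in $\scrT_\G$, contradiction.

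The main obstacle I anticipate is the compactness input in both $(1)$ and $(2)$ — specifically, showing that a bound on $\Del$ to a \emph{fixed} structure forces subconvergence within the relevant space. For currents this is clean, since $\bbP\Curr_f(\G)$ is known to be embedded in $\scrD_\G$ with the subspace topology agreeing with the weak-$\ast$ topology on the compact set of currents of bounded length \cite{cantrell-reyes.manhattan}, and the intersection form is proper on filling currents. For quasiFuchsian space the subtlety is that $\scrQ\scrF_\G$ is \emph{not} closed in $\scrD_\G$ (degenerations to the boundary occur), so one must rule out the limit escaping to $\partial \scrQ\scrF_\G$: here the point is that a metric structure on the boundary of quasiFuchsian space is no longer bi-Lipschitz to $\rho_{[\eta]}$ — a boundary degeneration develops a quasi-isometrically embedded subgroup on which the length spectrum degenerates (a parabolic or a measured-lamination-type collapse), contradicting the uniform two-sided comparison with $\rho_{[\eta]}$ coming from $\Del(\rho_k,\rho_{[\eta]})$ bounded. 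Making this last point rigorous — quantifying how a boundary point of $\scrQ\scrF_\G$ fails to be roughly similar to any fixed interior (or filling-current) structure — is where I would spend the most care, likely citing the properness of the $\Del$-metric on appropriate slices or a double-limit-type argument in the spirit of Thurston.
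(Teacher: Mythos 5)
The central gap is in your argument for the inclusion $\scrQ\scrF_\G \cap \bbP\Curr_f(\G) \subseteq \scrT_\G$. You propose to rule out $\rho_\pi = \rho_{[\eta]}$ for non-Fuchsian $\pi$ by comparing the Hausdorff dimension of $\Lam_{\pi(\G)}$ (via Bowen) to the exponential growth rate of $\ell_\eta$, claiming that ``the exponential growth rate of the length spectrum would be forced to differ.'' This does not work: rough similarity includes scaling, so if $\ell_\pi = \lam\ell_\eta$, the critical exponents just differ by the factor $\lam$, which you are free to choose. The exponential growth rate of a filling geodesic current is not pinned to $1$; it can be any positive real. Bowen's theorem gives no obstruction here. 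You need a \emph{shape} constraint on the length function, not a one-parameter invariant. The paper's actual obstruction is a triangle-inequality defect in opposite directions: by \cite{fricker-furman}, a non-Fuchsian quasiFuchsian $\pi$ admits unlinked and aligned elements $a,b$ with $\ell_\pi[a]+\ell_\pi[b]-\ell_\pi[ab] \geq c > 0$, while for any filling current $\eta$ the BIPP pseudometric $d_\eta$ on $\Hy^2$ is additive along geodesics, which forces $\ell_\eta[ab] \geq \ell_\eta[a]+\ell_\eta[b]$ for the same $a,b$. These two sign conditions are immediately incompatible with $\ell_\pi = \lam\ell_\eta$, and in fact yield the explicit bound $\Del(\rho,\rho_{[\eta]}) \geq \log(1 + c/\ell_\pi[ab])$, which is Item~(1) with no compactness argument at all.

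Your plan for Items (1) and (2) by compactness and subsequence extraction is a genuinely different route, and would be a reasonable fallback if the set equality were established — but it rests on the set-equality step, which is exactly the part that does not go through. Note also that you spend most of your concern on the worry that $\scrQ\scrF_\G$ is not closed in $\scrD_\G$ and might degenerate at the boundary; in fact \cref{prop.homeoccpt} already shows that for a non-free $\G$ (in particular a surface group) the map $\scrC\scrC^n_\G \ra \scrD_\G$ is a homeomorphism onto a \emph{closed} subset, so there is no escape to a boundary. The paper uses precisely this to deduce Item (2) from Item (1) in one line, rather than by a double-limit argument. If you correct the set-equality step along the Fricker--Furman lines, your compactness argument becomes redundant for (1) (the bound is explicit) and for (2) reduces to invoking \cref{prop.homeoccpt}.
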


When $[\eta]$ represents a negatively curved Riemannian metric on a surface with fundamental group $\G$, Item (2) answers in the affirmative a question of Fricker and Furman \cite[Question~1.4]{fricker-furman}.

Combined with \cref{coro.fillingcurrents=cubulations}, this result says that translation lengths of points in $\scrQ\scrF_\G \bs \scrT_\G$ \emph{cannot} be represented by geodesic currents in $\Curr(\G)$ via the intersection number. In a forthcoming project \cite{martinezgranado-reyes}, the second-named author and Didac Martinez-Granado plan to construct a general setting in which quasiFuchsian representations are dual to appropriate co-geodesic currents.


\subsection{Applications}
The original motivation for \cref{conj:futer.wise} in \cite{futer-wise} was to obtain effective bounds for the density on which random quotients of cubulable hyperbolic groups are again hyperbolic and cubulable. In the case of quotients of hyperbolic groups, the random model also depends on a geometric action of $\G$ on a geodesic metric space $X$. More precisely, given such an action and a subset $E\subset \G$, the \emph{exponential growth rate} of a subset $E\subset \G$ is the quantity
\begin{equation}\label{eq.EGR}
    v_X(E):=\limsup_{k\to \infty}{\frac{\log \# \{g\in \G \colon d_X(go,o)\leq k\}}{k}},
\end{equation}
which is always finite and independent of the point $o\in X$. The quantity $v_X=v_X(\G)$ is called the \emph{exponential growth rate} of the action. This is equivalently defined as the abscissa of convergence of the Poincar\'e series determined by the action, and may also be called the \emph{critical exponent}.

In this setting, if the geometric action on $X$ is $\G$-equivariantly $\lam$-quasi-isometric to a geometric action on the $\CAT(0)$ cube complex $\wtilde\calX$, then \cite[Thm.~1.3]{futer-wise} gives bounds for the density depending on $\lam$, $v_X$ and the maximal exponential growth rate $v_X(H)$ where $H<\G$ is the stabilizer of an essential hyperplane of $\wtilde\calX$. Since our results allow us to take $\lam$ as close to 1 as we wish, by applying \cite[Thm.~1.3]{futer-wise} we deduce the following.

\begin{corollary}\label{coro.genericquotient}
    Let $\G$ act freely and geometrically on the Gromov hyperbolic space $X$ and let $k\leq e^{c\ell}$, where either:
   \begin{itemize}
        \item $X$ is a negatively curved Riemannian surface and $c<\frac{v_X}{41}$; or,
        \item $X=\Hy^3$ and $c<\frac{2}{41}$; or, 
        \item $X=\Hy^n$ with $n\geq 4$, $\G$ is arithmetic of simplest type and $c<\frac{1}{20}$. 
    \end{itemize}
Then with overwhelming probability as $\ell \to \infty$, for any set of conjugacy classes $[g_1],\dots,[g_k]$ in $\G$ with stable translation lengths $\ell_X[g_i]\leq \ell$, the quotient $\ov{\G} = \G / \genby{\genby{g_1,\dots,g_k}}$ is hyperbolic and acts geometrically on a $\CAT(0)$ cube complex.
\end{corollary}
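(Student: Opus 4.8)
The plan is to combine the three approximation results \cref{thm.2-dimensions,thm.main.arithmetic,thm.main.3manifold} with the random-quotient theorem of Futer and Wise, \cite[Thm.~1.3]{futer-wise}. That theorem takes as input a free cocompact action of $\G$ on a $\CAT(0)$ cube complex $\wtilde{\calX}$ together with a $\G$-equivariant $\lam$-quasi-isometry $\wtilde{\calX}\to X$, and produces a threshold $c_0=c_0(\lam,v_X,v_X(H))$ --- depending only on $\lam$, on $v_X$, and on the largest exponential growth rate $v_X(H)$ among the stabilizers $H$ of essential hyperplanes of $\wtilde{\calX}$ --- such that whenever $k\le e^{c\ell}$ with $c<c_0$, a random quotient of $\G$ by $k$ relators of $X$-translation length at most $\ell$ is hyperbolic and cubulable with overwhelming probability as $\ell\to\infty$. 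Our theorems produce such cubulations with $\lam$ arbitrarily close to $1$ and, by \cref{rmk.oneorbithyperplane}, with a single orbit of hyperplanes, so that "$H$" is unambiguous. Thus the argument reduces to (a) identifying the hyperplane stabilizers of the cubulations used and bounding $v_X(H)$, and (b) checking that $c_0(\lam,v_X,v_X(H))$ tends to the stated constant as $\lam\downarrow 1$ --- and, in the $\Hy^3$ case, also as $v_X(H)\downarrow 1$. Matching the "$k\le e^{c\ell}$" formulation to \cite[Thm.~1.3]{futer-wise} uses only that the number of conjugacy classes $[g]$ of $\G$ with $\ell_X[g]\le\ell$ grows like $e^{v_X\ell(1+o(1))}$, so that the exponent $c$ corresponds to density $c/v_X$.

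I would then treat the three cases. When $X$ is a negatively curved Riemannian surface, the cubulation produced in the proof of \cref{thm.2-dimensions} is dual to a filling rational current $\eta_{[g]}$, so its hyperplane stabilizers are virtually cyclic and $v_X(H)=0$; letting $\lam\downarrow 1$, the Futer-Wise threshold is governed by its term proportional to $v_X$, giving $c<v_X/41$. When $X=\Hy^n$ with $n\ge 4$ and $\G$ is arithmetic of simplest type, the cubulation of \cref{thm.main.arithmetic} is that of \cite{BHW}, whose hyperplane stabilizer is commensurable to the fundamental group of a closed totally geodesic hypersurface and therefore acts cocompactly on a totally geodesic copy of $\Hy^{n-1}\subset\Hy^n$; hence $v_X(H)=n-2$ while $v_X=n-1$, and in the limit the threshold is governed by its term proportional to $v_X-v_X(H)=1$, giving $c<1/20$ uniformly in $n$. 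When $X=\Hy^3$, the cubulation of \cref{thm.main.3manifold} is dual to a rational co-geodesic current approximating the Haar co-geodesic current, built from the asymptotically Fuchsian minimal surfaces of \cite{labourie,lowe-neves}; its hyperplane stabilizer $H$ is commensurable to such a surface subgroup, which is convex-cocompact in $\Hy^3$, so $v_X(H)=\dim_H\Lambda_H$. Along the approximating sequence the minimal surfaces have second fundamental forms tending to $0$, forcing $\Lambda_H$ to be a quasicircle whose quasiconformality constant tends to $1$; standard estimates for quasicircles then give $v_X(H)\to 1$ (in fact $v_X(H)\le 42/41$ along the sequence is all that is needed, since then the $(v_X-v_X(H))/20$ term stays at least $2/41$). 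As $2/41<1/20$, the threshold is again governed by its term proportional to $v_X=2$, giving $c<2/41$.

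Granting these three limits the corollary follows: for $c$ strictly below the stated constant, choose an approximating cubulation $\wtilde{\calX}$ --- with one hyperplane orbit and close enough to $X$ that $c_0(\lam,v_X,v_X(H))>c$ --- and apply \cite[Thm.~1.3]{futer-wise} to it; the free cocompact action on $\wtilde{\calX}$ (freeness coming from torsion-freeness of $\G$) and the equivariant quasi-isometry to $X$ are the remaining hypotheses.

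The main obstacle is step (b): confirming from the precise statement of \cite[Thm.~1.3]{futer-wise} that $c_0(\lam,v_X,v_X(H))$ is governed by a $v_X/41$ term in the surface and $\Hy^3$ cases and by a $(v_X-v_X(H))/20$ term in the arithmetic case, so that the limits are exactly $v_X/41$, $2/41$ and $1/20$. This is bookkeeping, but it must be done carefully and it is the only place the precise constants enter. A second, genuinely geometric, point --- relevant only in the $\Hy^3$ case --- is ensuring that the hyperplane subgroups of the cubulations in \cref{thm.main.3manifold} have limit sets of Hausdorff dimension approaching $1$; this is where the asymptotically Fuchsian nature of the Kahn-Markovic/Labourie/Lowe-Neves surfaces is essential, since the second fundamental forms of those minimal surfaces tend to $0$, making their limit sets quasicircles of quasiconformality constant tending to $1$ and hence of Hausdorff dimension tending to $1$. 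If no off-the-shelf reference covers this last implication, it needs a short lemma. Everything else --- base-point independence of $v_X$ and $v_X(H)$, the exponential count of conjugacy classes of bounded translation length, and the passage between density and the exponent $c$ --- is routine.
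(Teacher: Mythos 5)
Your proposal is correct and is essentially the proof the paper gives: apply \cite[Thm.~1.3]{futer-wise} to the approximating cubulations from Theorems \ref{thm.2-dimensions}, \ref{thm.main.arithmetic}, \ref{thm.main.3manifold}, identify the hyperplane stabilizers in each case (cyclic for surfaces, totally geodesic $\Hy^{n-1}$-stabilizers in the arithmetic case, $K_m$-quasiFuchsian surface groups with $K_m\to 1$ for $\Hy^3$), bound $v_X(H)$ accordingly ($0$, $n-2$, and $\to 1$ respectively), and let $\lam\to 1$. The one point you flagged as ``bookkeeping'' is exactly what the paper spells out — the Futer--Wise threshold is $\min\{(v_X-v_X(H))/(20\lam),\,v_X/(40\lam+1)\}$ — and your computed limits ($v_X/41$, $2/41$, $1/20$) are correct; your observation that $v_X(H)\le 42/41$ suffices in the $\Hy^3$ case is a small refinement the paper does not need but which is also valid.
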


Our work also has consequences for the growth of quasiconvex subgroups of lattices. Following \cite[Def.~1.1]{li-wise} if $\calP$ is a property, we say that $\G$ has a $\calP$ \emph{growth-gap} relative to the action on $X$ if 
\begin{equation*}
    \sup\{v_X(H) \colon H<\G, |\G:H|=\infty, H \text{ satisfies }\calP\}<v_X.
\end{equation*}
In \cite[Main~ Theorem~1.5]{li-wise} Li and Wise proved that if $\calX$ is a compact special cube complex, then $\G=\pi_1(\calX)$ does not have a quasiconvex growth-gap for its action on $\wtilde{\calX}^{(1)}$. Combining this with our results and Agol's theorem \cite{agol.haken}, we obtain the following result, which partially answers \cite[Problem~9.4~(1)]{li-wise}.
\begin{corollary}\label{cor.nogrowthgap}
    Let $\G$ act geometrically and freely on the Gromov hyperbolic space $X$, so that either: 
\begin{itemize}
\item $\G$ is a surface group and $\rho_X \in \bbP\Curr_f(\G)\cup \scrQ\scrF_\G$; or,
\item $X=\Hy^3$; or,
\item $X=\Hy^n$ with $n\geq 3$ and $\G$ is of simplest type.
\end{itemize}
Then this action has no growth-gap: there exists a sequence $(H_m)_m$ of infinite index quasiconvex subgroups of $\G$ such that
    \begin{equation*}
        \lim_{m \to \infty}{v_{X}(H_m)}=v_X.
    \end{equation*}
\end{corollary}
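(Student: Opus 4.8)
The plan is to combine the approximation theorems proved above with Agol's virtual specialness theorem \cite{agol.haken} and the absence of a quasiconvex growth-gap for special cube complexes, due to Li and Wise \cite[Main Theorem~1.5]{li-wise}. The first point is that in each of the three cases of the hypothesis the metric structure $\rho_X$ is approximable by cubulations: for a surface group with $\rho_X\in \bbP\Curr_f(\G)$ this follows from \cref{coro.fillingcurrents=cubulations}, for $\rho_X\in \scrQ\scrF_\G$ it follows from \cref{prop.main.quasifuchsian}, for $X=\Hy^3$ it is \cref{thm.main.3manifold}, and for $X=\Hy^n$ with $\G$ of simplest type it is \cref{thm.main.arithmetic}. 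In every case $\G$ is torsion-free (for a surface group this is automatic, and a free geometric action on $\Hy^n$ forces torsion-freeness since finite subgroups fix a point), and, being approximable by cubulations, $\G$ acts geometrically on a $\CAT(0)$ cube complex, so by Agol's theorem $\G$ is virtually special. It therefore suffices to prove the statement under the sole assumptions that $\G$ is torsion-free and that $\rho_X$ is approximable by cubulations.

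Fix $\ep>0$. First I would invoke the approximability of $\rho_X$ to obtain a geometric action of $\G$ on a $\CAT(0)$ cube complex $\wtilde{\calX}$ and a $\G$-equivariant $\lam$-quasi-isometry $f\colon\wtilde{\calX}\to X$ with $\log\lam<\ep$. Since $\G$ is torsion-free and acts properly on the $\CAT(0)$ space $\wtilde{\calX}$, this action is free, so $\calX:=\G\bs\wtilde{\calX}$ is a compact non-positively curved cube complex with $\pi_1(\calX)\cong\G$ and universal cover $\wtilde{\calX}$. By Agol's theorem there is a finite-index subgroup $\G_0<\G$ for which $\calX_0:=\G_0\bs\wtilde{\calX}$ is a compact special cube complex. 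Applying \cite[Main Theorem~1.5]{li-wise} to $\calX_0$, there is a sequence $(H_m)_m$ of infinite-index subgroups of $\G_0$, each quasiconvex for the action on $\wtilde{\calX}^{(1)}$, with $v_{\wtilde{\calX}}(H_m)\to v_{\wtilde{\calX}}(\G_0)$.

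Next I would transfer this, first from $\G_0$ to $\G$ and then from $\wtilde{\calX}$ to $X$. Since $\G_0$ has finite index in $\G$, one has $v_{\wtilde{\calX}}(\G_0)=v_{\wtilde{\calX}}(\G)$ and $v_X(\G_0)=v_X(\G)=v_X$; each $H_m$ has infinite index in $\G$, and being quasiconvex in $\G_0$ it is quasiconvex in $\G$, hence quasiconvex for the action on $X$ by quasi-isometry invariance of quasiconvexity for hyperbolic groups. Comparing the orbit-counting functions of \eqref{eq.EGR} through the $\G$-equivariant $\lam$-quasi-isometry $f$ gives $\lam^{-1}v_{\wtilde{\calX}}(H)\le v_X(H)\le\lam v_{\wtilde{\calX}}(H)$ for every subgroup $H<\G$. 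Hence
\[
\liminf_{m\to\infty}v_X(H_m)\;\ge\;\lam^{-1}\lim_{m\to\infty}v_{\wtilde{\calX}}(H_m)\;=\;\lam^{-1}v_{\wtilde{\calX}}(\G)\;\ge\;\lam^{-2}v_X\;\ge\;e^{-2\ep}v_X,
\]
while $v_X(H_m)\le v_X$ for all $m$. Thus for every $\ep>0$ one produces a sequence of infinite-index quasiconvex subgroups of $\G$ whose $X$-growth rates eventually exceed $e^{-2\ep}v_X$.

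Finally, running the previous steps with $\ep=1/j$ for each $j\in\bbN$, I would obtain sequences $(H_m^{(j)})_m$, pick $m(j)$ with $v_X(H_{m(j)}^{(j)})\ge e^{-2/j}v_X-1/j$, and take the diagonal sequence $(H_{m(j)}^{(j)})_j$: its members are infinite-index quasiconvex subgroups of $\G$ and $v_X(H_{m(j)}^{(j)})\to v_X$, which is the claim. The substantive input is entirely the approximation theorems (which supply cube complexes $\lam$-quasi-isometric to $X$ with $\lam\to1$); granted these, the only thing requiring care is the compatibility of the two reductions — checking that quasiconvexity, infinite index, and the growth rate up to the factor $\lam^{\pm1}$ are all preserved under passing to the special finite-index subgroup $\G_0$ and under replacing $X$ by $\wtilde{\calX}$.
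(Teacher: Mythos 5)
Your proof takes the same route as the paper's: approximate $\rho_X$ by cubulations, invoke Agol's virtual specialness theorem together with the Li--Wise no-growth-gap theorem to produce quasiconvex infinite-index subgroups with growth rate close to the ambient growth rate in the cube complex, and transfer these subgroups and their growth rates back to $X$. You also helpfully spell out the Agol step --- descending to a special finite-index subgroup $\G_0$, running Li--Wise there, and checking that quasiconvexity and infinite index persist upon returning to $\G$ --- which the paper's one-line appeal to Agol and Li--Wise leaves implicit.

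The one slip is the inequality $\lam^{-1}v_{\wtilde\calX}(H)\le v_X(H)\le\lam v_{\wtilde\calX}(H)$. A $\lam$-quasi-isometry in the sense of \eqref{eq.lam-qi} only constrains the product $\lam_1\lam_2=\lam$, not each factor; from $\lam_1^{-1}d_{\wtilde\calX}-A\le d_X\le\lam_2 d_{\wtilde\calX}+A$ one gets $\lam_1^{-1}v_{\wtilde\calX}(H)\le v_X(H)\le\lam_2 v_{\wtilde\calX}(H)$, and $\lam_1,\lam_2$ need not individually be bounded by $\lam$. For instance $d_X=2d_{\wtilde\calX}$ is a $1$-quasi-isometry (take $\lam_1=1/2$, $\lam_2=2$) yet halves all growth rates, so your asserted two-sided bound fails with $\lam=1$. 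The correct scale-invariant comparison, and the one the paper records, is between \emph{normalized} ratios: $e^{-\Del(\rho_m,\rho)}\frac{v_X(H)}{v_X}\le\frac{v_{\wtilde\calX_m}(H)}{v_{\wtilde\calX_m}}\le e^{\Del(\rho_m,\rho)}\frac{v_X(H)}{v_X}$. Your chain of estimates is easily repaired by replacing the two occurrences of $\lam^{-1}$ with $\lam_1^{-1}$ and $\lam_2^{-1}$ respectively, which in fact gives the improved bound $\liminf_m v_X(H_m)\ge\lam^{-1}v_X$; the conclusion of the corollary is unaffected.
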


In the 3-dimensional case, \cref{cor.nogrowthgap} also follows from the work of Rao \cite[Thm.~1.2]{rao}. In the arithmetic case, \cref{cor.nogrowthgap} recovers a result of McMullen \cite[Sec.~6]{magee}.

This corollary contrasts with the case of non-elementary hyperbolic groups with property (T), which have growth-gap with respect to any geometric action on a $\CAT(-1)$ space \cite{CDS}.


\subsection*{Organization}
In \cref{sec.preliminaries} we discuss the main facts about hyperbolic groups, geodesic currents, $\CAT(0)$ cube complexes, and metric structures that we will use throughout our work. In \cref{sec.co-geodesic} we study co-geodesic currents for hyperbolic groups with sphere boundary, as well as the associated intersection number. The main results are \cref{prop.contcogeodesic}, which (under the appropriate assumptions) promotes weak-$\ast$ convergence of co-geodesic currents to convergence of metric structures, and \cref{prop.dualcubulation} which shows that discrete co-geodesic currents are dual to cubulations. These results are applied to surface groups in \cref{sec.2-dim}, from which we deduce \cref{thm.2-dimensions}. There, we show that singular non-positively curved Riemannian metrics on surfaces and positively ratioed representations can be approximated by cubulations. \cref{sec.currentsQC} discusses co-geodesic currents supported on quasiconformal codimension-1 hyperspheres for uniform lattices in $\Hy^n$. We prove \cref{prop.criterioncontinuous}, a criterion for continuity of the intersection number, and describe the Haar current, which is dual to the lattice action. \cref{sec.arithmetic} studies the arithmetic case of \cref{conj:futer.wise}, in which we prove \cref{thm.main.arithmetic} based on classical results from homogeneous dynamics.
In \cref{sec.3-dim}, we specialize to 3-dimensions and prove \cref{thm.main.3manifold}. This is a consequence of \cref{thm.3dimcurrtocurr}, which upgrades convergence of measures on the 2-plane Grassmannian bundle of a closed hyperbolic 3-manifold to convergence of co-geodesic currents. The key step to prove this theorem is \cref{prop.convergenceintegral}, which relies on an analysis of the geometry of minimal disks in $\Hy^3$ with quasicircles as limit sets. The approximation of quasiFuchsian representations by cubulations is discussed in \cref{sec.quasifuchsian}, in which we prove \cref{prop.main.quasifuchsian}. For that, we use \cref{thm.main.3manifold}, a theorem of Brooks \cite{brooks}, and \cref{prop.homeoccpt}, which relates different topologies on the space of convex-cocompact representations. In this section, we also prove \cref{prop.QFintCurr=Teich}. Corollaries \ref{coro.genericquotient} and \ref{cor.nogrowthgap} are proven in \cref{sec.applications}, and we conclude the paper with some further questions in \cref{sec.questions}.


\subsection*{Acknowledgements}
Both authors are grateful to Sami Douba, Sam Fisher, Ursula Hamenst\"adt, Didac Martinez-Granado, and Andrew Yarmola for interesting conversations. We also thank David Futer and Franco Vargas-Pallete for their helpful comments on an early draft. We are indebted to Fernando Al Assal for sharing his thesis before the preprint \cite{alassal} was publicly available. The second author would like to thank the Max Planck Institut f\"ur Mathematik for its hospitality and financial support.


\section{Preliminaries}\label{sec.preliminaries}

\subsection{Hyperbolic Groups}

For references on hyperbolic groups, see \cite{ghys-delaharpe,bridson-haefliger,CDP,drutu-kapovich}. A pseudo metric space $(X,d)$ is $\del$-\emph{hyperbolic} $(\del\geq 0)$ if every four-tuple $x,y,z,w\in X$ satisfies the inequality
\[(x|z)_{w,d}\geq \min\{(x|y)_{w,d},(y|z)_{w,d}\}-\del,\]
where $2(x|y)_{w,d}:=d(x,w)+d(w,y)-d(x,y)$ denotes the \emph{Gromov product}. The space $(X,d)$ is \emph{(Gromov) hyperbolic} if it is $\del$-hyperbolic for some $\del$.

A finitely generated group $\G$ is \emph{hyperbolic} if it admits a proper and cocompact action by isometries (i.e. a \emph{geometric} action) on a Gromov hyperbolic metric space. Examples of hyperbolic groups include finitely generated free groups and fundamental groups of closed negatively curved manifolds. A hyperbolic group is \emph{elementary} if it is virtually cyclic. In the sequel, unless otherwise stated, $\G$ always denotes a non-elementary hyperbolic group with identity element $o$. In this case, $\G$ has a well-defined \emph{Gromov boundary}, denoted by $\partial \G$. This is a compact, uncountable metrizable space. Furthermore, there is a natural topology on $\G \cup \partial \G$ that induces the topology on $\partial \G$, which allows us to discuss convergence in $\G\cup\partial \G$. The left action of $\G$ on itself induces a natural topological action on $\partial \G$.

The \emph{limit set} of a subset $A\subset \G$ is the set $\Lam(A)$ of all points in $\partial \G$ which are limits of sequences in $A$. The set $\Lam(A)$ is always closed, and it is non-empty so long as $A$ is infinite. If $g\in \G$ is a non-torsion element, then the
limit set of $\langle g\rangle$ consists of two points $g^\infty,g^{-\infty}$, characterized by the identities $g^{\pm \infty}=\lim_{m\to \pm \infty}{g^m}$. 


\subsection{Geodesic currents}

Let $\calG\subset  \partial \G \times \partial \G$ denote the space of ordered pairs of distinct points in $\partial \G$, equipped with the subspace topology. Note that $\G$ acts on $\calG$ via $g\cdot (x,y)=(gx,gy)$, and that this action cocompact. Let $\tau: \calG \to \calG$ denote the involution $\tau(x,y)=(y,x)$, which is fixed-point free since we have removed the diagonal. 

\begin{definition}
    A \emph{geodesic current} on $\G$ is a $\G$-invariant, locally finite Borel measure on $\calG$ that is also $\tau$-invariant. Let $\Curr(\G)$ be the space of all the geodesic currents on $\G$, equipped with the weak-$\ast$ topology.
\end{definition}

\begin{remark}\label{rmk.choiceordered}
    Equivalently, we could have defined a geodesic current as a $\G$-invariant locally finite measure on the set $\partial^2\G$ of \emph{unordered} pairs of distinct points in $\partial \G$.
    However, it will be more convenient to work with ordered pairs (particularly in \cref{sec.currentsQC}), so we have decided to work with measures on $\calG$ instead. 
\end{remark}

For any $g\in \G$ we associate a geodesic current $\eta_{[g]}\in \Curr(\G)$ as follows. Suppose first that $g$ is \emph{primitive}, in the sense that if $g=h^m$ for some $h\in \G$ and $m\in \Z$ then $m=\pm 1$. In particular $g$ is non-torsion, so we let $\calS_{[g]}$ be the set of all the pairs in $\calG$ of the form $(hg^{\pm \infty},hg^{\mp \infty})$ for some $h\in \G$. Then $\calS_{[g]}$ is a discrete $\G$-invariant subset of $\calG$ and the expression
    \[\eta_{[g]}:=\sum_{\gam\in \calS_{[g]}}{\del_{\gam}}\]
   defines a locally finite measure, where $\del_x$ denotes the Dirac measure at the point $x$. If $g$ is non-torsion, then we have $g=h^m$ for some primitive element $h$ and $m\in \Z$, and we define $\eta_{[g]}=|m|\eta_{[h]}$. For completeness, we define $\eta_{[g]}$ as the zero current when $g$ is torsion. Any such current $\eta_{[g]}$ will be called a \emph{rational} current.


   Bonahon \cite{bonahon} proved  that the set $\{\lam \eta_{[g]} : \lam>0, g\in \G\}$ is dense in $\Curr(\G)$. He also proved that the set of projective geodesic currents $\bbP\Curr(\G):=(\Curr(\G)-\{0\})/\R_+$ is compact and metrizable for the quotient topology.

\begin{remark}\label{remark.primitive}
    If $\G$ is torsion-free, a non-trivial element $g\in \G$ is primitive if and only if the subgroup generated by $g$ equals the set-wise stabilizer of $\{g^\infty,g^{-\infty}\}$ in $\G$. Indeed, if the latter subgroup is $A$, then $A$ is virtually cyclic, and hence cyclic and generated by an element $a\in \G$. Then $g=a^k$ for some $k$, and if $g$ is primitive then $A=\genby{g}$. Conversely, if $A=\genby{g}$ and $g=b^r$ for some $b\in \G$, then $b$ is non-torsion and fixes $\{g^{\infty},g^{-\infty}\}$, so that $b\in A$. Then $b=g^s$ for some $s$ and $r= \pm 1$. 
\end{remark}


\subsection{CAT(0) cube complexes and Sageev's construction}\label{subsec.CAT(0)cc}
In this subsection we describe Sageev's construction to produce cubical actions on $\CAT(0)$ cube complexes,  following \cite{bridson-haefliger,hruska-wise}. 

A \emph{cube complex} is a metric polyhedral complex in which all polyhedra are unit-length
Euclidean cubes. Such a complex is \emph{non-positively curved (NPC)} if its universal cover
is a $\CAT(0)$ metric space when endowed with the induced length distance.

By Gromov's criterion \cite[Thm.~II.5.2]{bridson-haefliger}, a cube complex $\calX$ is NPC if and only if the link of each vertex is a \emph{flag complex}.
Recall that a flag simplicial complex is a complex determined by its 1–skeleton: for every complete subgraph of the 1–skeleton, there is a simplex with 1–skeleton equal to that subgraph. A \emph{$\CAT(0)$ cube complex} is a simply connected NPC cube complex.

The crucial aspect of the geometry of a $\CAT(0)$ cube complex is the notion of a hyperplane. Each $n$-cube $C=[0,1]^n$ has $n$ \emph{midcubes}, obtained by setting one coordinate to $1/2$. The face of a midcube of $C$ is the midcube of a face of $C$, and so the set of midcubes of a cube complex $\calX$ has a cube complex structure, called the \emph{hyperplane complex}. A \emph{hyperplane} $\frakh$ of $\calX$ is a connected component of the hyperplane complex. If $\wtilde\calX$ is a $\CAT(0)$ cube complex, any hyperplane can be considered as a convex, 2-sided subspace of $\wtilde\calX$, which is a subcomplex of the first cubical barycentric subdivision of $\wtilde\calX$. A hyperplane $\frakh$ \emph{separates} two vertices of $\wtilde\calX$ of they lie on different components of $\wtilde\calX \bs \frakh$. An edge $e$ of $\wtilde\calX$ is \emph{dual} to  $\frakh$ if the vertices determined by $e$ are separated by $\frakh$. 
The \emph{carrier} $N(\frakh)$ is the set of all vertices in an edge dual to $\frakh$. The convex subcomplex of $\wtilde\calX$ spanned by a carrier is convex.

The \emph{combinatorial distance} on the $\CAT(0)$ cube complex $\wtilde\calX$ is the graph metric $d_{\wtilde\calX}$ on the 1-skeleton $\wtilde\calX^{(1)}$ so that each edge has length one. Often we will also consider the restriction of this metric to the 0-skeleton $\wtilde\calX^{(0)}$. In this case the distance $d_{\wtilde\calX}(v,w)$ of two vertices $v,w$ equals the number of hyperplanes separating them. If $\G$ is a group acting by cubical automorphisms on $\wtilde\calX$, then its action on $(\wtilde\calX^{(0)},d_{\wtilde\calX})$ is by isometries.

To obtain actions on $\CAT(0)$ cube complexes, we recall the notion of wallspace structure. A \emph{wall} of the non-empty set $X$ is an unordered pair $\calW=\{U,U^\ast\}$ of non-empty subsets of $X$ such that $U \cup U^\ast=X$ and $U,U^\ast \neq X$. The \emph{closed halfspaces} determined by $\calW$ are $U$ and $U^\ast$, and the \emph{open halfspaces} are $U\bs U^\ast$ and $U^\ast \bs U$. This wall \emph{separates} two points $x,y\in X$ if they lie in distinct open halfspaces determined by $\calW$.   

A \emph{wallspace structure} on $X$ is a set $\bbW$ of walls of $X$ satisfying:
\begin{itemize}
    \item for any $x\in X$ there exist only finitely many $\calW=\{U,U^\ast\}\in \bbW$ such that $x\in U\cap U^\ast$; and,
    \item for any $x,y\in X$ the number of walls in $\bbW$ separating them is finite.
\end{itemize}

\begin{remark}
    The definition of a wallspace structure presented here differs from that of \cite[Sec.~2.1]{hruska-wise}. For us, a wallspace is a set of walls rather than a collection, meaning we do not allow for duplicates of walls. In addition, closed halfspaces of walls are by definition non-empty. However, we do allow walls $\calW=\{U,U^\ast\}$ such that $U\cap U^\ast$ is non-empty. 
\end{remark}

If a group $\G$ acts on $X$, then the wallspace structure $\bbW$ on $X$ is $\G$-\emph{invariant} if for any $\calW=\{U,U^\ast\}\in \bbW$ and $g\in \G$, the wall $g\calW:=\{gU,gU^\ast\}$ also belongs to $\bbW$. 

As in \cite[Sec.~3]{hruska-wise}, a wallspace structure $\bbW$ on the space $X$ determines a $\CAT(0)$ cube complex $\wtilde\calX$ as follows.
An \emph{orientation} $v$ of $\bbW$ is a choice $v(\calW)\in \calW$ of a closed halfspace for each $\calW\in \bbW$. This definition is meaningful since $\# \calW=2$ for any $\calW$.  

Each vertex of the $\CAT(0)$ cube complex $\wtilde\calX$ will be an orientation of $\bbW$ satisfying some additional conditions. Two vertices $v,w$ of $\wtilde\calX$ are adjacent if and only if there exists a unique wall $\calW$ in $\bbW$ such that $\calW=\{v(\calW),w(\calW)\}$. In this case, we say that the hyperplane dual to the edge determined by $v,w$ \emph{corresponds} to $\calW$. This correspondence gives us a bijection between $\bbW$ and the set of hyperplanes of $\wtilde{\calX}$. 

Each point $x$ of $X$ determines a \emph{canonical cube} \cite[Def.~3.6]{hruska-wise}, which is the set of vertices $v\in \wtilde{\calX}$ such that $x\in v(\calW)$ for all $\calW\in \bbW$. The dimension of this cube equals the number of walls $\calW=\{U,U^\ast\}$ such that $x\in U\cap U^\ast$, which is finite. 

If the wallspace structure $\bbW$ is invariant under a group $\G$ acting on $X$, then $\G$ acts naturally on $\wtilde\calX$ by cubical automorphisms. In this case, the group $\G$ also acts on the set of hyperplanes of $\wtilde\calX$. The \emph{stabilizer} of the hyperplane $\frakh$ is then the set $\G_{\frakh}$ of all $g\in \G$ such that $g\frakh=\frakh$. It is evident that if $\frakh$ corresponds to the wall $\calW$, then $\G_\frakh$ is precisely the set of group elements fixing $\calW$ for the action of $\G$ on $\bbW$.

\subsection{Metric structures and the Lipschitz distance}

Let $\G$ be a hyperbolic group. Given a left-invariant pseudo metric $d$ on $\G$, we let $\ell_d:\G \ra \R$ be the translation length function of the left action of $\G$ on $(\G,d)$. Recall that $\calD_\G$ is the space of left-invariant pseudo metrics on $\G$ that are Gromov hyperbolic and quasi-isometric to a word metric for a finite symmetric generating subset of $\G$. This space is contained in the space $\ov\calD_\G$ of left-invariant pseudo metrics $d$ on $\G$ satisfying:
\begin{itemize}
    \item $\ell_d$ is not identically zero; and,
    \item for some (hence any) $d'\in \calD_\G$, there exists $\lam>0$ such that
    \[(g|h)_{o,d}\leq \lam (g|h)_{o,d'}+\lam\]
    for all $g,h\in \G$.
\end{itemize}
As with the case of $\calD_\G$, we let $\ov\scrD_\G$ be the quotient of $\ov\calD_\G$ by rough similarity (recall \eqref{eq.roughsimilarity}).
Points in $\ov{\scrD}_\G$ are called \emph{metric structures}, and $[d]$ denotes the metric structure induced by $d\in \ov\calD_\G$.  

The extension from $\scrD_\G$ to $\overline{\scrD}_\G$ allows us to encode some cocompact isometric actions of $\G$ that are not necessarily proper. For example, if $\G$ acts cubically and cocompactly on the $\CAT(0)$ cube complex $\wtilde\calX$ and the hyperplane stabilizers are quasiconvex of infinite index, then the isometric action of $\G$ on $(\wtilde\calX^{(1)},d_{\wtilde\calX})$ determines a metric structure $\rho_{\wtilde\calX}\in \ov\scrD_\G$ \cite[Prop.~6.14]{cantrell-reyes.manhattan}. Moreover, this structure belongs to $\scrD_\G$ if and only if the action of $\G$ on $\wtilde\calX$ is proper. For more details about the space $\ov\scrD_\G$, see \cite{cantrell-reyes.manhattan}.

Suppose $\G$ acts isometrically on a metric space $X$. If there is some $x\in X$ so that the function $d_X^x(g,h):=d_X(gx,hx)$ determines a pseudo metric belonging to $\ov\calD_\G$, we will write $\rho_X\in\ov\scrD_\G$ to denote this metric structure.


\section{Co-geodesic structures}\label{sec.co-geodesic}

Throughout this section we assume $\G$ is a torsion-free hyperbolic group with boundary homeomorphic to the $(n-1)$-sphere $S^{n-1}$ ($n\geq 2$). We let $\calC(\partial \G)$ be the space of non-empty compact subsets of $\partial \G$ equipped with the Hausdorff topology, and $\calC_{\geq 2}(\partial \G)\subset \calC(\partial \G)$ be the set of compact subsets of cardinality at least 2, equipped with the subspace topology. 
The natural action of $\G$ on $\partial \G$ induces topological actions on $\calC(\partial \G)$ and $\calC_{\geq 2}(\partial \G)$. All subsets of $\partial \G$ are considered with the subspace topology.

\subsection{Systems of spheres and $\calS$-currents}

The space of geodesic currents on a surface group is defined in terms of limit sets of geodesics on the surface. We begin this section by introducing an analogous space for more general hyperbolic groups with a sphere boundary.

\begin{definition}\label{def.systemspheres}
    A \emph{system of (hyper)spheres at infinity} for $\G$ is a non-empty set $\calS \subset \calC(\partial \G)$ of compact subsets of $\partial \G$ satisfying the following.
    \begin{itemize}
        \item[i)] Each $S\in \calS$ is homeomorphic to the $(n-2)$-dimensional sphere $S^{n-2}$.
        \item[ii)] $\calS$ is $\G$-invariant under the action of $\G$ on $\calC(\partial \G)$.
        \item[iii)] $\calS$ is a closed subset of $\calC_{\geq 2}(\partial \G)$. 
    \end{itemize} 
    Given such a system of spheres $\calS$, an \emph{$\calS$-current} is a $\G$-invariant Radon measure on $\calS$. We let $\calS\Curr(\G)$ denote the space of $\calS$-currents, endowed with the weak-$\ast$ topology. 
\end{definition}

We now make some observations about the above definition. First, we note that a system $\calS$ of spheres at infinity is locally compact, separable, and metrizable. This follows since $\calC_{\geq 2}(\partial \G)$ is locally compact, separable, and metrizable \cite[Sec.~2.1]{sasaki}. In addition, the action of $\G$ on $\calC_{\geq 2}(\partial \G)$ is cocompact \cite[Lem.~2.5]{sasaki}, and hence the action of $\G$ on $\calS$ is also cocompact. Therefore, by \cite[Thm.~2.23]{sasaki} we deduce that the space $\calS\Curr(\G)$ is locally compact, separable, and completely metrizable. We summarize these properties in the following lemma.

\begin{lemma}\label{lem.Scurrentscompact}
    Let $\calS$ be a system of spheres at infinity for $\G$. Then the following properties hold.
    \begin{itemize}
        \item $\calS$ is locally compact, separable and metrizable. 
        \item The action of $\G$ on $\calS$ is cocompact.
        \item The space of $\calS$-currents $\calS\Curr(\G)$ is locally compact and metrizable. 
        \item The space of projective $\calS$-currents $\bbP\calS\Curr(\G):=(\calS\Curr(\G)-\{0\})/\R_+$ is compact metrizable when endowed with the quotient topology.
    \end{itemize}
\end{lemma}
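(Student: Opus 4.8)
The plan is to deduce all four properties from standard facts about the hyperspace $\calC_{\geq 2}(\partial \G)$ together with a general structure theorem for spaces of invariant Radon measures, the point being that $\calS$ is, by axiom iii) of \cref{def.systemspheres}, a closed $\G$-invariant subset of $\calC_{\geq 2}(\partial \G)$ on which $\G$ acts cocompactly. First I would record the ambient facts: $\calC_{\geq 2}(\partial \G)$ is locally compact, separable and metrizable, and the $\G$-action on it is cocompact (available from the literature on hyperspaces of Gromov boundaries, e.g. \cite{sasaki}). Since $\calS$ is closed in $\calC_{\geq 2}(\partial \G)$, it inherits local compactness, separability and metrizability, which is the first bullet. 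For the second bullet, a closed $\G$-invariant subset of a cocompact $\G$-space is itself cocompact: the quotient $\G\bs\calS$ is a closed subspace of the compact space $\G\bs\calC_{\geq 2}(\partial \G)$, hence compact.

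For the third and fourth bullets I would invoke the general principle — exactly the content of the cited structure theorem \cite[Thm.~2.23]{sasaki}, which abstracts Bonahon's original arguments — that if $Y$ is a locally compact, separable, metrizable space carrying a cocompact action of a countable group $\G$, then the space of $\G$-invariant Radon measures on $Y$ with the weak-$\ast$ topology is locally compact, separable and completely metrizable, and its projectivization is compact and metrizable. Applying this with $Y=\calS$ (legitimate by the two bullets just established) yields at once that $\calS\Curr(\G)$ is locally compact and metrizable and that $\bbP\calS\Curr(\G)$ is compact and metrizable.

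As a sanity check I would also keep the direct argument for compactness of $\bbP\calS\Curr(\G)$ in reserve. Choose a precompact open set $U\subset\calS$ with $\G\cdot U=\calS$, possible by cocompactness. Any nonzero $\calS$-current $\mu$ satisfies $\mu(U)>0$, since otherwise $\G$-invariance forces $\mu(gU)=0$ for all $g$ and hence $\mu\equiv 0$; so every projective class has a representative with $\mu(\ov{U})=1$. The set of $\calS$-currents with $\mu(\ov{U})\leq 1$ has uniformly bounded mass on every compact subset of $\calS$, because such a subset is covered by finitely many translates $gU$; hence by Banach--Alaoglu/Prokhorov-type compactness for Radon measures it is weak-$\ast$ compact, and passing to the quotient (which is Hausdorff and second countable) gives what we want.

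The only genuine subtlety, and the step I would be most careful about, is the passage from ``uniformly bounded on a fundamental domain'' to genuine weak-$\ast$ precompactness in the space of Radon measures, together with the claim that the projectivization is Hausdorff and metrizable. These are precisely the technical points packaged in \cite[Thm.~2.23]{sasaki}, so in the write-up I would lean on that citation rather than reprove it; I would only include the direct argument above if the cited statement turned out to be phrased for a slightly narrower class of $\G$-spaces than ours.
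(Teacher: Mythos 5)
Your proposal is correct and takes essentially the same route as the paper: both arguments reduce to the facts that $\calC_{\geq 2}(\partial\G)$ is locally compact, separable, metrizable with cocompact $\G$-action (Sasaki, Sec.~2.1 and Lem.~2.5), that $\calS$ inherits these as a closed $\G$-invariant subset, and then invoke \cite[Thm.~2.23]{sasaki} for the conclusions about $\calS\Curr(\G)$ and $\bbP\calS\Curr(\G)$. The extra direct compactness argument you keep "in reserve" is not in the paper but is a sound backup.
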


\begin{definition}\label{def.cogeodesic}    
   The system of spheres $\calS$ is \emph{co-geodesic} if in addition it satisfies the following.
   \begin{itemize}
       \item[iv)] The diagonal action of $\G$ on 
       \[\calI_\calS:=\{(S,(p,q))\in \calS\times \calG \colon p,q \text{ belong to different components of }\partial \G \bs S\}\]
       is free and properly discontinuous (we say that $S\in \calS$ \emph{separates} $p,q\in \partial \G$ if $(S,(p,q)) \in \calI_\calS$).
       \item[v)] For some (any) Borel fundamental domain $\calB$ for the action of $\G$ on $\calI_\calS$, the quantity $(\al \times \eta)(\calB)$ is finite for all $\al\in \calS\Curr(\G)$ and $\eta\in \Curr(\G)$. 
   \end{itemize}
   In that case, we define the \emph{intersection number} $$i_\calS: \calS\Curr(\G) \times \Curr(\G) \ra \R_{\geq 0}$$ according to 
   \begin{equation}\label{eq.defi_S}
       i_\calS(\al,\eta):=\frac{1}{2}(\al \times \eta)(\calB),
   \end{equation}
   and we say that the pair $(\calS\Curr(\G),i_\calS)$ is the \emph{co-geodesic structure induced} by $\calS$. A \emph{co-geodesic current} is an $\calS$-current for some co-geodesic system of spheres at infinity. 
\end{definition}

\begin{remark}
    The extra factor of $1/2$ in \eqref{eq.defi_S} comes from our choice of working with ordered pairs of points in $\partial \G$, see \cref{rmk.choiceordered}. It is also justified by \eqref{eq.dualityformula} in \cref{prop.dualcubulation} below.
\end{remark}

It follows from the definition that the function $i_\calS$ is $\R_+$-bilinear. Also, if $\calS$ and $\calS'$ are co-geodesic systems of spheres are infinity with $\calS\subset \calS'$, then there is a natural inclusion of $\calS\Curr(\G)$ into $\calS'\Curr(\G)$ as the closed subset of $\calS'$-currents with support in $\calS$, and also $i_{\calS'}(\al,\eta)=i_\calS(\al,\eta)$ for all $\eta\in \Curr(\G)$ and $\al\in \calS\Curr(\G)\subset \calS'\Curr(\G)$.

\begin{remark}
    It may happen that every system of spheres at infinity is co-geodesic. That is, that Items iv) and v) are consequences of Items i)-iii) of  \cref{def.systemspheres}. However, we do not intend to approach this problem in this work.
\end{remark}

\begin{example}[Surface groups]\label{ex.surfacegroups}
If $\G$ is the fundamental group of a closed orientable hyperbolic surface, then $n=2$ and $(n-2)$-spheres at infinity are just unordered pairs of distinct points in $\partial \G$. In this case, systems of spheres at infinity are precisely the closed and $\G$-invariant subsets of $\partial^2 \G$, and hence co-geodesic currents coincide with geodesic currents (see \cref{rmk.choiceordered}). In particular, \eqref{eq.defi_S} reduces to the usual definition of intersection number for geodesic currents on surfaces groups given by Bonahon \cite{bonahon}.
\end{example}

As in the case of surface groups, we want to relate co-geodesic currents with isometric actions of $\G$.

\begin{definition}\label{def.filling-dual}
Let $\calS$ be a co-geodesic system of spheres at infinity with induced co-geodesic structure $(\calS\Curr(\G),i_\calS)$.
\begin{enumerate}
    \item  An $\calS$-current $\al\in \calS\Curr(\G)$ is \emph{filling} (resp. \emph{weakly filling}) if $i_\calS(\al,\eta)>0$ for all non-zero currents $\eta \in \Curr(\G)$ (resp. $i_\calS(\al,\eta_{[g]})>0$ for any non-trivial element $g\in \G$). Similarly, a projective $\calS$-current $[\al]\in \bbP\calS\Curr(\G)$ is \emph{filling} (resp. \emph{weakly filling}) if $\al$ is filling (resp. weakly filling) for some (and hence, any) representative $\al$ of $[\al]$.
    \item A pseudo metric $d\in \ov{\calD}_\G$ is \emph{dual} to the $\calS$-current $\al\in \calS\Curr(\G)$ if for any $g\in \G$ we have  \[\ell_d[g]=i_\calS(\al,\eta_{[g]}).\] 
    Similarly, a metric structure $[d]\in \ov{\scrD}_\G$ is \emph{dual} to the projective $\calS$-current $[\al]\in \bbP\Curr(\G)$ if $d$ is dual to $t\al$ for some real number $t$.
\end{enumerate}
\end{definition}

\begin{remark}\label{rmk.filling=proper}Since $\G$ is torsion-free, for any $d\in \calD_\G$ the function $\ell_d$ uniquely extends to a continuous function $\ell_d:\Curr(\G) \ra \R$ such that $\ell_d(\eta_{[g]})=\ell_d[g]$ for all $g\in \G$ (see e.g.~ \cite[Thm.~1.7]{cantrell-reyes.manhattan}). Indeed, this function is $\R_+$-\emph{linear}, meaning that $$\ell_d(\eta_1+t\eta_2)=\ell_d(\eta_1)+t\ell_d(\eta_2)$$ for all $\eta_1,\eta_2\in \Curr(\G)$ and $t\in \R_+$. This is implicit in the proofs of \cite[Cor.~5.2]{oregon-reyes.ms} and \cite[Thm.~1.5]{EPS}, on which \cite[Thm.~1.7]{cantrell-reyes.manhattan} relies on.  From this, it follows that if a pseudo metric $d\in \ov{\calD}_\G$ is dual to $\al\in \calS\Curr(\G)$ and $\eta \mapsto i_\calS(\al,\eta)$ is continuous on $\Curr(\G)$, then
$$\ell_d(\eta)=i_\calS(\al,\eta)$$
for all $\eta\in \Curr(\G)$. In that case, $\al$ is filling if and only if $d\in \calD_\G$.
\end{remark}

\subsection{Duality principle}

The general strategy to prove our main results is to deduce convergence of a sequence of metric structures in $\scrD_\G$ from the convergence of a sequence of corresponding dual $\calS$-currents for a convenient co-geodesic system $\calS$ of spheres at infinity. For that purpose, we prove the following proposition.

\begin{proposition}\label{prop.contcogeodesic}
    Let $\calS$ be a co-geodesic system of spheres at infinity for $\G$, with induced co-geodesic structure $(\calS\Curr(\G),i_\calS)$. Let $(d_m)_m\subset \ov{\calD}_\G$ be a sequence of pseudo metrics with each $d_m$ being dual to $\al_m\in \calS\Curr(\G)$, such that the projective classes $[\al_m]$ converge to $[\al]$ in $ \bbP\calS\Curr(\G)$. If $[\al]$ is dual to the metric structure $[d]\in \scrD_\G$ and $i_\calS$ is continuous at $(\al,\eta)$ for all $\eta\in \Curr(\G)$, then:
    \begin{enumerate}
        \item $[\al_m]$ is filling for all $m$ large enough; and,
        \item $[d_m]$ converges to $[d]$ in $(\scrD_\G,\Del)$.
    \end{enumerate}
    
\end{proposition}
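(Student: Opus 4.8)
The plan is to reduce the proposition to a compactness argument on $\bbP\Curr(\G)$, exploiting the hypothesis that $i_\calS$ is jointly continuous at $(\al,\eta)$ for \emph{every} $\eta$ to prevent "mass escaping to infinity" in the marked length spectra. First I would normalize. By \cref{rmk.filling=proper} and the continuity hypothesis I may fix a representative $\al_\infty$ of $[\al]$ and a representative $d$ of $[d]$ with $\ell_d(\eta)=i_\calS(\al_\infty,\eta)$ for all $\eta\in\Curr(\G)$; since $d\in\calD_\G$, the same remark shows $\al_\infty$ is filling, so $i_\calS(\al_\infty,\eta)>0$ whenever $\eta\neq0$. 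Because $\bbP\calS\Curr(\G)$ is metrizable (\cref{lem.Scurrentscompact}) and the projection $\calS\Curr(\G)\setminus\{0\}\to\bbP\calS\Curr(\G)$ is open, the convergence $[\al_m]\to[\al]$ lifts to a weak-$\ast$ convergence $\beta_m\to\al_\infty$ with $\beta_m=t_m\al_m$, $t_m>0$. Replacing each $d_m$ by the roughly similar metric $t_md_m$ (which represents the same point of $\scrD_\G$) I may assume $\ell_{d_m}[g]=i_\calS(\beta_m,\eta_{[g]})$ for all $g$; then bilinearity of $i_\calS$ and its continuity at $(\al_\infty,\eta_{[g]})$ give $\ell_{d_m}[g]\to\ell_d[g]$ for every $g\in\G$.

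For part (1) I argue by contradiction. If $\beta_m$ fails to be filling for infinitely many $m$, choose for each such $m$ a nonzero $\eta_m\in\Curr(\G)$ with $i_\calS(\beta_m,\eta_m)=0$; using compactness of $\bbP\Curr(\G)$ and openness of the projection, pass to a subsequence and rescale so that $\eta_m\to\eta_\infty$ weak-$\ast$ with $\eta_\infty\neq0$ (rescaling $\eta_m$ leaves $i_\calS(\beta_m,\eta_m)=0$ unchanged). Joint continuity of $i_\calS$ at $(\al_\infty,\eta_\infty)$ then forces $0=\lim_m i_\calS(\beta_m,\eta_m)=i_\calS(\al_\infty,\eta_\infty)>0$, a contradiction. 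Hence $\al_m$ (equivalently $\beta_m$) is filling for all large $m$; in particular $\ell_{d_m}[g]>0$ for every nontrivial $g$, so $d_m\in\calD_\G$ and $[d_m]\in\scrD_\G$.

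For part (2), for $m$ large set $\overline\lam_m=\sup_{g\neq o}\ell_{d_m}[g]/\ell_d[g]$ and $\underline\lam_m=\inf_{g\neq o}\ell_{d_m}[g]/\ell_d[g]$, both finite and positive because $d_m,d\in\calD_\G$ are equivariantly quasi-isometric. By the equivalence of \eqref{eq.lam-qi} and \eqref{eq.lam-qi.mls} one has $\Del([d_m],[d])\leq\log(\overline\lam_m/\underline\lam_m)$, so it suffices to show $\overline\lam_m\to1$ and $\underline\lam_m\to1$. The pointwise convergence $\ell_{d_m}[g]\to\ell_d[g]$ already gives $\liminf_m\overline\lam_m\geq1\geq\limsup_m\underline\lam_m$. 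For the reverse inequalities, suppose e.g. $\limsup_m\overline\lam_m>1$ (possibly $+\infty$); along a subsequence pick nontrivial $g_k$ with $\ell_{d_{m_k}}[g_k]/\ell_d[g_k]$ converging to this value, rescale the rational currents $\eta_{[g_k]}$ and pass to a further subsequence so that $\eta'_k:=c_k\eta_{[g_k]}\to\eta_\infty$ weak-$\ast$ with $\eta_\infty\neq0$. Since $\eta'_k$ is a rational current, $\ell_{d_{m_k}}(\eta'_k)=i_\calS(\beta_{m_k},\eta'_k)$, and joint continuity of $i_\calS$ at $(\al_\infty,\eta_\infty)$ gives $\ell_{d_{m_k}}(\eta'_k)\to i_\calS(\al_\infty,\eta_\infty)=\ell_d(\eta_\infty)>0$, while $\ell_d(\eta'_k)\to\ell_d(\eta_\infty)$ by continuity of $\ell_d$ on $\Curr(\G)$; hence $\ell_{d_{m_k}}[g_k]/\ell_d[g_k]=\ell_{d_{m_k}}(\eta'_k)/\ell_d(\eta'_k)\to1$, contradicting the choice of the $g_k$. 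The symmetric argument with infima gives $\liminf_m\underline\lam_m\geq1$. Therefore $\overline\lam_m,\underline\lam_m\to1$ and $\Del([d_m],[d])\to0$.

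The step I expect to be the main obstacle is the last one: weak-$\ast$ convergence $\beta_m\to\al_\infty$ by itself controls only $i_\calS(\beta_m,\eta)$ for each \emph{fixed} $\eta$, so a priori the ratios $\ell_{d_m}[g]/\ell_d[g]$ could misbehave along a sequence of conjugacy classes escaping to infinity (a "moving bump"). Ruling this out is exactly what the joint-continuity hypothesis on $i_\calS$ accomplishes once it is combined with the compactness of $\bbP\Curr(\G)$: the escaping currents subconverge after projectivizing, and joint continuity at the limit pins the offending ratios back to $1$. Getting the bookkeeping of the nested subsequences and rescalings exactly right is the technical heart of the argument. A secondary point that must be handled with care, in order to invoke the translation-length description of $\Del$, is the passage from $\ov\calD_\G$ to $\calD_\G$, which rests on the equivalence between filling of the dual $\calS$-current and properness — not merely cocompactness — of the associated pseudo metric.
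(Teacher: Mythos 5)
Your proposal is correct and follows essentially the same route as the paper: lift the projective convergence to weak-$\ast$ convergence $t_m\al_m\to\al$, reduce to $d_m\in\calD_\G$ via the filling/properness equivalence, and then use compactness of $\bbP\Curr(\G)$ together with joint continuity of $i_\calS$ at $(\al,\cdot)$ to rule out escaping conjugacy classes. The only cosmetic difference is that you bound $\sup_g \ell_{d_m}[g]/\ell_d[g]$ and $\inf_g \ell_{d_m}[g]/\ell_d[g]$ by two separate subconvergence arguments, whereas the paper bounds the product $\bigl(\sup_g \ell_{d_m}[g]/\ell_d[g]\bigr)\bigl(\sup_h \ell_d[h]/\ell_{d_m}[h]\bigr)$ in a single contradiction by extracting convergent subsequences of both $\eta_{[g_m]}$ and $\eta_{[h_m]}$ simultaneously; both versions rely on the identical cancellation of the projective scaling factors and the positivity $i_\calS(\al,\cdot)>0$ coming from $\al$ being filling.
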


\begin{proof}[Proof]
    Let $(t_m)_m\subset \R_+$ be so that $t_m\al_m$ converges to $\al$ in $\calS\Curr(\G)$. From \cref{rmk.filling=proper} and the continuity of $i_\calS$ at $(\al,\eta)$ for any $\eta\in \Curr(\G)$, we see that $\al$ is filling.

    To prove (1), suppose for the sake of contradiction that, after extracting a subsequence and reindexing, there exist non-zero currents $\eta_m\in \Curr(\G)$ such that $i_\calS(\al_m,\eta_m)=0$ for all $m$. After extracting a further subsequence and rescaling, we can assume that $\eta_m$ converges to the non-zero current $\eta$. But then the continuity of $i_\calS$ at $(\al,\eta)$ implies  $i_\calS(\al,\eta)=\lim_{m}{i_\calS(t_m\al_m,\eta_m)}=0$, contradicting that $\al$ is filling. 
    
    To prove (2), without loss of generality, we can assume that $d_m\in \calD_\G$ for all $m$ (see \cref{rmk.filling=proper}), for which we need to prove that the sequence $$m \mapsto \exp{\Del([d_m],[d])}=\left( \sup_{g \neq o}{\frac{\ell_{d_m}[g]}{\ell_{d}[g]}}\right)\cdot\left( \sup_{h \neq o}{\frac{\ell_{d}[h]}{\ell_{d_m}[h]}}\right)$$
    converges to 1, see \cite[Prop.~3.5]{oregon-reyes.ms}. Otherwise, there would exist a subsequence (also denoted $d_m$), some $\lam>1$ and sequences $(g_m)_m,(h_m)_m \subset \G$ of non-torsion elements such that \small
    \begin{equation}\label{eq.ineqcurrents}   \frac{i_\calS(t_m\al_m,\eta_{[g_m]})i_\calS(\al,\eta_{[h_m]})}{i_\calS(\al,\eta_{[g_m]})i_\calS(t_m\al_m,\eta_{[h_m]})}=\frac{i_\calS(\al_m,\eta_{[g_m]})i_\calS(\al,\eta_{[h_m]})}{i_\calS(\al,\eta_{[g_m]})i_\calS(\al_m,\eta_{[h_m]})}=\frac{\ell_{d_m}[g_m]}{\ell_{d}[g_m]}\cdot \frac{\ell_{d}[h_m]}{\ell_{d_m}[h_m]}\geq \lam
    \end{equation}
    \normalsize
    for all $m$. After extracting a new subsequence and rescaling, by the compactness of $\bbP\Curr(\G)$ we can assume that $\eta_{[g_m]} \to \beta$ and $\eta_{[h_m]} \to \gam$ with $\beta$ and $\gam$ non-zero currents. But by letting $m$ tend to infinity in \eqref{eq.ineqcurrents}, and since $i_\calS(\al,\eta)=\ell_{d}(\eta)>0$ for $\eta\neq 0$ we get $1=\frac{i_\calS(\al,\beta)i_\calS(\al,\gam)}{i_\calS(\al,\beta)i_\calS(\al,\gam)}\geq \lam$, which is the desired contradiction. 
\end{proof}


\subsection{Currents dual to cubulations}

In this subsection, we relate certain cubulations of $\G$ to a particular type of co-geodesic current.

\begin{definition}\label{def.discrcurrent}
    A \emph{discrete co-geodesic current} on $\G$ is a Borel measure on $\calC_{\geq 2}(\G)$ of the form 
    \[\al=\sum_{S\in \calS}{\del_S}\]  
    for a discrete set $\calS \subset \calC(\partial \G)$ consisting of finitely many $\G$-orbits of elements in $\calC(\partial \G)$ homeomorphic to a $(n-2)$-sphere.
\end{definition}
If $\al$ is a discrete co-geodesic current, then $\calS_\al=\calS=\supp(\al)$ is a system of spheres at infinity. Also, from \cite[Thm.~2.8]{sasaki} it follows that if $S$ belongs to $\calS_\al$ for some discrete co-geodesic current $\al$ then $S=\Lam(H)$ for some quasiconvex subgroup $H<\G$. 

The goal of this section is to produce cubulations of $\G$ dual to discrete co-geodesic currents. This is guaranteed by the next proposition.

\begin{proposition}\label{prop.dualcubulation}
    Let $\al$ be a discrete co-geodesic current on $\G$ with support $\calS$. Then there exists a cocompact cubical action of $\G$ on a $\CAT(0)$ cube complex $\wtilde{\calX}$ satisfying the following.
    \begin{enumerate}
        \item There exists a natural $\G$-equivariant bijection between $\calS$ and the set of hyperplanes of $\wtilde\calX$. In particular, every hyperplane stabilizer is quasiconvex and has as limit set a sphere in $\calS$, and every sphere in $\calS$ is the limit set of a hyperplane stabilizer.
        \item Any pseudo metric on $\G$ induced by its action on $\wtilde{\calX}^{(1)}$ with the combinatorial metric is dual to $\al$. That is,
            \begin{equation}\label{eq.dualityformula}
                \ell_{\wtilde{\calX}}[g]=i_{\calS}(\al,\eta_{[g]})
            \end{equation}
        for any $g\in \G$.
        \item The action of $\G$ on $\wtilde{\calX}$ is proper if and only if $\al$ is weakly filling. 
    \end{enumerate}
\end{proposition}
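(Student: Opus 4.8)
\emph{Proof idea.} The plan is to realize $\wtilde\calX$ as the dual cube complex of an explicit $\G$-invariant wallspace structure on $\G$ built out of $\calS$, and then to extract the translation lengths from the combinatorics of that wallspace.

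\textbf{Constructing the walls.} Fix a finite generating set and identify $\G$ with the vertex set of the associated Cayley graph. Given $S\in\calS$, the set $S$ is a topologically embedded copy of $S^{n-2}$ in $\partial\G\cong S^{n-1}$, so by Alexander duality (equivalently, Jordan--Brouwer separation) $\partial\G\ssm S$ has exactly two connected components $C_S,C_S'$. By the observation following \cref{def.discrcurrent} there is a quasiconvex $H<\G$ with $\Lam(H)=S$; a short argument (minimality of the limit set, together with finiteness of index for quasiconvex subgroups with equal limit sets) shows that $H$ has finite index in $H_S:=\mathrm{Stab}_\G(S)$, so $H_S$ is itself quasiconvex with $\Lam(H_S)=S$ and $|\G:H_S|=\infty$. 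Hence $\Hull(S)$ lies at bounded Hausdorff distance from $H_S$, and for $r$ large the components of $\G\ssm N_r(\Hull(S))$ partition into exactly two families according to which of $C_S,C_S'$ contains their limit set; grouping each family with $N_r(\Hull(S))$ produces a wall $\calW_S=\{\hat U_S,\hat U_S'\}$ of $\G$. The assignment $S\mapsto\calW_S$ is $\G$-equivariant ($g\calW_S=\calW_{gS}$), injective (the two halfspaces of $\calW_S$ coarsely intersect in $N_r(\Hull(S))$, which recovers $S$), and $g$ stabilizes $\calW_S$ precisely when it preserves $\{C_S,C_S'\}$, i.e.\ precisely when $gS=S$; thus the stabilizer of $\calW_S$ is $H_S$. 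Since $\calS$ has finitely many $\G$-orbits and the $H_S$ are quasiconvex, the two wallspace axioms follow from the standard counting estimates for codimension-$1$ quasiconvex subgroups of hyperbolic groups (cf.\ \cite{sageev.cod,bergeron-wise}). Feeding this wallspace into Sageev's construction (\cref{subsec.CAT(0)cc}) yields a $\CAT(0)$ cube complex $\wtilde\calX$ with a cocompact cubical $\G$-action whose hyperplanes correspond $\G$-equivariantly to the walls $\calW_S$, hence to $\calS$; as hyperplane stabilizers are wall stabilizers, this proves (1).

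\textbf{The duality formula.} Each $x\in\G$ determines a canonical vertex $v_x\in\wtilde\calX$ with $gv_x=v_{gx}$, and $d_{\wtilde\calX}(v_x,v_y)$ equals the number of walls $\calW_S$ separating $x$ from $y$ (\cref{subsec.CAT(0)cc}). So for primitive non-torsion $g$ one has $\ell_{\wtilde\calX}[g]=\lim_m\tfrac1m\#\{S\in\calS:\calW_S\text{ separates }o\text{ and }g^m\}$. A geodesic from $o$ to $g^m$ fellow-travels an axis of $g$ from $g^{-\infty}$ to $g^\infty$, and up to an error bounded independently of $m$ the walls $\calW_S$ crossing it are exactly those $S\in\calS$ that separate $g^{-\infty}$ from $g^\infty$ and whose hull meets a fixed neighbourhood of the geodesic. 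Local finiteness of the wallspace forces the $\langle g\rangle$-action on $\{S\in\calS:S\text{ separates }g^{\pm\infty}\}$ to have finite quotient, of cardinality $N$ say, and each $\langle g\rangle$-orbit is crossed once per period of the axis, so $\ell_{\wtilde\calX}[g]=N$. On the other hand, writing $\al=\sum_{S\in\calS}\del_S$ and $\eta_{[g]}=\sum_{\gam\in\calS_{[g]}}\del_\gam$ and fixing a Borel fundamental domain $\calB$ for the action of $\G$ on $\calI_\calS$, the $\G$-orbits of pairs $(S,\gam)\in\calI_\calS$ with $\gam\in\calS_{[g]}$ are indexed by $\{S\in\calS:S\text{ separates }g^{\pm\infty}\}/\langle g\rangle$ together with the two orderings of $\{g^\infty,g^{-\infty}\}$ (here \cref{remark.primitive} identifies the $\G$-stabilizer of $(g^\infty,g^{-\infty})$ with $\langle g\rangle$), so $(\al\times\eta_{[g]})(\calB)=2N$ and $i_\calS(\al,\eta_{[g]})=N$. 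This gives \eqref{eq.dualityformula} for primitive $g$; the general non-torsion case follows since both sides scale by $|r|$ under $g\mapsto g^r$, and the torsion case is vacuous.

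\textbf{Properness.} Item (3) follows from (2) together with basic features of cubical actions. If the action is proper then every point of $\wtilde\calX$ has finite stabilizer, so no infinite-order element fixes a point; by \cite{haglund} any such element has an invariant combinatorial geodesic, whence $\ell_{\wtilde\calX}[g]>0$, and by (2) $i_\calS(\al,\eta_{[g]})>0$ for all $g\neq o$, i.e.\ $\al$ is weakly filling. Conversely, if $\al$ is weakly filling and some non-trivial $g$ preserved a cube $C$, then $g$ would fix the barycentre of $C$ and hence satisfy $\ell_{\wtilde\calX}[g]=0$, contradicting $i_\calS(\al,\eta_{[g]})>0$; so the action is free on cubes, and a free cocompact cubical action on a $\CAT(0)$ cube complex is proper (freeness and cocompactness force the $1$-skeleton to be locally finite, and a free cocompact action on a locally finite graph is proper). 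I expect the main obstacle to be the asymptotic analysis of the previous step: showing that, up to bounded error, the walls crossing $[o,g^m]$ are governed by which spheres of $\calS$ separate the endpoints of an axis of $g$, and then matching the resulting count — with its multiplicities and the orderings of $\{g^\infty,g^{-\infty}\}$ — against $\tfrac12(\al\times\eta_{[g]})(\calB)$, the factor $\tfrac12$ in \eqref{eq.defi_S} being precisely what reconciles the two.
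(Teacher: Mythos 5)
Your proposal follows the same overall route as the paper: build a $\G$-invariant wallspace on $\G$ from the spheres in $\calS$ (you take coarse neighborhoods of $\Hull(S)$, the paper takes coarse neighborhoods of the stabilizers $\G_S$ in $\G$; these are coarsely equivalent), feed it into Sageev's construction, and count walls separating $g^{-\infty}$ from $g^{\infty}$, with the factor $1/2$ in \eqref{eq.defi_S} absorbing the two orderings. The fundamental-domain count $(\al\times\eta_{[g]})(\calB)=2N$, its reliance on \cref{remark.primitive}, the reduction to primitive $g$, and the properness argument via Haglund all match the paper.

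The gap is exactly where you predict it. The assertion that, up to bounded error, the walls crossed by $[o,g^m]$ are precisely those $\calW_S$ with $S$ separating $g^{\pm\infty}$, each $\langle g\rangle$-orbit appearing once per period, is stated but not proved, and it is the real content of Item~(2). A priori a hyperplane carrier could stay within bounded distance of a quasi-axis without ever being crossed, or a hyperplane crossed by a combinatorial geodesic could fail to separate $g^{\pm\infty}$. The paper isolates this as \cref{lem.Hseparates} and proves it with two real inputs you do not supply: (a) some power $g^N$ acts stably and without inversions on $\wtilde\calX$ (this uses finite height of the finite family of quasiconvex hyperplane stabilizers, \cite{GMRS}), so by \cite[Thm.~1.5]{haglund} there is an honest $\langle g^N\rangle$-invariant combinatorial geodesic $\sig$; and (b) the divergence estimate $d_{\wtilde{\calX}}(N(\frakh_S),g^m v)\to\infty$, which rests on finite-dimensionality and the Haglund--Wise transversality bound \cite[Lem.~13.13]{haglund-wise.special}, to conclude that a hyperplane crossing $\sig$ does force $S$ to separate the endpoints. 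Your fellow-traveling plan can surely be carried out, but it requires this same convexity/divergence analysis; as written it is a restatement of what must be shown, not a proof. Items (1) and (3) are fine.
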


The rest of this subsection is devoted to proving \cref{prop.dualcubulation}, so from now on we fix a discrete co-geodesic current $\al$ with support $\calS$, and let $\{S_1,\dots,S_k\}$ be a complete set of representatives of $\G$-orbits in $\calS$. For each $1\leq i\leq k$ we set 
\[\G_i:=\{g\in \G \colon gS_i=S_i\}.\]
By our assumptions, each $\G_i$ is a quasiconvex subgroup with limit set $S_i$. Similarly, if $S=gS_i\in \calS$ for $g\in \G$ we define $\G_S=g\G_ig^{-1}$.


Our first step is to construct a wallspace structure on $\G$. We fix a finite and symmetric generating set for $\G$ (and hence a), and for $1\leq i \leq k$ and $r>0$, we let $W_i$ be the $r$-neighborhood of $\G_i$ in $\G$. Since $W_i$ is quasiconvex and $\Lam(W_i)=S_i$ separates $\partial \G$ into two components, there exists $\lam>0$ such that we can find $r>0$ large enough for which $\G \bs W_i$ consists of exactly two $\lam$-quasiconnected components $U_i,U_i^*$. We set $\calW_i:=\{W_i \cup U_i,W_i\cup U_i^*\}$ and for $S_i\in \calS$ and $g\in \G$ we also consider $W_S=gW_i$ and $\calW_S=\{W_S\cup U_S,W_S\cup U_S^*\}$ for $\{U_S,U_S^\ast\}=\{gU_i,gU_i^\ast\}$. Note that each $\calW_S$ is only well-defined as an unordered pair since an element in $\G_S$ may exchange $U_S$ and $U_S^\ast$. 
We define the $\G$-equivariant wallspace structure $$\bbW:=\{\calW_S \colon S\in \calS\}$$ on $\G$. Since each $S\in \calS$ can be recovered as the intersection $S=\Lam(U_S)\cap \Lam(U_S^\ast)=\Lam(W_S)$, the identity $\calW_S=\calW_{S'}$ implies $S=S'$.

We let $\wtilde{\calX}$ be the $\CAT(0)$ cube complex dual to $\bbW$, according to \cref{subsec.CAT(0)cc}. The natural action of $\G$ on $\wtilde{\calX}$ is cocompact \cite[Thm.~3.1]{sageev.cod} and there is a $\G$-equivariant 1-to-1 correspondence between $\bbW$ and $\calS$. We let $\frakh_S\subset \wtilde{\calX}$ denote the hyperplane determined by $S\in \calS$.


\begin{lemma}\label{lem.closetoginfty}
    Let $g\in \G$ be non-torsion and $S\in \calS$ such that $g^{\infty}\notin S$. If $g^{\infty}$ belongs to the interior of the limit set of $U_S$ (rep. $U_S^*$), then for all $s\geq 0$ and all $m$ large enough we have $g^m\in g^iU_S$ (resp. $g^m\in g^iU_S^*$) for all $0\leq i \leq s$.   
\end{lemma}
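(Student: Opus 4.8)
The statement is essentially a claim that the forward orbit $(g^m)_{m\geq 0}$ of a non-torsion element $g$, viewed in the Cayley graph of $\G$, eventually stays on the "correct" side of each wall $g^i \calW_S$, as long as $g^\infty$ lies strictly inside the limit set of the corresponding half-space. I would first reduce to the case $i=0$: since the collection of indices $0 \le i \le s$ is finite and $g^m \in g^i U_S \iff g^{m-i} \in U_S$, it suffices to show that for $m$ large enough $g^m \in U_S$ whenever $g^\infty$ lies in the interior of $\Lam(U_S)$; then the finitely many finite shifts are absorbed by enlarging the threshold. So fix $S$ with $g^\infty \notin S$ and assume $g^\infty \in \Int(\Lam(U_S))$.

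The key geometric input is that $g^m \to g^\infty$ in $\G \cup \partial \G$ as $m \to \infty$. I would exploit the dichotomy built into the wallspace construction: $\G = W_S \sqcup (U_S \setminus W_S) \sqcup (U_S^* \setminus W_S)$ (disjoint up to the $r$-neighborhood $W_S$ of $\G_S$), with $\Lam(U_S \setminus W_S)$ and $\Lam(U_S^* \setminus W_S)$ meeting exactly along $S = \Lam(W_S)$. The plan is to argue by contradiction: if $g^m \notin U_S$ for infinitely many $m$, then along that subsequence either $g^m \in W_S$ or $g^m \in U_S^* \setminus W_S$. In the first case, since $g^m \to g^\infty$ and $W_S$ is the $r$-neighbourhood of the quasiconvex set $\G_S$ with $\Lam(\G_S) = S$, a subsequential limit of $(g^m)$ lying in $W_S$ must lie in $\Lam(W_S) = S$, forcing $g^\infty \in S$, a contradiction. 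In the second case, $g^m \in U_S^*$ along a subsequence, so $g^\infty$, being the limit of these points, lies in $\Cl(\Lam(U_S^*)) = \Lam(U_S^*)$ (limit sets of subsets are closed, and the closure of a set maps into the limit set of its closure); but $\Lam(U_S) \cap \Lam(U_S^*) = S$ and $g^\infty \notin S$, while also $g^\infty \in \Int(\Lam(U_S))$ which is disjoint from $\Lam(U_S^*)$ — again a contradiction. Hence $g^m \in U_S$ for all $m$ sufficiently large.

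The step I expect to be the main obstacle is making precise the assertion that a subsequential limit of $g^m$ lying in the $r$-neighbourhood $W_S$ of $\G_S$ must lie in $\Lam(\G_S) = S$. This is where quasiconvexity of $\G_S$ and the bounded size of the neighbourhood are used: a sequence of points within bounded distance of a quasiconvex set, converging to a boundary point, converges to a point in the limit set of that set. One clean way is to note that $W_S$ is itself quasiconvex with $\Lam(W_S) = \Lam(\G_S) = S$ (enlarging a quasiconvex set by a bounded neighbourhood preserves quasiconvexity and the limit set), and then invoke the general fact that for a quasiconvex $Q \subset \G$ one has $\Lam(Q) = \overline{Q} \cap \partial\G$ inside $\G \cup \partial\G$, i.e. any boundary point that is a limit of points of $Q$ already lies in $\Lam(Q)$ — which is immediate from the definition of $\Lam(Q)$. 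Once this is in hand, the rest is the elementary contradiction argument above, together with the observation that interiors (in the subspace topology on $\partial\G$) of the two limit sets are disjoint because their intersection $S$ has empty interior in each — indeed $S$ separates $\partial\G$ into exactly the two open sets $\Int\Lam(U_S)$ and $\Int\Lam(U_S^*)$, whose union is the complement of $S$.
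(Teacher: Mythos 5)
Your proof is correct and follows essentially the same approach as the paper's, which is a one-line observation that $g^\infty\in g^i\Int\Lam(U_S)$ for all $i$ and that points of $\G$ close to $g^\infty$ must therefore lie in $\bigcap_{0\le i\le s}g^iU_S$. Your write-up simply makes explicit the contradiction argument (splitting into the $W_S$ case, which is ruled out by $\Lam(W_S)=S\not\ni g^\infty$, and the $U_S^*$ case, ruled out by $\Lam(U_S)\cap\Lam(U_S^*)=S$) that the paper leaves implicit; note only that since $U_S,U_S^*$ are by construction the two components of $\G\setminus W_S$, your ``disjoint up to $W_S$'' qualifier is unnecessary --- the union $\G = W_S\sqcup U_S\sqcup U_S^*$ is genuinely disjoint.
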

\begin{proof}
    If $g^\infty \in \Int\Lam(U_S)=\Lam(U_S)\bs S$, then $g^\infty\in g^i\Int\Lam(U_S)$ for all $i$. In particular, for any fixed $s$, all the points in $\G$ close enough to $g^\infty$ belong to $\bigcap_{0\leq i \leq s}{g^{i}U_S}$, which is true of $g^m$ for $m$ large enough. 
\end{proof}

\begin{lemma}\label{lem.criterionloxo}
    Let $g\in \G$ be a non-torsion element. If $i_{\calS}(\al,\eta_{[g]})>0$ then $\ell_{\wtilde{\calX}}[g]>0$.
\end{lemma}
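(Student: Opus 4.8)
The plan is to show $\ell_{\wtilde{\calX}}[g]>0$ by bounding below the combinatorial distance $d_{\wtilde{\calX}}(g^Nv,v)$ linearly in $N$, where $v$ is a vertex of the canonical cube associated to the identity $o$. Since for vertices of $\wtilde{\calX}$ the combinatorial distance equals the number of separating hyperplanes, and hyperplanes correspond bijectively to walls $\calW_S$ ($S\in\calS$), it suffices to produce, for each large $N$, roughly $N$ walls in $\bbW$ that separate $o$ from $g^N$.

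First I would unpack the hypothesis. Writing $i_{\calS}(\al,\eta_{[g]})=\tfrac12(\al\times\eta_{[g]})(\calB)$ for a Borel fundamental domain $\calB$ of the $\G$-action on $\calI_{\calS}$, positivity gives $(\al\times\eta_{[g]})(\calI_{\calS})>0$. The measure $\al$ is supported on the discrete set $\calS$, and $\eta_{[g]}$ is supported on a discrete, $\G$- and $\tau$-invariant subset of $\calG$ that contains $(g^{\infty},g^{-\infty})$; hence $\calI_{\calS}$ contains a point of the form $(S_0,(hg^{\infty},hg^{-\infty}))$ for some $S_0\in\calS$ and $h\in\G$ (possibly after swapping the two boundary points). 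Setting $S:=h^{-1}S_0\in\calS$, which is legitimate since $\calS$ is $\G$-invariant, we obtain $S\in\calS$ separating $g^{\infty}$ from $g^{-\infty}$; in particular $g^{\pm\infty}\notin S$. Relabeling $U_S$ and $U_S^{*}$ if needed, I may assume $g^{\infty}\in\Int\Lam(U_S)$ and $g^{-\infty}\in\Int\Lam(U_S^{*})$.

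The core step is to exhibit the crossing of $g$ with the translates $g^iS$. Applying \cref{lem.closetoginfty} with $s=0$ to $g$ and to $g^{-1}$ gives $M\ge 1$ with $g^{j}\in U_S$ and $g^{-j}\in U_S^{*}$ for all $j\ge M$. Because $W_{g^iS}$, $g^iU_S$, $g^iU_S^{*}$ are pairwise disjoint with union $\G$, the two open halfspaces of the wall $\calW_{g^iS}$ are precisely $g^iU_S$ and $g^iU_S^{*}$. Now fix $N\ge 2M$. For each integer $i$ with $M\le i\le N-M$: from $i\ge M$ we get $g^{-i}\in U_S^{*}$, so $o\in g^iU_S^{*}$; from $N-i\ge M$ we get $g^{N-i}\in U_S$, so $g^N\in g^iU_S$. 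Thus $\calW_{g^iS}$ separates $o$ from $g^N$. Since any vertex of the canonical cube of a point $x$ assigns to each wall the (unique) halfspace containing $x$ whenever $x$ avoids the overlap of the two halfspaces, and here $o,g^N\notin W_{g^iS}$, we conclude $v(\calW_{g^iS})=W_{g^iS}\cup g^iU_S^{*}\ne W_{g^iS}\cup g^iU_S=(g^Nv)(\calW_{g^iS})$.

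It remains to check that the walls $\calW_{g^iS}$ for $i=M,\dots,N-M$ are pairwise distinct, so that they give $N-2M+1$ distinct hyperplanes separating $v$ from $g^Nv$; this yields $d_{\wtilde{\calX}}(g^Nv,v)\ge N-2M+1$, hence $\ell_{\wtilde{\calX}}[g]\ge 1$. If $g^iS=g^jS$ with $i\ne j$, then $g^{i-j}$ is a nontrivial, hence infinite-order (as $\G$ is torsion-free), element of $\G_S$; since $\G_S$ is quasiconvex with limit set $S$, we get $g^{\pm\infty}\in\Lam(\langle g^{i-j}\rangle)\subseteq\Lam(\G_S)=S$, contradicting $g^{\pm\infty}\notin S$. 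The only delicate points are the extraction of the separating sphere $S$ from positivity of $i_{\calS}$ (keeping track of which pairs lie in the support of $\eta_{[g]}$, in particular the reduction to the primitive root of $g$) and the bookkeeping of which open halfspace of $\calW_{g^iS}$ contains $o$ and which contains $g^N$; once \cref{lem.closetoginfty} is in hand, the rest is routine.
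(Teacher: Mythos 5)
Your proof is correct and follows essentially the same route as the paper: both extract a separating sphere $S\in\calS$ from positivity of $i_\calS(\al,\eta_{[g]})$, use the fact that $g^{\pm\infty}$ lie in the interiors of the two components of $\partial\G\setminus S$ to locate large powers of $g$ in the open halfspaces, count the translates $g^iS$ (equivalently $g^i\frakh_S$) that separate two translates of a canonical-cube vertex, and verify their pairwise distinctness via the height/limit-set argument. The only differences are cosmetic: you compare $o$ with $g^N$ while the paper compares $g^{-k_0}v$ with $g^k v$, and you are slightly more explicit about pulling back $S$ by $h^{-1}$ when extracting the separating sphere from the support of $\al\times\eta_{[g]}$.
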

\begin{proof}
    Suppose that $i_{\calS}(\al,\eta_{[g]})>0$. This means that there exists some $S\in \calS$ separating $g^{-\infty}$ and $g^\infty$ in $\partial \G$. So, without loss of generality assume that $g^{\infty}\in \Int\Lam(U_S)$ and $g^{-\infty}\in \Int\Lam(U_S^*)$. From this, let $k_0$ be such that $g^k\in U_S$ and $g^{-k}\in U_S^*$ for $k\geq k_0$. In particular, for any fixed $k\geq k_0$ we have
    \[g^{-k_0}\in g^rU_S^*  \ \ \text{ and } \ \ g^k\in g^rU_S\]
    for all $0\leq r\leq k-k_0$.

    Now, let $v\in \wtilde{\calX}$ be any vertex in the canonical cube corresponding to the identity $o\in \G$. Then the hyperplane $g^r\frakh_S=\frakh_{g^rS}$ separates $g^{-k_0}v$ and $g^kv$ for all $k\geq k_0$ and $0\leq r \leq k-k_0$. Since $S$ is not fixed by any non-trivial power of  $g$ (otherwise $g^{\pm\infty}\in \Lam(\G_S)=S$), all the hyperplanes $g^r\frakh_S$ are different, and hence
    \begin{equation*}
        \ell_{\wtilde{\calX}}[g]=\lim_{k \to \infty}{\frac{d_{\wtilde{\calX}}(g^{-k_0}v,g^kv)}{k}}\geq \lim_{k \to \infty}{\frac{k-k_0+1}{k}}=1. \qedhere
    \end{equation*}
\end{proof}

\begin{lemma}\label{lem.Hseparates}
    Let $g\in \G$ be an element acting loxodromically on $\wtilde{\calX}$ (i.e. $\ell_{\wtilde\calX}[g]>0$), and let $\sig\subset \wtilde{\calX}^{(1)}$ be a $\genby{g}$-equivariant combinatorial geodesic. Then 
    \[\{S\in \calS \colon S \text{ separates }g^{-\infty},g^{\infty}\}=\{S \in \calS \colon \frakh_S \text{ crosses }\sig\}.\]
\end{lemma}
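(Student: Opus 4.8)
Here is how I would approach \cref{lem.Hseparates}.

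\medskip

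\noindent\textbf{Overall strategy.} The plan is to prove the two inclusions of the claimed equality by translating, at each end of $\sigma$, between the side of $\frakh_S$ occupied by the orbit $(g^m w_0)_m$ and the side of the wall $\calW_S$ occupied by $g^m\in\G$. Fix a vertex $v$ in the canonical cube of $o$ (\cref{subsec.CAT(0)cc}), fix a vertex $w_0$ of $\sigma$, and let $F\subset\bbW$ be the (finite) set of walls on which the orientations $v$ and $w_0$ disagree. Fix the labelling of $\calW_S=\{W_S\cup U_S,\,W_S\cup U_S^\ast\}$ and write $\mathfrak{H}_S^{+},\mathfrak{H}_S^{-}$ for the halfspaces of $\frakh_S$ in $\wtilde\calX$ corresponding to $W_S\cup U_S$ and $W_S\cup U_S^\ast$ respectively. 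I will use three facts: (a) a combinatorial geodesic meets a hyperplane in at most one edge, so $\frakh_S$ crosses $\sigma$ iff the two ends of $\sigma$ lie in different halfspaces of $\frakh_S$, i.e.\ iff $g^m w_0$ lies in one of $\mathfrak{H}_S^{\pm}$ for all $m\gg 0$ and in the other for all $m\ll 0$; (b) unwinding the $\G$--action on orientations, $g'v\in\mathfrak{H}_S^{+}$ whenever $g'\in U_S$ and $g'v\in\mathfrak{H}_S^{-}$ whenever $g'\in U_S^\ast$, because $v$ orients $\calW_{(g')^{-1}S}$ toward the halfspace containing $o$ and $o=(g')^{-1}g'\in U_{(g')^{-1}S}$ resp.\ $U_{(g')^{-1}S}^\ast$; (c) $g^mv$ and $g^mw_0$ disagree on $\calW_S$ only when $g^{-m}S$ lies in the finite set $\{T\in\calS:\calW_T\in F\}$. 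Granting that $g^jS\neq S$ for all $j\neq 0$ (next paragraph), fact (c) shows $g^mv$ and $g^mw_0$ lie on the same side of $\frakh_S$ for all but finitely many $m$, so combining with (a): \emph{$\frakh_S$ crosses $\sigma$ iff $g^mv\in\mathfrak{H}_S^{+}$ for all $m\gg 0$ and $g^mv\in\mathfrak{H}_S^{-}$ for all $m\ll 0$, or vice versa.}

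\medskip

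\noindent\textbf{The reduction, and why it already pins down $g^{\pm\infty}$.} First I would dispose of the case $g^jS=S$ for some $j\neq 0$: then $g^j\in\G_S$, the stabilizer of $\frakh_S$, so $g^j$ fixes $\frakh_S$ while translating $\sigma$ nontrivially, impossible for a hyperplane meeting $\sigma$ in one edge; hence $\frakh_S$ does not cross $\sigma$, while simultaneously $g^{\pm\infty}\in\Lam(\genby{g^j})\subseteq\Lam(\G_S)=S$, so $S$ does not separate $g^{\pm\infty}$, and both sets in the statement omit such $S$. So assume $g^jS\neq S$ for all $j\neq 0$, equivalently $g^j\notin\G_S$ for all $j\neq 0$. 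I claim this already forces $g^{\infty}\notin S$ and $g^{-\infty}\notin S$: if $g^{\infty}\in S=\Lam(\G_S)$, then since $\G_S$ is quasiconvex in the hyperbolic group $\G$, the quasigeodesic ray $(g^m)_{m\geq 0}$ (which converges to $g^{\infty}$) lies within a bounded distance $C$ of $\G_S$; thus the infinitely many distinct elements $g^0,g^1,g^2,\dots$ all lie in the finite union $\bigcup_{|b|\leq C}b\G_S$ of left cosets, so some coset contains $g^{m_1}$ and $g^{m_2}$ with $m_1<m_2$, whence $g^{m_2-m_1}=(g^{m_1})^{-1}g^{m_2}\in\G_S$, a contradiction. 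Applying this to $g^{-1}$ gives $g^{-\infty}\notin S$ as well.

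\medskip

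\noindent\textbf{Conclusion via \cref{lem.closetoginfty}.} Now each of $g^{\infty},g^{-\infty}$ lies in $\Int\Lam(U_S)$ or in $\Int\Lam(U_S^\ast)$, so \cref{lem.closetoginfty} (applied to $g$ and to $g^{-1}$ with $s=0$) is available. For ``$S$ separates $g^{\pm\infty}$ $\Rightarrow$ $\frakh_S$ crosses $\sigma$'': assuming $g^{\infty}\in\Int\Lam(U_S)$ and $g^{-\infty}\in\Int\Lam(U_S^\ast)$, \cref{lem.closetoginfty} gives $g^m\in U_S$ for $m\gg 0$ and $g^m\in U_S^\ast$ for $m\ll 0$, so fact (b) together with the criterion of the first paragraph yields that $\frakh_S$ crosses $\sigma$. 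Conversely, suppose $\frakh_S$ crosses $\sigma$; then (first paragraph, possibly swapping the labels $\pm$) $g^mv\in\mathfrak{H}_S^{+}$ for $m\gg 0$ and $g^mv\in\mathfrak{H}_S^{-}$ for $m\ll 0$, so by the contrapositive of fact (b) we get $g^m\notin U_S^\ast$ for $m\gg 0$ and $g^m\notin U_S$ for $m\ll 0$; by \cref{lem.closetoginfty} this excludes $g^{\infty}\in\Int\Lam(U_S^\ast)$ and $g^{-\infty}\in\Int\Lam(U_S)$, and since $g^{\pm\infty}\notin S$ we are left with $g^{\infty}\in\Int\Lam(U_S)$ and $g^{-\infty}\in\Int\Lam(U_S^\ast)$, i.e.\ $S$ separates $g^{\pm\infty}$.

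\medskip

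\noindent\textbf{Where the work is.} The step needing the most care is the orientation bookkeeping in the first paragraph, namely verifying facts (b) and (c) directly from the construction of $\wtilde\calX$ in \cref{subsec.CAT(0)cc}; in particular (b) gives no information for the $g^m$ that land in $W_S$, and it is exactly (c) (together with $g^jS\neq S$) that repairs this. The one genuinely structural input is the observation in the second paragraph: the harmless-looking reduction $g^j\notin\G_S$ for $j\neq 0$ automatically places both $g^{\infty}$ and $g^{-\infty}$ off the sphere $S$, which is what makes \cref{lem.closetoginfty} applicable and dispatches what would otherwise be a delicate boundary case.
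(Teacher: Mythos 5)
Your proof is correct, but it takes a genuinely different route from the paper's, most visibly in the converse direction. The paper proves the converse geometrically: assuming $\frakh_S$ crosses $\sigma$, it establishes $d_{\wtilde\calX}(N(\frakh_S),g^mv)\to\infty$ by a contradiction argument combining convexity of the carrier, cocompactness of the $\G$-action, finite-dimensionality of $\wtilde\calX$, and a transversality lemma of Haglund--Wise; it then transfers this to $d(W_S,g^m)\to\infty$ in $\G$, which yields $g^{\pm\infty}\notin S$, and reads off from the halfspaces of $\frakh_S$ which component of $\partial\G\bs S$ contains each fixed point. You avoid all of this. You first dispose of the case where $g^j\in\G_S$ for some $j\neq 0$, observing that both sides of the equality then omit $S$; in the remaining case you obtain $g^{\pm\infty}\notin S$ by a purely group-theoretic argument (quasiconvexity of $\G_S$, stability of quasigeodesics, pigeonhole on finitely many cosets containing the tail of $(g^m)_m$), and the converse then runs essentially as the contrapositive of the forward direction via \cref{lem.closetoginfty}. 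Your forward direction also uses lighter bookkeeping: rather than the paper's device of producing some $g^i\frakh_S$ with $0\leq i\leq s$ (for $s\geq 2R$) that separates $g^{\pm m}v$, you compare the orientations $g^mv$ and $g^mw_0$ directly, using that $g^{-m}S$ avoids the finite collection of walls in $F$ once $|m|$ is large (which itself depends on the reduction $g^jS\neq S$). Both routes are sound; yours trades the paper's cube-complex lemma for a standard fact about quasiconvex subgroups of hyperbolic groups and gains symmetry between the two directions, while the paper's argument stays more self-contained within the combinatorial geometry of $\wtilde\calX$.
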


\begin{proof}
    Suppose that $S$ separates $g^{\pm \infty}$, and without loss of generality assume that $g^\infty\in \Int\Lam(U_S)$ and $g^{-\infty}\in \Int\Lam(U_S^*)$. Let $v\in \sig$ be any vertex and let $R=d_{\wtilde{\calX}}(v,v_o)$, where $v_o$ is any vertex in the canonical cube corresponding to the identity element $o$. By \cref{lem.closetoginfty} applied to $s\geq 2R$, we can find $m_0$ so that if $m\geq m_0$ then $g^m\in g^iU_S$ and $g^{-m}\in g^iU_S^\ast$ for $0\leq i \leq s$. Since $v$ and $v_o$, as orientations of $\bbW$ differ in exactly $R$ walls, by our choice of $s$ we can find some $0\leq i \leq s$ such that $g^mv(g^i\calW_S)=g^i(W_S\cup U_S)$ and $g^{-m}v(g^i\calW_S)=g^i(W_S\cup U_S^\ast)$ for all $m\geq m_0$. In particular, the hyperplane $g^i\frakh_S$ separates $g^mv$ and $g^{-m}v$. Then $g^i\frakh_S$ crosses $\sig$, and hence $\frakh_S$ also crosses $\sig$ by the $\genby{g}$-invariance of $\sig$. 

    Conversely, suppose that $\frakh_S$ crosses $\sigma$. After possibly replacing $g$ by a power, we can assume that $g$ preserves the orientation on $\sig$. We fix a vertex $v\in \sig$ such that $\frakh_S$ separates $v$ and $gv$, and let $N(\frakh_S)\subset \wtilde{\calX}$ be the carrier of $\frakh_S$. 
    
    We first claim that 
    \begin{equation}\label{eq.divergence0}
        d_{\wtilde{\calX}}(N(\frakh_S),g^mv)\to \infty \ \ \text{ as } \ \ m\to \pm\infty.
    \end{equation} Otherwise, since $N(\frakh_S)$ is convex and all the points $g^mv$ belong to $\sig$, after possibly replacing $g$ by $g^{-1}$ we can assume that $d_{\wtilde\calX}(N(\frakh_S),g^mv)=R$ is constant for all $m$ large enough. In particular, there exists some $m_0$ such that the hyperplanes $g^m\frakh_S$ and $g^{m'}\frakh_S$ are transverse for all $m,m'\geq m_0$. But $\G$ acts cocompactly on $\wtilde{\calX}$ and $\wtilde\calX$ is finite-dimensional, so for $m,m'$ large enough these transversalities are impossible by \cite[Lem.~13.13]{haglund-wise.special}.

    Now we use our claim to prove that \begin{equation}\label{eq.divergence1}
        d(W_S,g^m)\to \infty \ \ \text{ as } \ \ m \to \pm \infty,
    \end{equation} where $d$ is some word metric on $\G$ with respect to a finite generating set. Indeed, $\G_S$, being the stabilizer of $\frakh_S$ on $\G$, acts cocompactly on $\frakh_S$ and hence on $N(\frakh_S)$ \cite[Exercise~1.6]{sageev.cat0}. Similarly, $\G_S$ acts cocompactly on $W_S$, so let $K_1\subset N(\frakh_S)$ and $K_2\subset W_S$ be compact subsets such that $\G_S \cdot K_1=N(\frakh_S)$ and $\G_S\cdot K_2=W_S$. Also, since $d_{\wtilde\calX}$ is $\G$-invariant there exists $\lam>0$ such that 
    \[d_{\wtilde\calX}(h_1v,h_2v)\leq \lam d(h_1,h_2)+\lam\]
for all $h_1,h_2\in \G$. Combining these facts, for all $h\in \G$ we deduce
\begin{align*}
 d_{\wtilde\calX}(N(\frakh_S),hv)& = d_{\wtilde\calX}(\G_S \cdot K_1,hv)\\ & \leq  d_{\wtilde\calX}(\G_S \cdot v,hv) + d_{\wtilde\calX}(K_1,v)  \\
 & \leq  \lam d(\G_S,h) + \lam+d_{\wtilde\calX}(K_1,v) \\
 & \leq  \lam d(W_S,h)+\lam d(K_2,o) + \lam+d_{\wtilde\calX}(K_1,v),
\end{align*} and the conclusion follows from our previous claim. 

From \eqref{eq.divergence1} we have that  $g^{\pm \infty}\notin S=\Lam(W_S)$. Also, if $v_o\in \wtilde\calX$ is any vertex in the canonical cube corresponding to $o$, then \eqref{eq.divergence0} implies that $g^mv_o$ and $g^{-m}v_o$ lie in different halfspaces determined by $\frakh_S$ for $m$ large enough. Therefore, without loss of generality we can assume $g^m\in U_S$ and $g^{-m}\in U_S^\ast$ for all $m$ large enough. This means that $g^\infty\in \Lam(U_S)\bs S$ and $g^{-\infty}\in \Lam(U^\ast_S)\bs S$, and $S$ separates $g^\infty$ and $g^{-\infty}$, as desired.   
\end{proof}

\begin{proof}[Proof of \cref{prop.dualcubulation}]
We must verify that the action of $\G$ on $\wtilde{\calX}$ satisfies all the items of the statement, where Item (1) follows by the construction of $\bbW$ and $\wtilde{\calX}$.

To prove Item (2), let $g\in \G$ be a non-torsion element, which we can assume is primitive. Our claim is that
 \[i_\calS(\al,\eta_{[g]})=\ell_{\wtilde{\calX}}[g].\] 
By \cref{lem.criterionloxo}, the assertion is true if $\ell_{\wtilde{\calX}}[g]=0$, so we can assume that $g$ acts loxodromically on $\wtilde{\calX}$. We claim that there exists $N$ such that $g^N$ acts \emph{stably and without inversions} on $\wtilde{\calX}$, meaning that $g^N$ does not exchange the halfspaces determined by any hyperplane of $\wtilde\calX$ \cite[Def.~4.1]{haglund}. If not, then we can find an infinite sequence $(S_{(j)})_j$ of hyperspheres in $\calS$ and a sequence $(r_j)_j$ of positive integers such that $g^{(2r_1)\cdots (2r_{j-1})r_j}$ exchanges the halfspaces determined by $\frakh_{S_{(j)}}$ for each $j$. Note that all the hyperspheres $S_{(j)}$ are distinct. Since $g^{(2r_1)\cdots (2r_{j-1})(2r_j)}$ preserves $\frakh_{S_{(j)}}$, we have $g^{(2r_1)\cdots (2r_{j-1})(2r_j)}\in \G_{S_{(j)}}$ for each $j$, and indeed 
\begin{equation}\label{eq.height}
    g^{(2r_1)\cdots (2r_{j-1})(2r_j)}\in \G_{S_{(1)}}\cap \cdots \cap \G_{S_{(j)}}
\end{equation}
for each $j$. But $\G_{S_{(j)}}\neq \G_{S_{(j')}}$ for $j\neq j'$ (because $S_{(j)}\neq S_{(j')}$) and each $\G_{S_{(j)}}$ is the conjugate of a quasiconvex subgroup $\G_{j}$ for $1\leq i \leq k$, so for $j$ large enough the inclusion in \eqref{eq.height} contradicts the fact that $\{\G_1,\dots,\G_k\}$ has finite \emph{height} (see e.g.~\cite[Main Theorem]{GMRS}). Therefore, by \cite[Thm.~1.5]{haglund} there exists a bi-infinite combinatorial geodesic $\sig \subset \wtilde{\calX}^{(1)}$ that is $\genby{g^N}$-invariant. 

Let $\calF$ be a fundamental domain for the action of $\G$ on $\calI_\calS$ (recall \cref{def.cogeodesic}), and assume that if $(S,(hg^{\pm\infty},hg^{\mp \infty}))\in \calF$ for some $h\in \G$, then indeed $h(g^{\pm\infty},g^{\mp\infty})=(g^{\pm \infty},g^{\mp \infty})$. This implies $h\in \genby{g}$ by \cref{remark.primitive}.

Let $$\calA:=\{(S,(g^\infty,g^{-\infty}))\in \calF \colon S\in \calS, S \text{ separates }g^{-\infty},g^{\infty}\}$$ and $$\calB_r:=\{\genby{g^r}-\text{orbits of }S\in \calS \colon S \text{ separates }g^{-\infty},g^{\infty}\}$$    
for $r>0$, and note that 
    \begin{align*}
        i_\calS(\al,\eta_{[g]}) & =\frac{1}{2}\# \{(S,(hg^{\pm \infty},hg^{\mp \infty}))\in \calF \colon S\in \calS, S \text{ separates }hg^{-\infty},hg^{\infty}\} \\
        & = \# \{(S,(g^\infty,g^{-\infty}))\in \calF \colon S\in \calS, S \text{ separates }g^{-\infty},g^{\infty}\}= \# \calA.
    \end{align*}

    We claim that the map $\Phi: \calA \ra \calB_1$ that sends $(S,(g^\infty,g^{-\infty}))$ to the $\genby{g}$-orbit of $S$ is a bijection, so that $\#\calA =\# \calB_1$. Clearly $\Phi$ is well-defined, and if $S\in \calS$ and $S$ separates $g^{\pm\infty}$, then $h(S,(g^\infty,g^{-\infty}))\in \calF$ for some $h\in \G$, which indeed belongs to $\genby{g}$ by our assumptions on $\calF$. This means that the $\genby{g}$-orbit of $S$ equals $\Phi(hS,(g^\infty,g^{-\infty}))$, and hence $\Psi$ is surjective. To show it is injective, let $S,S'\in \calS$ separate $g^{\pm \infty}$ and such that $(S,(g^\infty,g^{-\infty})),(S',(g^\infty,g^{-\infty}))\in \calF$ and $\Psi(S,(g^\infty,g^{-\infty}))=\Psi(S',(g^\infty,g^{-\infty}))$. Then $S'=g^rS$ for some $r$ and $(S',(g^\infty,g^{-\infty}))=g^r(S,(g^\infty,g^{-\infty}))$. Since $\calF$ is a fundamental domain we deduce $r=0$, implying that $(S,(g^\infty,g^{-\infty}))=(S',(g^\infty,g^{-\infty}))$.
    
    
    To conclude the proof Item (2) we will show that $\ell_{\wtilde{\calX}}[g^N]=\#\calB_N=N\#\calB_1$. For the first equality, let $v$ be any vertex of $\sig$. By \cref{lem.Hseparates} we have
    \begin{align*}
        \#\calB_{N} & =\#\{\genby{g^N}-\text{orbits of }S\in \calS \colon S \text{ separates }g^{-\infty}, g^{\infty}\}\\
        & =\#\{\genby{g^N}-\text{orbits of }S \in \calS \colon \frakh_S \text{ crosses }\sig\}\\
        & =\#\{S \in \calS \colon \frakh_S \text{ separates }v\text{ and }g^Nv\}\\
        & =d_{\wtilde{\calX}}(v,g^Nv)=\ell_{\wtilde{\calX}}[g^N],
    \end{align*}
    where in the last equality we used that $v$ belongs to the $\langle g^N\rangle$-invariant geodesic $\sig$. For the second equality, note that if $S\in \calS$ separates $g^{-\infty},g^{\infty}$, then no power of $g$ fixes $S$, and hence the $\genby{g}$-orbit of $S$ is the disjoint union of the $\langle g^N \rangle$-orbits of $S,gS,\dots,g^{N-1}S$. These identities and our claim imply $$\ell_{\wtilde{\calX}}[g]=\frac{1}{N}\ell_{\wtilde{\calX}}[g^N]=\frac{1}{N}\# \calB_N=\# \calB_1=\#\calA=i_\calS(\al,\eta_{[g]}),$$ which proves Item (2).

    Finally, from \cite[Thm.1.5]{haglund} we have that the action of $\G$ on $\wtilde\calX$ is proper if and only if $\ell_{\wtilde\calX}[g]>0$ for all $g\neq o$, which happens if and only if $\al$ is weakly filling by Item (2). This proves Item (3).
\end{proof}

\begin{remark}\label{rmk.uniqueness}
As we see from Item (2) in \cref{prop.dualcubulation}, the translation length function $\ell_{\wtilde\calX}$ is completely determined by the discrete current $\al$, while the cubulation $\wtilde\calX$ is far from canonical. However, under some additional irreducibility assumptions, $\wtilde\calX$ is also determined by $\al$ up to a $\G$-equivariant isomorphism, see \cite{beyrer-fioravanti.1,beyrer-fioravanti.2}.
\end{remark}

\section{The 2-dimensional case}\label{sec.2-dim}

In this section we assume that $n=2$, so that $\G$ is the fundamental group of a closed surface of negative Euler characteristic, which we assume is orientable. As explained in \cref{ex.surfacegroups}, in this case co-geodesic currents are precisely the geodesic currents, and the intersection number introduced in \cref{def.cogeodesic} coincides with the usual intersection number $i:\Curr(\G) \times \Curr(\G) \ra \R$. 

A geodesic current $\eta\in \Curr(\G)$ is \emph{filling} if $i(\eta,\al)>0$ for all non-zero geodesic currents $\al$. This is consistent with \cref{def.filling-dual} (1). We let $\Curr_f(\G)$ (resp. $\bbP\Curr_f(\G)$) be the set of all (resp. projective classes of ) filling geodesic currents. If $\eta\in \Curr_f(\G)$, then there exists a pseudo metric $d_\mu\in \calD_\G$ dual to $\eta$ in the sense of \cref{def.filling-dual} (2), see \cite[Sec.~6.3]{cantrell-reyes.manhattan}. This gives us a metric structure $\rho_{[\eta]}\in \scrD_\G$ that depends only on the projective class $[\eta]\in \bbP\Curr_f(\G)$. Our results from \cref{sec.co-geodesic} imply that the metric structures $\rho_{[\eta]}$ can be approximated by cubulations.

\begin{proposition}\label{prop.generalsurfaces}
    Let $\rho_{[\eta]}\in \scrD_\G$ be a metric structure represented by the filling geodesic current $\eta\in \Curr_f(\G)$. Then there exists a sequence $(\rho_{\wtilde{\calX}_m})_m\subset \scrD_\G$ of metric structures represented by geometric actions of $\G$ on $\CAT(0)$ cube complexes $\wtilde{\calX}_n$ such that 
    \[\Del(\rho_{[\eta]},\rho_{\wtilde{\calX}_m}) \to 0 \text{ as }n \to \infty.
    \]
Moreover, we can choose each $\wtilde{\calX}_m$ to have a single $\G$-orbit of hyperplanes, all of them having infinite cyclic stabilizers. 
\end{proposition}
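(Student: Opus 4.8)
The plan is to combine the density of rational currents in $\Curr(\G)$ (Bonahon) with the duality machinery built in \cref{sec.co-geodesic}, specialized to the surface case where co-geodesic currents are genuine geodesic currents (\cref{ex.surfacegroups}). First I would invoke Bonahon's density theorem to obtain a sequence $(t_m \eta_{[g_m]})_m$ of real multiples of rational currents converging to $\eta$ in $\Curr(\G)$; since $\eta$ is filling, and filling is an open condition on $\bbP\Curr(\G)$ (a standard consequence of compactness of $\bbP\Curr(\G)$ and continuity of $i$), each $\eta_{[g_m]}$ is filling for all $m$ large enough. Discarding finitely many terms, we may assume every $\eta_{[g_m]}$ is a filling rational current; moreover, after replacing $g_m$ by its primitive root, $\eta_{[g_m]} = N_m \eta_{[h_m]}$ with $h_m$ primitive, and absorbing $N_m$ into $t_m$ we may assume each $g_m$ is primitive. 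Then $\eta_{[g_m]}$ is a discrete co-geodesic current in the sense of \cref{def.discrcurrent} whose support $\calS_m$ is a single $\G$-orbit of the pair $\{g_m^\infty, g_m^{-\infty}\}$, with (cyclic, hence quasiconvex) stabilizer $\langle g_m\rangle$.

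Next I would apply \cref{prop.dualcubulation} to each $\eta_{[g_m]}$: this yields a cocompact cubical action of $\G$ on a $\CAT(0)$ cube complex $\wtilde{\calX}_m$ with a single $\G$-orbit of hyperplanes, each hyperplane stabilizer conjugate to $\langle g_m\rangle$ (giving the "single orbit of hyperplanes with infinite cyclic stabilizers" clause), satisfying $\ell_{\wtilde{\calX}_m}[g] = i(\eta_{[g_m]}, \eta_{[g]})$ for all $g\in\G$; and since $\eta_{[g_m]}$ is filling (in particular weakly filling), Item (3) of \cref{prop.dualcubulation} guarantees the action is proper, hence geometric. So $\wtilde{\calX}_m$ carries a pseudo metric $d_m = d_{\wtilde{\calX}_m}^x$ dual to $\eta_{[g_m]}$, inducing $\rho_{\wtilde{\calX}_m}\in\scrD_\G$.

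Finally I would feed everything into \cref{prop.contcogeodesic}, with $\calS$ the full system $\partial^2\G$ of all unordered pairs of distinct boundary points (which is co-geodesic, $i_\calS = i$), $\al_m = \eta_{[g_m]}$, $\al = \eta$. We have: $d_m$ dual to $\al_m$; $[\al_m] \to [\al]$ in $\bbP\Curr(\G)$ (from the convergence $t_m\eta_{[g_m]}\to\eta$); $[\al] = [\eta]$ dual to the metric structure $\rho_{[\eta]}$ (the existence of $d_\eta\in\calD_\G$ dual to $\eta$ is cited from \cite[Sec.~6.3]{cantrell-reyes.manhattan}); and $i_\calS = i$ is globally continuous on $\Curr(\G)\times\Curr(\G)$ by Bonahon, so in particular continuous at $(\al,\eta')$ for every $\eta'$. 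Conclusion (2) of \cref{prop.contcogeodesic} then gives $\Del(\rho_{[\eta]}, \rho_{\wtilde{\calX}_m}) \to 0$, which is exactly the claim (after renaming $n\leftrightarrow m$ in the displayed limit). The main obstacle — and the only place requiring care — is verifying that the rational currents approximating $\eta$ can be taken filling and primitive so that \cref{prop.dualcubulation} applies; this is where openness of the filling locus and the reduction to primitive roots are used, but both are routine. Everything else is assembling the already-established black boxes.

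\begin{proof}
By \cref{ex.surfacegroups}, for a surface group $\G$ the space of co-geodesic currents for the system $\calS = \partial^2\G$ of all unordered pairs of distinct points in $\partial\G$ is exactly $\Curr(\G)$, and the intersection number $i_\calS$ coincides with Bonahon's $i$, which is continuous on $\Curr(\G)\times\Curr(\G)$.

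\emph{Step 1: approximating $\eta$ by primitive filling rational currents.}
By Bonahon's density theorem, there is a sequence $(t_m\eta_{[g_m]})_m$ of positive multiples of rational currents with $t_m\eta_{[g_m]}\xrightharpoonup{\ast}\eta$, hence $[\eta_{[g_m]}]\to[\eta]$ in $\bbP\Curr(\G)$. The set of filling projective currents is open in $\bbP\Curr(\G)$: if $[\eta]$ is filling, then by compactness of $\bbP\Curr(\G)$ and continuity and bilinearity of $i$ there is $\ep>0$ with $i(\eta,\beta)\geq\ep$ for all $\beta$ in a compact fundamental domain of $\bbP\Curr(\G)$ under rescaling, and this inequality persists on a neighborhood of $[\eta]$. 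Therefore $\eta_{[g_m]}$ is filling for all $m$ large; discarding the remaining finitely many indices, we assume every $\eta_{[g_m]}$ is filling. Writing $g_m = h_m^{N_m}$ with $h_m$ primitive and $N_m\geq 1$, we have $\eta_{[g_m]} = N_m\eta_{[h_m]}$, so $\eta_{[h_m]}$ is also filling; replacing $g_m$ by $h_m$ and $t_m$ by $t_mN_m$ we may assume each $g_m$ is primitive, while still $t_m\eta_{[g_m]}\to\eta$.

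\emph{Step 2: dual cubulations.}
For each $m$, the current $\al_m := \eta_{[g_m]}$ is a discrete co-geodesic current in the sense of \cref{def.discrcurrent}: its support $\calS_m = \calS_{[g_m]}$ is a single $\G$-orbit of the $0$-sphere $\{g_m^\infty, g_m^{-\infty}\}\subset\partial\G$, and the set-wise stabilizer of this pair is $\langle g_m\rangle$ by \cref{remark.primitive}, which is infinite cyclic and quasiconvex. Applying \cref{prop.dualcubulation}, we obtain a cocompact cubical action of $\G$ on a $\CAT(0)$ cube complex $\wtilde{\calX}_m$ with: a single $\G$-orbit of hyperplanes, each with stabilizer conjugate to $\langle g_m\rangle$ (Item (1)); the duality identity $\ell_{\wtilde{\calX}_m}[g] = i(\al_m, \eta_{[g]})$ for all $g\in\G$ (Item (2)); and, since $\al_m$ is filling (in particular weakly filling), properness of the action (Item (3)). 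Thus the action is geometric, and for any basepoint $x$ the pseudo metric $d_m := d_{\wtilde{\calX}_m}^x$ lies in $\calD_\G$ and is dual to $\al_m$, inducing a metric structure $\rho_{\wtilde{\calX}_m}\in\scrD_\G$.

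\emph{Step 3: convergence of metric structures.}
Since $\eta\in\Curr_f(\G)$, there is a pseudo metric $d_\eta\in\calD_\G$ dual to $\eta$ by \cite[Sec.~6.3]{cantrell-reyes.manhattan}, giving a metric structure $\rho_{[\eta]} = [d_\eta]\in\scrD_\G$ dual to $[\eta]$. We now apply \cref{prop.contcogeodesic} with the co-geodesic system $\calS = \partial^2\G$, the pseudo metrics $d_m$ dual to $\al_m$, and the fact that $[\al_m]\to[\eta]$ in $\bbP\calS\Curr(\G) = \bbP\Curr(\G)$, that $[\eta]$ is dual to $\rho_{[\eta]}\in\scrD_\G$, and that $i_\calS = i$ is continuous at $(\eta,\eta')$ for every $\eta'\in\Curr(\G)$. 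Conclusion (2) of \cref{prop.contcogeodesic} yields $\Del(\rho_{[\eta]}, \rho_{\wtilde{\calX}_m})\to 0$, which is the assertion of the proposition, the approximating complexes $\wtilde{\calX}_m$ having a single $\G$-orbit of hyperplanes with infinite cyclic stabilizers as noted in Step 2.
\end{proof}
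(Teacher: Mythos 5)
Your proof is correct and follows essentially the same route as the paper's: Bonahon density to produce approximating rational multiples, a filling argument, \cref{prop.dualcubulation} to produce the dual cubulations, and \cref{prop.contcogeodesic} to upgrade projective convergence of currents to convergence of metric structures. The only differences are cosmetic: you establish eventual fillingness directly via openness of the filling locus, whereas the paper simply invokes Conclusion (1) of \cref{prop.contcogeodesic}; and you make explicit the reduction to primitive $g_m$ before applying \cref{prop.dualcubulation}, which the paper handles implicitly by recording that the hyperplane stabilizers are merely \emph{commensurable} to conjugates of $\langle g_m\rangle$ rather than equal to them. Both refinements are reasonable but do not change the structure of the argument.
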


\begin{proof}
   Suppose $\eta\in \Curr_f(\G)$ is a filling current. Then by \cite[Prop.~2]{bonahon}, there is a sequence $(t_m\eta_{[g_m]})_m$ of real multiples of rational currents converging to $\eta$. 
   By \cref{prop.contcogeodesic} (1) and by continuity of the usual intersection number, the sequence $(\eta_{[g_m]})_m$ must be eventually filling. Hence for $m$ large enough, the currents $\eta_{[g_m]}$ determine geometric actions of $\G$ on a $\CAT(0)$ cube complex $\wtilde{\calX}_m$ by \cref{prop.dualcubulation} (2). The corresponding metric structures $\rho_{\wtilde{\calX}_m}$ then converge to $\rho_{[\eta]}$ by \cref{prop.contcogeodesic} (2). It follows from \cref{prop.dualcubulation} (1) that each $\wtilde{\calX}_m$ has a single orbit of hyperplanes, and that the stabilizers of these hyperplanes are commensurable to conjugates of the cyclic subgroup $\langle g_m\rangle$.
\end{proof}

From \cref{prop.generalsurfaces} we deduce \cref{thm.2-dimensions} from the introduction. 




\begin{proof}[Proof of \cref{thm.2-dimensions}] 

Let $M$ be a closed negatively curved Riemannian surface with fundamental group $\G$, and let $\al_M\in \Curr(\G)$ be the associated \emph{Liouville current} \cite{Otal}. This current is dual to the metric structure in $\scrD_\G$ induced by the action of $\G$ on $\wtilde{M}$ by Deck transformations. The current $\al_M$ is filling, and hence the result follows from \cref{prop.generalsurfaces}.
\end{proof}

We also deduce \cref{coro.fillingcurrents=cubulations}.

\begin{proof}[Proof of \cref{coro.fillingcurrents=cubulations}]  
    By \cref{prop.generalsurfaces}, it is enough to prove that $\bbP\Curr_f(\G)$ is a closed subspace of $\scrD_\G$ containing all the metric structures induced by geometric cubulations with cyclic hyperplane stabilizers. 

    To see that $\bbP\Curr_f(\G)$ is closed, let $(\eta_m)_m\in \Curr_f(\G)$ be a sequence of filling currents with $\rho_{[\eta_m]}$ converging to $\rho$ in $\scrD_\G$. Up to taking a subsequence, we can assume that $[\eta_m]\to [\eta_\infty]$ in $\bbP\Curr(\G)$, and hence \cref{prop.contcogeodesic} and the continuity of the intersection number imply that $\rho=\rho_{[\eta_\infty]}$. \cref{rmk.filling=proper} then implies that $\eta_{\infty}$ is filling and $\rho\in \bbP\Curr_f(\G)$. 

    Finally, let $\G$ act geometrically on the $\CAT(0)$ cube complex $\wtilde\calX$ with cyclic hyperplane stabilizers. If $\frakh\subset \wtilde\calX$ is a hyperplane, we let $g_{\frakh}\in \G$ be a primitive element such that $\langle g_\frakh\rangle$ and the stabilizer of $\frakh$ are commensurable. Note that we may have $\langle g_\frakh \rangle=\langle g_{\frakh'}\rangle$ for $\frakh \neq \frakh'$. We define the geodesic current 
    \[\alpha_{\wtilde\calX}:=\sum_{\frakh}{(\del_{\{g_{\frakh}^\infty,g_{\frakh}^{-\infty}\}}+\del_{\{g_{\frakh}^{-\infty},g_{\frakh}^{\infty}\}})}\] on $\calG$, 
    where the sum runs over all the hyperplanes $\frakh$ of $\wtilde\calX$. As in the proof of \cref{prop.dualcubulation} it can be verified that $i(\al_{\wtilde\calX},\eta_{[g]})=\ell_{\wtilde\calX}[g]$ for all $g\in \G$, and hence $\rho_{\wtilde\calX}=\rho_{[\al_{\wtilde{\calX}}]}\in \bbP\Curr_f(\G)$.
\end{proof}

We now survey the many classes of metric structures on a surface group that can be represented by a filling geodesic current, and hence satisfy the hypotheses for \cref{prop.generalsurfaces}.



\begin{example}[Non-positively curved metrics on surfaces]\label{ex.nonpossurfaces}
  Let $M$ be a closed surface with fundamental group $\G$. We say that a metric $\frakg$ on $M$ is a \emph{non-positively curved cone metric} if it has a finite set of singularities of cone-type, each of which has an angle strictly larger than $2\pi$, and away from these singularities, the metric is Riemannian and non-positively curved. In \cite{constantine2018marked}, Constantine describes a geodesic current $\al_{\frakg}\in \Curr(\G)$ dual to $\frakg$: the identity $i(\al_\frakg,\eta_{[g]})=\ell_\frakg[g]$ holds for all $g\in \G$, where $\ell_{\frakg}$ is the translation length function associated to the action of $\G$ on the universal cover $\wtilde{M}$ equipped with the pullback metric of $\frakg$. The current $\al_\frakg$ is filling, so that \cref{prop.generalsurfaces} applies and we obtain the following.

  \begin{corollary}\label{coro.NPC}
Let $\frakg$ be a non-positively curved cone metric on the closed surface $M$ with fundamental group $\G$, and let $\wtilde{M}_\frakg$ be the universal cover of $M$ equipped with the pullback metric. Then for any $\lam>1$, there exists a geometric action of $\G$ on a $\CAT(0)$ cube complex $\wtilde{\calX}$ and a $\G$-equivariant $\lam$-quasi-isometry from $\wtilde{\calX}$ to $\wtilde{M}_{\frakg}$.
\end{corollary}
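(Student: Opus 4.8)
The plan is to deduce the statement directly from \cref{prop.generalsurfaces}: the only ingredient special to cone metrics is that such a metric admits a filling dual geodesic current, which is precisely Constantine's theorem \cite{constantine2018marked}, and everything else is the dictionary between geodesic currents, metric structures, and $\lam$-quasi-isometries developed above.

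First I would record the two metric structures in play. Since $\frakg$ is a length metric on the closed surface $M$, the universal cover $\wtilde{M}_\frakg$ with the pulled-back length metric is a complete, locally compact, geodesic metric space carrying a proper cocompact action of $\G$ by deck transformations; by the Milnor--Schwarz lemma it is $\G$-equivariantly quasi-isometric to $\G$, hence Gromov hyperbolic (hyperbolicity being a quasi-isometry invariant of geodesic spaces), so the pseudo metric $(g,h)\mapsto d_{\wtilde{M}_\frakg}(g\tilde{x},h\tilde{x})$ lies in $\calD_\G$ and defines a metric structure $\rho_{\wtilde{M}_\frakg}\in\scrD_\G$ whose translation length function is $\ell_\frakg$. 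On the other hand, by \cite{constantine2018marked} there is a geodesic current $\al_\frakg\in\Curr(\G)$ with $i(\al_\frakg,\eta_{[g]})=\ell_\frakg[g]$ for all $g\in\G$, and $\al_\frakg$ is filling; as recalled at the beginning of this section, a filling current admits a dual pseudo metric in $\calD_\G$ and hence defines a metric structure $\rho_{[\al_\frakg]}\in\scrD_\G$. These two pseudo metrics have the same marked length spectrum, so the formula for $\exp\Del$ from \cite[Prop.~3.5]{oregon-reyes.ms} (the identity used in the proof of \cref{prop.contcogeodesic}) gives $\Del(\rho_{[\al_\frakg]},\rho_{\wtilde{M}_\frakg})=0$, and since $\Del$ is a metric we conclude $\rho_{[\al_\frakg]}=\rho_{\wtilde{M}_\frakg}$.

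Next I would apply \cref{prop.generalsurfaces} to the filling current $\eta=\al_\frakg$, obtaining geometric actions of $\G$ on $\CAT(0)$ cube complexes $\wtilde{\calX}_m$ (each with a single $\G$-orbit of hyperplanes, all with infinite cyclic stabilizers) such that $\Del(\rho_{[\al_\frakg]},\rho_{\wtilde{\calX}_m})\to 0$. Given $\lam>1$, fix $m$ with $\Del(\rho_{\wtilde{M}_\frakg},\rho_{\wtilde{\calX}_m})<\log\lam$. Since $\wtilde{M}_\frakg$ and $\wtilde{\calX}_m^{(1)}$ are geodesic spaces carrying geometric $\G$-actions, $\Del(\rho_{\wtilde{M}_\frakg},\rho_{\wtilde{\calX}_m})$ equals the infimum of $\log\mu$ over $\G$-equivariant $\mu$-quasi-isometries $\wtilde{\calX}_m\to\wtilde{M}_\frakg$ (as recalled when $\Del$ was introduced); hence such a $\mu$-quasi-isometry exists with $\mu<\lam$, which is a fortiori a $\lam$-quasi-isometry. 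Setting $\wtilde{\calX}=\wtilde{\calX}_m$ completes the proof.

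The one substantive ingredient is the cited fact that Constantine's cone current $\al_\frakg$ is filling — this is where the hypothesis that every cone angle strictly exceeds $2\pi$ enters, ruling out vanishing of $i(\al_\frakg,\cdot)$ on currents carried by geodesics through the singular set — and for the present corollary it is quoted rather than reproved; the remaining steps are formal manipulations with $\Del$ and the outputs of \cref{sec.co-geodesic,sec.2-dim}.
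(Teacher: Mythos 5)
Your proposal is correct and follows exactly the same route as the paper: the paper's Example~\ref{ex.nonpossurfaces} notes that Constantine's current $\al_\frakg$ is filling and dual to $\frakg$, then invokes \cref{prop.generalsurfaces} to deduce the corollary. You have merely spelled out the dictionary (identifying $\rho_{[\al_\frakg]}$ with $\rho_{\wtilde{M}_\frakg}$ via the shared marked length spectrum, and translating $\Del$-convergence into $\lam$-quasi-isometries) that the paper leaves implicit, so there is nothing to add.
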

\end{example}

\begin{example}[Positively ratioed representations]\label{ex.positiveratioed}
Let $G$ be a real, connected, non-compact, semisimple, linear Lie group, and let $[P]$ be the conjugacy class of a parabolic subgroup $P <G$. For an arbitrary hyperbolic group $\G$, there is a notion of $[P]$-\emph{Anosov representation} $\pi:\G \ra G$, extending the class of convex-cocompact representations into rank-1 semisimple Lie groups. See for instance \cite[Sec.~4]{kassel.ICM}.

The conjugacy class $[P]$ corresponds to a subset $\theta$ of the set of restricted simple roots $\Del$ of $G$. For a given $[P]$-Anosov representation $\pi:\G \ra G$ and each $\al\in \theta$, one can define a length functional \[\ell^\pi_\al:\G \ra \R_{\geq 0},\] 
that assigns for each $g\in \G$ the logarithm of the diagonal matrix of eigenvalues of $\pi(g)$ composed with $\al + \iota(\al)$, where $\iota$ denotes the opposite involution on $\Del$. 

Suppose now that $\G$ is a hyperbolic surface group. In \cite{martone2019positively}, Martone and Zhang introduced the notion of a \emph{positively ratioed} representation. These are a class of $[P]$-Anosov representations $\pi:\G \ra G$ dual to geodesic currents in the following sense: if $[P]$ corresponds to $\theta \subset \Del$ and $\al\in \theta$, then there exists a geodesic current $\mu^\pi_\al\in \Curr(\G)$ 
such that 
\[i(\mu^\pi_\al,\eta_{[g]})=\ell_{\al}^\pi[g]\]
for all $g\in \G$ \cite[Def.~2.25]{martone2019positively}. As noted in \cite[Lem.~2.13]{derosa-martinezgranado}, the geodesic currents $\mu_\al^\pi$ are non-atomic and have full support, so in particular they are filling. Hence in light of \cref{prop.generalsurfaces}, positively ratioed representations are approximable by cubulations.

Examples of positively ratioed representations are \emph{Hitchin representations} \cite[Sec.~3.1]{martone2019positively}. A representation of $\G$ into $\PSL(m,\R)$ is $m$-\emph{Fuchsian} if it is the composition of a Fuchsian representation of $\G$ into $\PSL(2,\R)$ and an irreducible representation of $\PSL(2,\R)$ into $\PSL(m,\R)$. A representation $\pi :\G \ra \PSL(m,\R)$ is Hitchin if
lies in the same connected component of the character variety as an $m$-Fuchsian representation. Hitchin representations are $[P]$-positively ratioed for $[P]$ corresponding to $\theta=\Del$ \cite[Thm.~3.4]{martone2019positively}. 

In particular, if $\ell_\pi:\G \ra \R$ assigns to $g\in \G$ the sum of the logarithms of the maximal and minimal absolute values of eigenvalues of $\pi(g)$, then there exists a filling current $\mu^\pi\in \Curr(\G)$ dual to $\ell_\pi$. Moreover, $\ell_\pi$ is the stable translation length function of the pseudo metric on $\G$ given by $d^\pi(g,h)= \log(\|\pi(g^{-1}h)\|\|\pi(h^{-1}g)\|)$, for $\|\cdot \|$ an arbitrary norm on $\R^m$. This pseudo metric belongs to $\calD_\G$ (see e.g.~\cite[Lem.~3.14]{cantrell-reyes.manhattan}), and hence induces a metric structure $\rho_\pi\in \scrD_\G$. Applying \cref{prop.generalsurfaces} to $\mu^\pi$, we obtain the following.

\begin{corollary}\label{coro.hitchin}
Let $\pi:\G \ra \PSL(m,\R)$ be a Hitchin representation, and let $\rho_\pi=[d^\pi]\in \scrD_\G$ be the metric structure induced by $\pi$ as above. Then there exists a sequence of metric structures induced by geometric actions of $\G$ on $\CAT(0)$ cube complexes that converges to $\rho_\pi$ in $\scrD_\G$. 
\end{corollary}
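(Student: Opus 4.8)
The plan is to deduce this corollary directly from \cref{prop.generalsurfaces} applied to the filling current $\mu^\pi \in \Curr_f(\G)$ associated to the Hitchin representation $\pi$ in the discussion preceding the statement. The only nontrivial point is to identify the metric structure $\rho_{[\mu^\pi]}$ produced by \cref{prop.generalsurfaces} with the metric structure $\rho_\pi = [d^\pi]$ determined by $\pi$.

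First I would recall that, since $\pi$ is Hitchin, it is $[P]$-positively ratioed for $[P]$ corresponding to $\theta = \Del$, so by \cite[Thm.~3.4]{martone2019positively} (combined with \cite[Lem.~2.13]{derosa-martinezgranado}) there is a non-atomic, full-support current $\mu^\pi \in \Curr(\G)$ with $i(\mu^\pi, \eta_{[g]}) = \ell^\pi[g]$ for all $g \in \G$; in particular $\mu^\pi$ is filling. Next, as noted in the surrounding text, $\ell^\pi$ is the stable translation length function of the pseudo metric $d^\pi(g,h) = \log(\|\pi(g^{-1}h)\| \, \|\pi(h^{-1}g)\|)$, and $d^\pi \in \calD_\G$ (e.g.\ by \cite[Lem.~3.14]{cantrell-reyes.manhattan}), so $\rho_\pi = [d^\pi]$ is a well-defined point of $\scrD_\G$.

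Then I would invoke the definition of $\rho_{[\mu^\pi]}$ from the paragraph preceding \cref{prop.generalsurfaces}: there is a pseudo metric $d_{\mu^\pi} \in \calD_\G$ dual to $\mu^\pi$, meaning $\ell_{d_{\mu^\pi}}[g] = i(\mu^\pi, \eta_{[g]}) = \ell^\pi[g] = \ell_{d^\pi}[g]$ for all $g \in \G$. Since a pseudo metric in $\calD_\G$ is determined up to rough similarity by its stable translation length function (this is the marked-length-spectrum rigidity for hyperbolic groups underlying the metric structure formalism, cf.\ \cite{oregon-reyes.ms,cantrell-reyes.manhattan}), it follows that $[d_{\mu^\pi}] = [d^\pi]$, i.e.\ $\rho_{[\mu^\pi]} = \rho_\pi$ in $\scrD_\G$. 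Applying \cref{prop.generalsurfaces} to $\mu^\pi$ now yields a sequence $(\rho_{\wtilde\calX_m})_m$ of metric structures induced by geometric actions of $\G$ on $\CAT(0)$ cube complexes with $\Del(\rho_\pi, \rho_{\wtilde\calX_m}) \to 0$, which is the claim.

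There is no real obstacle here beyond the bookkeeping: the content is entirely in \cref{prop.contcogeodesic} and \cref{prop.dualcubulation}, packaged through \cref{prop.generalsurfaces}, together with the external input that Hitchin representations are positively ratioed with non-atomic full-support dual currents. The one step to state carefully is the identification $\rho_{[\mu^\pi]} = \rho_\pi$, which rests on the fact that two elements of $\calD_\G$ with the same translation length function are roughly similar; this is standard but should be cited explicitly.
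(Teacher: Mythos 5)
Your proposal is correct and follows essentially the same route as the paper: the paper obtains the corollary by applying \cref{prop.generalsurfaces} to the filling current $\mu^\pi$ furnished by the positively ratioed property of Hitchin representations, with the identification $\rho_{[\mu^\pi]}=\rho_\pi$ implicit because both classes have the same stable translation length function (so $\Del=0$ by \cite[Prop.~3.5]{oregon-reyes.ms}). Your only addition is to spell out that identification explicitly, which is fine.
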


Another class of positively ratioed representations is that of \emph{maximal representations}. These representations are studied in the context \emph{Hermitian} Lie groups, which are groups whose corresponding symmetric space admits a complex structure. Such Lie groups include all symplectic groups $\PSp(2m,\R)$. 
Given a surface group representation $\pi:\G \ra G$ into a Hermitian Lie group, we can define the \emph{Toledo invariant}, an integer that is locally constant with respect to deformations \cite[Sec.~3.2]{martone2019positively}. Hence, each component of the character variety is assigned a number. The Toledo invariant is bounded above by the product of the absolute value of the Euler characteristic of the surface group and the rank of the symmetric space associated to $G$, and a representation $\pi:\G \ra G$ is \emph{maximal} when its Toledo invariant achieves this number in absolute value.


If $\pi:\G \ra\PSp(2m,\R)$ is a maximal representation, then it is $[P]$-Anosov and positively ratioed for the appropriate conjugacy class $[P]$ \cite[Cor.~3.11]{martone2019positively}. This implies that the length function $\ell^\pi:\G \ra \R$ given by 
\[\ell^\pi[g]= \log \lam_1(\pi(g))+\cdots+ \log\lam_m(\pi(g))\]
is dual to a geodesic current $\mu^\pi$, where $\lam_1(A),\dots,\lam_m(A)$ denote the $m$ largest absolute values of eigenvalues of $A\in \PSp(2m,\R)$. As in the case of Hitchin representations, from \cite[Lem.~3.14]{cantrell-reyes.manhattan} we deduce that $\ell^\pi$ is the translation length function of a pseudo metric $d^\pi$ in $\calD_\G$, and again we obtain a metric structure $\rho_\pi\in \scrD_\G$. \cref{prop.generalsurfaces} then implies the following.

\begin{corollary}\label{coro.maximal}
    Let $\pi:\G \ra \PSp(2m,\R)$ be a maximal representation, and let $\rho_\pi=[d^\pi]\in \scrD_\G$ be the metric structure induced by $\pi$. Then there exists a sequence of metric structures induced by geometric actions of $\G$ on $\CAT(0)$ cube complexes that converges to $\rho_\pi$ in $\scrD_\G$. 
\end{corollary}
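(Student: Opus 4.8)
The plan is to transcribe the argument used for \cref{coro.hitchin}: exhibit a \emph{filling} geodesic current representing $\rho_\pi$, and then invoke \cref{prop.generalsurfaces}. The only step that calls for any thought is identifying that current; everything else is formal.

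First I would assemble the structural input already recorded in the discussion preceding the statement. By \cite[Cor.~3.11]{martone2019positively}, a maximal representation $\pi\colon\G\to\PSp(2m,\R)$ is $[P]$-Anosov and positively ratioed for the conjugacy class $[P]$ of the parabolic stabilizing a Lagrangian subspace, whose associated subset $\theta\subset\Del$ is such that the eigenvalue functional $\ell^\pi[g]=\sum_{j=1}^m\log\lam_j(\pi(g))$ of the statement is a nonnegative linear combination $\ell^\pi=\sum_{\al\in\theta}c_\al\,\ell^\pi_\al$ of the Anosov length functionals (after unwinding the normalizations of \cite{martone2019positively}). Since $\pi$ is positively ratioed, each $\ell^\pi_\al$ with $\al\in\theta$ equals $i(\mu^\pi_\al,\eta_{[\cdot]})$ for a geodesic current $\mu^\pi_\al\in\Curr(\G)$; setting $\mu^\pi:=\sum_{\al\in\theta}c_\al\,\mu^\pi_\al$ and using the $\R_+$-bilinearity of the intersection number gives $i(\mu^\pi,\eta_{[g]})=\ell^\pi[g]$ for all $g\in\G$, which is exactly the duality asserted above the corollary.

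Next I would verify $\mu^\pi\in\Curr_f(\G)$. By \cite[Lem.~2.13]{derosa-martinezgranado}, each $\mu^\pi_\al$ is non-atomic with full support on $\calG$, so the finite nonnegative combination $\mu^\pi$ also has full support, and any geodesic current with full support is filling. Simultaneously, \cite[Lem.~3.14]{cantrell-reyes.manhattan} shows that $\ell^\pi$ is the stable translation length function of a pseudo metric $d^\pi\in\calD_\G$, so $\rho_\pi=[d^\pi]$ is a well-defined metric structure and $\ell_{d^\pi}=\ell^\pi=i(\mu^\pi,\eta_{[\cdot]})$; that is, $d^\pi$ is dual to $\mu^\pi$ and $\rho_\pi=\rho_{[\mu^\pi]}$ in $\scrD_\G$. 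Applying \cref{prop.generalsurfaces} to the filling current $\mu^\pi$ then produces a sequence of metric structures induced by geometric actions of $\G$ on $\CAT(0)$ cube complexes — each with a single $\G$-orbit of hyperplanes, all having infinite cyclic stabilizers — that converges to $\rho_\pi$ in $(\scrD_\G,\Del)$, which is the assertion.

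I do not anticipate a genuine obstacle, as the corollary is a direct parallel of \cref{coro.hitchin}. The one delicate point is the bookkeeping in the first paragraph: correctly matching the eigenvalue functional $g\mapsto\sum_{j=1}^m\log\lam_j(\pi(g))$ with a \emph{nonnegative} combination of the $\ell^\pi_\al$'s for the parabolic class $[P]$ relative to which $\PSp(2m,\R)$-maximal representations are positively ratioed, so that $\mu^\pi$ is a bona fide positive geodesic current and additivity of the intersection number applies. With that identification in place, the filling property and the reduction to \cref{prop.generalsurfaces} are immediate.
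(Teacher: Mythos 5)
Your overall plan mirrors the paper's: exhibit a filling geodesic current dual to $\ell^\pi$ and invoke \cref{prop.generalsurfaces}. That is exactly what the text surrounding \cref{coro.maximal} does. The weak link is precisely the step you flag as delicate, and I believe as written it does not go through.

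You claim that $\ell^\pi[g]=\sum_{j=1}^m\log\lambda_j(\pi(g))$ is a nonnegative linear combination $\sum_{\al\in\theta}c_\al\,\ell^\pi_\al$. For a maximal representation into $\PSp(2m,\R)$, the relevant parabolic stabilizes a Lagrangian, so $\theta$ is the \emph{singleton} $\{\al_m\}$ (the long simple root $2e_m$ in type $C_m$). With the paper's own convention for $\ell^\pi_\al$ --- composition of the Jordan projection with $\al+\iota(\al)$, and $\iota=\mathrm{id}$ in type $C$ --- the only available functional is $\ell^\pi_{\al_m}\propto\log\lambda_m(\pi(g))$. This cannot be proportional to $\sum_{j=1}^m\log\lambda_j(\pi(g))$ for a generic maximal representation, so the proposed decomposition is not a bookkeeping matter that "unwinds": it fails. (Contrast with the Hitchin case, where $\theta=\Del$ and $\ell_\pi=\frac12\sum_{\al\in\Del}\ell^\pi_\al$ really does hold, which is why the analogous step there is unproblematic.)

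What the paper actually does is cite Martone--Zhang directly for the assertion that $\ell^\pi$ (equivalently, the $\omega_m$-weight length, where $\omega_m=e_1+\cdots+e_m$) is the period of an intersection pairing against some geodesic current $\mu^\pi$; this comes out of their cross-ratio/positively-ratioed machinery for the Shilov boundary, not from assembling the root-length currents $\mu^\pi_\al$. Once one grants the existence of that $\mu^\pi$ and that it is non-atomic with full support (hence filling, via the cited \cite[Lem.~2.13]{derosa-martinezgranado}), the rest of your argument --- \cite[Lem.~3.14]{cantrell-reyes.manhattan} to place $d^\pi$ in $\calD_\G$ and \cref{prop.generalsurfaces} to finish --- is exactly the paper's. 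So the fix is to drop the decomposition and instead quote Martone--Zhang for the dual current to $\ell^\pi$ directly, as the paper does.
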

\end{example}

\section{Currents on quasiconformal spheres}\label{sec.currentsQC}

In this section, we assume that $n\geq 3$ and $\G< \SO^+(n,1)$ is a torsion-free uniform lattice. Then $\Hy^n=\SO^+(n,1)/\SO(n)$ and $M=\G \bs \Hy^n$ is a closed hyperbolic $n$-manifold with fundamental group $\G$. Note that $\partial \G=\partial \Hy^n=\bbS^{n-1}$ is the $(n-1)$-dimensional sphere with the standard conformal structure. The goal of this section is to construct systems of spheres at infinity for $\G$ given by quasiconformal hyperspheres in $\bbS^{n-1}$. The main results of the section are \cref{prop.QCKcogeodesic}, which certifies that these systems are co-geodesic, and \cref{prop.criterioncontinuous}, which is a criterion for continuity of the intersection number. We also describe Haar co-geodesic currents, which are dual to the standard lattice action of $\G$ on $\Hy^n$. We will work with $K$-\emph{quasiconformal maps} on $\bbS^{n-1}$ for which we refer the reader to \cite[Ch.~22]{drutu-kapovich}.

\subsection{Co-geodesic currents from quasiconformal hyperspheres}

\begin{definition}
By a \emph{round (hyper)sphere} in $\bbS^{n-1}$, we mean the limit set of a totally geodesic copy of $\Hy^{n-1}$ embedded in $\Hy^n$. Let $\calQC_1=\calQC_1^n$ be the set of all round spheres in $\bbS^{n-1}$. More generally, given $K\geq 1$, a $K$-\emph{quasiconformal (hyper)sphere} in $\bbS^{n-1}$ is the image of a round sphere under a $K$-quasiconformal homeomorphism of $\bbS^{n-1}$. We let $\calQC_K=\calQC_K^n$ be the space of all the $K$-quasiconformal spheres in $\bbS^{n-1}$. 
All these spaces are endowed with the subspace (Hausdorff) topology from $\calC_{\geq 2}(\partial \G)$, so they naturally carry topological $\SO^+(n,1)$-actions. 
\end{definition}

By Gehring’s Theorem \cite[Thm.~22.31]{drutu-kapovich}, the notation $\calQC_1$ is consistent, since 1-quasiconformal maps on $\bbS^{n-1}$ are conformal. Our next lemma asserts that the spaces $\calQC_K$ are systems of spheres at infinity.

\begin{lemma}\label{lem.QCKsystemofspheres}
For every $K\geq 1$, the set $\calQC_K$ is a closed subset of $\calC_{\geq 2}(\partial \G)$. In particular, $\calQC_K$ is a system of spheres at infinity for $\G$.
\end{lemma}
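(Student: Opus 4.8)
The plan is to verify the two defining conditions for a system of hyperspheres (Definition~\ref{def.systemspheres}): that each element of $\calQC_K$ is homeomorphic to $S^{n-2}$, and that $\calQC_K$ is closed in $\calC_{\geq 2}(\partial\G)$; the $\G$-invariance is immediate since $\G<\SO^+(n,1)$ acts by conformal (hence $1$-quasiconformal) maps, which preserve the class of $K$-quasiconformal spheres. The first point is essentially by definition: a round sphere is the limit set of a totally geodesic $\Hy^{n-1}\subset\Hy^n$, hence a topological $(n-2)$-sphere, and applying a homeomorphism of $\bbS^{n-1}$ preserves this. So the entire content of the lemma is the closedness statement, and the last sentence (``In particular\dots'') then follows formally from Definition~\ref{def.systemspheres} together with the already-established fact that $\calC_{\geq 2}(\partial\G)$ is locally compact metrizable.

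For closedness, I would argue by sequential compactness. Suppose $(S_j)_j\subset\calQC_K$ with $S_j\to S$ in the Hausdorff topology on $\calC_{\geq 2}(\bbS^{n-1})$; I must show $S\in\calQC_K$. Write $S_j=f_j(S_0)$ for a fixed round sphere $S_0$ (say the standard equatorial $\bbS^{n-2}$) and $K$-quasiconformal homeomorphisms $f_j$ of $\bbS^{n-1}$. The key is a normalization/precompactness step: by post-composing each $f_j$ with a suitable Möbius transformation of $\bbS^{n-1}$ (which is conformal, so does not change the quasiconformality constant and sends round spheres to round spheres), I may assume the $f_j$ fix three prescribed points in general position, or equivalently normalize so that the $S_j$ all pass through three fixed points of $\bbS^{n-1}$ in ``general position''. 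Here one must be slightly careful: the Hausdorff limit $S$ could a priori be degenerate, but normalizing the three points to be spread out on the limiting sphere rules this out. Under such a normalization, the standard compactness theory of $K$-quasiconformal mappings — the family of normalized $K$-quasiconformal self-homeomorphisms of $\bbS^{n-1}$ is precompact in the uniform topology, and a locally uniform limit of $K$-quasiconformal homeomorphisms is either a constant or a $K$-quasiconformal homeomorphism (see \cite[Ch.~22]{drutu-kapovich}, e.g. the results around Gehring's theorem) — yields a subsequence $f_{j}\to f$ uniformly with $f$ a $K$-quasiconformal homeomorphism of $\bbS^{n-1}$ (the normalization at three points excludes the constant limit). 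Uniform convergence of $f_j\to f$ on the compact set $S_0$ gives $f_j(S_0)\to f(S_0)$ in the Hausdorff metric, so $S=f(S_0)\in\calQC_K$, as desired. Undoing the Möbius normalization at the end (it was applied uniformly, or can be extracted along a further subsequence by compactness of the Möbius group action on triples of distinct points) completes the argument.

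The main obstacle is the normalization step and making sure the limiting map is a genuine homeomorphism rather than collapsing. Concretely: one needs the three normalization points to remain in ``general position'' in the limit, which should be arranged by choosing them (after passing to a subsequence) to converge to three distinct points lying on the Hausdorff-limit set $S$ — using that $S$ has at least two points and in fact, being a Hausdorff limit of $(n-2)$-spheres with $n\geq 3$, is not a single point; for $n=3$ where $(n-2)$-spheres are pairs of points a direct ad hoc argument suffices, and for $n\geq 4$ one uses connectedness of the $S_j$. Once non-degeneracy of the limit is secured, the quasiconformal compactness is a black box from \cite{drutu-kapovich}. I would also remark that the conclusion ``$\calQC_K$ is a system of spheres at infinity'' is exactly Items i)--iii) of Definition~\ref{def.systemspheres}: i) was checked above, ii) is $\SO^+(n,1)$-invariance hence in particular $\G$-invariance, and iii) is the closedness just proved (note $\calQC_K\subset\calC_{\geq 2}(\partial\G)$ since round spheres, and their quasiconformal images, have more than one point).
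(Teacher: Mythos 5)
Your approach---writing each $S_j=f_j(S_0)$, normalizing to apply the precompactness of $K$-quasiconformal self-maps of $\bbS^{n-1}$ together with the fact that a uniform limit of normalized $K$-quasiconformal homeomorphisms is again a $K$-quasiconformal homeomorphism (the paper cites \cite[Thm.~22.34]{drutu-kapovich}), and passing to the uniform limit on $S_0$---is essentially the paper's proof. One small difference: the paper \emph{precomposes} each $\phi_m$ with a conformal self-map of $\bbS^{n-1}$ preserving $S_0$, which leaves $S_m=\phi_m(S_0)$ unchanged and so avoids your additional step of extracting a convergent subsequence of Möbius normalizers and ``undoing'' the post-composition at the end; the precomposition route is cleaner for exactly this reason. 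Also note a dimension slip: for $n=3$ the $(n-2)$-spheres in $\calQC_K$ are circles ($1$-spheres), not pairs of points (that would be $n=2$, excluded from this section), so connectedness of the $S_j$ rules out a degenerate Hausdorff limit uniformly for all $n\geq 3$ and no separate ad hoc argument is needed.
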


\begin{proof}
Each $S\in \calQC_K$ is homeomorphic to $S^{n-2}$, and clearly $\calQC_K$ is $\G$-invariant, so it is enough to show that $\calQC_K$ is a closed subset of $\calC_{\geq 2}(\partial \G)$. To this end, let $(S_m)_m\subset \calQC_K$ be a sequence of $K$-quasiconformal spheres that converge in the Hausdorff topology to some $S_\infty\in \calC_{\geq 2}(\partial \G)$. Since each $S_m$ is connected, $S_\infty$ is also connected, so that $S_\infty$ is infinite and there exist three different points $a,b,c\in S_\infty$. Let $S_0\subset \bbS^{n-1}$ be a round hypersphere, fix three distinct points $a_0,b_0,c_0$ in $S_0$, and for each $m$ let $\phi_m: \bbS^{n-1} \ra \bbS^{n-1}$ be a $K$-quasiconformal homeomorphism satisfying $S_m=\phi_m(S_0)$. After precomposing with a conformal map preserving $S_0$, we can also assume that $\{\phi_m(a_0),\phi_m(b_0),\phi_m(c_0)\} \to \{a,b,c\}$. Therefore, up to a subsequence, we can assume that $\phi_m$ converges uniformly to a $K$-quasiconformal homeomorphism $\phi_\infty$ (see for instance \cite[Thm.~22.34]{drutu-kapovich}). Equicontinuity of the sequence $(\phi_m)_m$ then implies that $S_\infty$ is the $K$-quasiconformal sphere $\phi_\infty(S_0)$.
\end{proof}

From \cref{lem.Scurrentscompact}, we infer that each $\calQC_K$ is locally compact, separable, and metrizable, and that the space of $\calQC_K\Curr(\G)$ of $\calQC_K$-currents is locally compact and metrizable. 

Recall that $\calI_K=\calI_{\calQC_K} \subset \calQC_K \times \calG$ is the set of all pairs $(S,(p,q))$ such that  $S$ separates $p$ and $q$. We also define $\calJ_K\subset \calQC_K \times \calG$ as the set of all pairs $(S,(p,q))$ such that  either $(S,(p,q))\in \calI_K$ or $p \notin S$ and $q\in S$. Note that $\calJ_K$ is also invariant under the diagonal action of $\G$, and that $\calI_K$ is an open subset of $\calQC_K \times \calG$.

\begin{lemma}\label{lem.JKpropdisc}
The diagonal action of $\Gam$ on $\calJ_K$ is properly discontinuous. In particular, the action of $\G$ on $\calI_K$ is properly discontinuous.
\end{lemma}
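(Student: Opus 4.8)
The plan is to show that the diagonal $\Gam$-action on $\calJ_K$ is properly discontinuous; properly discontinuity on the closed invariant subset $\calI_K$ follows immediately. The key point is that the action of $\Gam$ on $\calG = \partial^2\Gam$ is already properly discontinuous (since $\Gam$ is a uniform lattice, this is a standard consequence of the convergence-group property), so the extra data of a separating quasiconformal sphere should not destroy this — but one must control the $\calQC_K$-factor, where $\Gam$ acts cocompactly but not properly discontinuously on its own. The mechanism linking the two factors is \emph{compactness}: for $(S,(p,q)) \in \calJ_K$, the sphere $S$ is trapped in a region of $\calQC_K$ determined by the pair $(p,q)$, namely the spheres that contain $q$ and separate $q$ from $p$ (or the closure of this condition), and this region, together with a compact neighborhood of $(p,q)$ in $\calG$, should be $\Gam$-wandering.

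First I would reduce to a local statement: it suffices to show that for every point $(S_0,(p_0,q_0)) \in \calJ_K$ there is a neighborhood $\Oc$ in $\calJ_K$ such that $\{g\in\Gam : g\Oc \cap \Oc \neq \nil\}$ is finite. Pick a compact neighborhood $L \subset \calG$ of $(p_0,q_0)$ small enough that its closure still lies in $\calG$ (i.e. bounded away from the diagonal). Because the $\Gam$-action on $\calG$ is properly discontinuous, the set $F := \{g \in \Gam : gL \cap L \neq \nil\}$ is finite. Now take $\Oc = \calJ_K \cap (\calQC_K \times L)$. If $g\Oc \cap \Oc \neq \nil$, then in particular $gL \cap L \neq \nil$, so $g \in F$. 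Hence the stabilizing set is contained in the finite set $F$, and properly discontinuity follows. The same argument applied to the open subset $\calI_K \subset \calJ_K$ gives the second assertion — indeed $\calI_K \subset \calJ_K$, so any neighborhood basis in $\calJ_K$ restricts to one in $\calI_K$.

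Wait — this argument as stated only uses the $\calG$-factor and never uses $\calQC_K$ at all, which suggests it proves too little or I have misread the claim. Let me reconsider: the issue is that $L$ must be a neighborhood in $\calG$, but $\Oc = \calQC_K \times L$ intersected with $\calJ_K$ is a neighborhood of $(S_0,(p_0,q_0))$ in $\calJ_K$ only because we are giving $\calJ_K$ the subspace topology from $\calQC_K\times\calG$ and taking the \emph{full} $\calQC_K$-slice, which is legitimate — a product with the whole first factor is open. So the argument is in fact correct: properly discontinuity of the $\Gam$-action on $\calJ_K$ reduces to that on $\calG$ via the (continuous, $\Gam$-equivariant) projection $\calJ_K \to \calG$, $(S,(p,q))\mapsto(p,q)$, together with the elementary fact that if $\Gam$ acts properly discontinuously on $Y$ and $X \to Y$ is a $\Gam$-equivariant continuous map, then $\Gam$ acts properly discontinuously on $X$.

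So the real content is \textbf{properly discontinuity of the $\Gam$-action on $\calG = \partial^2\Gam$}, which is where the main obstacle lies, though it is classical: a non-elementary hyperbolic group acts as a geometrically finite (here, uniform, hence cocompact) convergence group on its boundary, and for uniform lattices this is precisely properly discontinuity of the action on the space of distinct pairs — equivalently, the action on the space of oriented geodesics of $\Hy^n$, which is the unit tangent bundle modulo flow, a cocompact homogeneous space on which $\Gam$ acts properly by Milnor–Švarc. I would cite \cite{bridson-haefliger} or \cite{drutu-kapovich} for this. The only care needed is to confirm that $\calJ_K \to \calG$ is genuinely continuous and $\Gam$-equivariant — both are immediate from the definitions, equivariance because $g$ sends $(S,(p,q))$ to $(gS,(gp,gq))$ and the defining conditions (" $S$ separates $p,q$" or "$p\notin S, q\in S$") are $\Gam$-invariant. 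Hence the lemma follows. The subtlety the authors presumably want to record is exactly that the $\calQC_K$-direction, despite carrying only a cocompact (non-proper) action, causes no trouble because it is projected away.
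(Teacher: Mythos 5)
Your proposal contains a fatal gap at precisely the point you flagged and then dismissed. You propose to deduce properly discontinuity of the $\G$-action on $\calJ_K$ from properly discontinuity of the action on the base $\calG = \partial^2\G$ via the projection $(S,(p,q))\mapsto(p,q)$, and you assert that the latter is ``classical,'' comparing it to the action of $\G$ on the space of oriented geodesics of $\Hy^n$. But the $\G$-action on $\calG$ is \emph{not} properly discontinuous. If $g \in \G$ is any nontrivial element, the pair $(g^{\infty},g^{-\infty}) \in \calG$ is fixed by the entire infinite cyclic group $\langle g\rangle$, so point stabilizers are infinite — the most elementary obstruction to proper discontinuity. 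Equivalently, the quotient $\G\backslash\calG$ is the (non-Hausdorff) space of geodesics of $M = \G\backslash\Hy^n$; and although $\G$ does act geometrically on $T^1\Hy^n$ by Milnor--Švarc, passing to the quotient by the geodesic flow destroys properness. The convergence-group property for a hyperbolic group asserts proper discontinuity on the space of distinct \emph{triples}, not distinct pairs, and the step from pairs to triples is exactly the nontrivial content.

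Your instinct mid-proof — ``this argument never uses $\calQC_K$ at all, which suggests it proves too little'' — was correct, and the error was talking yourself out of it. The sphere $S$ is indispensable: it is exactly what supplies the third point needed to land in a space where the action \emph{is} properly discontinuous. The paper constructs a continuous $\G$-equivariant map $\pi:\calJ_K \to \Om^3(\overline{\Hy}^n)$ (distinct ordered triples in $\Hy^n\cup\partial\Hy^n$) by $\pi(S,(p,q)) = (\proj_{\Hull(S)}(p),\,p,\,q)$, where the defining conditions of $\calJ_K$ ($p\notin S$, $p\neq q$) guarantee the nearest-point projection is defined and that the three outputs are distinct (the first lies in $\Hy^n$, the other two in $\partial\Hy^n$). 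Proper discontinuity on $\Om^3(\overline{\Hy}^n)$ then comes from the convergence-group property, and your reduction lemma (a $\G$-equivariant continuous map into a properly discontinuous $\G$-space pulls back proper discontinuity) — which is correct — finishes it. The remaining work, done in \cref{lem.projctns}, is verifying continuity of $(S,p)\mapsto \proj_{\Hull(S)}(p)$; that is where the specific geometry of $K$-quasiconformal spheres enters.
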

\begin{proof}
The action of $\G$ on $\ov{\Hy}^n:=\Hy^n\cup \partial \Hy^n$ is a \emph{convergence action}, meaning that the diagonal action of $\G$ on the space $\Om^3(\ov{\Hy}^n)$ of ordered triples of distinct points in $\ov{\Hy}^n$ is properly discontinuous \cite[Prop.~2.8]{bowditch}. Therefore, it is enough to construct a $\G$-equivariant continuous map $\pi:\calJ_K \to \Om^3(\ov{\Hy}^n)$.

For $(S,(p,q))\in \calJ_K$ we set \[\pi(S,(p,q)):=(\proj_{\Hull(S)}(p),p,q),\] where $\proj_{\Hull(S)}(p))\in \Hy^n$ denotes the nearest point projection of $p$ onto $\Hull(S)$, the convex hull of $S$ in $\Hy^n$. Since $p\neq q$ and $p\notin S$, the map $\pi$ is well-defined and $\G$-equivariant. The continuity of $\pi$ then follows from the continuity of $(S,p)\to \proj_{\Hull(S)}(p)$, which is proven in the next lemma. 
\end{proof}

\begin{lemma}\label{lem.projctns}
For each $K\geq 1$, the map from $\{(S,p) \in \calQC_K \times \partial \Hy^n \colon p\notin S\}$ into $\Hy^n$ given by
$$(S,p) \mapsto \proj_{\Hull(S)}(p)$$
is continuous.
\end{lemma}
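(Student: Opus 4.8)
The plan is to establish \emph{sequential} continuity, which suffices since $\calQC_K$ and $\Hy^n$ are metrizable. So let $(S_m,p_m)\to(S_\infty,p_\infty)$ with $p_m\notin S_m$ and $p_\infty\notin S_\infty$, put $x_m:=\proj_{\Hull(S_m)}(p_m)$ and $x_\infty:=\proj_{\Hull(S_\infty)}(p_\infty)$, and aim to show $x_m\to x_\infty$. First recall why the projection is well defined: for compact $\Lambda\subseteq\bbS^{n-1}$ with $\#\Lambda\ge 2$ the hull $\Hull(\Lambda)$ is a closed convex subset of $\Hy^n$ whose set of limit points in $\partial\Hy^n$ is exactly $\Lambda$, and for $p\in\partial\Hy^n\setminus\Lambda$ the restriction to $\Hull(\Lambda)$ of the Busemann function $b_p$ based at $p$ has compact sublevel sets (any point of $\partial\Hy^n$ in the closure of such a set would lie in $\{p\}\cap\Lambda=\varnothing$), hence attains its minimum; since closed horoballs are strictly convex this minimiser is unique, and it equals $\proj_{\Hull(\Lambda)}(p)$. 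The only analytic input we need is the elementary fact that in the ball model $\bbB^n$ one has $b_p(z)=\log\bigl(|z-p|^2/(1-|z|^2)\bigr)$ (normalised to vanish at the centre), and that $(p,z)\mapsto b_p(z)$ extends to a continuous function on $\{(p,z)\in\bbS^{n-1}\times\ov{\Hy}^n:z\ne p\}$ with values in $(-\infty,+\infty]$, taking the value $+\infty$ exactly at the pairs with $z\in\partial\Hy^n$.

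The second ingredient is the continuity of $\Lambda\mapsto\Hull(\Lambda)$ under Hausdorff convergence. Passing to the Klein model, hyperbolic convexity is Euclidean convexity, so $\Hull(S)=\mathrm{Conv}(S)\cap\bbB^n$ (Euclidean convex hull intersected with the open ball), and by Carath\'eodory's theorem every point of $\Hull(S)$ is a convex combination of at most $n+1$ points of $S$. From this, together with $S_m\to S_\infty$ in the Hausdorff topology, one reads off the two statements we shall use: \textbf{(i)} if $z_m\in\Hull(S_m)$ and $z_m\to z$ in $\ov{\Hy}^n$, then writing $z_m=\sum_i t_i^{(m)}\zeta_i^{(m)}$ with $\zeta_i^{(m)}\in S_m$ and passing to a subsequence gives $z=\sum_i t_i\zeta_i$ with $\zeta_i\in S_\infty$, so either $z\in\Hull(S_\infty)$ (when $z\in\bbB^n$) or $z\in S_\infty$ (when $|z|=1$, by strict convexity of the ball); and \textbf{(ii)} every $z\in\Hull(S_\infty)$ is a limit of points $z_m\in\Hull(S_m)$, obtained by replacing each $\zeta_i$ in a Carath\'eodory representation of $z$ by a nearby point of $S_m$.

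The argument then concludes by compactness. Fix any $y\in\Hull(S_\infty)$ and, using (ii), points $y_m\in\Hull(S_m)$ with $y_m\to y$; then $b_{p_m}(x_m)\le b_{p_m}(y_m)\to b_{p_\infty}(y)<\infty$, so $b_{p_m}(x_m)$ is bounded above. Consequently no subsequence of $(x_m)$ converges to a boundary point: a limit $\xi\in\partial\Hy^n$ would satisfy $\xi\in S_\infty$ by (i), hence $\xi\ne p_\infty$, so $b_{p_m}(x_m)\to b_{p_\infty}(\xi)=+\infty$, a contradiction. Thus $(x_m)$ is precompact in $\Hy^n$; let $x'$ be any subsequential limit, so $x'\in\Hull(S_\infty)$ by (i). For arbitrary $z\in\Hull(S_\infty)$, choosing $z_m\in\Hull(S_m)$ with $z_m\to z$ as in (ii) and letting $m\to\infty$ in $b_{p_m}(x_m)\le b_{p_m}(z_m)$ (joint continuity of $b$ is applied at the interior points $x',z$) yields $b_{p_\infty}(x')\le b_{p_\infty}(z)$. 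Hence $x'$ minimises $b_{p_\infty}$ over $\Hull(S_\infty)$, so $x'=x_\infty$ by uniqueness; as every subsequential limit of the precompact sequence $(x_m)$ equals $x_\infty$, we conclude $x_m\to x_\infty$.

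The step I expect to require the most care is the second ingredient: controlling the convex hulls under Hausdorff convergence, in particular ensuring that interior limits of hull points remain in the limiting hull while boundary limits escape into $S_\infty$. The Klein-model/Carath\'eodory description makes this transparent and uniform in $K$. Everything else reduces to the compactness manipulation above, once the nearest-point projection from the boundary is reformulated as the unique minimiser of a Busemann function.
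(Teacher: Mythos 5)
Your proof is correct and follows essentially the same strategy as the paper: characterize $\proj_{\Hull(S)}(p)$ as the unique minimizer of the Busemann function at $p$ over $\Hull(S)$, then pass to subsequential limits using Hausdorff convergence of the convex hulls. Two places where you supply more than the paper does, both to good effect: the paper simply asserts that $(y_m:=\proj_{\Hull(S_m)}(p_m))_m$ is bounded, while you justify this cleanly by bounding the Busemann values $b_{p_m}(x_m)$ and ruling out boundary limits via the $+\infty$-extension of the Busemann function; and the paper cites Bowditch's theorem for the Hausdorff convergence $\Hull(S_m)\to\Hull(S_\infty)$ on compacta, whereas your Klein-model/Carath\'eodory argument gives both inclusions elementarily. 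The tradeoff is self-containedness versus brevity; your route also makes visible exactly where the hypothesis $p_\infty\notin S_\infty$ enters (to keep the minimizing sequence away from the sphere at infinity), which the paper leaves implicit.
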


\begin{proof}
The projection $\proj_{\Hull(S)}:\partial \Hy^n \bs S \ra \Hull(S)$ can be characterized as follows. Fixed $x_0\in \Hy^n$, for each $p\in \partial \Hy^n \bs S$, the projection $\proj_{\Hull(S)}$ is the unique point $y\in \Hull(S)$ minimizing the Busemann function $x \mapsto \beta(x,x_0;p)$, where
\[\beta(x,x_0;p):=\lim_{x_m\to p}{(d_{\Hy^n}(x,x_m)-d_{\Hy^n}(x_0,x_m))}. \]

Now, consider the sequences $(S_m)_m\subset \calQC_K$ and $(p_m)_m\subset \partial \Hy^n$ converging to $S_\infty\in \calQC_K$ and $p_\infty$ respectively, so that $p_\infty\notin S_\infty$. This implies that $p_m\notin S_m$ for $m$ large enough. This convergence also implies that $(y_m:=\proj_{\Hull(S_m)}(p_m))_m$ is a bounded sequence, so that up to extracting a subsequence and reindexing we can assume that $y_m$ converges to $y_\infty$ as $m \to \infty$. Since $\Hull(S_m)$ converges to $\Hull(S_\infty)$ in the Hausdorff topology restricted to any compact subset of $\Hy^n$ (see e.g. \cite[Thm.~1.4]{bowditch.convex}), we have $y_\infty \in \Hull(S_\infty)$. 

Also, for any $z\in \Hull(S_\infty)$ we can find (after extracting a subsequence), a sequence $(z_m)_m$ with $z_m\in \Hull(S_m)$ for each $m$ and such that $z_m \to z$. The continuity of the Busemann function then gives us
\begin{align*}
    \beta(z,x_0;p_\infty)=\lim_m{\beta(z_m,x_0,p_m)}\geq \lim_m{\beta(y_m,x_0;p_m)}=\beta(y_\infty,x_0;p_\infty),
\end{align*}
and since $z$ was arbitrary we obtain $y_\infty=\proj_{\Hull(S_\infty)}(p_\infty)$.
Te same reasoning works for any subsequence of $(S_m,p_m)_m$, so we deduce that $\proj_{\Hull(S_m)}(p_m)$ converges to $\proj_{\Hull(S_\infty)}(p_\infty)$. 
\end{proof}

\begin{lemma}\label{lem.JKcocompact}
The action of $\G$ on $\calJ_K$ is cocompact. In consequence, the action of $\G$ on $\calJ_K \bs \calI_K$ is cocompact.
\end{lemma}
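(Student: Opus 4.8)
The plan is to exhibit a compact set $L\subseteq\calJ_K$ with $\G L=\calJ_K$; the assertion about $\calJ_K\bs\calI_K$ then follows for free, since $\calI_K$ is open in $\calQC_K\times\calG$, so $\calJ_K\bs\calI_K=\calJ_K\cap\calI_K^{c}$ is a closed $\G$-invariant subset of $\calJ_K$, and $L\cap(\calJ_K\bs\calI_K)$ is a compact set whose $\G$-translates cover it. To build $L$, I use the same equivariant anchor that appeared in the proof of \cref{lem.JKpropdisc}: for $(S,(p,q))\in\calJ_K$ we have $p\notin S$, so the nearest-point projection $\proj_{\Hull(S)}(p)$ is a well-defined point of $\Hull(S)\subseteq\Hy^n$, and $(S,(p,q))\mapsto\proj_{\Hull(S)}(p)$ is $\G$-equivariant. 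Since $\G$ is a uniform lattice, fix a compact $\bar D\subseteq\Hy^n$ with $\G\bar D=\Hy^n$ and set
\[L:=\{(S,(p,q))\in\calJ_K:\ \proj_{\Hull(S)}(p)\in\bar D\}.\]
Equivariance of the projection together with $\G\bar D=\Hy^n$ gives $\G L=\calJ_K$ at once, so everything reduces to proving that $L$ is compact.

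For compactness I would take a sequence $(S_m,(p_m,q_m))$ in $L$ and, using that $\partial\G=\bbS^{n-1}$ and $\calC(\partial\G)$ are compact, pass to a subsequence with $p_m\to p_\infty$, $q_m\to q_\infty$ in $\partial\G$ and $S_m\to S_\infty$ in $\calC(\partial\G)$. First, $S_\infty$ has at least two points: otherwise $S_m\to\{\xi\}$, the $S_m$ are eventually contained in round balls shrinking to $\xi$, hence $\Hull(S_m)$ is contained in a region of $\Hy^n$ receding to the ideal point $\xi$ (the convex hull of a shrinking spherical cap), so eventually $\Hull(S_m)\cap\bar D=\emptyset$, contradicting $\proj_{\Hull(S_m)}(p_m)\in\Hull(S_m)\cap\bar D$. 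Thus $|S_\infty|\geq 2$ and $S_\infty\in\calQC_K$ by \cref{lem.QCKsystemofspheres}. Second, $p_\infty\notin S_\infty$: if $p_\infty\in S_\infty$ then $\Hull(S_\infty)$ contains a geodesic ray towards $p_\infty$, along which the relevant Busemann function tends to $-\infty$; by Hausdorff convergence of the convex hulls on compact sets (\cite[Thm.~1.4]{bowditch.convex}) and of Busemann functions — exactly as in the proof of \cref{lem.projctns} — the minimizers $\proj_{\Hull(S_m)}(p_m)$ would have to leave every compact set, once more contradicting membership in $\bar D$. Granting $p_\infty\notin S_\infty$, \cref{lem.projctns} gives $\proj_{\Hull(S_m)}(p_m)\to\proj_{\Hull(S_\infty)}(p_\infty)\in\bar D$.

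It remains to check that $(S_\infty,(p_\infty,q_\infty))\in\calJ_K$. If $q_\infty\in S_\infty$, then $p_\infty\notin S_\infty$ forces $p_\infty\neq q_\infty$, and $(S_\infty,(p_\infty,q_\infty))\in\calJ_K\bs\calI_K$ by definition. If instead $q_\infty\notin S_\infty$, then $q_m\notin S_m$ for $m$ large, so $(S_m,(p_m,q_m))\in\calI_K$, i.e.\ $p_m$ and $q_m$ lie in distinct components of $\partial\G\bs S_m$. This forces $p_\infty\neq q_\infty$ and $p_\infty,q_\infty$ to lie in distinct components of $\partial\G\bs S_\infty$: were this false, join $p_\infty$ and $q_\infty$ by a path $\tau$ inside a single component of $\partial\G\bs S_\infty$ (a point if $p_\infty=q_\infty$); then $\tau$ is compact and disjoint from $S_\infty$, hence at positive distance from $S_m$ for $m$ large by Hausdorff convergence, so $\tau$ lies in one component of $\partial\G\bs S_m$; since $p_m\to p_\infty$ and $q_m\to q_\infty$ stay bounded away from $S_m$, for $m$ large $p_m$ and $q_m$ lie in that same component, contradicting $(S_m,(p_m,q_m))\in\calI_K$. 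Hence $(S_\infty,(p_\infty,q_\infty))\in\calI_K\subseteq\calJ_K$. In all cases the limit lies in $L$, so $L$ is compact, $\G$ acts cocompactly on $\calJ_K$, and the statement for $\calJ_K\bs\calI_K$ follows as explained.

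The step I expect to be the real work is the compactness of $L$: the ambient space $\calQC_K\times\calG$ is very non-compact, and one must see that the single condition $\proj_{\Hull(S)}(p)\in\bar D$ simultaneously prevents the spheres $S_m$ from collapsing (controlled through the geometry of convex hulls of shrinking boundary balls) and, together with the linking condition defining $\calJ_K$, keeps the pairs $(p_m,q_m)$ from degenerating onto $S_\infty$ or onto the diagonal (controlled through the local constancy of the separation relation on the complement of the sphere). Everything else is a matter of assembling \cref{lem.QCKsystemofspheres}, \cref{lem.projctns}, and cocompactness of $\G$ on $\Hy^n$.
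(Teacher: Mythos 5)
Your proposal is correct and follows essentially the same route as the paper: you use the same $\G$-equivariant anchor set $\{(S,(p,q)):\proj_{\Hull(S)}(p)\in \bar D\}$ for a compact fundamental set $\bar D$, prove its compactness by a sequential argument relying on \cref{lem.QCKsystemofspheres} and \cref{lem.projctns}, and deduce the statement for $\calJ_K\bs\calI_K$ from openness of $\calI_K$. The only difference is that you spell out the degeneration arguments (no collapse of $S_m$ to a point, $p_\infty\notin S_\infty$ via Busemann functions, stability of the separation condition) that the paper compresses into ``the compactness of $D$ implies...''.
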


\begin{proof}
The second assertion follows from the first one since $\calI_K$ is open in $\calJ_K$. For the first assertion, let $D\subset \Hy^n$ be a compact subset such that $\G \cdot D=\Hy^n$, and define 
$$A=\{(S,(p,q))\in \calJ_K \colon \proj_{\Hull(S)}(p) \in D\},$$ where
$\proj_{\Hull(S)}(p)$ is as in \cref{lem.JKpropdisc}. Clearly $\G \cdot A=\calJ_K$, so it is enough to prove that $A$ is compact. 

First, let $B=\{S\in 
\calQC_K \colon \Hull(S)\cap D\neq \emptyset\}.$ A compactness argument about $K$-quasiconformal maps as in the proof of \cref{lem.QCKsystemofspheres} implies that $B$ is compact. Let $(S_m,(p_m,q_m))_m\subset A$ be an arbitrary sequence. Since each $S_m$ belongs to $B$, after taking a subsequence and reindexing we can assume that $p_m\to p, q_m\to q$ and $S_m\to S$ for some $p,q\in \partial \Hy^n$ and $S\in B$. We also have $\proj_{\Hull(S_m)}(p_m)\in D$ for all $m$, and the compactness of $D$ implies that $p\notin S$, and that either $q\in S$ or $p,q$ lie in different components of $\partial \Hy^n \bs  S$. Thus $(S,(p,q))\in \calJ_K$. In addition, from the continuity of the map $(S',p') \mapsto \proj_{\Hull(S')}(p')$ given by \cref{lem.projctns} and the compactness of $D$ we obtain $(S,(p,q))\in A$, which concludes the proof of the first assertion. 
\end{proof}

As $\Gam$ is torsion-free, the action of $\G$ on $\calJ_K$ is free, and hence the quotient $\ov{\calJ}_K:=\G\bs \calJ_K$ is a compact, metrizable space. Therefore, every $\G$-invariant Radon measure $\mu$ on $\calJ_K$ pushes down to a measure $\ov\mu$ on $\ov\calJ_K$. We also define $\ov{\calI}_K=\Gam \bs \calI_K$, which is an open subset of $\ov{\calJ}_K$. Note that $\ov{\calJ}_K \bs \ov{\calI}_K$ is also compact by \cref{lem.JKcocompact}.

If $A \subset \calJ_K$ is as compact set such that $\G \cdot A=\calJ_K$, then for any $(\al,\eta)\in \calQC_K\Curr(\G) \times \Curr(\G)$, the product measure $\al\times \eta$ is Radon and hence $(\al\times \eta)(A)$ is finite. Since $i_K(\al,\eta)\leq \frac{1}{2}\ov{\al\times \eta}(\ov\calJ_K) \leq \frac{1}{2}(\al \times \eta)(A)$, we deduce the following. 

\begin{proposition}\label{prop.QCKcogeodesic}
For any $K\geq 1$ the action of $\G$ on $\calI_K$ is properly discontinuous, and for every $(\al,\eta)\in \calQC_K\Curr(\G)  \times\Curr(\G)$ the intersection number $i_{\calQC_K}(\al,\eta)$ is finite. Therefore, $\calQC_K$ is a co-geodesic system of spheres at infinity for $\G$.
\end{proposition}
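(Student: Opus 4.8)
The plan is to verify conditions~(iv) and~(v) of \cref{def.cogeodesic} for the system $\calS=\calQC_K$; that $\calQC_K$ is a system of spheres at infinity for $\G$ is exactly \cref{lem.QCKsystemofspheres}, so this is all that remains. The whole point of having introduced the larger space $\calJ_K\supseteq\calI_K$ in the preceding lemmas is that $\G$ acts on $\calJ_K$ \emph{cocompactly} (\cref{lem.JKcocompact}), whereas $\G\bs\calI_K$ is merely an open — not closed — subset of the compact quotient $\G\bs\calJ_K$. So I would route all the topological and measure-theoretic control through $\calJ_K$ and then restrict.

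For condition~(iv), proper discontinuity of the $\G$-action on $\calI_K$ is already recorded in \cref{lem.JKpropdisc}: it is the restriction of a properly discontinuous action to the $\G$-invariant open subset $\calI_K\subseteq\calJ_K$. Freeness is also quick: the assignment $(S,(p,q))\mapsto\proj_{\Hull(S)}(p)$ is a well-defined $\G$-equivariant map $\calI_K\to\Hy^n$ (well-defined because $p\notin S$ throughout $\calJ_K$, cf.\ the proof of \cref{lem.JKpropdisc}), and $\G$, being torsion-free and discrete in $\SO^+(n,1)$, acts freely on $\Hy^n$; hence it acts freely on $\calI_K$.

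For condition~(v), I would use \cref{lem.JKcocompact} to fix a compact $A\subseteq\calJ_K$ with $\G\cdot A=\calJ_K$, and then (via the standard existence of Borel transversals for a free, properly discontinuous action of a countable group on a Polish space) pick a Borel fundamental domain $\calB_{\calJ}\subseteq A$ for the $\G$-action on $\calJ_K$. Since $\calI_K$ is $\G$-invariant, $\calB:=\calB_{\calJ}\cap\calI_K$ is then a Borel fundamental domain for the $\G$-action on $\calI_K$. For any $\al\in\calQC_K\Curr(\G)$ and $\eta\in\Curr(\G)$ the product $\al\times\eta$ is a Radon measure on $\calQC_K\times\calG$, hence finite on the compact set $A$, so
\[ i_{\calQC_K}(\al,\eta)=\tfrac12(\al\times\eta)(\calB)\le\tfrac12(\al\times\eta)(A)<\infty. \]
The quantity $(\al\times\eta)(\calB)$ is the total mass of the measure that $\al\times\eta$ descends to on $\G\bs\calI_K$, so it does not depend on the chosen Borel fundamental domain, which is what makes condition~(v) well posed. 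Combining this with~(iv) and \cref{lem.QCKsystemofspheres} gives that $\calQC_K$ is co-geodesic.

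I do not expect a genuine obstacle, since all the substantive work already sits in the proven lemmas; the only place that needs a moment's care is the finiteness in~(v). The subtlety is that a fundamental domain for the $\G$-action on $\calI_K$ need not be relatively compact \emph{inside $\calI_K$}, but — because $\calI_K\subseteq\calJ_K$ is invariant and the $\G$-action on $\calJ_K$ is cocompact — it can be taken relatively compact \emph{inside $\calJ_K$}, and that is enough for the Radon measure $\al\times\eta$ to assign it finite mass. This is precisely why the auxiliary space $\calJ_K$, with its extra ``tangency'' pairs ($p\notin S$, $q\in S$), was worth setting up in \cref{lem.JKpropdisc}--\cref{lem.JKcocompact}.
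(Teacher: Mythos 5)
Your proposal is correct and follows the same route as the paper: proper discontinuity of $\G\curvearrowright\calI_K$ is inherited from $\calJ_K$ via \cref{lem.JKpropdisc}, and finiteness of $i_{\calQC_K}(\al,\eta)$ comes from the cocompactness of $\calJ_K$ (\cref{lem.JKcocompact}), since the Radon measure $\al\times\eta$ is finite on a compact set $A$ with $\G\cdot A=\calJ_K$ and this dominates the mass of any Borel fundamental domain. The only cosmetic difference is freeness: the paper deduces it directly from proper discontinuity plus torsion-freeness of $\G$ (finite stabilizers must be trivial), whereas you build a $\G$-equivariant map to $\Hy^n$ and pull back freeness — both are fine.
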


\subsection{A criterion for continuity}

In order to apply \cref{prop.contcogeodesic} we must verify that the intersection number is continuous at certain pairs of currents. For our purposes, the following criterion will suffice.

\begin{proposition}\label{prop.criterioncontinuous}
For every $K\geq 1$ and $(\al,\eta)\in \calQC_K\Curr(\G) \times\Curr(\G)$ satisfying 
\begin{equation}\label{eqnmeasu}
(\al\times \eta)(\calJ_K \bs \calI_K)=0,
\end{equation} 
the intersection number $i_{\calQC_K}:\calQC_K\Curr(\G) \times \Curr(\G) \ra \R$ is continuous at $(\al,\eta)$.
\end{proposition}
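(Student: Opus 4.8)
The plan is to reduce the claim to a weak-$\ast$ convergence statement on the compact quotient $\ov{\calJ}_K$ and then invoke the portmanteau theorem. Fix a sequence $(\al_m,\eta_m)$ in $\calQC_K\Curr(\G)\times\Curr(\G)$ converging weak-$\ast$ to $(\al,\eta)$, where $(\al,\eta)$ satisfies \eqref{eqnmeasu}. As in the discussion preceding \cref{prop.QCKcogeodesic}, for any pair $(\nu,\xi)$ the measure $(\nu\times\xi)|_{\calJ_K}$ is $\G$-invariant and Radon and pushes down to a finite Borel measure $\ov{\nu\times\xi}$ on the compact metrizable space $\ov{\calJ}_K$; moreover, by \eqref{eq.defi_S}, a Borel fundamental domain $\calB$ for the action of $\G$ on $\calI_K$ satisfies $2\,i_{\calQC_K}(\nu,\xi)=(\nu\times\xi)(\calB)=\ov{\nu\times\xi}(\ov{\calI}_K)$. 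Hence it suffices to prove $\ov{\al_m\times\eta_m}(\ov{\calI}_K)\to\ov{\al\times\eta}(\ov{\calI}_K)$.

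The first step is to show $\ov{\al_m\times\eta_m}\to\ov{\al\times\eta}$ weak-$\ast$ as finite measures on $\ov{\calJ}_K$. Upstairs one has $\al_m\times\eta_m\to\al\times\eta$ weak-$\ast$ on $\calQC_K\times\calG$: weak-$\ast$ convergence of $\al_m$ and $\eta_m$ gives local uniform bounds on masses, and on any product $L_1\times L_2$ of compacta an arbitrary element of $C_c(\calQC_K\times\calG)$ is a uniform limit of finite sums of elementary tensors $f\otimes g$, for which $\int f\otimes g\,d(\al_m\times\eta_m)=\bigl(\int f\,d\al_m\bigr)\bigl(\int g\,d\eta_m\bigr)$ converges. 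To descend to the quotient, I use that $\G$ acts on $\calJ_K$ freely (as $\G$ is torsion-free), properly discontinuously (\cref{lem.JKpropdisc}), and cocompactly (\cref{lem.JKcocompact}): there is a nonnegative $\chi\in C_c(\calJ_K)$ with $\sum_{g\in\G}\chi(g\cdot)\equiv 1$ on $\calJ_K$, obtained by normalizing a bump that is positive on a compact set whose $\G$-translates cover $\calJ_K$ — the sum is locally finite by proper discontinuity and bounded below by a positive constant by $\G$-invariance and cocompactness. Extending $\chi$ by zero gives an element of $C_c(\calQC_K\times\calG)$, since $\supp\chi$ is compact there as well. Writing $p\colon\calJ_K\to\ov{\calJ}_K$ for the quotient map, for every $f\in C(\ov{\calJ}_K)$ we then have
\begin{align*}
\int_{\ov{\calJ}_K} f\,d\ov{\al_m\times\eta_m} & =\int_{\calJ_K}\chi\cdot(f\circ p)\,d(\al_m\times\eta_m)\\
& \longrightarrow\int_{\calJ_K}\chi\cdot(f\circ p)\,d(\al\times\eta)=\int_{\ov{\calJ}_K} f\,d\ov{\al\times\eta},
\end{align*}
which is the claimed convergence; taking $f\equiv 1$ shows the total masses converge, hence stay bounded.

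The second step is the portmanteau theorem. Since $\calI_K$ is open and $\G$-invariant, $\ov{\calI}_K$ is open in $\ov{\calJ}_K$, so its topological boundary satisfies $\partial\ov{\calI}_K=\overline{\ov{\calI}_K}\bs\ov{\calI}_K\subseteq\ov{\calJ}_K\bs\ov{\calI}_K=\G\bs(\calJ_K\bs\calI_K)$. Evaluating the push-down measure on this set gives the $(\al\times\eta)$-mass of a fundamental domain for the action of $\G$ on $\calJ_K\bs\calI_K$, which is at most $(\al\times\eta)(\calJ_K\bs\calI_K)=0$ by \eqref{eqnmeasu}; thus $\ov{\al\times\eta}(\partial\ov{\calI}_K)=0$. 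The portmanteau theorem now yields $\ov{\al_m\times\eta_m}(\ov{\calI}_K)\to\ov{\al\times\eta}(\ov{\calI}_K)$, that is, $i_{\calQC_K}(\al_m,\eta_m)\to i_{\calQC_K}(\al,\eta)$.

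The routine inputs — weak-$\ast$ convergence of product measures, the $\G$-invariant partition of unity, and the portmanteau theorem — are all standard; the only point demanding genuine care is the transfer of weak-$\ast$ convergence from the noncompact factors $\calQC_K$ and $\calG$ to the compact quotient $\ov{\calJ}_K$, namely the bookkeeping with $\chi$ and the verification that the relevant cutoffs are honestly compactly supported on $\calQC_K\times\calG$. The conceptual content — that \eqref{eqnmeasu} forces $\ov{\al\times\eta}$ to ignore the boundary $\partial\ov{\calI}_K\subseteq\ov{\calJ}_K\bs\ov{\calI}_K$ — is already built into the choice of $\calJ_K$ as the enlargement of $\calI_K$ by the locus $\{(S,(p,q)):p\notin S,\ q\in S\}$.
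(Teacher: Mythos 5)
There is a genuine gap, and it sits exactly at the decisive step. Your argument hinges on the claim that the partition-of-unity function $\chi\in C_c(\calJ_K)$, extended by zero, lies in $C_c(\calQC_K\times\calG)$, so that weak-$\ast$ convergence of $\al_m\times\eta_m$ can be tested against $\chi\cdot(f\circ p)$. Compact support is indeed automatic, but continuity of the zero-extension fails: $\calJ_K$ is neither open nor closed in $\calQC_K\times\calG$, and every point of $\calJ_K\bs\calI_K$ (i.e.\ $p\notin S$, $q\in S$) lies in the closure of the complement of $\calJ_K$ — perturb $q$ slightly off $S$ to the side containing $p$ and the resulting pair is not separated and not in $\calJ_K$. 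Since $\sum_{g\in\G}\chi(g\cdot)\equiv 1$ on all of $\calJ_K$, some translate of $\chi$ is strictly positive at such points, so the zero-extension necessarily has jump discontinuities along $\calJ_K\bs\calI_K$ and is not a legitimate test function. Consequently your "first step" — full weak-$\ast$ convergence of $\ov{\al_m\times\eta_m}$ to $\ov{\al\times\eta}$ on $\ov{\calJ}_K$ — is not justified; worse, as you state it, it makes no use of the hypothesis \eqref{eqnmeasu} at all, whereas mass of $\al_m\times\eta_m$ sitting just outside $\calJ_K$ (non-separated pairs with $q_m$ close to $S_m$) can in the limit land on $\calJ_K\bs\calI_K$, which is precisely the leakage that \eqref{eqnmeasu} is there to exclude. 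This is not a cosmetic point: it is the difficulty the paper's proof is organized around — there, weak-$\ast$ convergence of the pushed-down measures is established only on the open part $\ov{\calI}_K$ (using cutoffs supported in $\calI_K$, which \emph{is} open upstairs, so extension by zero is continuous), and the behavior near $\ov{\calJ}_K\bs\ov{\calI}_K$ is controlled separately via \eqref{eqnmeasu} (the analogue of your missing boundary control) and a covering argument with balls whose spheres are $\ov{\al\times\eta}$-null.

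Your architecture can be repaired, but by a different argument than the one you wrote: keep the identity $\int f\,d\ov{\al_m\times\eta_m}=\int\chi\cdot(f\circ p)\,d(\al_m\times\eta_m)$, observe that the zero-extension of $\chi\cdot(f\circ p)$ is bounded, compactly supported, and continuous off the set $\calJ_K\bs\calI_K$ (its support, being compact in $\calJ_K$, is closed in $\calQC_K\times\calG$, so no discontinuities arise elsewhere), and then invoke the portmanteau/mapping theorem for integrands whose discontinuity set is null for the limit measure — which is exactly where \eqref{eqnmeasu} enters. With that substitution your two-step scheme does go through and is arguably shorter than the paper's; but as written, the continuity claim for the extended cutoff is false and the hypothesis is invoked only in the final portmanteau step, so the proof has a real gap.
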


Throughout this subsection, we consider a pair $(\al,\eta)$ satisfying \eqref{eqnmeasu}, and take sequences $(\al_m)_m \subset \calQC_K\Curr(\G)$ and $(\eta_m)_m\subset \Curr(\G)$ converging to $\al$ and $\eta$, respectively. We also define the measures $\ov{\mu}_m=\ov{\al_m\times \eta_m}$ and $\ov{\mu}=\ov{\al\times \eta}$ on $\ov{\calJ}_K$, and denote the intersection number on $\calQC_K\Curr(\G)\times \Curr(\G)$ by $i_K$. 

\begin{lemma}\label{lem.limitzero}
We have $\lim_{m\to \infty}{\ov\mu_m(\ov{\calJ}_K \bs \ov{\calI}_K)}=0$.
\end{lemma}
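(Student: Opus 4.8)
The plan is to exploit the hypothesis \eqref{eqnmeasu}, which says $\ov{\mu}(\ov{\calJ}_K \bs \ov{\calI}_K) = 0$, together with the fact that $\ov{\calJ}_K \bs \ov{\calI}_K$ is compact (by \cref{lem.JKcocompact}) and is a closed subset of the compact metrizable space $\ov{\calJ}_K$. The naive approach — pushing the weak-$\ast$ convergence $\al_m \times \eta_m \xrightharpoonup{\ast} \al \times \eta$ down to $\ov{\calJ}_K$ and invoking the portmanteau theorem — gives only $\limsup_m \ov{\mu}_m(\ov{\calJ}_K \bs \ov{\calI}_K) \leq \ov{\mu}(\ov{\calJ}_K \bs \ov{\calI}_K) = 0$ provided one knows that $\ov{\mu}_m \xrightharpoonup{\ast} \ov{\mu}$ as finite measures on $\ov{\calJ}_K$. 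So the first step is to justify this pushed-down convergence: since the $\G$-actions on $\calQC_K$ and $\calG$ are cocompact and $\G$ is torsion-free (hence the action on $\calJ_K$ is free, making $\ov{\calJ}_K$ Hausdorff), the map $\calJ_K \to \ov{\calJ}_K$ is a covering-type quotient, and weak-$\ast$ convergence of $\G$-invariant Radon measures on $\calJ_K$ descends to weak-$\ast$ convergence of the quotient measures on the compact space $\ov{\calJ}_K$; this is a standard consequence of writing integrals against $C(\ov{\calJ}_K)$ as integrals of $\G$-periodic functions against a fixed compactly supported partition-of-unity cutoff on $\calJ_K$. One should also check that $\ov{\mu}_m(\ov{\calJ}_K)$ stays bounded, which follows since $\ov{\mu}_m(\ov{\calJ}_K) \leq (\al_m \times \eta_m)(A)$ for a fixed compact $A$ with $\G \cdot A = \calJ_K$, and $(\al_m \times \eta_m)(A) \to (\al \times \eta)(A) < \infty$ by choosing $A$ with $(\al \times \eta)(\partial A) = 0$ (or by an outer-regularity argument).

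Granting that, the second step is the portmanteau inequality for the closed set $F := \ov{\calJ}_K \bs \ov{\calI}_K$: for weak-$\ast$ convergent finite measures on a metric space, $\limsup_m \ov{\mu}_m(F) \leq \ov{\mu}(F)$. Combined with $\ov{\mu}(F) = 0$ from \eqref{eqnmeasu} and nonnegativity of each $\ov{\mu}_m(F)$, this forces $\lim_m \ov{\mu}_m(F) = 0$, which is exactly the claim.

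The main obstacle I expect is the first step — rigorously transferring weak-$\ast$ convergence from the non-compact space $\calJ_K$ (where the measures are only Radon, not finite) to the compact quotient $\ov{\calJ}_K$. One has to be careful that the relevant test functions on $\ov{\calJ}_K$ pull back to continuous functions on $\calJ_K$ that are $\G$-invariant but \emph{not} compactly supported, so one cannot directly feed them into the definition of weak-$\ast$ convergence on $\calJ_K$; the fix is to multiply by a fixed continuous compactly supported ``tile'' function $\psi \geq 0$ on $\calJ_K$ satisfying $\sum_{g \in \G} \psi \circ g^{-1} \equiv 1$ (which exists by cocompactness and paracompactness), so that $\int_{\ov{\calJ}_K} f \, d\ov{\mu}_m = \int_{\calJ_K} (\tilde f \cdot \psi) \, d(\al_m \times \eta_m)$ for $\tilde f$ the pullback of $f$, and now $\tilde f \cdot \psi \in C_c(\calJ_K)$ so the convergence applies. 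Everything else — compactness of $F$, the portmanteau bound, nonnegativity — is routine. Alternatively, one can bypass portmanteau entirely: fix $\epsilon > 0$, use outer regularity of $\ov\mu$ to pick an open $\ov\calU \supset F$ in $\ov\calJ_K$ with $\ov\mu(\ov\calU) < \epsilon$, take a compactly supported $f \in C(\ov\calJ_K)$ with $\mathbbm{1}_F \leq f \leq \mathbbm{1}_{\ov\calU}$, and conclude $\limsup_m \ov\mu_m(F) \leq \lim_m \int f \, d\ov\mu_m = \int f \, d\ov\mu \leq \ov\mu(\ov\calU) < \epsilon$.
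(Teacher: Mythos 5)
Your proof is correct, but it takes a heavier route than the paper's. You transfer the weak-$\ast$ convergence $\al_m\times\eta_m \xrightharpoonup{\ast} \al\times\eta$ down to the compact quotient $\ov{\calJ}_K$ (via the partition-of-unity tile function), and then apply portmanteau on $\ov{\calJ}_K$ to the closed set $\ov{\calJ}_K \bs \ov{\calI}_K$. This works, and the tile-function descent you spell out is essentially the technical content of the paper's \emph{next} lemma (\cref{lem.convergenceI}), where it is carried out on the open subset $\ov{\calI}_K$; front-loading it to prove \cref{lem.limitzero} is more machinery than the lemma requires.

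The paper's own argument bypasses the quotient entirely and works upstairs. By \cref{lem.JKcocompact} one chooses a compact $A\subset\calJ_K\bs\calI_K$ with $\G\cdot A=\calJ_K\bs\calI_K$. Since $A$ contains a Borel fundamental domain for the $\G$-action on $\calJ_K\bs\calI_K$, one has the elementary bound $\ov\mu_m(\ov{\calJ}_K\bs\ov{\calI}_K)\leq(\al_m\times\eta_m)(A)$ for every $m$. Then portmanteau applied directly to the compact set $A$ inside the locally compact space $\calQC_K\times\calG$ gives $\limsup_m(\al_m\times\eta_m)(A)\leq(\al\times\eta)(A)=0$, the last equality from \eqref{eqnmeasu}. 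The ``main obstacle'' you identify — pushing weak-$\ast$ convergence of infinite Radon measures through the covering map — is an obstacle only to your chosen route, not to the paper's one-line argument, which never leaves the ambient space. Still, nothing in your proposal is wrong, and the only thing you should make precise is the boundedness/weak-$\ast$ descent step; the rest is routine, exactly as you say.
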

\begin{proof}
By \cref{lem.JKcocompact}, choose a compact set $A\subset \calJ_K \bs \calI_K$ such that $\G A=\calJ_K \bs \calI_K$, and note that $(\al\times \eta)(A)=0$ by \eqref{eqnmeasu}. Since $\al_m\times \eta_m \to \al \times \eta$ (see for instance \cite[Thm.~2.8]{billinsley}) we obtain
\begin{equation*}
    \limsup_{m \to \infty}{\ov\mu_m(\ov{\calJ}_K \bs \ov{\calI}_K)}\leq \limsup_{m\to \infty}{(\al_m\times \eta_m)(A)}\leq (\al \times \eta)(A)=0. \qedhere
\end{equation*}
\end{proof}
Let $\pi: \calJ_K \to \ov{\calJ}_K$ be the quotient map. We say that an open set $U \subset \ov{\calJ}_K$ is \emph{nice} if $U=\pi(\wtilde{U})$ for a precompact open set $\wtilde{U}\subset \calJ_K$ such that $g\wtilde{U}\cap \wtilde{U}= \emptyset$ for any non-trivial element $g\in \Gam$. In particular, $(\al_m\times \eta_m)(V)=\ov\mu_m(\pi(V))$ for all $m$ and all $V\subset \wtilde{U}$.

\begin{lemma}\label{lem.convergenceI}
 $\ov\mu_m \to \ov\mu$ weak-$\ast$ on $\ov{\calI}_K$.
\end{lemma}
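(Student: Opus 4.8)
The goal is to show that $\ov\mu_m\to\ov\mu$ in the weak-$\ast$ topology on the open subset $\ov{\calI}_K\subset\ov{\calJ}_K$, i.e. that $\int f\,d\ov\mu_m\to\int f\,d\ov\mu$ for every continuous compactly supported $f:\ov{\calI}_K\to\R$. The idea is to pull the problem back upstairs to $\calI_K$, where $\al_m\times\eta_m\to\al\times\eta$ weak-$\ast$ holds by hypothesis (this is exactly the statement ``$\al_m\times\eta_m\to\al\times\eta$'' used in \cref{lem.limitzero}), and to patch together local contributions using finitely many nice open sets. Since $\ov{\calI}_K$ is an open subset of the compact metrizable space $\ov{\calJ}_K$ on which $\G$ acts freely and properly discontinuously with $\calI_K$ open in $\calJ_K$, every point of $\ov{\calI}_K$ has a nice open neighborhood whose preimage lies in $\calI_K$; using a partition of unity we reduce to the case $\supp f\subset U$ for a single nice open $U$.

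\textbf{Key steps.} First I would fix $f\in C_c(\ov{\calI}_K)$, and cover the compact set $\supp f$ by finitely many nice open sets $U_1,\dots,U_N$, each of the form $U_j=\pi(\wtilde U_j)$ with $\wtilde U_j\subset\calI_K$ precompact and $\G$-disjoint (the inclusion $\wtilde U_j\subset\calI_K$ is possible because $\calI_K$ is open and $\G$-invariant and $\pi$ is a local homeomorphism there). Choose a subordinate partition of unity $\chi_1,\dots,\chi_N$ with $\chi_j\in C_c(U_j)$ and $\sum_j\chi_j\equiv 1$ on $\supp f$, so that $f=\sum_j f\chi_j$ and it suffices to treat each $f\chi_j$ separately. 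Second, for a fixed $j$, let $\wtilde f_j$ be the unique continuous function on $\calJ_K$ supported in $\wtilde U_j$ with $\pi_\ast$-pushforward equal to $f\chi_j$ on $U_j$ (well-defined because $\pi|_{\wtilde U_j}$ is a homeomorphism onto $U_j$ and $f\chi_j$ is compactly supported inside $U_j$); then $\wtilde f_j\in C_c(\calJ_K)$ and the defining property of a nice set gives
\[
\int_{\ov{\calI}_K} f\chi_j\,d\ov\mu_m=\int_{\calJ_K}\wtilde f_j\,d(\al_m\times\eta_m),\qquad
\int_{\ov{\calI}_K} f\chi_j\,d\ov\mu=\int_{\calJ_K}\wtilde f_j\,d(\al\times\eta).
\]
Third, since $\al_m\times\eta_m\to\al\times\eta$ weak-$\ast$ as Radon measures on $\calJ_K$ and $\wtilde f_j\in C_c(\calJ_K)$, the right-hand sides converge; summing over $j$ gives $\int f\,d\ov\mu_m\to\int f\,d\ov\mu$, which is the claim.

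\textbf{Main obstacle.} The only genuinely delicate point is the bookkeeping needed to make ``nice sets plus partition of unity'' rigorous: one must check that a compactly supported continuous function on $\ov{\calI}_K$ can be written as a finite sum of functions each supported in (the image of) a nice open set whose lift stays inside $\calI_K$ rather than merely inside $\calJ_K$, and that lifting these pieces to $\calJ_K$ produces honest elements of $C_c(\calJ_K)$ so that weak-$\ast$ convergence of $\al_m\times\eta_m$ applies directly. This is where the hypothesis that $\calI_K$ is \emph{open} in $\calJ_K$ (noted just before \cref{lem.JKpropdisc}) and that the $\G$-action on $\calJ_K$ is free and properly discontinuous with compact quotient (\cref{lem.JKpropdisc}, \cref{lem.JKcocompact}) are all used; once these topological facts are in hand, the convergence itself is immediate from weak-$\ast$ convergence of the product measures. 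I do not expect any analytic difficulty beyond this patching argument.
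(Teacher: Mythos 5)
Your proposal is correct and follows essentially the same route as the paper: cover $\supp f$ by finitely many nice open sets whose lifts sit in $\calI_K$, use a subordinate partition of unity, lift each piece to a compactly supported continuous function upstairs, and apply weak-$\ast$ convergence of $\al_m\times\eta_m$. One small imprecision: the weak-$\ast$ convergence you invoke holds on the ambient product $\calQC_K\times\calG$, not ``on $\calJ_K$''; this is harmless here because your lifted pieces $\wtilde f_j$ are supported in $\wtilde U_j\subset\calI_K$, which is open in $\calQC_K\times\calG$, so $\wtilde f_j\in C_c(\calI_K)\subset C_c(\calQC_K\times\calG)$ and the ambient convergence applies directly (this is exactly the inclusion the paper records).
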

\begin{proof}
Let $f\in C_c(\ov{\calI}_K)$ be a continuous function with support contained in the compact set $D\subset \ov{\calI}_K$. As $\G$ is torsion-free and acts properly on the locally compact separable metrizable space $\calI_K$, $\ov{\calI}_K$ has a locally finite covering $(U_i)_{i\in I}$ consisting of nice open sets. If $(\rho_i)_{i\in I}$ is a partition of unity subordinate to $(U_i)_{i\in I}$ and $I_0$ is the finite set of all the indices $i$ such that $U_i\cap D \neq \emptyset$, then $f=\sum_{i \in I_0}{\rho_if}$. Let $\wtilde{f_i}\in C_c(\calI_K)\subset C_c(\calQC_K \times \calG)$ be a lift of $\rho_if$ supported on the precompact open set $\wtilde{U}_i$ such that $\pi(\wtilde{U}_i)=U_i$ and $g \wtilde{U}_i\cap \wtilde{U}_i=\emptyset$ for $g\in \G \bs \{o\}$. The convergence $\al_m \times \eta_m \to \al \times \eta$ then implies
\begin{align*}
    \lim_{m \to \infty}{\int{f}{d\ov\mu_m}} & =\lim_{m \to \infty}{\left(\sum_{i\in I_0}{\int{\rho_if}{d\ov\mu_m}}\right)} \\
    & =\sum_{i\in I_0}{\left(\lim_{m \to \infty}{\int{\wtilde{f}_i}{d(\al_m\times\eta_m)}}\right)}=\sum_{i \in I_0}{\int{\wtilde{f}_i}{d(\al\times \eta)}}=\int{f}{d\ov\mu}. \qedhere
\end{align*}
\end{proof}

\begin{corollary}\label{coro.liminfinter}
$\liminf_{m\to \infty}{i_K(\al_m,\eta_m)}\geq i_K(\al,\eta).$
\end{corollary}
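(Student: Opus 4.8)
The plan is to recast both sides as masses of the quotient measures on the open subset $\ov\calI_K\subset\ov\calJ_K$, and then to invoke the standard lower semicontinuity of mass on open sets under weak-$*$ convergence. Since $\al\times\eta$ is $\G$-invariant and $\ov\mu=\ov{\al\times\eta}$ is its push-forward to $\ov\calJ_K$, for any Borel fundamental domain $\calB$ of the $\G$-action on $\calI_K$ one has $(\al\times\eta)(\calB)=\ov\mu(\ov\calI_K)$, whence $i_K(\al,\eta)=\tfrac12\ov\mu(\ov\calI_K)$, and likewise $i_K(\al_m,\eta_m)=\tfrac12\ov\mu_m(\ov\calI_K)$ for all $m$ (these quantities are finite by \cref{prop.QCKcogeodesic}). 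So the corollary reduces to showing $\liminf_{m}\ov\mu_m(\ov\calI_K)\ge\ov\mu(\ov\calI_K)$.

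For this I would use \cref{lem.convergenceI}, which gives $\ov\mu_m\to\ov\mu$ in the weak-$*$ topology on $\ov\calI_K$; this is meaningful because $\ov\calI_K$ is open in the compact metrizable space $\ov\calJ_K$, hence locally compact, and $\ov\mu$ is a finite, hence Radon, measure. Given $\ep>0$, by inner regularity pick a compact $C\subset\ov\calI_K$ with $\ov\mu(C)>\ov\mu(\ov\calI_K)-\ep$, and by Urysohn's lemma for locally compact spaces a function $f\in C_c(\ov\calI_K)$ with $0\le f\le 1$ and $f\equiv 1$ on $C$; extended by zero, $f\in C_c(\ov\calJ_K)$. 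Then
\[
\ov\mu(\ov\calI_K)-\ep<\int f\,d\ov\mu=\lim_{m\to\infty}\int f\,d\ov\mu_m\le\liminf_{m\to\infty}\ov\mu_m(\ov\calI_K),
\]
and letting $\ep\to 0$ finishes the argument.

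I do not anticipate a genuine obstacle: this is the easy half of continuity, and — unlike the reverse inequality that will follow — it uses neither the hypothesis \eqref{eqnmeasu} nor \cref{lem.limitzero}, holding for arbitrary weak-$*$ convergent sequences of currents. The only care needed is routine bookkeeping: that $\ov\calI_K$ is locally compact so that weak-$*$ convergence of Radon measures is meaningful there; that functions in $C_c(\ov\calI_K)$ extend by zero to functions in $C_c(\ov\calJ_K)$, so that \cref{lem.convergenceI} (established via a partition of unity subordinate to nice open sets) applies directly; and that the identification $i_K(\cdot,\cdot)=\tfrac12\ov\mu_{(\cdot)}(\ov\calI_K)$ is legitimate, which is immediate from the definition of $i_K$ together with the freeness and cocompactness facts recorded in \cref{lem.JKcocompact} and \cref{prop.QCKcogeodesic}.
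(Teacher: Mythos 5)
Your proof is correct and takes essentially the same route as the paper: both reduce to $\ov\mu_m(\ov\calI_K)$ via the identification $i_K(\cdot,\cdot)=\tfrac12\ov\mu_{(\cdot)}(\ov\calI_K)$, invoke \cref{lem.convergenceI} for weak-$*$ convergence on the open set $\ov\calI_K$, and then deduce lower semicontinuity of mass. The only cosmetic difference is in the final step — you use inner regularity and a Urysohn function, while the paper exhausts $\ov\calI_K$ by precompact open sets and cites the standard liminf inequality for open sets; these are interchangeable ways of packaging the same fact.
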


\begin{proof}
The space $\ov{\calI}_K$ is locally compact, separable and metrizable, hence $\sigma$-compact and there exists an increasing sequence $U_1\subset U_2\subset U_3 \subset  \dots \subset \ov{\calI}_K$ of precompact open sets such that $\bigcup_i{U_i}=\ov{\calI}_K$. The convergence $\ov\mu_m \to \ov\mu$ on $\ov\calI_K$ given by \cref{lem.convergenceI} implies that $\liminf_m{\ov\mu_m(\ov{\calI}_K)}\geq \liminf_m{\ov\mu_m(U_i)} \geq \ov\mu(U_i)$ for each $i$, and hence
\begin{equation*}
    \liminf_{m \to \infty}{i_K(\al_m,\eta_m)}=\frac{1}{2}\liminf_{m \to \infty}{\ov\mu_m(\ov{\calI}_K)} \geq \frac{1}{2}\sup_i{\ov\mu(U_i)}=\frac{1}{2}\ov\mu(\ov{\calI}_K)=i_K(\al,\eta). \qedhere
\end{equation*}
\end{proof}

\begin{lemma}\label{lem.limsupint}
We have $\limsup_{m\to \infty}{\ov\mu_m(\ov{\calJ}_K)}\leq \ov\mu(\ov{\calJ}_K)$.
\end{lemma}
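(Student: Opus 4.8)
The plan is to realise each total mass $\ov\mu_m(\ov{\calJ}_K)$ as the integral of one and the same compactly supported, upper semicontinuous function against the product measure $\al_m\times\eta_m$ on $\calQC_K\times\calG$, and then pass to the limit using semicontinuity of such integrals under vague convergence. Some input of this kind is needed: weak-$\ast$ convergence $\ov\mu_m\to\ov\mu$ on the open set $\ov{\calI}_K$ (\cref{lem.convergenceI}) together with $\ov\mu_m(\ov{\calJ}_K\setminus\ov{\calI}_K)\to 0$ (\cref{lem.limitzero}) does not by itself prevent mass escaping to the ``ends'' of $\ov{\calI}_K$, so one has to use the extra rigidity coming from $\G$-invariance of the measures.

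First I would fix a Bruhat cut-off function for the $\G$-action on $\calJ_K$. Recall that $\calJ_K$ is a second-countable, locally compact Hausdorff space (a locally closed subspace of $\calQC_K\times\calG$) on which $\G$ acts freely ($\G$ being torsion-free), properly discontinuously (\cref{lem.JKpropdisc}), and cocompactly (\cref{lem.JKcocompact}). Choosing a compact set $A\subset\calJ_K$ with $\G\cdot A=\calJ_K$ and a $\phi\in C_c(\calJ_K)$ with $\phi\ge 0$ and $\phi>0$ on $A$, the sum $\Phi:=\sum_{g\in\G}\phi\circ g$ is a locally finite, $\G$-invariant, strictly positive continuous function, so $\psi:=\phi/\Phi\in C_c(\calJ_K)$ satisfies $\psi\ge 0$ and $\sum_{g\in\G}\psi(gx)=1$ for all $x\in\calJ_K$. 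Unfolding over a Borel fundamental domain $\calF$ for this action and using $\G$-invariance, a Tonelli computation gives $\ov\nu(\ov{\calJ}_K)=\nu(\calF)=\int_\calF\bigl(\sum_{g}\psi(gy)\bigr)d\nu(y)=\int_{\calJ_K}\psi\,d\nu$ for every $\G$-invariant Radon measure $\nu$ on $\calQC_K\times\calG$; in particular, with $\tilde\psi$ denoting $\psi$ extended by zero, $\ov\mu_m(\ov{\calJ}_K)=\int\tilde\psi\,d(\al_m\times\eta_m)$ and $\ov\mu(\ov{\calJ}_K)=\int\tilde\psi\,d(\al\times\eta)$.

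The step I expect to require the most care — and the one that explains why the statement is an inequality rather than an equality — is verifying that $\tilde\psi$ is upper semicontinuous on $\calQC_K\times\calG$. Since $\psi\ge 0$, for $c>0$ the superlevel set $\{\tilde\psi\ge c\}$ equals $\{x\in\calJ_K:\psi(x)\ge c\}$, which is closed in $\calJ_K$ and contained in the compact set $\supp\psi$, hence compact, hence closed in $\calQC_K\times\calG$; for $c\le 0$ it is the whole space. So every superlevel set of $\tilde\psi$ is closed, i.e.\ $\tilde\psi$ is bounded, nonnegative, compactly supported and upper semicontinuous. (It is typically discontinuous along $\calJ_K\setminus\calI_K$, which is why one cannot improve $\limsup\le$ to $\lim=$ without using \eqref{eqnmeasu}.)

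Finally, since $\al_m\to\al$ in $\calQC_K\Curr(\G)$ and $\eta_m\to\eta$ in $\Curr(\G)$, the products converge vaguely, $\al_m\times\eta_m\to\al\times\eta$ on $\calQC_K\times\calG$ (the fact already used for \cref{lem.limitzero,lem.convergenceI}). Writing $\tilde\psi=\inf_i f_i$ as a pointwise non-increasing limit of functions $f_i\in C_c(\calQC_K\times\calG)$ with $f_i\ge\tilde\psi$ (possible because $\tilde\psi$ is u.s.c.\ with compact support, via a standard regularization), for each $i$ one gets $\limsup_m\int\tilde\psi\,d(\al_m\times\eta_m)\le\limsup_m\int f_i\,d(\al_m\times\eta_m)=\int f_i\,d(\al\times\eta)$, and letting $i\to\infty$ with monotone convergence yields $\limsup_m\ov\mu_m(\ov{\calJ}_K)\le\ov\mu(\ov{\calJ}_K)$. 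Everything from the upper-semicontinuity observation onward is soft; the real content is arranging, through the Bruhat function and the unfolding identity, that a single semicontinuous function computes all of the masses $\ov\mu_m(\ov{\calJ}_K)$ and $\ov\mu(\ov{\calJ}_K)$.
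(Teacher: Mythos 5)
Your proof is correct, and it takes a genuinely different route from the paper's. The paper's argument is hands-on: it covers $\ov{\calJ}_K\setminus\ov{\calI}_K$ by finitely many ``nice'' open balls, chooses radii so that $\ov\mu$ gives zero mass to the bounding spheres, runs an induction over the finite unions $Q_i$ of the corresponding closed balls to show $\limsup_m\ov\mu_m(Q_i)\le\ov\mu(Q_i)$, and finally splits $\ov{\calJ}_K=Q_q\cup(\ov{\calJ}_K\setminus\Int Q_q)$, handling the second piece (a compact subset of $\ov{\calI}_K$) via \cref{lem.convergenceI}. You instead replace this whole covering scheme by a single Bruhat cut-off $\psi\in C_c(\calJ_K)$ with $\sum_g\psi\circ g\equiv 1$, unfold to realize every total mass $\ov\nu(\ov{\calJ}_K)$ as $\int\tilde\psi\,d\nu$ for the same function, observe that the zero-extension $\tilde\psi$ to $\calQC_K\times\calG$ is compactly supported and upper semicontinuous (its positive superlevel sets are compact in $\calJ_K$, hence closed in $\calQC_K\times\calG$), and finish with the standard portmanteau-type inequality $\limsup_m\int\tilde\psi\,d\nu_m\le\int\tilde\psi\,d\nu$ for u.s.c.\ functions under vague convergence. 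Your route is more conceptual and, notably, does not invoke \cref{lem.convergenceI} at all, so it makes Lemma \ref{lem.limsupint} logically independent of that intermediate step; the only slightly delicate input is that $\calJ_K$ is locally compact (it is locally closed in $\calQC_K\times\calG$: it is relatively closed in the open set $\{(S,(p,q)):p\notin S\}$), which the paper also uses implicitly when forming $\ov{\calJ}_K$ but does not spell out. The paper's version buys elementariness (no Bruhat function, no semicontinuous regularization) at the cost of a fussier combinatorial bookkeeping; yours buys brevity and reuse (the unfolding identity and the u.s.c.\ portmanteau step are general tools) at the cost of a slightly heavier abstract-nonsense layer.
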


\begin{proof}
We fix a metric on $\ov{\calJ}_K$ inducing its topology and use the notation $B(x,r)$ for the open ball of radius $r$ around $x\in \ov\calJ_K$, and $S(x,r)$ for the sphere of radius $r$ around $x$. Define $\ov\Del_K=\ov{\calJ}_K \bs \ov{\calI}_K$, and for each $x\in \ov\Del_K$ consider a nice open ball $B(x,t_x)$, so that $\ov\Del_K \subset \bigcup_{x\in \ov\Del_K}{B(x,t_x)}$. By compactness of $\ov\Del_K$ we can find $x_1,\dots,x_q\in \ov\Del_K$ and $t_i=t_{x_i}>0$ such that $\ov\Del_K\subset \bigcup_{1\leq i \leq q}{B(x_i,t_i)}$. Compactness also allows us to find $0<s_i<t_i$ such that $\bigcup_{1\leq i \leq q}{B(x_i,s_i)}$ still contains $\ov\Del_K$. 

For each $1\leq i \leq q$, find a number $r_i$ such that $s_i<r_i<t_i$ and $\ov{\mu}(S(x_i,r_i))=0$, and define $Q_i:=\bigcup_{1\leq j \leq i}{\ov{B(x_j,r_j)}}$, where $\ov{B(x_j,r_j)}$ denotes the closure of $B(x_j,r_j)$ in $\ov\calJ_K$. 

We claim that $\limsup_{m \to \infty}{\ov\mu_m(Q_i)}\leq \ov\mu(Q_i)$ for all $1\leq i \leq q$. The claim holds for $i=1$, since $Q_1$ is a closed subset of the nice open set $B(x_1,t_1)$ and $\al_m\times \eta_m \to \al\times \eta$. Now, assume that the claim holds for some $1\leq i \leq q-1$, and note that \begin{equation}\label{eq.Qunion}
    Q_{i+1}=Q_i \cup (\ov{B(x_{i+1},r_{i+1})} \bs \Int(Q_i)),
\end{equation} where $\Int$ denotes interior. We also have
\[Q_i \cap (\ov{B(x_{i+1},r_{i+1})} \bs \Int(Q_i)) \subset Q_i \bs \Int(Q_i) \subset \bigcup_{1\leq j \leq i}{S(x_j,r_j)},\]
and hence $\ov\mu(Q_i \cap (\ov{B(x_{i+1},r_{i+1})} \bs \Int(Q_i)))=0$ by our choice of the $r_j$'s. By our inductive assumption we have 
\begin{equation}\label{eq.1meas}
    \limsup_m{\ov\mu_m(Q_i)}\leq \ov\mu(Q_i),
\end{equation} and since $Q_i \cap (\ov{B(x_{i+1},r_{i+1})} \bs \Int(Q_i))$ and $\ov{B(x_{i+1},r_{i+1})} \bs \Int(Q_i)$ are closed subsets of the nice open subset $B(x_{i+1},t_{i+1})$, we have \begin{equation}\label{eq.2meas}
    \lim_m{\ov\mu_m(Q_i \cap ( \ov{B(x_{i+1},r_{i+1})} \bs \Int(Q_i)))}\leq \ov\mu(Q_i \cap ( \ov{B(x_{i+1},r_{i+1})} \bs \Int(Q_i)))=0
\end{equation} 
and 
\begin{equation}\label{eq.3meas}
\limsup_m{\ov\mu_m( \ov{B(x_{i+1},r_{i+1})} \bs \Int(Q_i))} \leq \ov\mu(\ov{B(x_{i+1},r_{i+1})} \bs \Int(Q_i)).
\end{equation}
The inequalities \eqref{eq.1meas}, \eqref{eq.2meas} and \eqref{eq.3meas} together with \eqref{eq.Qunion} imply that $$\limsup_m{\ov\mu_m(Q_{i+1})}\leq \ov\mu(Q_{i+1}),$$
which concludes the proof of our claim.

Finally, $\ov\calJ_K=Q_q \cup (\ov\calJ_K\bs \Int(Q_q))$, where $\ov{\calJ}_K\bs \Int(Q_q)$ is a compact subset of $\ov{\calI}_K$, and $Q_q\cap (\ov\calJ_K\bs \Int(Q_q))=(Q_q \bs \Int(Q_q))$ is a closed set of $\ov\mu$-measure 0. From these observations, our claim, and \cref{lem.convergenceI} we conclude 
\begin{align*}
    \limsup_{m \to \infty}{\ov\mu_m(\ov\calJ_K)} & \leq \limsup_{m\to \infty}{\ov\mu_m(Q_q)}+\limsup_{m\to \infty}{\ov\mu_m(\ov\calJ_K\bs \Int(Q_q))} \\ & \leq \ov\mu(Q_q)+\ov\mu(\ov\calJ_K\bs \Int(Q_q))=\ov\mu(\ov\calJ_K).
\end{align*}
This finishes the proof of the lemma.
\end{proof}

\begin{proof}[Proof of \cref{prop.criterioncontinuous}]
Let $(\al,\eta)\in \calQC_K\Curr(\G)\times \Curr(\G)$ be such that $(\al \times \eta)(\calJ_K \bs \calI_K)=0$ and consider a sequence $(\al_m,\eta_m)_m\subset \calQC_K\Curr(\G)\times \Curr(\G)$ converging to $(\al,\eta)$. By \cref{coro.liminfinter} we have $\liminf_{m\to \infty}{i_K(\al_m,\eta_m)}\geq i_K(\al,\eta),$ and by \cref{lem.limsupint} we have 
\begin{align*}
\limsup_{m \to \infty}{i_K(\al_m,\eta_m)} & = \frac{1}{2}\limsup_{m \to \infty}{\ov{\mu}_m(\ov\calI_K)}\\
& \leq  \frac{1}{2}\limsup_{m \to \infty}{\ov{\mu}_m(\ov\calJ_K)}\\
& \leq  \frac{1}{2}\ov{\mu}(\ov\calJ_K)\\
& =  \frac{1}{2}\ov{\al \times \eta}(\ov\calI_K)=i_K(\al,\eta).
\end{align*}
This implies that the limit $\lim_{m\to \infty}{i_K(\al_m,\eta_m)}$ exists and is equal to $i_K(\al,\eta)$.
\end{proof}

\subsection{The Haar co-geodesic current}\label{subsec.haarcurrent}
In this subsection, we describe a canonical (projective) co-geodesic current on $\G$ supported on round hyperspheres that is dual to the lattice action of $\G$ on $\Hy^n$. The topological action of $\SO^+(n,1)$ on $\calQC_1$ is transitive, and since the stabilizer of any round hypersphere is conjugate to $\SO^+(n-1,1)$, we obtain a $\SO^+(n,1)$-equivariant homeomorphism between $\calQC_1$ and the homogeneous space $\SO^+(n,1)/\SO^+(n-1,1)$. Any Haar measure on $\SO^+(n,1)$ then induces a $\SO^+(n,1)$-invariant Radon measure on $\SO^+(n,1)/\SO^+(n-1,1)$, and hence on $\calQC_1$ \cite[Sec.~6.2]{drutu-kapovich}.

\begin{definition}[Haar current]
    By a \emph{Haar co-geodesic current} on $\calQC_1\Curr(\G)$ we mean any co-geodesic current $\al_{Haar}$ induced by a Haar measure on the homogeneous space $\SO^+(n,1)/\SO^+(n-1,1)$ as describe above. The \emph{projective Haar co-geodesic current} is the equivalence class $[\al_{Haar}]\in \bbP\calQC_1\Curr(\G)$, which is independent of the choice of $\al_{Haar}$.
\end{definition}

Crofton's formula \cite[Prop.~2.1]{robertson} implies that the Haar current is dual to the lattice action on $\Hy^n$.

\begin{lemma}\label{lem.duallattice}
    The Haar projective co-geodesic current is dual to the metric structure $\rho_{\Hy^n}\in \scrD_\G$ induced by the action of $\G$ on $\Hy^n$. That is, there is some $t>0$ such that
    \[\ell_{\Hy^n}[g]=t\cdot i_{\calQC_1}(\al_{Haar},\eta_{[g]})\]
    for all $g\in \G$.
\end{lemma}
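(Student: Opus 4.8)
The plan is to reduce the identity, via $\R_+$-bilinearity of $i_{\calQC_1}$, to the case of a primitive element $g\in\G$ (using $\eta_{[h^m]}=|m|\eta_{[h]}$ and $\ell_{\Hy^n}[h^m]=|m|\ell_{\Hy^n}[h]$), and then to compute both sides geometrically inside $\Hy^n$. Write $\calH$ for the space of totally geodesic hyperplanes of $\Hy^n$. The map $P\mapsto\partial_\infty P=\Lam(P)$ is an $\SO^+(n,1)$-equivariant homeomorphism $\calH\to\calQC_1$, and under it $\al_{Haar}$ corresponds to an $\SO^+(n,1)$-invariant Radon measure $\mu_\calH$ on $\calH$; since $\SO^+(n,1)$ and $\SO^+(n-1,1)$ are both unimodular, such a measure is unique up to scaling, so $\mu_\calH$ agrees up to a positive constant with whatever normalization a Crofton formula is stated for. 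I am free to absorb all such constants into the final $t$.

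First I would unpack the definition of $i_{\calQC_1}(\al_{Haar},\eta_{[g]})$ exactly as in the proof of \cref{prop.dualcubulation}: choose a Borel fundamental domain $\calF$ for the $\G$-action on $\calI_{\calQC_1}$ (which is free and properly discontinuous by \cref{prop.QCKcogeodesic}) with the property that $(S,(hg^{\pm\infty},hg^{\mp\infty}))\in\calF$ forces $h\in\genby{g}$. Since $\eta_{[g]}$ is the atomic measure on the $\G$-orbit of $\{(g^\infty,g^{-\infty}),(g^{-\infty},g^{\infty})\}$, the same bookkeeping as in that proof (where the factor $1/2$ in the definition of the intersection number is absorbed by the two orderings of the pair of fixed points of $g$) gives
\[
i_{\calQC_1}(\al_{Haar},\eta_{[g]})=\al_{Haar}(\calD_g)=\mu_\calH(\calD_g'),
\]
where $\calD_g$ (resp.\ $\calD_g'$) is a Borel fundamental domain for the $\genby{g}$-action on the open set $\Sigma_g\subset\calQC_1$ of round hyperspheres separating $g^{\infty}$ from $g^{-\infty}$ (resp.\ on the corresponding set of hyperplanes in $\calH$); this action is free and properly discontinuous as a sub-action of the $\G$-action on $\calI_{\calQC_1}$, so $\calD_g,\calD_g'$ exist and the values above are independent of their choice. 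The only change from the cited proof is that a sum of Dirac masses is replaced by the continuous measure $\mu_\calH$, so a counting argument becomes a measure computation.

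Next I would translate the separation condition into an incidence with the axis. Let $\gam_g\subset\Hy^n$ be the translation axis of $g$, with endpoints $g^{-\infty},g^{\infty}$, on which $g$ acts as translation by $L:=\ell_{\Hy^n}[g]$. A hyperplane $P\in\calH$ satisfies $\partial_\infty P\in\Sigma_g$ if and only if $P$ crosses $\gam_g$: the two open half-spaces bounded by $P$ have boundaries at infinity the two components of $\bbS^{n-1}\smallsetminus\partial_\infty P$, so if $\partial_\infty P$ separates the endpoints of $\gam_g$ then $\gam_g$ meets both half-spaces and hence $P$, and conversely a transverse intersection separates the endpoints. The degenerate incidences ($\gam_g\subset P$, or $P$ through $g^{\pm\infty}$) form a $\mu_\calH$-null set, since the hyperplanes through a fixed point of $\ov{\Hy}^n$ form a positive-codimension submanifold of $\calH$. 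As $\gam_g$ is a geodesic and each $P$ is convex, a hyperplane meeting $\gam_g$ meets it in a single point; choosing a half-open sub-segment $\sigma\subset\gam_g$ of length $L$ as a fundamental domain for $\genby{g}$ on $\gam_g$, it follows that, up to a $\mu_\calH$-null set, $\{P\in\calH:P\cap\sigma\neq\emptyset\}$ is a fundamental domain for the $\genby{g}$-action on the hyperplanes crossing $\gam_g$, so
\[
i_{\calQC_1}(\al_{Haar},\eta_{[g]})=\mu_\calH\bigl(\{P\in\calH:P\cap\sigma\neq\emptyset\}\bigr).
\]
Finally I would invoke Crofton's formula in its hyperbolic form \cite[Prop.~2.1]{robertson}: for a rectifiable curve $c$ in $\Hy^n$, the $\mu_\calH$-integral of the number of intersection points with a variable hyperplane equals $c_n\,\Length(c)$ for a constant $c_n>0$ depending only on $n$ and the normalization of $\mu_\calH$. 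Applied to $c=\sigma$, where this intersection number is $0$ or $1$, this gives $i_{\calQC_1}(\al_{Haar},\eta_{[g]})=c_n L=c_n\,\ell_{\Hy^n}[g]$ for primitive $g$, hence for all $g$ by bilinearity; setting $t=1/c_n$ proves the lemma. The step that needs the most care is the first one — matching the abstract intersection number to the geometric quantity ``$\al_{Haar}$-mass of the hyperplanes crossing a fundamental arc of the axis of $g$'' — together with checking that the various degenerate incidences are $\mu_\calH$-null; once this is set up, the conclusion is a direct application of the hyperbolic Crofton formula.
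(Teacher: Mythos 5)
Your proposal is correct and takes the same route the paper does — the paper's ``proof'' is a single sentence citing Crofton's formula \cite[Prop.~2.1]{robertson}, and you have correctly supplied the details it omits: the reduction to primitive elements, the unwinding of $i_{\calQC_1}(\al_{Haar},\eta_{[g]})$ into the $\mu_\calH$-mass of a fundamental domain for $\genby{g}$ acting on the hyperplanes crossing the axis $\gam_g$, the identification of ``separates $g^{\pm\infty}$'' with ``crosses $\gam_g$'' modulo a $\mu_\calH$-null set, and the final application of Crofton to a length-$L$ fundamental arc. The bookkeeping matching the factor $1/2$ against the two orderings $(g^{\infty},g^{-\infty})$, $(g^{-\infty},g^{\infty})$ is handled correctly and is the one place where it would have been easy to drop a factor of $2$.
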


The Haar current will be our limit co-geodesic current when we apply \cref{prop.contcogeodesic}, so we must verify that the intersection number is continuous at this current when paired with any geodesic current. This is the content of the next lemma.

\begin{lemma}\label{lem.Haarintersectionctns}
For every $\eta\in \Curr(\G)$ and $K\geq 1$, the intersection number $i_K:\calQC_K\Curr(\G) \times \Curr(\G) \ra \R$ is continuous at $(\al_{Haar},\eta)$.
\end{lemma}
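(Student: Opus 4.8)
The plan is to reduce everything to the continuity criterion \cref{prop.criterioncontinuous}: it suffices to check that the pair $(\al_{Haar},\eta)$ satisfies hypothesis \eqref{eqnmeasu}, i.e.\ that $(\al_{Haar}\times\eta)(\calJ_K\bs\calI_K)=0$. Unwinding the definitions of $\calI_K$ and $\calJ_K$, one sees that $\calJ_K\bs\calI_K$ consists exactly of the triples $(S,(p,q))$ with $p\notin S$ and $q\in S$. Since $\al_{Haar}$, viewed as a $\calQC_K$-current, is supported on the round spheres $\calQC_1\subset\calQC_K$, Tonelli's theorem gives
\[(\al_{Haar}\times\eta)(\calJ_K\bs\calI_K)=\int_{\calG}\al_{Haar}(\{S\in\calQC_1 : p\notin S,\ q\in S\})\,d\eta(p,q)\le \int_{\calG}\al_{Haar}(A_q)\,d\eta(p,q),\]
where $A_q:=\{S\in\calQC_1 : q\in S\}$. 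So the whole lemma reduces to the single assertion that \emph{the set of round hyperspheres through a fixed point at infinity is $\al_{Haar}$-null}: $\al_{Haar}(A_q)=0$ for every $q\in\bbS^{n-1}$.

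To prove this I would exploit the homogeneity of the situation. First, $A_q$ is a closed (hence Borel) subset of $\calQC_1$, being the zero set of the continuous function $S\mapsto d(q,S)$; and since $\SO^+(n,1)$ acts on $\calQC_1$ by $S\mapsto gS$, one checks directly that $gA_q=A_{gq}$. As $\al_{Haar}$ is $\SO^+(n,1)$-invariant and $\SO^+(n,1)$ acts transitively on $\bbS^{n-1}=\partial\Hy^n$, the number $c:=\al_{Haar}(A_q)$ is independent of $q$. Now apply Tonelli a second time, to the indicator of the closed incidence set $\calZ:=\{(S,q)\in\calQC_1\times\bbS^{n-1} : q\in S\}$ with respect to the product of $\al_{Haar}$ and the round probability measure $\nu$ on $\bbS^{n-1}$: integrating first over $q$ yields $\int_{\calQC_1}\nu(S)\,d\al_{Haar}(S)=0$, because each round hypersphere is a totally geodesic $(n-2)$-sphere and hence $\nu$-null; integrating first over $S$ yields $\int_{\bbS^{n-1}}\al_{Haar}(A_q)\,d\nu(q)=c\,\nu(\bbS^{n-1})=c$. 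Hence $c=0$, which is what we needed; substituting back verifies \eqref{eqnmeasu}, and \cref{prop.criterioncontinuous} then gives continuity of $i_K$ at $(\al_{Haar},\eta)$.

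The only genuinely substantive point is the vanishing $\al_{Haar}(A_q)=0$; everything else is the reduction through \cref{prop.criterioncontinuous} plus two routine uses of Tonelli's theorem, legitimate because $\calJ_K\bs\calI_K$, $A_q$ and $\calZ$ are Borel and $\al_{Haar},\eta,\nu$ are $\sigma$-finite Radon measures. As a cross-check one can obtain this vanishing more concretely in the hyperboloid model: $\calQC_1$ is identified with the de Sitter quadric $\{v\in\R^{n,1} : \langle v,v\rangle=1\}$ modulo $\pm$, a point $q$ at infinity corresponds to a null vector $u$, and $A_q$ to $\{[v] : \langle u,v\rangle=0\}$; since a null vector and a spacelike vector can never be proportional, $0$ is a regular value of $v\mapsto\langle u,v\rangle$ on the quadric, so $A_q$ is an embedded submanifold of positive codimension in $\calQC_1$, hence Lebesgue-null and therefore $\al_{Haar}$-null. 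I do not anticipate a serious obstacle here; the main care needed is bookkeeping — correctly identifying $\calJ_K\bs\calI_K$ (which endpoint lies on $S$) and using that $\al_{Haar}$ is concentrated on $\calQC_1$ when forming the product measure.
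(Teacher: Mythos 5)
Your reduction through \cref{prop.criterioncontinuous} and the first Tonelli step are exactly the paper's; the only real content is the vanishing $\al_{Haar}(A_q)=0$, and there your primary argument diverges from the paper's in a nice way. The paper proves this vanishing directly: it observes that $A_q=\{S\in\calQC_1\colon q\in S\}$ is a positive-codimension smooth submanifold of $\calQC_1$ and that $\al_{Haar}$ is (a multiple of) a Riemannian volume, which annihilates such submanifolds — this is essentially the ``cross-check'' you sketch via the de~Sitter hyperboloid model. Your main route instead uses only the $\SO^+(n,1)$-invariance of $\al_{Haar}$, transitivity on $\bbS^{n-1}$ (so $\al_{Haar}(A_q)=c$ is a $q$-independent constant), and a second Tonelli application over the incidence set $\calZ\subset\calQC_1\times\bbS^{n-1}$ against the round probability measure $\nu$: integrating first in $q$ gives $0$ because round hyperspheres are $\nu$-null in $\bbS^{n-1}$, while integrating first in $S$ gives $c\cdot\nu(\bbS^{n-1})=c$, forcing $c=0$. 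Both arguments are correct; yours is slightly more elementary in that it never needs to interpret $\al_{Haar}$ concretely as a Riemannian volume form on $\calQC_1$, trading that for the very soft input that a codimension-one submanifold of the sphere has zero round measure, at the small cost of an extra Tonelli step (and the observation, which you should make explicit, that Tonelli is applicable since $\al_{Haar}$ and $\nu$ are $\sigma$-finite and $\calZ$ is closed, hence Borel). Either route fills the gap; the paper uses what you present as the fallback.
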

\begin{proof}
We want to apply \cref{prop.criterioncontinuous}, so it is enough to show that $(\al_{Haar}\times \eta) (\calJ_K \bs \calI_K)=0$ for any $K\geq 1$ and and $\eta\in \Curr(\G)$. Indeed, since $\al_{Haar}$ is supported on $\calQC_1$ we can assume that $K=1$. Therefore, by Fubini's Theorem we get
\begin{equation*}
 (\al_{Haar}\times \eta) (\calJ_K \bs \calI_K) =  \int_{\calG}{\al_{Haar}(\{S\in \calQC_1 \colon p\notin S, q\in S\})}{d\eta(p,q)}.  
\end{equation*}
Also, for any $q\in \bbS^{n-1}$ the set $\{S\in \calQC_1 \colon p\in S\}$ is a smooth submanifold of $\calQC_1$ of positive codimension, and since $\al_{Haar}$ is a Riemannian volume, we have 
\[\al_{Haar}(\{S\in \calQC_1 \colon p\notin S, q\in S\})\leq \al_{Haar}(\{S\in \calQC_1 \colon q\in S\})=0\]
for all $p,q$. This implies that $ (\al_{Haar}\times \eta) (\calJ_K \bs \calI_K)=0$ and concludes the proof. 
\end{proof}


\section{The arithmetic case}\label{sec.arithmetic}

In this section, we prove \cref{thm.main.arithmetic}, which states that a torsion-free uniform arithmetic lattice of simplest type is approximable by cubulations. Throughout this section, we assume $n\geq 3$ and let $M$ be a closed arithmetic $n$-manifold of simplest type. Then $M=\G\bs \Hy^n$ for $\G$ a torsion-free cocompact lattice in $G:=\SO^+(n,1)$. Recall that $\calQC_1$ is the space of round hyperspheres in $\bbS^{n-1}$, which is naturally identified with $G/L$ for $L:=\SO^+(n-1,1)$. Under this identification, points in $\calQC_1$ also correspond to subsets of $G$. We let $\pi:G \ra \G \bs G$ be the natural projection, and note that $\G \bs G$ is compact. We also let $G$ act on $\G \bs G$ by right translations. We will need some notions of homogeneous dynamics and arithmetic lattices, for which we refer the reader to \cite{zimmer}.

Since $M$ is arithmetic of simplest type, it contains infinitely many immersed totally geodesic codimension-1 submanifolds, see e.g.~\cite[Sec.~2]{BHW}. By a dense commensurator argument, we can prove the following.

\begin{lemma}\label{lem.seqarith}
    There exists a sequence $(S_m)_m\subset \calQC_1$ converging to $S_\infty\in \calQC_1$ and such that:
    \begin{itemize}
        \item The stabilizer of $S_m$ in $G$ acts cocompactly on $S_m$;
        \item $\pi(S_m)$ is closed in $\G \bs G$ for each $m$; and,
        \item $\pi(S_\infty)$ is dense in $\G \bs G$.
    \end{itemize}
\end{lemma}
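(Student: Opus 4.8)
The plan is to construct the sequence from a single immersed totally geodesic hypersurface, spread it around using the dense commensurator, and then steer the limit to a round hypersphere with dense orbit via a genericity argument. First I would fix one round hypersphere $S_0\in\calQC_1$ realized as the limit set of a lift of an immersed closed totally geodesic codimension-$1$ submanifold of $M$; such a submanifold exists precisely because $M$ is arithmetic of simplest type \cite[Sec.~2]{BHW}. Since $\G$ is cocompact, $\G\cap\mathrm{Stab}_G(S_0)$ is a uniform lattice in $\mathrm{Stab}_G(S_0)$, which is a conjugate of $L$; equivalently, writing $S_0=g_0L$ under the identification $\calQC_1\cong G/L$, the orbit $\pi(S_0)=\G g_0L$ is closed in $\G\bs G$. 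So $S_0$ already satisfies the first two bullets of the statement.

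Next I would invoke that $\G$, being arithmetic, has commensurator $\Comm_G(\G)$ dense in $G$ \cite{zimmer}, and show that the first two bullets are preserved under $\Comm_G(\G)$. For $c\in\Comm_G(\G)$ we have $\mathrm{Stab}_G(cS_0)=c\,\mathrm{Stab}_G(S_0)\,c^{-1}$; since $\G\cap c\G c^{-1}$ has finite index in $c\G c^{-1}$, intersecting it with the subgroup $c\bigl(\G\cap\mathrm{Stab}_G(S_0)\bigr)c^{-1}$ (which is a uniform lattice in $\mathrm{Stab}_G(cS_0)$) shows that $\G\cap\mathrm{Stab}_G(cS_0)$ contains, hence is, a uniform lattice in $\mathrm{Stab}_G(cS_0)$. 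Thus $\pi(cS_0)$ is closed in $\G\bs G$ and $\mathrm{Stab}_G(cS_0)$ acts cocompactly on $cS_0$ (it acts transitively, being conjugate to $L$). As the orbit map $g\mapsto gS_0$ from $G$ onto $\calQC_1$ is continuous, the set $\calR$ of round hyperspheres satisfying the first two bullets contains $\{cS_0:c\in\Comm_G(\G)\}$ and is therefore dense in $\calQC_1$.

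Then I would produce a round hypersphere with dense $\pi$-image. By Moore's ergodicity theorem the $L$-action on $\G\bs G$ equipped with the invariant probability measure $m$ is ergodic, since $L=\SO^+(n-1,1)$ is noncompact; as $m$ has full support and $\G\bs G$ is second countable, the $L$-invariant set of points with dense $L$-orbit is $m$-conull. Pulling this set back to $G$ gives a right-$L$-invariant conull subset of $G$ (using unimodularity of $G$ and the local triviality of $G\to\G\bs G$), which descends to a conull subset $\scrG\subset G/L=\calQC_1$; by construction $\scrG$ consists exactly of the round hyperspheres $S$ with $\pi(S)$ dense in $\G\bs G$, and in particular $\scrG\neq\emptyset$. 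Finally I would pick $S_\infty\in\scrG$ and, using density of $\calR$, choose $S_m\in\calR$ with $S_m\to S_\infty$; this is the required sequence. I expect the delicate points to be the commensurator step — checking that closedness of the orbit (equivalently, the uniform-lattice property of the stabilizer) is exactly what is inherited by $cS_0$ — and the measure-theoretic transfer from "$m$-a.e.\ point of $\G\bs G$ has dense $L$-orbit" to "a.e.\ round hypersphere has dense orbit"; everything else is density and continuity bookkeeping.
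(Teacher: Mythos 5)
Your construction is correct, and it splits into two halves: the first half (fixing one round hypersphere coming from a closed immersed totally geodesic hypersurface, conjugating by the dense commensurator, and checking that $\G\cap\mathrm{Stab}_G(cS_0)$ inherits the uniform-lattice property through the finite-index intersection) is essentially the same commensurator-density argument the paper uses to show that the set $\calL$ of round hyperspheres arising as limit sets of quasiconvex subgroups of $\G$ is dense in $\calQC_1$. Where you genuinely diverge is in producing $S_\infty$ with $\pi(S_\infty)$ dense: the paper observes that $\calL$ is countable (quasiconvex subgroups of $\G$ are finitely generated, hence there are only countably many), picks any $S_\infty\in\calQC_1\bs\calL$, notes that its $\G$-stabilizer is then not a lattice in its $G$-stabilizer so $\pi(S_\infty)$ is not closed, and invokes Shah's orbit-closure theorem \cite[Thm.~B]{shah1991closures} to upgrade ``not closed'' to ``dense''; you instead apply Moore's ergodicity theorem to the right $L$-action on $(\G\bs G,m)$, deduce that $m$-a.e.\ point has a dense $L$-orbit, and transfer this along the saturated pullback in $G$ to get a conull (hence nonempty) set of round hyperspheres with dense image, from which you choose $S_\infty$ and approximate by your dense family $\calR$. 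Both routes are valid and use inputs the paper already has on hand (Moore ergodicity is invoked in the very next proposition); yours trades the topological rigidity input (every sphere outside a countable exceptional set works, with an explicit closed-or-dense dichotomy) for a softer measure-theoretic genericity argument, at the cost of the disintegration/saturation bookkeeping you correctly flag as the delicate step. Two cosmetic points: the justification ``since $\G$ is cocompact'' for $\G\cap\mathrm{Stab}_G(S_0)$ being a uniform lattice should really read ``since the immersed totally geodesic hypersurface is closed'' (which is what your choice of $S_0$ provides), and note that the lemma's first bullet is intended as cocompactness of the stabilizer \emph{in $\G$} — which your uniform-lattice computation establishes anyway — rather than the trivially true statement for the stabilizer in $G$ that you also record.
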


\begin{proof}
Let $\calL\subset \calQC_1$ be the subset of all the round hyperspheres that are limit sets of a (totally geodesic) quasiconvex subgroup of $\G$, which is non-empty since $M$ is compact and arithmetic of simplest type. We claim that $\calL$ is dense in $\calQC_1$. Indeed, let $S$ be any hypersphere in $\calL$, and assume it is the limit set of the quasiconvex subgroup $\G_S<\G$. By arithmeticity, the commensurator $\Comm_G(\G)$ is dense in $G$ \cite[Prop.~6.2.4]{zimmer}, and since $G$ acts transitively on $\calQC_1$, for any arbitrary $S_\infty \in \calQC_1$ we can find a sequence $(g_m)_m$ in $\Comm_G(\G)$ with $S_m:=g_m S$ converging to $S_\infty$. Now we note that $S_m$ is the limit set of the (totally geodesic) quasiconvex subgroup $\G_m:=\G \cap g_m\G_Sg_m^{-1}<\G$, so that each $S_m$ belongs to $\calL$. 

Since $\calL$ is countable, there is a sequence $(S_m)_m$ in $\calL$ converging to $S_\infty$ in $\calQC_1 \bs \calL$. We claim that this sequence satisfies the conclusion of the lemma. The first two assertions follow by the definition of $\calL$, because each $S_m$ is the limit set of a totally geodesic hypersurface projecting to an immersed submanifold that is closed in $M=\G \bs G /\SO(n)$. To prove the third assertion, we note that the stabilizer in $\G$ of $S_\infty$ is not a lattice in the stabilizer in $G$ of $S_\infty$ (because $S_\infty \notin \calL$). Then the projection in $M$ of the totally geodesic hypersurface with boundary $S_\infty$ is not closed, and similarly $\pi(S_\infty)$ is not closed in $G \bs \G$. Then $\pi(S_\infty)$ is dense in $\G \bs G$ by \cite[Thm.~B]{shah1991closures}. 
\end{proof}



A Borel probability measure $\mu$ on $\G \bs G$ is \emph{homogeneous} if there is a closed, connected subgroup $V<G$ and a point $\ov x=\pi(x)\in \G \bs G$ such that $\ov x \cdot V$ is closed in $\G \bs G$ and $\mu$ is (right) $V$-invariant, with support $\ov x \cdot V$. The action of $V$ on $\supp(\mu)$ is transitive, so it is a homogeneous $V$-space and hence $\mu$ satisfies
\begin{equation}\label{eq.defhomogeneous}
    \int_{G}{F}{d\wtilde{\mu}}=\int_V{F(x y)}{dH_V(y)}
\end{equation}
for all compactly supported continuous functions $F\in C_c(G)$, where $\wtilde{\mu}$ is the $\G$-equivariant lift of $\mu$ to $G$ (with respect to the left action of $\G$) and $dH_V$ is a Haar measure on $V$. The measure $H_V$ is uniquely determined by \eqref{eq.defhomogeneous} since $\mu$ is a probability measure.

Note that if $\mu$ is homogeneous, then any subset of full $\mu$-measure is dense in $\supp(\mu)$. We also note that if $\mu$ is a $\G$-invariant probability measure on $\G\bs G$, then it is the homogeneous measure induced by a Haar measure on $G$.

If $S_m$ is as in \cref{lem.seqarith}, then $\pi(S_m)$ is closed and equals $\ov{x}_m\cdot L$ for some $\ov{x}_m$ in $\G \bs G$. Then the homogeneous probability measure $\mu=\mu_m$ on $\G \bs G$ with support $\pi(S_m)$ satisfies \eqref{eq.defhomogeneous} for $V=L=\SO^+(n-1,1)$.

\begin{proposition}\label{prop.convHaar}
    Let $(S_m)_m$ and $S_\infty$ be as in \cref{lem.seqarith}, and for each $m$ let $\mu_m$ be the homogeneous probability measure on $\G \bs G$ supported on $\pi(S_m)$. Then $\mu_m$ weak-$\ast$ converges to the homogeneous probability measure supported on $\G \bs G$. 
\end{proposition}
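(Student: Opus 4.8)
The plan is to combine Ratner's measure classification theorem with the rigidity of weak-$\ast$ limits of homogeneous measures due to Mozes and Shah, and then to exploit the fact that $L=\SO^+(n-1,1)$ is a \emph{maximal} connected subgroup of $G=\SO^+(n,1)$ when $n\geq 3$. Since $\G\bs G$ is compact, no mass escapes to infinity, so it is enough to show that every weak-$\ast$ subsequential limit of $(\mu_m)_m$ is the $G$-invariant probability measure on $\G\bs G$; the statement then follows because the space of probability measures on the compact metric space $\G\bs G$ is metrizable.

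First I would record some preliminaries. Choose lifts $g_m\in G$ of $S_m\in\calQC_1=G/L$ with $g_m\to g_\infty$, where $g_\infty$ is a lift of $S_\infty$; then $\pi(S_m)=\ov{x}_mL$ with $\ov{x}_m=\G g_m\to\G g_\infty=:\ov{x}_\infty$, and $\pi(S_\infty)=\ov{x}_\infty L$. Since $\pi(S_m)$ is closed in the compact space $\G\bs G$ it is a compact $L$-orbit, so the stabilizer $g_m^{-1}\G g_m\cap L$ is a lattice in $L$; as $n\geq 3$, $L$ is a connected simple Lie group with no compact factors, so Moore's ergodicity theorem shows that $\mu_m$ is ergodic under some one-parameter unipotent subgroup of $L$. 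Thus each $\mu_m$ is a homogeneous probability measure of the type to which the Mozes--Shah theorem \cite{mozes-shah} (via \cite{ratner}) applies.

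Now let $\mu_\infty$ be a weak-$\ast$ limit along a subsequence. By Mozes--Shah, $\mu_\infty$ is homogeneous: there is a closed connected subgroup $H<G$, generated by its one-parameter unipotent subgroups, with $\supp\mu_\infty$ a closed $H$-orbit, and moreover there exist $\gamma_m\to e$ in $G$ such that $\gamma_m^{-1}L\gamma_m\subseteq H$ and $\supp\mu_m\cdot\gamma_m\subseteq\supp\mu_\infty$ for all large $m$. Applying the last inclusion to the base point gives $\ov{x}_m\gamma_m\in\supp\mu_\infty$, and letting $m\to\infty$, since $\supp\mu_\infty$ is closed, yields $\ov{x}_\infty\in\supp\mu_\infty$, so $\supp\mu_\infty=\ov{x}_\infty H$. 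I would then invoke maximality: viewing $\frakl=\so{n-1}{1}$ as a subalgebra of $\g=\so{n}{1}$, the quotient $\g/\frakl$ is isomorphic, as an $L$-module, to the standard representation of $\SO^+(n-1,1)$ on $\R^{n}$, which is irreducible for $n\geq 3$; hence any connected subgroup of $G$ properly containing $L$ equals $G$, and the same holds for every conjugate of $L$. Since $\gamma_m^{-1}L\gamma_m\subseteq H$ with $H$ connected, either $H=G$, or $H=\gamma_m^{-1}L\gamma_m$ for every $m$ (all these subgroups have the same dimension as $H$ and are contained in it).

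Finally I would rule out the degenerate alternative. If $H=\gamma_m^{-1}L\gamma_m$ for all $m$, then letting $m\to\infty$ and using $\gamma_m\to e$ gives $\mathrm{Lie}(H)=\lim_m\Ad(\gamma_m^{-1})\frakl=\frakl$, so $H=L$ and $\supp\mu_\infty=\ov{x}_\infty L=\pi(S_\infty)$; but $\pi(S_\infty)$ is dense in $\G\bs G$ by \cref{lem.seqarith}, while $\ov{x}_\infty L$ is an immersed submanifold of codimension $n\geq 3$ and hence Lebesgue-null, a contradiction. Therefore $H=G$ and $\mu_\infty$ is the $G$-invariant probability measure, which completes the argument. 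I expect the main obstacle to be extracting the precise Mozes--Shah rigidity statement $\supp\mu_m\cdot\gamma_m\subseteq\supp\mu_\infty$ with $\gamma_m\to e$: this is exactly the ingredient that excludes the degenerate case, and it cannot be replaced by weak-$\ast$ convergence alone, since the closed orbits $\ov{x}_mL$ accumulate onto the dense orbit $\ov{x}_\infty L$, so any naive ``limit of supports'' argument is vacuous.
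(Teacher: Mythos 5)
Your proof is correct and uses the same core ingredients as the paper — Moore's ergodicity theorem to get unipotent ergodicity of each $\mu_m$, the Mozes--Shah rigidity theorem for weak-$\ast$ limits of homogeneous measures, and the density of $\pi(S_\infty)$ from Lemma~5.1 — but the way you close the argument is genuinely different. The paper does not use the conjugation inclusion $\gamma_m^{-1}\Lambda(\mu_m)^0\gamma_m\subset H$ at all: it only uses the support inclusion $\supp\mu_m\subset\supp\mu_\infty g_m$ to push $\pi(S_\infty)$ into $\supp\mu_\infty$, observes by density that $\mu_\infty$ has full support, and then reads $\Lambda(\mu_\infty)=G$ off the homogeneous description $\supp\mu_\infty=\ov x\cdot\Lambda(\mu_\infty)$. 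You instead invoke the extra Mozes--Shah output $\gamma_m^{-1}L\gamma_m\subseteq H$, combine it with $\gamma_m\to e$ to get $L\subseteq H$ in the limit, and then appeal to the representation-theoretic fact that $L=\SO^+(n-1,1)$ is a maximal connected subgroup of $G=\SO^+(n,1)$ (since $\frakg/\frakl$ is the irreducible standard $L$-module on $\R^n$) to force $H\in\{L,G\}$, ruling out $H=L$ by the density of $\pi(S_\infty)$. Both routes are valid. Yours makes the group-theoretic input (maximality of $L$) explicit and self-contained, at the cost of needing the stronger Mozes--Shah conclusion; the paper's route is leaner and only needs the support inclusion. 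A few minor remarks: the direction of the conjugation ($\gamma_m^{-1}L\gamma_m$ versus $\gamma_m L\gamma_m^{-1}$) coming out of Mozes--Shah doesn't match the sign convention in your support inclusion, but since $\gamma_m\to e$ this has no effect on the limit argument; you should state explicitly that you pass to generic points $y_m$ for $\mu_m$ with $y_m\to y$ before invoking the rigidity statement, as the paper does, since the base points $\ov x_m$ you name need not be generic (your later plugging in of $\ov x_m$ is still valid since $\ov x_m\in\supp\mu_m$); and the final detour through Lebesgue-nullity is unnecessary — a closed dense subset of $\G\bs G$ equals $\G\bs G$, which already contradicts $\dim(\ov x_\infty L)<\dim(\G\bs G)$.
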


\begin{proof}
    It is enough to show that any convergent subsequence of $(\mu_m)_m$ converges to the homogeneous probability measure supported on $\G \bs G$, so we take a subsequence converging to the probability measure $\mu_\infty$, which we still denote by $(\mu_m)_m$. 

    By \cite[Thm.~2.3]{shah1991unif}, $L$ acts ergodically on each $\pi(S_m)$ with respect to $\mu_m$ (any measurable $L$-invariant set is null or conull). Since $L=\SO(n-1,1)$ is finitely generated by 1-parameter unipotent subgroups \cite[Prop.~1.5.4]{margulis}, we choose such a generating collection $U_1,\dots,U_k$ of $L$. Since each $U_i$ is a closed and non-compact subgroup, Moore's Ergodicity Theorem \cite[Thm.~2.2.6]{zimmer} implies that $\mu_m$ is $U_i$-ergodic, for each $m$ and $i$.

    Let $\ov x\in \supp(\mu_\infty)$, so that $\ov x=\lim_{m}{\ov x_m}$ for $\ov x_m\in \supp(\mu_m)$. We can assume that each $\ov x_m$ is $U_i$-generic \cite[Cor.~1.5]{mozes-shah} for all $1\leq i\leq k$. Indeed, for any $m$ the set of $U_i$-generic points has full $\mu_m$-measure, and hence the set of points that are $U_i$-generic for all $i$ is dense in $\supp(\mu_m)$. Let $g_m\in G$ be such that $\ov x=\ov x_m g_m$, and take this sequence so that $g_m$ converges to the identity in $G$. Then \cite[Thm.~1.1]{mozes-shah} gives us some $m_0$ such that 
    \[\pi(S_m)=\supp(\mu_m)\subset \supp(\mu_\infty)g_m\]
    for all $m\geq m_0$. But $S_m$ converges to $S_\infty$ in $\calQC_1$ and $g_m$ converges to the identity, so we actually deduce 
    \[\pi(S_\infty)\subset \supp(\mu_\infty).\]
    
From \cref{lem.seqarith} we have that $\pi(S_\infty)$ is dense in $\G \bs G$, so that $\mu_\infty$ has full support in $\G \bs G$. \cite[Thm.~1.1]{mozes-shah} also implies that $\G \bs G=\supp(\mu_\infty)=\ov x \cdot \Lam(\mu_\infty)$, where $\Lam(\mu_\infty)$ is the subgroup of $G$ preserving $\mu_\infty$. We deduce that $G=\Lam(\mu_\infty)$, and hence $\mu_\infty$ is $G$-invariant and is the homogeneous probability measure supported on $\G \bs G$. 
\end{proof}

\begin{proof}[Proof of \cref{thm.main.arithmetic}]
    Let $(S_m)_m\subset \calQC_1$ be the sequence given by \cref{lem.seqarith}. Since the stabilizer of each $S_m$ acts cocompactly on it, the sets $\calS_{m}:=\{gS_m : g\in \G\}$ are discrete and $\G$-invariant subsets of $\calQC_1$. For each $m$ we define
    \[\al_m:=\sum_{S\in \calS_m}{\del_S},\]
    which is a discrete co-geodesic current. We claim that the projective $\calQC_1$-currents $[\al_m]$ converge to the projective Haar current $[\al_{Haar}]$ in $\bbP\calQC_1\Curr(\G)$. Note that this proves the theorem, after applying \cref{prop.dualcubulation} to each $\al_m$, as well as \cref{prop.contcogeodesic} and Lemmas \ref{lem.duallattice} and  \ref{lem.Haarintersectionctns}.

    To prove our claim, let $(\mu_m)_m$ be the sequence of homogeneous probability measures on $\G \bs G$ with support $\pi(S_m)$ respectively, which converge to the homogeneous measure $\mu_\infty$ on $\G \bs G$ by \cref{prop.convHaar}. Let $\wtilde\mu_m$ be the lift of $\mu_m$ to $G$ that is $\G$-invariant for the left action. Since $\mu_m$ is $L$-homogeneous, there exists $t_m>0$ such that for all $F\in C_c(G)$ we have
    \begin{equation}\label{eq.mu_m}
        \int{F}{d\wtilde \mu_m}=t_m \cdot \sum_{xL\in \calS_m}{\left( \int_{L}{F(xy)}{dH_L(y)}\right)}, 
    \end{equation}
    where $H_L$ is a fixed Haar measure on $L$. Also, if $\hat F\in C_c(\calQC_1)=C_c(G/L)$, then by \cite[Prop.~2.48]{folland} there exists $F\in C_c(G)$ such that
    \begin{equation}\label{eq.hatf}
        \hat{F}(xL)=\int_{L}{F(xy)}{dH_L(y)}
    \end{equation}
    for all $x\in G$. 
    From \eqref{eq.mu_m} and \eqref{eq.hatf} we deduce that 
    \begin{equation}\label{eq.al_m}
        t_m\int{\hat F}{d\al_m}=t_m \cdot\sum_{xL\in \calS_m}{\hat F(xL)}= t_m  \cdot \sum_{xL\in \calS_m}{\left( \int_{L}{F(xy)}{dH_L(y)}\right)}=\int{F}{d\wtilde \mu_m}
    \end{equation}
    for all $\hat F\in C_c(\calQC_1)$ and all $m$.
    
    Since $\mu_m$ converges to the homogeneous measure supported on $\G \bs G$ by \cref{prop.convHaar}, the measures $\wtilde \mu_m$ weak-$\ast$ converge to a Haar measure $H_G$ on $G$. But then there exists a Haar current $\al_{Haar}\in \calQC_1\Curr(\G)$ such that for all $\hat F\in C_c(\calQC_1)$ we have
    \begin{equation}\label{eq.alHaar}
        \int{F}{dH_G}=\int_{\calQC_1}{\left(\int_{L}{F(xy)}{dH_L(y)}\right)}{d\al_{Haar}(xL)}=\int_{\calQC_1}{\hat F}d\al_{Haar}  
    \end{equation}  
    for $F$ satisfying \eqref{eq.hatf}.
    Combining \eqref{eq.al_m} and \eqref{eq.alHaar} we deduce that $t_m\al_m$ weak-$\ast$ converges to $\al_{Haar}$, which proves the claim and concludes the proof of the theorem. 
\end{proof}

\begin{remark}\label{rmk.singleorbitarith}
    The supports of the currents $\al_m$ in the proof of \cref{thm.main.arithmetic} all consist of a single $\G$-orbit in $\calQC_1$. This implies that all the approximating cubulations have a single $\G$-orbit of hyperplanes. 
\end{remark}


\section{The 3-dimensional case}\label{sec.3-dim}

For this section we assume $n=3$, so that $M=\G \bs \Hy^3$ is a closed hyperbolic 3-manifold. Our goal is to prove \cref{thm.main.3manifold}. The main difficulty is to approximate the Haar current introduced in \cref{subsec.haarcurrent} by discrete currents. To do this, we use the analogous result at the level of measures on the Grassmannian of 2-planes in $M$, and promote this to the convergence of co-geodesic currents by integrating over minimal disks with quasicircles as limit sets. For this, we mainly follow the work of Seppi \cite{seppi}, inspired by \cite[Sec.~2.6]{alassal}.

In this section we replace the term ($K$-)quasiconformal hypersphere by ($K$-)\emph{quasicircle}, and we write $\calQC$ for the union of all the spaces $\calQC_K$ for $K\geq 1$. We let $\Gr_2(M)$ and $\Gr_2(\Hy^3)$ denote the spaces of (unoriented) 2-dimensional tangent planes to $M$ and $\Hy^3$ respectively, and let $d=d_{\Hy^3}$ denote the distance on $\Hy^3$.

\subsection{Currents from quasiFuchsian surfaces}

Let $\Sig\loopra M$ be an immersed smooth surface (which we always assume to be closed and connected) and let $\mu_\Sig$ denote the area measure on $\Sig$ induced by the metric on $M$. Also, if $j_\Sig:\Sig \ra \Gr_2(M)$ maps a point $p\in \Sig$ to its tangent plane $T_p\Sig$, we let $\nu_\Sig$ be the pushforward of $\mu_\Sig$ via $j_\Sig$, normalized to have total mass 1. We also let $\nu_{Haar}$ denote the Haar probability measure on $\Gr_2(M)=\G \bs \hspace{-0.7mm}\PSL(2,\C)/\SO(2)$.

We say that a smooth immersed surface $\Sig \loopra M$ is ($K$-)\emph{quasiFuchsian} if the limit set of any lift of $\Sig$ to $\Hy^3$ is a ($K$-)quasicircle. All quasiFuchsian surfaces are incompressible, with fundamental group quasiconvex in $\G=\pi_1(M)$, and for such surfaces $\Sig$ we define 
$$\calS_\Sig=\{\Lam(\wtilde{\Sig})\in \calQC \colon \wtilde{\Sig} \ra \Hy^3 \text{ is a lift of } \Sig\}.$$ 
The set $\calS_\Sig$ is discrete and $\G$-invariant, so we obtain $\calS_\Sig$-currents by defining
\begin{equation}\label{eq.defcccurrents}
    \al_\Sig:=\sum_{C\in \calS_\Sig}{\del_{C}} \ \text{ and } \ \hat{\al}_\Sig:=\frac{1}{\Area(\Sig)}\al_\Sig.
\end{equation}
Note that $\al_\Sig$ is a discrete co-geodesic current according to \cref{def.discrcurrent}, and that $\calS_\Sig$ is a co-geodesic system of spheres at infinity by \cref{prop.QCKcogeodesic}. Let $\calT$ be the collection of all immersed totally geodesic surfaces in $M$. The main result of the section is the following.

\begin{theorem}\label{thm.3dimcurrtocurr}
    Let $(\Sig_m \loopra M)_m$ be a sequence of $K_m$-quasiFuchsian, closed and connected minimal surfaces with $K_m$ tending to $1$, and suppose that
    \begin{equation}\label{eq.conv.measu.grass}
    \nu_{\Sig_m} \xrightharpoonup{\ast} \nu_\infty=\lam_{Haar}\nu_{Haar}+\sum_{T\in \calT}{\lam_T\nu_T}\end{equation}
    for $\lam_{Haar},\lam_T\geq 0$ such that $\lam_{Haar}+\sum_{T\in \calT}{\lam_T}=1$. Then for any $K>1$ and $m_0$ so that $K_m\leq K$ for all $m\geq m_0$, the sequence of $\calQC_K$-currents $(\hat{\al}_{\Sig_m})_{m\geq m_0}$ weak-$\ast$ converges in $\calQC_K\Curr(\G)$ to
    \[\hat{\al}_\infty:=\lam_{Haar}\hat{\al}_{Haar}+\sum_{T\in \calT}{\lam_T\hat{\al}_T},\]
    where $\hat\al_{Haar}$ is the Haar co-geodesic current on $\G$ satisfying \eqref{eq.Haarcurrent}.
\end{theorem}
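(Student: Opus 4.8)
The plan is to pass from measures on $\Gr_2(M)$ to $\calQC_K$-currents through one ``averaging over minimal disks'' operator and then run a soft compactness argument; the analytic content is concentrated in \cref{prop.convergenceintegral}. For $\psi\in C_c(\Gr_2(\Hy^3))$ and a quasicircle $C$, let $D_C\subset\Hy^3$ be the minimal disk with boundary at infinity $C$, let $j_{D_C}\colon D_C\to\Gr_2(\Hy^3)$ be $p\mapsto(p,T_pD_C)$, and set
\[
(\calA\psi)(C):=\int_{D_C}\psi\circ j_{D_C}\;d\mu_{D_C},
\]
with $\mu_{D_C}$ the area measure of $D_C$. What \cref{prop.convergenceintegral} (via the analysis of minimal disks bounded by $K$-quasicircles, after Seppi) supplies is precisely the geometry that makes $\calA$ usable: in the relevant range of $K$, $D_C$ exists, is unique, depends continuously on $C$, has uniformly bounded intrinsic and extrinsic geometry, and $\{C:D_C\cap B\neq\emptyset\}$ is precompact for each compact $B\subset\Hy^3$; hence $\calA$ is a positive linear map $C_c(\Gr_2(\Hy^3))\to C_c(\calQC_K)$. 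We may moreover assume $K$ is as close to $1$ as we like: since $K_m\to1$, both the currents $\hat{\al}_{\Sig_m}$ (for $m$ large) and the limit $\hat{\al}_\infty$ are supported on $\calQC_{K'}$ for any fixed $K'\in(1,K]$, which is closed in $\calQC_K$ by \cref{lem.QCKsystemofspheres}, so convergence in $\calQC_{K'}\Curr(\G)$ gives convergence in $\calQC_K\Curr(\G)$; this also puts us in a regime where $D_C$ is unique.

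The key step is the following unfolding identity, which I would prove first. Writing $\underline\psi(v):=\sum_{\gam\in\G}\psi(\gam\cdot v)$ for the periodization — a bounded continuous $\G$-invariant function on $\Gr_2(\Hy^3)$, since the $\G$-action there is proper and cocompact — one has for every quasiFuchsian minimal surface $\Sig\loopra M$, and, by the same computation, for every $T\in\calT$,
\[
\int_{\calQC_K}\calA\psi\;d\hat{\al}_\Sig=\int_{\Gr_2(M)}\underline\psi\;d\nu_\Sig .
\]
Indeed the disks $D_C$ with $C\in\calS_\Sig$ are exactly the lifts of $\Sig$, so unfolding the sum over $\G$ gives $\sum_{C\in\calS_\Sig}\calA\psi(C)=\int_\Sig\underline\psi\circ j_\Sig\,d\mu_\Sig=\Area(\Sig)\int_{\Gr_2(M)}\underline\psi\,d\nu_\Sig$, and dividing by $\Area(\Sig)$ yields the claim. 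For the Haar current, the measure on $\Gr_2(\Hy^3)$ whose pairing with $\psi$ is $\int_{\calQC_K}\calA\psi\,d\hat{\al}_{Haar}$ is $\SO^+(3,1)$-invariant, hence descends to a multiple of $\nu_{Haar}$, and the normalization \eqref{eq.Haarcurrent} together with Crofton's formula (cf.\ \cref{lem.duallattice}) pins this multiple to $1$, so $\int_{\calQC_K}\calA\psi\,d\hat{\al}_{Haar}=\int_{\Gr_2(M)}\underline\psi\,d\nu_{Haar}$. Summing the three pieces, $\int_{\calQC_K}\calA\psi\,d\hat{\al}_\infty=\int_{\Gr_2(M)}\underline\psi\,d\nu_\infty$.

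Combining the identities for $\Sig=\Sig_m$ with the hypothesis $\nu_{\Sig_m}\xrightharpoonup{\ast}\nu_\infty$ gives, for every $\psi\in C_c(\Gr_2(\Hy^3))$,
\[
\int_{\calQC_K}\calA\psi\;d\hat{\al}_{\Sig_m}=\int_{\Gr_2(M)}\underline\psi\;d\nu_{\Sig_m}\;\longrightarrow\;\int_{\Gr_2(M)}\underline\psi\;d\nu_\infty=\int_{\calQC_K}\calA\psi\;d\hat{\al}_\infty .
\]
Next I would prove tightness of $(\hat{\al}_{\Sig_m})_m$: choosing $\psi_0\ge0$ with $\psi_0\ge1$ on a compact subset of $\Gr_2(\Hy^3)$ meeting $j_{D_C}(D_C)$ for every $C$ in a fixed fundamental domain $\calB\subset\calQC_K$, the monotonicity formula for minimal surfaces produces $c>0$ with $\calA\psi_0\ge c$ on $\calB$, whence $\hat{\al}_{\Sig_m}(\calB)\le c^{-1}\int_{\Gr_2(M)}\underline{\psi_0}\,d\nu_{\Sig_m}\le c^{-1}\|\underline{\psi_0}\|_\infty$ uniformly in $m$; so $(\hat{\al}_{\Sig_m})_m$ is precompact in $\calQC_K\Curr(\G)$. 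Finally I would identify every weak-$\ast$ limit $\beta$: since each $\calQC_{1+\epsilon}$ is closed in $\calQC_K$ and $\hat{\al}_{\Sig_m}$ is eventually supported in it, $\beta$ and $\hat{\al}_\infty$ are supported on $\bigcap_{\epsilon>0}\calQC_{1+\epsilon}=\calQC_1$; over round circles $D_C$ is the totally geodesic plane, and $v\mapsto$ (the round circle of the geodesic plane through $v$) is a fiber bundle $\Gr_2(\Hy^3)\to\calQC_1$, so $\calA$ restricted to $\calQC_1$ is integration over fibers and its image is all of $C_c(\calQC_1)$. Hence $\beta$ and $\hat{\al}_\infty$ are $\G$-invariant Radon measures on $\calQC_1$ agreeing on all of $C_c(\calQC_1)$, so $\beta=\hat{\al}_\infty$; being precompact with $\hat{\al}_\infty$ as its only limit point, $(\hat{\al}_{\Sig_m})_m$ converges weak-$\ast$ to $\hat{\al}_\infty$ in $\calQC_K\Curr(\G)$.

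The hard part is the input packaged in \cref{prop.convergenceintegral}: the fine geometry of minimal disks in $\Hy^3$ bounded by $K$-quasicircles — existence and uniqueness for $K$ near $1$, uniform principal-curvature and area estimates, continuous dependence of $D_C$ on $C$, and compactness of $\{C:D_C\cap B\neq\emptyset\}$. These are exactly what makes $\calA\psi$ a well-defined, compactly supported continuous function and what powers the tightness estimate; granting them, everything above is formal. A minor additional point is pinning down the Haar normalization in the identity for $\hat{\al}_{Haar}$ through Crofton's formula.
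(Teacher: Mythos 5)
Your skeleton (the unfolding identity $\int_{\calQC_K}\hat F\,d\hat\al_{\Sig}=\int_{\Gr_2(M)}\underline{F}\,d\nu_\Sig$, the Haar normalization \eqref{eq.Haarcurrent}, support of the limit in $\calQC_1$, and surjectivity of fiber integration over $\calQC_1$ to separate measures) is essentially the argument in the paper; your handling of compactness (a direct mass bound on a fundamental domain via the monotonicity formula, instead of projective compactness of the current space plus convergence of the scaling factors $t_m$) is a legitimate repackaging. But there is one genuine gap, and it sits exactly where the paper has to work: you assert that $\calA$ maps $C_c(\Gr_2(\Hy^3))$ into $C_c(\calQC_K)$, i.e.\ that $C\mapsto\hat F(C)=\int F\,d\wtilde\nu_C$ is \emph{continuous} on $\calQC_K$, and you attribute this to \cref{prop.convergenceintegral}. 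That proposition says strictly less: it gives $\hat F(C_m)\to\hat F(C)$ only when the limit $C$ is a \emph{round} circle and the quasiconformality constants $K_m$ tend to $1$. Continuity of $\hat F$ on all of $\calQC_K$ (even for $K\leq K_0$, where the minimal disk is unique by \cref{prop.seppi}) would require a separate compactness-and-uniqueness argument for minimal disks — smooth subconvergence of $\wtilde\Sig_{C_j}$, identification of the limit's boundary circle, convergence of area measures without loss — which you do not supply and which is not contained in the cited input.

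This matters because your identification step genuinely uses it: after extracting a weak-$\ast$ convergent subsequence $\hat\al_{\Sig_{m_k}}\rightharpoonup\beta$, you need $\int\calA\psi\,d\hat\al_{\Sig_{m_k}}\to\int\calA\psi\,d\beta$ in order to compare $\beta$ with $\hat\al_\infty$, and vague convergence only yields this for test functions that are continuous (and compactly supported) on $\calQC_K$; the unfolding identity gives you the value of these integrals but not their convergence to the integral against $\beta$. The paper bridges precisely this gap with \cref{lem.weakconvergencenice}: a Tietze-extension argument showing that the restricted sequential continuity of \cref{prop.convergenceintegral}, together with the fact that the supports shrink through the closed sets $\calQC_{K_m}$ (\cref{lem.QCKsystemofspheres}) into $\calQC_1$ where the limit measures live, suffices to pass to the limit in $\int\hat F\,d\hat\al_{\Sig_{m_k}}$. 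Your proof becomes correct if you either insert such a lemma or prove honest continuity of $C\mapsto\wtilde\nu_C$ on $\calQC_{K_0}$, but as written the step from subsequential weak-$\ast$ convergence to $\beta=\hat\al_\infty$ does not follow from the stated input. (Two minor points: the identity for $\hat\al_{Haar}$ is a matter of the normalization \eqref{eq.Haarcurrent} — Crofton is not needed there — and your tightness bound needs the observation that every disk $D_C$ with $C$ in a compact subset of $\calQC_K$ meets a fixed compact region of $\Hy^3$ with tangent planes in a fixed compact subset of $\Gr_2(\Hy^3)$, which should be said explicitly.)
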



By a result of Al Assal \cite[Thm.~1.1]{alassal}, for any measure $\nu_\infty$ as in \eqref{eq.conv.measu.grass} we can find a sequence $(\Sigma_m \loopra M)_m$ of immersed minimal surfaces satisfying all the assumptions of the theorem above and such that $\nu_{\Sig_m} \xrightharpoonup{*} \nu_\infty$. For $\nu_\infty=\nu_{Haar}$, this was first proven by Labourie \cite[Thm.~5.7]{labourie} using possibly disconnected surfaces, and by Lowe and Neves \cite[Prop.~6.1]{lowe-neves} using connected surfaces.

This result is our last step in the proof of the 3-dimensional case of Futer-Wise's \cref{conj:futer.wise}.

\begin{proof}[Proof of \cref{thm.main.3manifold}]
By either \cite[Prop.~6.1]{lowe-neves} or \cite[Thm.~1.1]{alassal}, there exists a sequence $(\Sigma_m \loopra M)_m$ of immersed $K_m$-quasiFuchsian closed and connected minimal surfaces with $K_m$ tending to 1, and so that $\nu_{\Sig_m}$ weak-$\ast$ converges to $\nu_{Haar}$. Fix $K>1$, and after removing the first elements of this sequence and reindexing, by \cref{thm.3dimcurrtocurr} we have that the sequence $(\hat{\al}_{\Sig_m})_m$ of $\calQC_K$-currents converges to $\hat{\al}_{Haar}$. 

For each $m$ the projective current $[\hat \al_{\Sig_m}]$ is represented by the discrete co-geodesic current $\al_{\Sig_m}$, and so by \cref{prop.dualcubulation} we can construct a cocompact cubical action of $\G$ on a $\CAT(0)$ cube complex $\wtilde{\calX}_m$ satisfying all the items (1)-(3) of that proposition. In particular, the metric structure $\rho_{\wtilde{\calX}_m}$ is dual to the projective current $[\hat{\al}_{\Sig_m}]$. The theorem then follows by \cref{lem.duallattice} and \cref{prop.contcogeodesic}, from which we deduce that the action of $\G$ on $\wtilde{\calX}_m$ is proper for all $m$ large enough, and that the metric structures $\rho_{\wtilde{\calX}_m}$ converge to $\rho_{\Hy^3}$ in $(\scrD_\G,\Del)$. This gives us $\G$-equivariant $\lam_m$-quasi-isometries from $\wtilde{\calX}_m$ to $\Hy^3$ with $\lam_m$ converging to 1.
\end{proof}

\begin{remark}\label{rmk.singleorbit3}
By construction, the support of the co-geodesic current $\hat \al_\Sig$ defined in \eqref{eq.defcccurrents} consists of a single $\G$-orbit of quasicircles. Therefore, the approximating cube complexes in the proof of \cref{thm.main.3manifold} have a single $\G$-orbit of hyperplanes.  
\end{remark}

\begin{remark}\label{rmk.approxothercurrents}
    In a forthcoming work of the second-named author and Didac Martinez-Granado \cite{martinezgranado-reyes}, the currents $\hat{\al}_\infty$ as in the conclusion of \cref{thm.3dimcurrtocurr} are shown to be dual to metric structures in $\ov\scrD_\G$ induced by cocompact isometric actions of $\G$.
\end{remark}


\subsection{Asymptotically Fuchsian minimal disks}

For the proof of \cref{thm.3dimcurrtocurr} we require some terminology, as well as some results about minimal disks in $\Hy^3$, following Seppi \cite{seppi}. Let $\wtilde{\Sig}\subset \Hy^3$ be an embedded smooth oriented surface, and for a point $x\in \wtilde{\Sig}$, let $\frakn^{\wtilde{\Sig}}_p:\R \ra \Hy^3$ be the arc-length parametrization of the geodesic normal to $\wtilde{\Sig}$ at $p$, so that $\frakn^{\wtilde{\Sig}}_p(0)=p$ and $\frakn_p^{\wtilde{\Sig}}$ is oriented consistently with the orientation of $\wtilde{\Sig}$. If $\wtilde{\Sigma}=D$ is a totally geodesic disk and $\rho>0$, we let $P^{\pm}$ be the totally geodesic disks such that $\frakn_p^{D}(\pm \rho)\in P^\pm$ respectively and that are orthogonal to $\frakn_p^D$ at these points. We define $U_{D,x,\rho}$ as the closed region of $\Hy^3$ bounded by the disks $P^{\pm}$, which is a neighborhood of $D$. Note that $\Lam(U_{D,x,\rho})$ is an annulus in $\bbS^2=\partial \Hy^3$. 

Given $K\geq 1$, we define $\calU_{D,x,\rho,K}$ as the set of $K$-quasicircles $C$ contained in $\Lam(U_{D,x,\rho})$ and so that the inclusion $C\subset \Lam(U_{D,x,\rho})$ is $\pi_1$-injective. We denote $\calU_{D,x,\rho}=\bigcup_{K}{\calU_{D,x,\rho,K}}$, and note that the sets $(\calU_{D,x,\rho})_{\rho>0}$ form a system of neighborhoods of $\Lam(D)$ in $\calQC$. Also, note the invariance identity $U_{gD,gx,\rho}=g(U_{D,x,\rho})$ valid for any isometry $g$ of $\Hy^3$. For the rest of the section we will work with embedded minimal disks with prescribed quasicircles as limit sets, whose existence was proved by Anderson \cite{anderson}. Indeed, if the quasicircle is close enough to a round circle, such a minimal disk is unique, as was proven by Seppi \cite{seppi}. More precisely, we have the following rephrasing of \cite[Thm.~A \& Prop.~4.1]{seppi}.

\begin{proposition}\label{prop.seppi}
    There exists $K_0>1$ and an increasing homeomorphism $a:[1,K_0]\ra [0,1/2]$ so that the following holds. If $K\leq K_0$ and $C\in \calQC_{K}$ then there exists a unique embedded minimal disk $\wtilde{\Sig}_C \ra \Hy^3$ whose limit set equals $C$. Moreover, $\wtilde{\Sig}_{C}\subset \Hull(C)$ for any such $C$ and for any $p\in C$ we have:
    \begin{itemize}
        \item[i)] the principal curvatures of $\wtilde{\Sig}_C$ at $p$ lie within the interval $(-a(K),a(K))$; and, 
        \item[ii)] if $D$ is the totally geodesic disk tangent to $\wtilde{\Sig}_C$ at $p$, then $C\in \calU_{D,p,a(K)}$ (and hence $\Hull(C)\subset U_{D,p,a(K)}$).
    \end{itemize}    
\end{proposition}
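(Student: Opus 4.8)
The plan is to assemble the statement from two external inputs: Anderson's existence theorem \cite{anderson} for minimal disks with prescribed ideal boundary, and Seppi's quantitative uniqueness and curvature estimates \cite{seppi}, which are available precisely when the ideal boundary is close to a round circle.

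First I would recall that by \cite{anderson} every quasicircle $C\subset\bbS^2$ is the ideal boundary of a complete embedded minimal disk in $\Hy^3$, and that any such disk is contained in $\Hull(C)$; the containment is the standard maximum-principle argument, sliding the equidistant surfaces from $\Hull(C)$ that foliate $\Hy^3\ssm\Hull(C)$ against the minimal disk. This gives the existence clause and the inclusion $\wtilde{\Sig}_C\subset\Hull(C)$. Next I would invoke Seppi's Theorem~A: there is a threshold $K_0>1$ such that every $K$-quasicircle $C$ with $K\leq K_0$ is the ideal boundary of a \emph{unique} minimal disk $\wtilde{\Sig}_C$, whose principal curvatures are bounded by a quantity depending only on $K$ and tending to $0$ as $K\to1$. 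After possibly shrinking $K_0$ and enlarging this bound, I may take it to be an increasing homeomorphism $a\colon[1,K_0]\to[0,1/2]$ with $a(1)=0$, which yields uniqueness together with item~(i).

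For item~(ii) I would either quote \cite[Prop.~4.1]{seppi} directly, or derive it by combining item~(i) with the thinness of the convex hull of a near-round quasicircle: $\Hull(C)$ lies in a neighbourhood of any of its tangent totally geodesic planes whose radius tends to $0$ as $K\to1$, and $\wtilde{\Sig}_C\subset\Hull(C)$. Either way, one obtains that at a point $p\in\wtilde{\Sig}_C$ with tangent totally geodesic plane $D$, the disk $\wtilde{\Sig}_C$ — and in fact all of $\Hull(C)$ — is contained in the slab $U_{D,p,a(K)}$. Passing to ideal boundaries gives $C=\Lam(\wtilde{\Sig}_C)\subset\Lam(U_{D,p,a(K)})$, and this inclusion is $\pi_1$-injective because $\wtilde{\Sig}_C$ is a properly embedded disk separating the two ends of the slab, so that $C$ separates the two boundary circles of the annulus $\Lam(U_{D,p,a(K)})$. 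Hence $C\in\calU_{D,p,a(K)}$, and the parenthetical $\Hull(C)\subset U_{D,p,a(K)}$ is immediate from convexity of the slab.

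Since the proposition is explicitly a rephrasing of \cite{seppi}, I do not expect a genuine geometric obstacle; the work is bookkeeping. The two points to be careful about are (a) that Seppi's curvature estimate can be packaged as a strictly increasing homeomorphism onto $[0,1/2]$ vanishing at $K=1$, which may force a further shrinking of $K_0$, and (b) that the slab $U_{D,x,\rho}$ and the neighbourhood $\calU_{D,x,\rho,K}$ as defined in this section agree with the objects appearing in \cite{seppi}. The only place a small independent argument is needed beyond citation is the $\pi_1$-injectivity in item~(ii), which uses that the minimal disk $\wtilde{\Sig}_C$ is properly embedded with ideal boundary exactly $C$, so that its two complementary regions inside the slab open onto the two Jordan domains bounded by $C$ in $\bbS^2$.
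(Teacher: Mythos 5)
Your high-level plan — read existence and the curvature bound from Anderson and Seppi's Theorem A, and handle the slab statement in item (ii) via Seppi's Prop.~4.1 — is exactly how the paper treats this proposition, which it states explicitly as a rephrasing of those results and supplements only by a remark explaining item (ii). Your closing remark on $\pi_1$-injectivity, which the paper leaves implicit, is correct: $\wtilde{\Sig}_C$ is a properly embedded disk separating the two ends of the slab, so $C$ separates the two boundary circles of the annulus $\Lam(U_{D,p,a(K)})$.

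The gap is in your proposed alternative derivation of item (ii). You suggest combining item (i) with the assertion that $\Hull(C)$ lies in a small neighbourhood of any of its tangent totally geodesic planes. But the plane $D$ in item (ii) is tangent to the minimal disk $\wtilde{\Sig}_C$ at a point $p$ lying in the \emph{interior} of $\Hull(C)$, so $D$ is not a supporting plane of the convex hull; a statement about distances of $\Hull(C)$ to its own supporting planes says nothing directly about the distance of $\Hull(C)$ to $D$, let alone about the containment $\Hull(C)\subset U_{D,p,a(K)}$, which involves the slab bounded by planes orthogonal to the normal at $\frakn_p^D(\pm a(K))$. Nor does item (i) on its own: pointwise-small principal curvature controls $\wtilde{\Sig}_C$ near $p$ but, absent a global argument, does not by itself prevent the surface from drifting away from $D$ at infinity. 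The input that actually closes this — and that the paper's remark extracts from the \emph{proof} of Seppi's Prop.~4.1, not just its statement — is that the normal equidistant level sets $\wtilde{\Sig}_C^{(t)}=\{\frakn_q^{\wtilde{\Sig}_C}(t):q\in\wtilde{\Sig}_C\}$ foliate $\Hy^3$, and the region $\bigcup_{|s|\le t}\wtilde{\Sig}_C^{(s)}$ is convex once $|t|\ge r(K)$ with $r(K)\to 0$ as $K\to 1$. The slab $U_{D,p,t}$ is precisely the region bounded by the supporting planes of that convex body at $\frakn_p^{\wtilde{\Sig}_C}(\pm t)$, so convexity forces $\Hull(C)\subset\bigcup_{|s|\le t}\wtilde{\Sig}_C^{(s)}\subset U_{D,p,t}$, and one may take $a(K)=2r(K)$. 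So: keep the direct quotation of Prop.~4.1 (suitably unpacked as above), and drop the ``thinness of the convex hull'' shortcut.
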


\begin{remark}\label{rmk.foliation}
Item ii) of \cref{prop.seppi} is not explicit in the statements of \cite[Thm.~A \& Prop.~4.1]{seppi} and deserves an explanation. Given $C\in \calQC_K$ with $K$ close enough to 1, using \cite[Prop.~4.1]{seppi} we have that the level sets $\wtilde{\Sig}_{C}^{(t)}=\{ \frakn_{p}^{\wtilde{\Sig}_C}(t)\colon p\in \wtilde{\Sig}_{C}\}$ form a foliation of $\Hy^3$ by smooth surfaces equidistant to $\wtilde{\Sig}_{C}$, see for instance the proof of Theorem 3.3 in \cite{uhlenbeck}. In this case, from the proof of \cite[Prop.~4.1]{seppi} we deduce the existence of a number $r>0$ depending only on $K$ and tending to $0$ as $K$ tends to 1, such that the region $\bigcup_{|s|\leq t}\wtilde{\Sig}_{C}^{(s)}$ is convex in $\Hy^3$ for $|t|\geq r$. In particular, for any $p\in \wtilde\Sig_C$ and $t\geq r$, the disk $\wtilde\Sig_C$ is contained in the closed (and convex) set bounded by the totally geodesic planes tangent to $\wtilde{\Sig}_{C}^{(\pm t)}$ at $\frakn_p^{\wtilde\Sig_C}(\pm t)$ respectively. But this region is precisely $U_{D,p,t}$ for $D$ being the totally geodesic disk tangent to $\wtilde\Sig_C$ at $p$, and hence Item ii) follows since we can always take $t=a(K):=2r$. 
\end{remark}

In the sequel, we fix $K_0$ given by \cref{prop.seppi} and we assume that if a quasicircle is $K$-quasiconformal then $K\leq K_0$. For $C\in \calQC_{K_0}$ we denote by $\wtilde{\Sig}_C$ the unique minimal disk in $\Hy^3$ with limit set $C$. As in the case of immersed surfaces in $M$, for such a quasicircle we construct the Borel (and Radon) measure $\wtilde{\nu}_C$ on $\Gr_2(\Hy^3)$ as follows. If $\wtilde{\mu}_C$ is the area measure on $\wtilde{\Sig}_C$ induced by its inclusion on $\Hy^3$ and $\wtilde{j}_C:\wtilde{\Sig}_C \ra \Gr_2(\Hy^3)$ maps any point $p\in \wtilde{\Sig}_C$ to its tangent plane $T_p\wtilde{\Sig}_C$, we let $\wtilde{\nu}_C$ be the pushforward of $\wtilde{\mu}_C$ via $\wtilde{j}_C$. 

The key step in the proof of \cref{thm.3dimcurrtocurr} is the following proposition.

\begin{proposition}\label{prop.convergenceintegral}
    Let $F\in C_c(\Gr_2(\Hy^3))$ be a compactly supported continuous function, and consider the function $\hat{F}:\calQC_{K_0}\ra \R$ given by
    \[C \mapsto \hat{F}(C)=\int{F}{d\wtilde{\nu}_C}.\]
    Then for any $C\in \calQC_1$ and any sequence $(C_m)_m$ of $K_m$-quasicircles converging to $C$ with $K_m$ tending to $1$, we have that 
    \[\hat{F}(C_m) \ \text{ converges to } \ \hat{F}(C).\]
\end{proposition}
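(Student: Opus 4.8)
The plan is to show that the minimal disks $\wtilde{\Sig}_{C_m}$ converge to $\wtilde{\Sig}_C$ in $C^\infty_{\mathrm{loc}}(\Hy^3)$, and then to pass to the limit in the integrals over a fixed compact region. Recall that since $C\in\calQC_1$ is a round circle, $\wtilde{\Sig}_C=D$ is the totally geodesic disk with $\Lam(D)=C$ and $\Hull(C)=D$. First, by \cref{prop.seppi} we have $\wtilde{\Sig}_{C_m}\subset\Hull(C_m)$, and since $C_m\to C$ in the Hausdorff topology, $\Hull(C_m)\to\Hull(C)=D$ in the Hausdorff topology on compact subsets of $\Hy^3$ (as in the proof of \cref{lem.projctns}, using \cite[Thm.~1.4]{bowditch.convex}). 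Hence for every compact $B\subset\Hy^3$ there is a sequence $\epsilon_m\to 0$ with $\wtilde{\Sig}_{C_m}\cap B\subset N_{\epsilon_m}(D)$; in particular no sheet of $\wtilde{\Sig}_{C_m}$ persists in $B$ far from $D$.

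To anchor a compactness argument, fix $x_0$ in the interior of $D$ and let $\gam$ be the geodesic through $x_0$ orthogonal to $D$, with ideal endpoints $\xi^\pm\in\bbS^2$; these lie in the two distinct open disks bounded by $C$. For $m$ large, $\xi^+$ and $\xi^-$ lie in the two different open disks bounded by $C_m$; since $\wtilde{\Sig}_{C_m}$ is properly embedded with $\overline{\wtilde{\Sig}_{C_m}}\cap\bbS^2=C_m$ and separates $\overline{\Hy^3}$ into two components, one for each closed disk bounded by $C_m$, the geodesic $\gam$ must cross $\wtilde{\Sig}_{C_m}$ at some point $p_m$. These points stay in a compact set: otherwise, along a subsequence, $p_m\to\xi^+$ (say), which is incompatible with $p_m\in\Hull(C_m)$ for $m$ large, because $\xi^+$ stays at a definite distance from $C_m$ and hence a fixed horoball at $\xi^+$ misses all the convex hulls $\Hull(C_m)$ with $m$ large. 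Having $p_m$ bounded, the trapping of the previous paragraph together with the fact that $p_m\in\gam$, which meets $D$ orthogonally only at $x_0$, force $p_m\to x_0$. Moreover, letting $D_m$ be the totally geodesic plane tangent to $\wtilde{\Sig}_{C_m}$ at $p_m$, \cref{prop.seppi}(ii) gives $C_m\subset\Lam(U_{D_m,p_m,a(K_m)})$, an annulus about $\partial_\infty D_m$ whose width tends to $0$; since $C_m\to C$ and $a(K_m)\to 0$ this yields $\partial_\infty D_m\to C$, and together with $p_m\to x_0$ we get $T_{p_m}\wtilde{\Sig}_{C_m}=T_{p_m}D_m\to T_{x_0}D$.

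The surfaces $\wtilde{\Sig}_{C_m}$ are complete embedded minimal surfaces in $\Hy^3$ with $|A_{\wtilde{\Sig}_{C_m}}|\le a(K_m)\le a(K_0)$, passing through $p_m\to x_0$ with tangent planes converging to $T_{x_0}D$. By the standard compactness theorem for minimal surfaces with uniformly bounded second fundamental form (uniform interior graph radius, elliptic estimates for the minimal surface equation, and Arzel\`a--Ascoli), every subsequence admits a further subsequence converging in $C^\infty_{\mathrm{loc}}$ to a complete minimal surface through $x_0$, tangent there to $T_{x_0}D$, with second fundamental form bounded by $\lim_m a(K_m)=0$; hence this limit is totally geodesic, and therefore equals $D$. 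As the limit is always $D$, the full sequence $\wtilde{\Sig}_{C_m}$ converges to $D$ in $C^\infty_{\mathrm{loc}}(\Hy^3)$. Combined with the trapping, over any fixed compact piece of $D$ the surface $\wtilde{\Sig}_{C_m}$ is, for $m$ large, the normal graph of a function $u_m\to 0$ in $C^\infty$.

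To conclude, let $B$ be the compact projection of $\supp F$ to $\Hy^3$, so that the integrand $p\mapsto F(\wtilde{j}_{C_m}(p))$ is supported in $\wtilde{\Sig}_{C_m}\cap B$, and similarly for $\wtilde{\Sig}_C=D$. Expressing $\wtilde{\Sig}_{C_m}$ over a slightly larger compact piece of $D$ as the graph of $u_m$, the Gauss maps $\wtilde{j}_{C_m}$ converge uniformly to $\wtilde{j}_D$ there and the induced area forms converge as well; since $F$ is continuous with compact support, the change of variables gives $\hat{F}(C_m)=\int F\,d\wtilde{\nu}_{C_m}\to\int F\,d\wtilde{\nu}_{D}=\hat{F}(C)$. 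The delicate point is the third paragraph — upgrading convex-hull proximity to genuine $C^\infty_{\mathrm{loc}}$ convergence of the disks, which is what makes both the Gauss maps and the area measures converge — together with producing the interior anchor $p_m\to x_0$ in the second paragraph, where we use that $\wtilde{\Sig}_{C_m}$ truly spans $C_m$ (it accumulates on all of $C_m$ and separates $\overline{\Hy^3}$) rather than degenerating.
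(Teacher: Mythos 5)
Your proof takes a genuinely different route from the paper's. The paper builds an explicit normal projection $\pi_m: \wtilde{\Sig}_{C_m}\cap Z \to D_\infty$ and carries out a quantitative change of variables, deriving the uniform Jacobian and tangent-plane estimates (Lemmas~\ref{lem.01}--\ref{lem.cont}) directly from Seppi's bound on the second fundamental form and the Riemannian-metric formula in Fermi coordinates. You instead invoke the standard $C^\infty_{\mathrm{loc}}$ compactness theorem for embedded minimal surfaces with uniformly bounded $|A|$ and reduce the problem to passing to the limit in the integral once $\wtilde{\Sig}_{C_m}$ is written as a graph over $D$. Your anchoring argument (the point $p_m\in\gam\cap\wtilde{\Sig}_{C_m}$, the proof that $p_m\to x_0$ and $T_{p_m}\wtilde{\Sig}_{C_m}\to T_{x_0}D$) is correct and is an ingredient not spelled out in the paper. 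The soft compactness route trades explicit estimates for a black box, which is a reasonable trade.

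However, there is a real gap at the step ``Combined with the trapping, over any fixed compact piece of $D$ the surface $\wtilde{\Sig}_{C_m}$ is, for $m$ large, the normal graph of a function $u_m\to 0$.'' The compactness theorem for embedded minimal surfaces with uniformly bounded $|A|$ yields subconvergence to a minimal \emph{lamination}; the trapping and $|A|\to 0$ force the support of that lamination to be $D$, but this does not by itself rule out $\wtilde{\Sig}_{C_m}$ having two (or more) sheets over $D\cap B$, i.e.\ convergence with multiplicity $\ge 2$. The multiplicity matters: if $k$ sheets converge, your change of variables would give $\hat F(C_m)\to k\,\hat F(C)$. The anchor only controls the leaf through $p_m$; other sheets in $B$ could exist a priori. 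The paper closes exactly this gap by invoking Seppi's foliation of $\Hy^3$ by the equidistant surfaces $\wtilde{\Sig}_{C_m}^{(t)}$ (\cref{rmk.foliation}, used in the proof of \cref{lem.02}), which makes the normal exponential map of $\wtilde{\Sig}_{C_m}$ a diffeomorphism of $\Hy^3$ and hence $\pi_m$ injective; this is what forces single-sheetedness globally. Your argument should either invoke the same foliation, or supply an independent multiplicity-one argument (e.g.\ combining embeddedness, the separation property, and the tangent-plane estimate of \cref{prop.seppi}(ii) to preclude a vertical geodesic being tangent to $\wtilde{\Sig}_{C_m}$).
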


We will combine this proposition with the following lemma.

\begin{lemma}\label{lem.weakconvergencenice}
    Let $X$ be a locally compact metrizable space and let $$X \supset X_1 \supset X_2 \supset X_3 \supset \dots$$ be a decreasing sequence of closed subsets with intersection $X_\infty = \bigcap_{m}{X_m}$. Let $(\al_m)_m$ be a sequence of Radon measures on $X$ so that each $\al_m$ has support contained in $X_m$, and suppose that $\al_m$ weak-$\ast$ converges to the Radon measure $\al_\infty$. Also, let $\hat{F}:X \ra \R$ be a bounded function with precompact support, which in addition satisfies the following: if $(x_m)_m$ is a sequence in $X$ converging to $x$ and each $x_m$ belongs to $X_m$, then $\hat{F}(x_m)$ converges to $\hat{F}(x)$. If $\hat{F}|_{\supp{\al_m}}$ is measurable for all $m$, then
    \[\int{\hat{F}}{d\al_m} \ \text{ converges to } \ \int{\hat{F}}{d\al_\infty}.\]
\end{lemma}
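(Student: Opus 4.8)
The statement is essentially a portmanteau convergence lemma adapted to the situation where the measures $\al_m$ live on shrinking closed sets and $\hat F$ is only continuous ``along'' the filtration. I would first reduce to the case where $\hat F \geq 0$ by splitting $\hat F = \hat F^+ - \hat F^-$; each piece is still bounded, precompactly supported, and satisfies the same sequential-continuity property, so it suffices to treat nonnegative $\hat F$. Fix a precompact open set $V \subset X$ containing $\supp \hat F$ with compact closure $\ov V$; for $m$ large the support of $\al_m$ need not be inside $V$, but since $\hat F$ vanishes off $\supp\hat F$, only the restriction of $\al_m$ to $\ov V$ matters, and $\al_m|_{\ov V} \to \al_\infty|_{\ov V}$ weakly as finite measures on the compact space $\ov V$ (using that weak-$\ast$ convergence of Radon measures plus $\sup_m \al_m(\ov V) < \infty$, which follows from weak-$\ast$ convergence, gives convergence tested against $C(\ov V)$).

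\textbf{Main step: approximating $\hat F$ from above and below by continuous functions that interact well with the filtration.} The obstacle is that $\hat F$ is not continuous on $X$, only along sequences respecting $X_m$. The device I would use is to fatten the sets $X_m$ slightly. For $\epsilon > 0$ let $X_m^\epsilon$ denote the closed $\epsilon$-neighbourhood of $X_m$ (fixing a metric $d$ on $X$ inducing its topology). The sequential-continuity hypothesis on $\hat F$ implies the following \emph{uniform} statement on $\ov V$: for every $\epsilon>0$ there exist $\delta>0$ and $M$ such that whenever $m\geq M$, $x\in X_m \cap \ov V$ and $y\in \ov V$ with $d(x,y)<\delta$, one has $|\hat F(x) - \hat F(y)| < \epsilon$. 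Indeed, if this failed we could extract sequences $x_{m_k}\in X_{m_k}$, $y_{m_k}$ with $d(x_{m_k},y_{m_k})\to 0$ but $|\hat F(x_{m_k})-\hat F(y_{m_k})|\geq\epsilon$; by compactness of $\ov V$ pass to a common limit $x\in X_\infty$ (here $x_{m_k}\in X_{m_k}$ forces the limit into $\bigcap X_m = X_\infty$ since the $X_m$ are closed and decreasing), contradicting sequential continuity applied to both sequences $(x_{m_k})$ and $(y_{m_k})$, which both converge to $x$ and both respect the filtration (for the second one, $y_{m_k}$ need not lie in $X_{m_k}$, but we only need $\hat F(x_{m_k})\to\hat F(x)$ from the first, and $\hat F(y_{m_k})\to \hat F(x)$ requires a variant; to avoid this subtlety I would instead assume WLOG, after passing to a subsequence, that $d(x_{m_k},y_{m_k})$ is so small that replacing $y_{m_k}$ by the nearest point of $X_{m_k}$ changes $\hat F$ by less than $\epsilon/2$ — but that is circular, so more cleanly: enlarge the hypothesis's test sequences by noting $\hat F$ restricted to $X_\infty$ is continuous there, and cover $X_\infty\cap\ov V$ by finitely many balls on which $\hat F$ oscillates less than $\epsilon$, then for $m$ large $X_m\cap\ov V$ lies in a small neighbourhood of $X_\infty\cap\ov V$).

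\textbf{Conclusion.} Granting the uniform continuity statement, I would fix $\epsilon>0$ and choose $g_\epsilon^+, g_\epsilon^- \in C_c(X)$ with $g_\epsilon^- \leq \hat F \leq g_\epsilon^+$ on $X_M\cap\ov V$ and $g_\epsilon^+ - g_\epsilon^- < 2\epsilon$ there, built by mollifying $\hat F|_{X_M\cap\ov V}$ using the $\delta$ above; extend them to be supported in $V$. Then for $m\geq M$,
\[
\int g_\epsilon^-\, d\al_m \;\leq\; \int \hat F\, d\al_m \;\leq\; \int g_\epsilon^+\, d\al_m,
\]
since $\al_m$ is supported in $X_m\subset X_M$. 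Letting $m\to\infty$ and using weak-$\ast$ convergence on $\ov V$,
\[
\int g_\epsilon^-\, d\al_\infty \;\leq\; \liminf_m \int \hat F\, d\al_m \;\leq\; \limsup_m \int \hat F\, d\al_m \;\leq\; \int g_\epsilon^+\, d\al_\infty.
\]
Since $\al_\infty$ is also supported in $X_\infty \subset X_M$, the outer integrals differ by at most $2\epsilon\,\al_\infty(\ov V)$, and both sandwich $\int \hat F\, d\al_\infty$. Letting $\epsilon\to 0$ gives the claim. The one genuine difficulty, as flagged above, is making the uniform-oscillation statement airtight; I expect the clean route is to cover $X_\infty\cap\ov V$ by finitely many open balls $B_j$ with $\mathrm{osc}_{\hat F}(B_j\cap X_\infty)<\epsilon$ (using continuity of $\hat F|_{X_\infty}$, itself a consequence of the sequential hypothesis), observe that $X_m\cap\ov V\subset\bigcup_j B_j$ for $m$ large (else a diagonal sequence escapes to a point of $X_\infty$ outside the cover), and then check that $\hat F$ on $X_m\cap B_j$ is within $\epsilon$ of its values on $X_\infty\cap B_j$ — again by the sequential hypothesis — so that $\hat F|_{X_m\cap\ov V}$ has oscillation $O(\epsilon)$ on each $B_j$, which is exactly what is needed to build $g_\epsilon^\pm$.
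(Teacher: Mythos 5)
Your proposal is in the right direction but considerably more complicated than necessary, and the key step — the uniform oscillation estimate — is left with gaps that you yourself flag. The paper uses a cleaner device. Since $\hat F|_{X_\infty}$ is continuous (which you correctly observe), the Tietze extension theorem immediately gives a continuous compactly supported $\ov F: X \to \R$ agreeing with $\hat F$ on $X_\infty$. Choosing a compact set $K$ containing the supports of both $\hat F$ and $\ov F$, one proves $\sup_{K\cap X_m}|\hat F - \ov F| \to 0$ by contradiction: a sequence $x_{m_k} \in K \cap X_{m_k}$ with $|\hat F(x_{m_k}) - \ov F(x_{m_k})| \geq \epsilon$ subconverges to some $x$, which must lie in $X_\infty$ because the $X_m$ are closed and decreasing; then the sequential-continuity hypothesis (after a re-indexing to make the sequence respect the filtration) forces $\hat F(x_{m_k}) \to \hat F(x)$ while continuity of $\ov F$ gives $\ov F(x_{m_k}) \to \ov F(x) = \hat F(x)$, a contradiction. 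From there, $\bigl|\int \hat F\,d\al_m - \int \ov F\,d\al_m\bigr| \leq \al_m(K\cap X_m)\cdot\|\hat F - \ov F\|_{K\cap X_m}$ together with $\limsup_m \al_m(K) \leq \al_\infty(K) < \infty$ and weak-$\ast$ convergence tested against $\ov F \in C_c(X)$ finishes the proof, since $\int \ov F\,d\al_\infty = \int \hat F\,d\al_\infty$ because $\al_\infty$ is supported on $X_\infty$.

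The reduction to $\hat F \geq 0$, the fattened sets $X_m^\epsilon$, and the sandwiching functions $g_\epsilon^\pm$ are all unnecessary once Tietze is invoked: the extension does that work in a single step. Your first attempt at the uniform oscillation claim is indeed circular, as you note. Your alternative route — cover $X_\infty \cap \ov V$ by balls of small oscillation, show $X_m \cap \ov V$ is eventually in the cover, and control oscillation ball by ball — is really a hands-on version of the compactness argument above, but as sketched it has a genuine gap: the closure of a ball $B_j$ need not lie in $B_j$, so the limit point you extract may not witness a small oscillation, and the final estimate that $\hat F$ on $X_m \cap B_j$ is uniformly close to its values on $X_\infty \cap B_j$ again requires the re-indexing trick to apply the sequential hypothesis, which you gesture at but do not carry out. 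The intuition about what needs to be proved is correct; the lesson is that once you know $\hat F$ restricted to $X_\infty$ is continuous with compact support, Tietze hands you the global continuous comparison function for free and the rest is routine.
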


\begin{proof}
    Note that $\hat{F}|_{X_\infty}$ is continuous, so by Tietze extension theorem let $\ov{F}:X \ra \R$ be a continuous function with compact support that agrees with $\hat{F}$ on $X_\infty$, and let $K\subset X$ be a compact set containing the support of both $\hat{F}$ and $\ov{F}$. If we set $K_m:=K \cap X_m$, a compactness argument and our assumptions on $\hat F$ imply that for any $\ep>0$ we can find $m_0\geq 0$ such that $\|\hat{F}-\ov{F}\|_{K_m}:=\sup_{x\in K_m}{|\hat{F}(x)-\ov{F}(x)|}\leq \ep$ for all $m\geq m_0$. We also have
    \begin{align*}
        \left|\int{\hat{F}}{d\al_m}-\int{\ov{F}}{d\al_m}\right|\leq \int_{K_m}{|\hat{F}-\ov{F}|}{d\al_m}\leq \al_m(K_m)\|\hat{F}-\ov{F}\|_{K_m},
    \end{align*}
    and since $\limsup_{m}{\al_m(K_m)}\leq \al_\infty(K)<\infty$ and $\int{\ov{F}}{d\al_m}$ converges to $\int{\ov{F}}{d\al_\infty}$, we deduce that  $\int{\hat{F}}{d\al_m}$ converges to $\int{\ov{F}}{d\al_\infty}$. But this integral equals $\int{\hat{F}}{d\al_\infty}$ since the support of $\al_\infty$ is contained in $X_\infty$.
\end{proof}

Now we see how \cref{prop.convergenceintegral} implies \cref{thm.3dimcurrtocurr}.

\begin{proof}[Proof of \cref{thm.3dimcurrtocurr} assuming \cref{prop.convergenceintegral}]
Define $\hat\al_{Haar}$ as the unique Haar co-geodesic current on $\G$ such that
\begin{equation}\label{eq.Haarcurrent}
\int{F}{d\wtilde\nu_{Haar}}=\int_{\calQC_1}{\left(\int{F}{d\wtilde\nu_C}\right)}{d\hat{\al}_{Haar}(C)}
\end{equation}
for any $F\in C_c(\Hy^3)$, where $\wtilde\nu_{Haar}$ is the $\G$-equivariant lift of $\nu_{Haar}$ to $\Gr_2(\Hy^3)$. 

Let $\wtilde{\Sig}_m$, $K_m$, $\nu_\infty$ and $\hat\al_\infty$ be as in the statement of the theorem, and for fixed $K>1$ we assume that $K_m\leq \min\{K,K_0\}$ for all $m$. It is enough to prove that any subsequence of $(\hat{\al}_{\Sig_m})_m$ has a convergent subsequence converging to $\hat\al_\infty$. We reindex any such sequence to be $(\hat{\al}_{\Sig_m})_m$, and by \cref{lem.Scurrentscompact} and after extracting a further subsequence and reindexing we find a sequence $(t_m)_m$ of positive numbers so that $\al_m:=t_m\hat{\al}_{\Sig_m}$ converges to the non-zero $\calQC_K$-current $\al_\infty$. We further assume that the sequence $(K_m)_m$ is non-increasing.

For each $m$ let $\wtilde{\nu}_m$ be the $\G$-equivariant lift of $\nu_{\Sig_m}$ to $\Gr_2(\Hy^3)$, so that for any $F\in C_c(\Gr_2(\Hy^3))$ we have
\begin{equation}\label{eq.intnu_m}
    \int{F}{d\wtilde\nu_m}=\int_{\calQC_{K}}{\left( \int{F}{d\wtilde{\nu}_C}\right)}{d\hat{\al}_{\Sig_m}(C)}=\int_{\calQC_{K_0}}{\hat{F}(C)}{d\hat{\al}_{\Sig_m}(C)},
\end{equation}
where $\hat{F}:\calQC_{K_0}\ra \R$ is defined as in \cref{prop.convergenceintegral}.

We first show that the sequence $(t_m)_m$ converges, so let $G:\Gr_2(\Hy^3)\ra \R$ be any continuous, nonnegative and compactly supported function satisfying $\sum_{g\in \G}{(G\circ g)}\equiv 1$. If $G$ induces $\hat{G}:\calQC_{K_0}\ra \R$, then for each $m$ we have
\[1=\int{G}{d\wtilde\nu_m}=\int{\hat{G}}{d\hat{\al}_{\Sig_m}}.\] Therefore, by applying \cref{prop.convergenceintegral} to $F=G$ and \cref{lem.weakconvergencenice} to $X=\calQC_{K_0}$ and $X_{m}=\calQC_{K_m}$ (recall that $(K_m)_m$ is non-increasing) we deduce that  
\[t_m=\int{\hat{G}}{d\al_m}  \ \text{ converges to } \ \int{\hat{G}}{d\al_\infty}=:t_\infty.\]

Now, let $\ov{F}\in C_c(\calQC_K)$ be an arbitrary compactly supported continuous function. Since $K_m$ converges to 1, $\al_\infty$ is supported on $\calQC_1$, and $\ov{F}$ restricts to a continuous compactly supported function on $\calQC_1$, 
we can find a continuous function $F\in C_c(\Gr_2(\Hy^3))$ such that
\[\ov{F}(C)=\hat{F}(C)=\int{F}{d\wtilde{\nu}_C} \text{ for all }C\in \calQC_1,\]
see for instance \cite[Prop.~2.48]{folland}.

Suppose that $\nu_{\Sig_m}$ converges to $\nu_\infty=\lam_{Haar}\nu_{Haar}+\sum_{T\in \calT}{\lam_T\nu_T}$ as in the statement of the theorem. Then $\wtilde{\nu}_m$ weak-$\ast$ converges to $\wtilde{\nu}_\infty$ in $\Gr_2(\Hy^3)$, the $\G$-equivariant lift of $\nu_\infty$, implying
\begin{align*}
    \int_{\calQC_K}{\ov{F}}{d\al_\infty}=\int_{\calQC_1}{\ov{F}}{d\al_\infty}=\int_{\calQC_1}{\left(\int{F}{d\wtilde{\nu}_C}\right)}{d\al_\infty(C)}=\int_{\calQC_K}{\left(\int{F}{d\wtilde{\nu}_C}\right)}{d\al_\infty(C)}.
\end{align*}
By applying \cref{prop.convergenceintegral}, \cref{lem.weakconvergencenice} and \eqref{eq.intnu_m} we obtain
\begin{align*}
\int_{\calQC_K}{\left(\int{F}{d\wtilde{\nu}_C}\right)}{d\al_\infty(C)} 
& =\lim_{m\to \infty}{ \int_{\calQC_K}{\left(\int{F}{d\wtilde{\nu}_C} \right)}{ t_md\hat{\al}_{\Sig_m}(C)}} \\
& =\lim_{m\to \infty}{t_m \int_{\calQC_K}{F}{ d\wtilde{\nu}_m}}\\ & =t_\infty \int{F}{d\wtilde{\nu}_\infty}\\ 
& =t_\infty \int{\left(\int{F}{d\wtilde{\nu}_C} \right)}{d\hat{\al}_\infty(C)} =t_\infty \int{\ov{F}}{d\hat{\al}_\infty},
\end{align*}
where in the last equality we used that $\hat{\al}_\infty$ is supported on $\calQC_1$. Then 
\[\int{\ov{F}}{d\al_\infty}=t_\infty \int{\ov{F}}{d\hat{\al}_\infty}\]
for all $\ov{F}\in C_c(\calQC_K)$, which gives us $\al_\infty=t_\infty \hat{\al}_\infty$. But since $\al_\infty$ is non-zero, we conclude that $t_\infty>0$ and $\hat{\al}_{\Sig_m} \xrightharpoonup{\ast} \hat{\al}_\infty$.
\end{proof}


The rest of the section is devoted to the proof of \cref{prop.convergenceintegral}. We will need some lemmas related to the geometry of the sets $U_{D,x,\rho}$, $\calU_{D,x,\rho,K}$ and $\calU_{D,x,\rho}$. These lemmas will be used to control the convergence of minimal disks with limit sets converging to round circles.

\begin{lemma}\label{lem.disjoint}
    For any $\rho,R>0$ there exists $r>0$ satisfying the following. If $D_0\subset \Hy^3$ is a totally geodesic plane with $x_0\in D_0$, $D$ is a totally geodesic plane not contained in $U_{D_0,x_0,\rho}$ and $p\in D$ satisfies $d(p,x_0)\leq R$, then 
    \[\calU_{D,p,r}\cap \calU_{D_0,x_0,\rho/2}=\emptyset.\]
\end{lemma}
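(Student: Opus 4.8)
\emph{Strategy.} I would prove this by a compactness argument. Fix $\rho,R>0$ and an auxiliary metric on $\partial\Hy^3=\bbS^2$ inducing its topology; write $N_\epsilon(\cdot)$ for its $\epsilon$-neighborhoods and $d_H$ for the associated Hausdorff distance between compact subsets of $\bbS^2$. Using the equivariance $U_{gD,gx,\rho}=g(U_{D,x,\rho})$ and the transitivity of $\Isom(\Hy^3)$ on pointed totally geodesic planes, it suffices to treat the case in which $D_0$ is a single fixed plane and $x_0\in D_0$ a single fixed point. Then the relevant configurations $(D,p)$ with $p\in D$ and $d(p,x_0)\le R$ range over the set $\calP_R$ of pointed totally geodesic planes $(D,p)$ with $p\in D\cap\ov{B(x_0,R)}$, which is compact for the topology of Hausdorff convergence of planes on compact subsets of $\Hy^3$ together with convergence of base points. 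So it is enough to find $r>0$ such that $\calU_{D,p,r}\cap\calU_{D_0,x_0,\rho/2}=\emptyset$ for every $(D,p)\in\calP_R$ with $D\not\subset U_{D_0,x_0,\rho}$.

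\emph{Two annular estimates.} The heart of the proof is to show that, uniformly over $(D,p)\in\calP_R$, an essential quasicircle inside the annulus $\Lam(U_{D,p,r})$ is Hausdorff-close to the core circle $\Lam(D)$ once $r$ is small. Since $U_{D,p,r}$ is the region between the totally geodesic planes $P^{\pm}$ through $\frakn_p^D(\pm r)$ orthogonal to $\frakn_p^D$, the assignment $(D,p,r)\mapsto(\Lam(P^+),\Lam(P^-))$ is continuous and degenerates to $(\Lam(D),\Lam(D))$ at $r=0$ (because $P^{\pm}|_{r=0}=D$); by compactness of $\calP_R$ this yields $w_0(r)\to 0$ as $r\to 0^+$ with $\Lam(U_{D,p,r})\subset N_{w_0(r)}(\Lam(D))$ for all $(D,p)\in\calP_R$. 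Conversely, I claim there is $w_1(r)\to 0$ with $\Lam(D)\subset N_{w_1(r)}(C)$ for every closed curve $C$ whose inclusion into $\Lam(U_{D,p,r})$ is $\pi_1$-injective: such a $C$ separates the two boundary circles $\Lam(P^{\pm})$ inside the annulus; given $z\in\Lam(D)$, the totally geodesic plane $Q$ through $p$ spanned by the ray $[p,z)$ and $\frakn_p^D$ meets $U_{D,p,r}$ in a convex planar strip, whose ideal boundary contains a short arc $\alpha_z$ from a point of $\Lam(P^+)$ to a point of $\Lam(P^-)$ passing through $z$; since $C$ is essential it meets $\alpha_z$, and the $\bbS^2$-diameter of $\alpha_z$ tends to $0$ as $r\to0$, uniformly over $z\in\Lam(D)$ and $(D,p)\in\calP_R$, again by compactness. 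With $w(r):=\max\{w_0(r),w_1(r)\}$, every $C\in\calU_{D,p,r}$ satisfies $d_H(C,\Lam(D))\le w(r)$, and $w(r)\to 0$.

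\emph{Nesting, convexity, and conclusion.} Two elementary facts finish the argument. First, since $\rho/2<\rho$ the closed annulus $\Lam(U_{D_0,x_0,\rho/2})$ lies in the interior of the closed annulus $\Lam(U_{D_0,x_0,\rho})$, so there is $\epsilon_0=\epsilon_0(\rho)>0$ with $N_{\epsilon_0}(\Lam(U_{D_0,x_0,\rho/2}))\subset\Lam(U_{D_0,x_0,\rho})$. Second, $U_{D_0,x_0,\rho}$ is convex (an intersection of two half-spaces), hence so is its closure in $\ov{\Hy}^3$; consequently, if $E$ is a totally geodesic plane with $\Lam(E)\subset\Lam(U_{D_0,x_0,\rho})$, then $E=\Hull(\Lam(E))\subset U_{D_0,x_0,\rho}$. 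Now choose $r>0$ with $w(r)<\epsilon_0(\rho)$, and suppose some $C$ lies in $\calU_{D,p,r}\cap\calU_{D_0,x_0,\rho/2}$ with $(D,p)\in\calP_R$. Then $C\subset\Lam(U_{D_0,x_0,\rho/2})$, while the previous paragraph gives $\Lam(D)\subset N_{w(r)}(C)\subset N_{\epsilon_0}(\Lam(U_{D_0,x_0,\rho/2}))\subset\Lam(U_{D_0,x_0,\rho})$, whence $D\subset U_{D_0,x_0,\rho}$. Therefore $\calU_{D,p,r}\cap\calU_{D_0,x_0,\rho/2}=\emptyset$ whenever $D\not\subset U_{D_0,x_0,\rho}$, which is the assertion.

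\emph{Main obstacle.} The subtle step is the second annular estimate: converting $\pi_1$-injectivity of $C\subset\Lam(U_{D,p,r})$ into the uniform reverse bound $\Lam(D)\subset N_{w_1(r)}(C)$. The cleanest route seems to be the arc $\alpha_z$ above, and what needs care is (i) verifying that for $(D,p)$ in the compact family $\calP_R$ the ideal boundary of the strip $Q\cap U_{D,p,r}$ does contain such an arc through each $z\in\Lam(D)$, with $\bbS^2$-diameter controlled by $r$; and (ii) checking that an essential $C$ genuinely meets $\alpha_z$ (a one-line point, since a priori $C$ could be tangent to $\Lam(P^{\pm})$). Everything else is continuity-and-compactness bookkeeping together with the convexity of $U_{D_0,x_0,\rho}$.
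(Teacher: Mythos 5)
Your argument is correct, and it reaches the conclusion by a genuinely different mechanism than the paper. The paper's proof is only a sketch: it asserts (with a figure, details left to the reader) that for $r$ small the thin annulus $\Lam(U_{D,p,r})$ separates one complementary component of $\Lam(U_{D_0,x_0,\rho/2})$ into two pieces, and deduces the disjointness of the two families of essential quasicircles from that topological separation statement. You never prove any separation statement; instead you convert $\pi_1$-injectivity in the thin annulus into the metric estimate that every essential curve $C\subset\Lam(U_{D,p,r})$ satisfies $\Lam(D)\subset N_{w_1(r)}(C)$ with $w_1(r)\to 0$ uniformly over the compact family $\calP_R$, and then rule out $C\in\calU_{D_0,x_0,\rho/2}$ by convexity: $\Lam(D)$ would then lie in the larger annulus $\Lam(U_{D_0,x_0,\rho})$, forcing $D=\Hull(\Lam(D))\subset U_{D_0,x_0,\rho}$, against the hypothesis. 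Your crossing-arc device (the arc $\alpha_z$ cut out by the plane spanned by $\frakn_p^D$ and $[p,z)$), combined with the observation that a point of $\Lam(P^{\pm})\setminus C$ lies in the same complementary component of $C$ as the open cap it bounds, correctly disposes of the one delicate point you flag in (ii): either $C$ meets $\alpha_z$ or its two endpoints would lie in different components of $\bbS^2\setminus C$. As for what each approach buys: the paper's route is shorter and visual but leaves the topology implicit; yours is longer but complete and checkable, the initial reduction by isometry-equivariance plus compactness of $\calP_R$ makes the dependence $r=r(\rho,R)$ transparent, and the hypotheses ``$D\not\subset U_{D_0,x_0,\rho}$'' and ``$\rho/2<\rho$'' enter exactly once, through the nesting constant $\epsilon_0(\rho)$ and the convex-hull step. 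Two small remarks: your first estimate ($w_0$) is never used in the final argument, so it can be dropped; and the nesting claim $N_{\epsilon_0}(\Lam(U_{D_0,x_0,\rho/2}))\subset\Lam(U_{D_0,x_0,\rho})$ is justified because distinct planes orthogonal to the common normal geodesic are disjoint with disjoint limit circles, which is worth saying explicitly.
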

\begin{proof}
    The conclusion follows because for $r>0$ small enough (depending only on $\rho$ and $R$) the limit set $\Lam(U_{D,p,r})$ separates at least one of the components of $\partial \Hy^3 \bs \Lam(U_{D_0,x_0,\rho/2})$ into two components, see \cref{fig:enter-label}. Details are left to the reader.
\end{proof}

\begin{figure}[hbt]\label{fig.proof}
\centering

\tikzset{every picture/.style={line width=0.75pt}} 

\begin{tikzpicture}[x=0.75pt,y=0.75pt,yscale=-1,xscale=1]

\draw  [line width=1.5]  (207.58,167.95) .. controls (207.58,101.67) and (261.31,47.95) .. (327.58,47.95) .. controls (393.86,47.95) and (447.58,101.67) .. (447.58,167.95) .. controls (447.58,234.22) and (393.86,287.95) .. (327.58,287.95) .. controls (261.31,287.95) and (207.58,234.22) .. (207.58,167.95) -- cycle ;
\draw  [draw opacity=0][dash pattern={on 5.63pt off 4.5pt}][line width=1.5]  (208.1,167.49) .. controls (209.3,154.17) and (262.33,143.45) .. (327.58,143.45) .. controls (393.58,143.45) and (447.08,154.42) .. (447.08,167.95) .. controls (447.08,168.09) and (447.08,168.24) .. (447.07,168.39) -- (327.58,167.95) -- cycle ; \draw  [color={rgb, 255:red, 208; green, 2; blue, 27 }  ,draw opacity=1 ][dash pattern={on 5.63pt off 4.5pt}][line width=1.5]  (208.1,167.49) .. controls (209.3,154.17) and (262.33,143.45) .. (327.58,143.45) .. controls (393.58,143.45) and (447.08,154.42) .. (447.08,167.95) .. controls (447.08,168.09) and (447.08,168.24) .. (447.07,168.39) ;  
\draw  [draw opacity=0][dash pattern={on 5.63pt off 4.5pt}][line width=1.5]  (243.04,251.88) .. controls (234.41,241.65) and (264.23,196.65) .. (310.26,150.62) .. controls (356.79,104.09) and (402.27,74.13) .. (411.83,83.7) .. controls (412.09,83.95) and (412.32,84.23) .. (412.52,84.54) -- (327.58,167.95) -- cycle ; \draw  [color={rgb, 255:red, 58; green, 110; blue, 172 }  ,draw opacity=1 ][dash pattern={on 5.63pt off 4.5pt}][line width=1.5]  (243.04,251.88) .. controls (234.41,241.65) and (264.23,196.65) .. (310.26,150.62) .. controls (356.79,104.09) and (402.27,74.13) .. (411.83,83.7) .. controls (412.09,83.95) and (412.32,84.23) .. (412.52,84.54) ;  
\draw  [draw opacity=0][dash pattern={on 4.5pt off 4.5pt}] (210.66,144.43) .. controls (210.63,144.29) and (210.62,144.14) .. (210.62,144) .. controls (210.62,133.78) and (263.09,125.5) .. (327.81,125.5) .. controls (391.36,125.5) and (443.1,133.49) .. (444.95,143.45) -- (327.81,144) -- cycle ; \draw  [color={rgb, 255:red, 245; green, 166; blue, 35 }  ,draw opacity=1 ][dash pattern={on 4.5pt off 4.5pt}] (210.66,144.43) .. controls (210.63,144.29) and (210.62,144.14) .. (210.62,144) .. controls (210.62,133.78) and (263.09,125.5) .. (327.81,125.5) .. controls (391.36,125.5) and (443.1,133.49) .. (444.95,143.45) ;  
\draw  [draw opacity=0][dash pattern={on 4.5pt off 4.5pt}] (210.66,181.43) .. controls (210.63,181.29) and (210.62,181.14) .. (210.62,181) .. controls (210.62,170.78) and (263.09,162.5) .. (327.81,162.5) .. controls (391.36,162.5) and (443.1,170.49) .. (444.95,180.45) -- (327.81,181) -- cycle ; \draw  [color={rgb, 255:red, 245; green, 166; blue, 35 }  ,draw opacity=1 ][dash pattern={on 4.5pt off 4.5pt}] (210.66,181.43) .. controls (210.63,181.29) and (210.62,181.14) .. (210.62,181) .. controls (210.62,170.78) and (263.09,162.5) .. (327.81,162.5) .. controls (391.36,162.5) and (443.1,170.49) .. (444.95,180.45) ;  
\draw  [draw opacity=0][dash pattern={on 4.5pt off 4.5pt}] (218.54,122) .. controls (220.1,112.02) and (268.62,104) .. (328.25,104) .. controls (387.88,104) and (436.4,112.02) .. (437.96,122) -- (328.25,122.5) -- cycle ; \draw  [color={rgb, 255:red, 248; green, 231; blue, 28 }  ,draw opacity=1 ][dash pattern={on 4.5pt off 4.5pt}] (218.54,122) .. controls (220.1,112.02) and (268.62,104) .. (328.25,104) .. controls (387.88,104) and (436.4,112.02) .. (437.96,122) ;  
\draw  [draw opacity=0][dash pattern={on 4.5pt off 4.5pt}] (219.54,209.5) .. controls (221.1,199.51) and (269.62,191.5) .. (329.25,191.5) .. controls (388.88,191.5) and (437.4,199.51) .. (438.96,209.5) -- (329.25,210) -- cycle ; \draw  [color={rgb, 255:red, 248; green, 231; blue, 28 }  ,draw opacity=1 ][dash pattern={on 4.5pt off 4.5pt}] (219.54,209.5) .. controls (221.1,199.51) and (269.62,191.5) .. (329.25,191.5) .. controls (388.88,191.5) and (437.4,199.51) .. (438.96,209.5) ;  
\draw  [draw opacity=0][dash pattern={on 4.5pt off 4.5pt}] (235.82,243.66) .. controls (227.7,232.11) and (256.74,187.36) .. (301.99,142.1) .. controls (347.76,96.33) and (393.02,67.15) .. (403.93,76.22) -- (320.41,160.51) -- cycle ; \draw  [color={rgb, 255:red, 57; green, 135; blue, 224 }  ,draw opacity=1 ][dash pattern={on 4.5pt off 4.5pt}] (235.82,243.66) .. controls (227.7,232.11) and (256.74,187.36) .. (301.99,142.1) .. controls (347.76,96.33) and (393.02,67.15) .. (403.93,76.22) ;  
\draw  [draw opacity=0][dash pattern={on 4.5pt off 4.5pt}] (249.82,257.68) .. controls (241.7,246.13) and (270.73,201.38) .. (315.99,156.12) .. controls (361.76,110.35) and (407.01,81.17) .. (417.93,90.24) -- (334.4,174.53) -- cycle ; \draw  [color={rgb, 255:red, 57; green, 135; blue, 224 }  ,draw opacity=1 ][dash pattern={on 4.5pt off 4.5pt}] (249.82,257.68) .. controls (241.7,246.13) and (270.73,201.38) .. (315.99,156.12) .. controls (361.76,110.35) and (407.01,81.17) .. (417.93,90.24) ;  
\draw  [draw opacity=0][fill={rgb, 255:red, 80; green, 227; blue, 194 }  ,fill opacity=0.7 ] (338,192.06) .. controls (293.96,234.67) and (252.43,261.29) .. (243.33,252.2) .. controls (233.77,242.63) and (263.73,197.15) .. (310.26,150.62) .. controls (312.86,148.02) and (315.46,145.47) .. (318.05,142.97) -- (327.58,167.95) -- cycle ; \draw  [draw opacity=0] (338,192.06) .. controls (293.96,234.67) and (252.43,261.29) .. (243.33,252.2) .. controls (233.77,242.63) and (263.73,197.15) .. (310.26,150.62) .. controls (312.86,148.02) and (315.46,145.47) .. (318.05,142.97) ;  
\draw  [color={rgb, 255:red, 0; green, 0; blue, 0 }  ,draw opacity=0 ][fill={rgb, 255:red, 208; green, 2; blue, 27 }  ,fill opacity=0.71 ] (208.1,167.49) .. controls (208.1,153.96) and (261.6,142.99) .. (327.59,142.99) .. controls (393.57,142.99) and (447.07,153.96) .. (447.07,167.49) .. controls (447.07,181.02) and (393.57,191.99) .. (327.59,191.99) .. controls (261.6,191.99) and (208.1,181.02) .. (208.1,167.49) -- cycle ;
\draw  [draw opacity=0][fill={rgb, 255:red, 74; green, 194; blue, 226 }  ,fill opacity=0.7 ] (317.6,143.41) .. controls (361.48,101.04) and (402.77,74.63) .. (411.83,83.7) .. controls (421.4,93.26) and (391.44,138.74) .. (344.91,185.27) .. controls (342.23,187.95) and (339.55,190.57) .. (336.89,193.14) -- (327.58,167.95) -- cycle ; \draw  [draw opacity=0] (317.6,143.41) .. controls (361.48,101.04) and (402.77,74.63) .. (411.83,83.7) .. controls (421.4,93.26) and (391.44,138.74) .. (344.91,185.27) .. controls (342.23,187.95) and (339.55,190.57) .. (336.89,193.14) ;  
\draw  [draw opacity=0][line width=1.5]  (447.58,167.95) .. controls (446.23,181.24) and (393.26,191.93) .. (328.11,191.93) .. controls (262.11,191.93) and (208.61,180.96) .. (208.61,167.43) .. controls (208.61,165.94) and (209.26,164.48) .. (210.49,163.07) -- (328.11,167.43) -- cycle ; \draw  [color={rgb, 255:red, 208; green, 2; blue, 27 }  ,draw opacity=1 ][line width=1.5]  (447.58,167.95) .. controls (446.23,181.24) and (393.26,191.93) .. (328.11,191.93) .. controls (262.11,191.93) and (208.61,180.96) .. (208.61,167.43) .. controls (208.61,165.94) and (209.26,164.48) .. (210.49,163.07) ;  
\draw  [draw opacity=0] (437.96,123) .. controls (436.4,132.98) and (387.88,141) .. (328.25,141) .. controls (268.62,141) and (220.1,132.98) .. (218.54,123) -- (328.25,122.5) -- cycle ; \draw  [color={rgb, 255:red, 248; green, 231; blue, 28 }  ,draw opacity=1 ] (437.96,123) .. controls (436.4,132.98) and (387.88,141) .. (328.25,141) .. controls (268.62,141) and (220.1,132.98) .. (218.54,123) ;  
\draw  [draw opacity=0] (444.39,141.48) .. controls (444.64,141.98) and (444.77,142.48) .. (444.77,142.99) .. controls (444.77,153.21) and (392.31,161.49) .. (327.59,161.49) .. controls (262.86,161.49) and (210.4,153.21) .. (210.4,142.99) .. controls (210.4,142.16) and (210.74,141.34) .. (211.42,140.53) -- (327.59,142.99) -- cycle ; \draw  [color={rgb, 255:red, 245; green, 166; blue, 35 }  ,draw opacity=1 ] (444.39,141.48) .. controls (444.64,141.98) and (444.77,142.48) .. (444.77,142.99) .. controls (444.77,153.21) and (392.31,161.49) .. (327.59,161.49) .. controls (262.86,161.49) and (210.4,153.21) .. (210.4,142.99) .. controls (210.4,142.16) and (210.74,141.34) .. (211.42,140.53) ;  
\draw  [draw opacity=0] (444.39,190.48) .. controls (444.64,190.98) and (444.77,191.48) .. (444.77,191.99) .. controls (444.77,202.21) and (392.31,210.49) .. (327.59,210.49) .. controls (262.86,210.49) and (210.4,202.21) .. (210.4,191.99) .. controls (210.4,191.16) and (210.74,190.34) .. (211.42,189.53) -- (327.59,191.99) -- cycle ; \draw  [color={rgb, 255:red, 245; green, 166; blue, 35 }  ,draw opacity=1 ] (444.39,190.48) .. controls (444.64,190.98) and (444.77,191.48) .. (444.77,191.99) .. controls (444.77,202.21) and (392.31,210.49) .. (327.59,210.49) .. controls (262.86,210.49) and (210.4,202.21) .. (210.4,191.99) .. controls (210.4,191.16) and (210.74,190.34) .. (211.42,189.53) ;  
\draw  [draw opacity=0] (438.96,210.49) .. controls (437.4,220.48) and (388.88,228.5) .. (329.25,228.5) .. controls (269.62,228.5) and (221.1,220.48) .. (219.54,210.49) -- (329.25,210) -- cycle ; \draw  [color={rgb, 255:red, 248; green, 231; blue, 28 }  ,draw opacity=1 ] (438.96,210.49) .. controls (437.4,220.48) and (388.88,228.5) .. (329.25,228.5) .. controls (269.62,228.5) and (221.1,220.48) .. (219.54,210.49) ;  
\draw  [draw opacity=0][line width=1.5]  (412.1,83.99) .. controls (420.82,94.15) and (390.98,139.19) .. (344.91,185.27) .. controls (298.38,231.8) and (252.9,261.76) .. (243.33,252.2) .. controls (243.11,251.98) and (242.91,251.74) .. (242.73,251.48) -- (327.58,167.95) -- cycle ; \draw  [color={rgb, 255:red, 58; green, 110; blue, 172 }  ,draw opacity=1 ][line width=1.5]  (412.1,83.99) .. controls (420.82,94.15) and (390.98,139.19) .. (344.91,185.27) .. controls (298.38,231.8) and (252.9,261.76) .. (243.33,252.2) .. controls (243.11,251.98) and (242.91,251.74) .. (242.73,251.48) ;  
\draw  [draw opacity=0] (403.4,75.83) .. controls (403.74,76.05) and (404.05,76.3) .. (404.33,76.59) .. controls (414.5,86.76) and (385.17,132.58) .. (338.82,178.93) .. controls (292.47,225.28) and (246.65,254.61) .. (236.48,244.44) .. controls (235.85,243.82) and (235.38,243.06) .. (235.05,242.17) -- (320.41,160.51) -- cycle ; \draw  [color={rgb, 255:red, 57; green, 135; blue, 224 }  ,draw opacity=1 ] (403.4,75.83) .. controls (403.74,76.05) and (404.05,76.3) .. (404.33,76.59) .. controls (414.5,86.76) and (385.17,132.58) .. (338.82,178.93) .. controls (292.47,225.28) and (246.65,254.61) .. (236.48,244.44) .. controls (235.85,243.82) and (235.38,243.06) .. (235.05,242.17) ;  
\draw  [draw opacity=0] (417.39,89.84) .. controls (417.73,90.07) and (418.04,90.32) .. (418.33,90.61) .. controls (428.5,100.78) and (399.17,146.59) .. (352.82,192.95) .. controls (306.46,239.3) and (260.64,268.63) .. (250.47,258.46) .. controls (249.85,257.84) and (249.37,257.08) .. (249.04,256.19) -- (334.4,174.53) -- cycle ; \draw  [color={rgb, 255:red, 57; green, 135; blue, 224 }  ,draw opacity=1 ] (417.39,89.84) .. controls (417.73,90.07) and (418.04,90.32) .. (418.33,90.61) .. controls (428.5,100.78) and (399.17,146.59) .. (352.82,192.95) .. controls (306.46,239.3) and (260.64,268.63) .. (250.47,258.46) .. controls (249.85,257.84) and (249.37,257.08) .. (249.04,256.19) ;  
\draw (193.5,169) node  {\includegraphics[width=14.25pt,height=43.5pt]{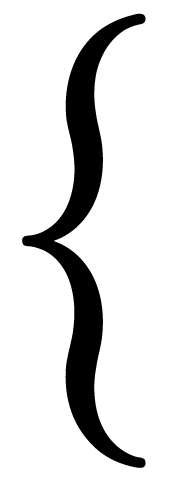}};
\draw (459,167.5) node [xscale=-1] {\includegraphics[width=15pt,height=72.75pt]{leftbracket.png}};

\draw (402.97,72.33) node [anchor=north west][inner sep=0.75pt]  [font=\Huge,rotate=-315]  {$\}$};
\draw (406,161.5) node [anchor=north west][inner sep=0.75pt] [font=\large]    {$D_{0}$};
\draw (371,107.4) node [anchor=north west][inner sep=0.75pt] [font=\large]   {$D$};
\draw (433,61.4) node [anchor=north west][inner sep=0.75pt]    {$\Lambda ( U_{D,p,r})$};
\draw (100,159.5) node [anchor=north west][inner sep=0.75pt]    {$\Lambda ( U_{D_{0} ,x_{0} ,\rho /2})$};
\draw (470,159.5) node [anchor=north west][inner sep=0.75pt]    {$\Lambda ( U_{D_{0} ,x_{0} ,\rho })$};
\draw (321,162.5) node [anchor=north west][inner sep=0.75pt]  [font=\large]  {$\bullet $};
\draw (309,163) node [anchor=north west][inner sep=0.75pt]  [font=\scriptsize]  {$x_{0}$};
\draw (335,148.4) node [anchor=north west][inner sep=0.75pt]  [font=\large]  {$\bullet $};
\draw (345,149.4) node [anchor=north west][inner sep=0.75pt]  [font=\scriptsize]  {$p$};

\end{tikzpicture}

    \caption{Proof of \cref{lem.disjoint}}
    \label{fig:enter-label}
\end{figure}

\begin{lemma}\label{lem.tangentminimalsfc}
    For any $\rho,R>0$ there exist $K'\in (1,K_0]$ and $\eta'>0$ such that the following holds. Let $D_0\subset \Hy^3$ be a totally geodesic plane with $x_0\in D_0$, and let $C\in \calU_{D_0,x_0,\eta',K'}$. If $p\in \wtilde{\Sig}_C$ satisfies $d(p,x_0)\leq R$, then the totally geodesic plane tangent to $\wtilde{\Sig}_C$ at $p$ is contained in $U_{D_0,x_0,\rho}$ and its limit set belongs to $\calU_{D_0,x_0,\rho}$. 
\end{lemma}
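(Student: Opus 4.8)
We sketch the argument; the plan is to argue by contradiction through a compactness argument, after normalizing by isometries, and to combine the inclusion $\wtilde\Sig_C\subset\Hull(C)$ with item ii) of \cref{prop.seppi}. Suppose the statement fails for some $\rho,R>0$. Then for every $m$ there are a pointed totally geodesic plane $(D_0^{(m)},x_0^{(m)})$ with $x_0^{(m)}\in D_0^{(m)}$, a quasicircle $C_m\in\calU_{D_0^{(m)},x_0^{(m)},1/m,K_m}$ with $K_m:=\min\{1+\tfrac1m,K_0\}\to1$, and a point $p_m\in\wtilde\Sig_{C_m}$ with $d(p_m,x_0^{(m)})\le R$, such that the totally geodesic plane $D_m$ tangent to $\wtilde\Sig_{C_m}$ at $p_m$ satisfies $D_m\not\subset U_{D_0^{(m)},x_0^{(m)},\rho}$ or $\Lam(D_m)\notin\calU_{D_0^{(m)},x_0^{(m)},\rho}$. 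In fact it suffices to contradict the second alternative: since $U_{D_0,x_0,\rho}$ is a closed convex slab between two ultraparallel totally geodesic planes, $\Lam(D)\subset\Lam(U_{D_0,x_0,\rho})$ already forces $D=\Hull(\Lam(D))\subset U_{D_0,x_0,\rho}$, so $\Lam(D_m)\in\calU_{D_0,x_0,\rho}$ would give both conclusions. Finally, isometries of $\Hy^3$ permute round spheres, quasicircles (preserving the quasiconformal constant), minimal disks, and the families $U_{D,x,\rho}$ and $\calU_{D,x,\rho}$, while $\Isom(\Hy^3)$ acts transitively on pointed totally geodesic planes; so after applying, for each $m$, a suitable isometry, we may assume $D_0^{(m)}=D_0$ and $x_0^{(m)}=x_0$ are fixed.

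I would then record two containments of $C_m$. First, $U_{D_0,x_0,1/m}$ is convex with $\Lam(U_{D_0,x_0,1/m})\supset C_m$, so $\wtilde\Sig_{C_m}\subset\Hull(C_m)\subset U_{D_0,x_0,1/m}$; as these slabs decrease to $D_0$, this yields $C_m\to\Lam(D_0)$ in the Hausdorff topology on $\calC(\bbS^2)$, and $p_m\in U_{D_0,x_0,1/m}\cap\ov{B(x_0,R)}$, a set decreasing to $D_0\cap\ov{B(x_0,R)}$. Second, item ii) of \cref{prop.seppi} applied at the point $p_m\in\wtilde\Sig_{C_m}$ gives $C_m\in\calU_{D_m,p_m,a(K_m)}$, hence $C_m\subset\Lam(U_{D_m,p_m,a(K_m)})$, and $a(K_m)\to0$ because $a\colon[1,K_0]\to[0,\tfrac12]$ is a homeomorphism with $a(1)=0$. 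Passing to a subsequence, $p_m\to p_\infty\in D_0\cap\ov{B(x_0,R)}$; the planes $D_m$ all meet the compact set $\ov{B(x_0,R)}$, so after a further subsequence $D_m\to D_\infty$ with $p_\infty\in D_\infty$. By elementary hyperbolic geometry the construction $(D,p,t)\mapsto\Lam(U_{D,p,t})$ is continuous and degenerates to $\Lam(D)$ as $t\to0^+$ (the bounding planes $P^\pm$ of $U_{D,p,t}$ both converge to $D$), so $\Lam(U_{D_m,p_m,a(K_m)})\to\Lam(D_\infty)$ in $\calC(\bbS^2)$, and therefore $C_m\to\Lam(D_\infty)$ as well. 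By uniqueness of Hausdorff limits $\Lam(D_\infty)=\Lam(D_0)$, i.e.\ $D_\infty=D_0$. Consequently $\Lam(D_m)\to\Lam(D_0)$ in $\calC_{\ge2}(\bbS^2)$ through round circles, hence in $\calQC$; since $\calU_{D_0,x_0,\rho}$ is a neighbourhood of $\Lam(D_0)$ in $\calQC$ by the discussion preceding the lemma, we get $\Lam(D_m)\in\calU_{D_0,x_0,\rho}$ for all large $m$, the desired contradiction. The quantities $K'$ and $\eta'$ claimed in the statement are then obtained from the negation: some $K'\in(1,K_0]$, $\eta'>0$ work.

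The only genuinely delicate point is the second containment, and more precisely the passage from the a priori purely \emph{extrinsic} information $\wtilde\Sig_{C_m}\subset U_{D_0,x_0,1/m}$ to control on the \emph{tangent plane} $D_m$: this is exactly what \cref{prop.seppi}~ii) supplies, after which one is left with the (soft) fact that two shrinking families of annuli both containing the curves $C_m$ must share the same core circle in the limit. Everything else — the normalization, the compactness extractions, and the continuity of the slab construction $(D,p,t)\mapsto\Lam(U_{D,p,t})$ near $t=0$ — is routine and I would only sketch it.
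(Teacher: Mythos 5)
Your proof is correct and reaches the same contradiction, but by a genuinely different route at the core. The paper's proof normalizes and contradicts the \emph{first} alternative ($D_m\not\subset U_{D_0,x_0,\rho}$); it then invokes a separate geometric Lemma~\ref{lem.disjoint} to produce an $r>0$, uniform in $m$, with $\calU_{D_m,p_m,r}\cap\calU_{D_0,x_0,\rho/2}=\emptyset$, so that Proposition~\ref{prop.seppi}~ii) (giving $C_m\in\calU_{D_m,p_m,a(K_m)}\subset\calU_{D_m,p_m,r}$ for large $m$) forces $C_m\notin\calU_{D_0,x_0,\rho/2}$, contradicting $C_m\to\Lam(D_0)$. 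You instead observe the alternatives collapse to the \emph{second} one (since $\Lam(D)\in\calU_{D_0,x_0,\rho}\Rightarrow D=\Hull(\Lam(D))\subset U_{D_0,x_0,\rho}$), extract a limiting tangent plane $D_\infty$ by compactness of planes meeting $\ov{B(x_0,R)}$, and show $\Lam(D_\infty)=\Lam(D_0)$, whence $\Lam(D_m)\to\Lam(D_0)\in\calU_{D_0,x_0,\rho}$ directly contradicts $\Lam(D_m)\notin\calU_{D_0,x_0,\rho}$. This avoids Lemma~\ref{lem.disjoint} entirely at the price of an extra subsequence extraction of tangent planes and the observation that $(D,p,t)\mapsto\Lam(U_{D,p,t})$ is continuous and degenerates to $\Lam(D)$ as $t\to0^+$; both trades are reasonable. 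One small wording slip: from $C_m\subset\Lam(U_{D_m,p_m,a(K_m)})$ and $\Lam(U_{D_m,p_m,a(K_m)})\to\Lam(D_\infty)$ you write ``therefore $C_m\to\Lam(D_\infty)$ as well,'' but a subset of a Hausdorff-convergent sequence need not converge to the full limit; what you actually have (and want) is $\Lam(D_0)=\lim C_m\subset\lim\Lam(U_{D_m,p_m,a(K_m)})=\Lam(D_\infty)$, and then two nested round circles coincide. Also note that both your earlier assertion $C_m\to\Lam(D_0)$ and the paper's rely implicitly on the $\pi_1$-injectivity built into the definition of $\calU_{D_0,x_0,\eta',K'}$ (containment in a shrinking annulus alone would only give one-sided Hausdorff control); the paper's proof is equally terse here, so this is not a gap unique to your argument.
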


\begin{proof}
    Suppose that the conclusion does not hold, and after translating by isometries of $\Hy^3$, assume the existence of a totally geodesic plane $D_0$, a point $x_0\in D_0$, and sequences $K_m\to 1$ and $\eta_m\to 0$ satisfying the following. For each $m$ there exists a quasicircle $C_m\in \calU_{D_0,x_0,\eta_m,K_m}$ and a point $p_m$ in $\wtilde{\Sig}_{C_m}\subset \Hy^3$ such that $d(p_m,x_0)\leq R$ and the totally geodesic disk $D_m$ containing $p_m$ and tangent to $\wtilde{\Sig}_m$ at $p_m$ is not contained in $U_{D_0,x_0,\rho}$. By \cref{lem.disjoint} we know that \begin{equation}\label{eq.disjoint}
        \calU_{D_m,p_m,r}\cap \calU_{D_0,x_0,\rho/2}=\emptyset
    \end{equation} for some $r>0$ independent of $m$. Also, by \cref{prop.seppi} ii) we have that $\Hull(C_m)\subset U_{D_m,p_m,a(K_m)}$, and hence $C_m\in \calU_{D_m,p_m,r_m,K_m}$ for $r_m=a(K_m) \to 0$. Since $\eta_m\to 0$, the sequence $C_m$ converges to $C=\Lam(D_0)$. On the other hand, \eqref{eq.disjoint} implies that $\Lam(D_0)$ does not belong to the interior of $\calU_{D_0,x_0,\rho/2}$, which is our desired contradiction.  
\end{proof}

The next lemma relies on elementary hyperbolic geometry and its proof is left to the reader. 

\begin{lemma}\label{lem.closehyperplanes}
    For any $\del,R>0$ there exists $\rho>0$ so that the following holds. Let $D_0\subset \Hy^3$ be a totally geodesic disk, $x_0$ a point in $D_0$, and $D$ another totally geodesic disk with $\Lam(D)\in \calU_{D_0,x_0,\rho}$. If $p\in D$ satisfies $d(p,x_0)\leq R$ and $\ell\subset \Hy^3$ is the geodesic containing $p$ and orthogonal to $D$, then
    \begin{enumerate}
        \item $\ell$ intersects $D_0$ at a unique point $q$;
        \item The distance $d(p,q)$ is at most $\del$; and, 
        \item If $\ell'\subset D_0$ is a geodesic ray based at $q$, then the angle between $\ell$ and $\ell'$ at $q$ is at least $\pi/2-\del$.
    \end{enumerate}
\end{lemma}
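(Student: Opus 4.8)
The plan is to argue by contradiction and compactness, in the spirit of the proof of \cref{lem.tangentminimalsfc}. Suppose the statement fails for some $\del,R>0$. Then there is a sequence $\rho_m\to 0$ together with totally geodesic planes $D_0^{(m)}\subset\Hy^3$, points $x_0^{(m)}\in D_0^{(m)}$, totally geodesic planes $D^{(m)}$ with $\Lam(D^{(m)})\in\calU_{D_0^{(m)},x_0^{(m)},\rho_m}$, and points $p_m\in D^{(m)}$ with $d(p_m,x_0^{(m)})\leq R$, such that the geodesic $\ell_m$ through $p_m$ orthogonal to $D^{(m)}$ fails at least one of (1)--(3). Since $\Isom(\Hy^3)$ acts transitively on pairs consisting of a point and a totally geodesic plane through it, after replacing each configuration by its image under a suitable isometry --- and using the equivariance $U_{gD,gx,\rho}=g(U_{D,x,\rho})$ --- we may assume $x_0^{(m)}=o$ and $D_0^{(m)}=D_0$ are fixed, with $o\in D_0$.

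The first step is to show $D^{(m)}\to D_0$ uniformly on compact subsets of $\Hy^3$. As $\Lam(D^{(m)})$ is a round circle, the hypothesis $\Lam(D^{(m)})\in\calU_{D_0,o,\rho_m}$ says precisely that $\Lam(D^{(m)})$ is a round circle contained in the closed annulus $\Lam(U_{D_0,o,\rho_m})$ and separating its two boundary circles. The planes $P^{\pm}$ defining $U_{D_0,o,\rho_m}$ are orthogonal to the common normal geodesic $\frakn_o^{D_0}$, so $\Lam(P^-)$, $\Lam(D_0)$, $\Lam(P^+)$ are nested round circles sharing the same pair of poles (the endpoints of $\frakn_o^{D_0}$); concretely, normalizing $D_0$ to an equatorial disk these are the latitude circles $z=-\tanh(2\rho_m)$, $z=0$, $z=\tanh(2\rho_m)$ on $\partial\Hy^3=S^2$, so the annulus $\Lam(U_{D_0,o,\rho_m})$ is a band of width $\to 0$ around $\Lam(D_0)$. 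Hence $\Lam(D^{(m)})\to\Lam(D_0)$ in the Hausdorff topology, and since $D^{(m)}=\Hull(\Lam(D^{(m)}))$ and $D_0=\Hull(\Lam(D_0))$, continuity of the convex hull on compact subsets of $\Hy^3$ (see e.g.~\cite[Thm.~1.4]{bowditch.convex}) yields $D^{(m)}\to D_0$; in particular the tangent planes of $D^{(m)}$ converge to those of $D_0$ on compacts.

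The second step is to pass to the limit. Since $d(p_m,o)\leq R$, after extracting a subsequence $p_m\to p_\infty\in D_0\cap\overline{B(o,R)}$. Because $D^{(m)}\to D_0$ in $C^1$ on $\overline{B(o,R)}$ and $p_m\to p_\infty$, the geodesic $\ell_m$ converges uniformly on compacts to the geodesic $\ell_\infty$ through $p_\infty$ orthogonal to $D_0$; in particular $\ell_m$ is transverse to $D_0$ for all large $m$ (its direction at $p_m$ tends to the normal direction of $D_0$ at $p_\infty$), so $\ell_m\cap D_0$ has at most one point --- two would force a geodesic segment of $\ell_m$ to lie in the complete totally geodesic plane $D_0$, hence $\ell_m\subset D_0$, contradicting transversality. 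Since $\ell_\infty$ crosses $D_0$ transversally at $p_\infty$, for $m$ large $\ell_m$ crosses $D_0$ at a single point $q_m$ with $q_m\to p_\infty$; this gives (1), and $d(p_m,q_m)\to 0$ gives (2) for $m$ large. For (3) note that $\ell_\infty\perp D_0$ at $p_\infty$, so the angle $\alpha_m$ between $\ell_m$ and the normal direction of $D_0$ at $q_m$ tends to $0$; as a line through $q_m$ making angle $\alpha$ with the normal of $D_0$ makes angle at least $\pi/2-\alpha$ with every geodesic ray of $D_0$ based at $q_m$, property (3) also holds for $m$ large. Thus (1)--(3) all hold for $m$ large, contradicting the choice of the sequence.

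The main obstacle is the first step: making precise that an \emph{essential} round circle trapped in the shrinking band $\Lam(U_{D_0,o,\rho})$ must converge to $\Lam(D_0)$, and hence the associated planes converge. The essentiality built into the definition of $\calU_{D_0,o,\rho}$ (the $\pi_1$-injectivity of the inclusion into the annulus) is crucial, since otherwise $\Lam(D^{(m)})$ could be a small round circle hugging one boundary component of the band, bounding a totally geodesic plane located far out near $\partial\Hy^3$ and nowhere near $D_0$. Everything else is a routine limiting argument, and the conclusions (1)--(3) can alternatively be obtained by a direct computation in the ball model once the band picture above is in place.
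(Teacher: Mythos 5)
The paper does not actually supply a proof of this lemma---it says explicitly that the lemma ``relies on elementary hyperbolic geometry and its proof is left to the reader.'' So there is no argument of the authors to compare yours against; I can only assess yours on its own merits.

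Your contradiction-and-compactness proof is correct, and it identifies the right crux: after fixing $D_0$ and $x_0=o$ by equivariance, the condition $\Lam(D^{(m)})\in\calU_{D_0,o,\rho_m}$ forces $\Lam(D^{(m)})$ to be an \emph{essential} round circle in a band that shrinks to $\Lam(D_0)$, which pins $\Lam(D^{(m)})\to\Lam(D_0)$ in Hausdorff distance, hence $D^{(m)}\to D_0$ on compacts. The rest is then a routine limiting argument, and the angle bound (3) follows from the reverse triangle inequality for angles at $q_m$, exactly as you write. This is a valid route, if somewhat heavier machinery than ``elementary hyperbolic geometry'' suggests; the direct latitude-circle computation you sketch at the end is closer to what the authors presumably intended, and a careful reader could replace your compactness step by that computation to get explicit estimates for $\rho$ in terms of $\del,R$.

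Two small points worth tightening. First, you pass from Hausdorff convergence $D^{(m)}\to D_0$ to $C^1$ convergence ``in particular the tangent planes converge'' without justification; this is easy for totally geodesic planes (such a plane is determined by a point and a unit normal vector, both of which converge once the planes converge on a neighborhood of a fixed interior point), but it deserves a sentence since the $C^1$ convergence is what you actually use to get $\ell_m\to\ell_\infty$. Second, a minor misstatement: the latitude circles $z=\pm\tanh(2\rho_m)$ and $z=0$ do not ``share the same pair of poles''---none of them passes through the poles $(0,0,\pm1)$; they are concentric about that axis. The formula $z=\pm\tanh(2\rho_m)$ is nonetheless correct and gives exactly the shrinking-band picture you need.
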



From now on, we fix a round circle $C_\infty\in \calQC_1$ and let $D_\infty\subset \Hy^3$ be the totally geodesic plane with limit set $C_\infty$. Given $C\in \calQC_{K_0}$, we let $A_C\subset \wtilde{\Sig}_{C}$ be the set of all points $p$ such that the normal geodesic $\frakn^C_p=\frakn_p^{\wtilde{\Sig}_C}$ intersects $D_\infty$ at a unique point, which we denote by $\pi_C(p)$. Note that $A_C$ is open in $\wtilde{\Sig}_C$ and the function $\pi_C:A_C \ra D_\infty$ is smooth. We also fix an arbitrary sequence of quasicircles $C_m\in \calQC_{K_m}$ with $K_m \to 1$ and $C_m \to C_\infty$. After removing the first elements of this sequence, we can assume the orientation on the surfaces $\wtilde{\Sig}_{C_m}$ is consistent with a fixed orientation on $D_\infty$. From this we can define $\tau_m:A_{C_m}\ra \R$ according to the equation \[\frakn^{C_m}_p(\tau_m(p))=\pi_{C_m}(p).\]
We also write $\pi_n$ for $\pi_{C_n}$.
For the proof of \cref{prop.convergenceintegral} we require some lemmas about the sequence $(\wtilde{\Sig}_{C_m})_m$ of minimal disks converging to $D_\infty=\wtilde{\Sig}_{C_\infty}$.

\begin{lemma}\label{lem.01}
    For any compact $Z\subset \Hy^3$ and $\del>0$ and for all $m$ large enough we have:
    \begin{enumerate}
        \item $\wtilde{\Sig}_{C_m}\cap Z \subset A_{C_m}$;
        \item if $p\in \wtilde{\Sig}_{C_m}\cap Z$ then $d(p,\pi_m(p))\leq \del$; and,
        \item if $p \in \wtilde{\Sig}_{C_m}\cap Z$ and $\ell\subset D_\infty$ is a geodesic containing $\pi_m(p)$, then the angle at $\pi_m(p)$ between $\ell$ and $\frakn_p^{C_m}$ is at least $\pi/2-\del$.
    \end{enumerate}
    In particular, we can find a compact set $B$ so that if $m$ is large enough then
    $\pi_m(\wtilde{\Sig}_{C_m}\cap Z) \subset D_\infty \cap B$.
\end{lemma}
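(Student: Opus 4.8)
The plan is to obtain statements (1)--(3) of the lemma, and then the displayed consequence, by feeding the convergence $C_m\to C_\infty$ into \cref{lem.tangentminimalsfc} and \cref{lem.closehyperplanes}, with the auxiliary parameters chosen in the correct order. Fix a point $x_0\in D_\infty$ and set $R:=\sup_{z\in Z}d(x_0,z)$, which is finite since $Z$ is compact; then every $p\in\wtilde\Sig_{C_m}\cap Z$ satisfies $d(p,x_0)\le R$. Given $\del>0$, I would first apply \cref{lem.closehyperplanes} with this $R$ and $D_0=D_\infty$ to obtain $\rho>0$, and then apply \cref{lem.tangentminimalsfc} with $D_0=D_\infty$, the point $x_0$, the same $R$, and this $\rho$, to obtain $K'\in(1,K_0]$ and $\eta'>0$.

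Next I would use that the sets $\calU_{D_\infty,x_0,\eta}$, $\eta>0$, form a neighbourhood basis of $\Lam(D_\infty)=C_\infty$ in $\calQC$. Since $C_m\to C_\infty$ and $K_m\to 1$, for all $m$ large enough we have $C_m\in\calQC_{K'}$ together with $C_m\subset\Lam(U_{D_\infty,x_0,\eta'})$ and $\pi_1$-injective inclusion, that is, $C_m\in\calU_{D_\infty,x_0,\eta',K'}$. Now fix such an $m$ and a point $p\in\wtilde\Sig_{C_m}\cap Z$. By \cref{lem.tangentminimalsfc}, the totally geodesic plane $D_p$ tangent to $\wtilde\Sig_{C_m}$ at $p$ has $\Lam(D_p)\in\calU_{D_\infty,x_0,\rho}$, and the geodesic $\frakn^{C_m}_p$ is orthogonal to $\wtilde\Sig_{C_m}$, hence to $D_p$, at $p$. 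Applying \cref{lem.closehyperplanes} with $D_0=D_\infty$, $D=D_p$, and $\ell=\frakn^{C_m}_p$ then yields: $\frakn^{C_m}_p$ meets $D_\infty$ in a unique point $q$, so $p\in A_{C_m}$ and $\pi_m(p)=q$, which is (1); $d(p,\pi_m(p))=d(p,q)\le\del$, which is (2); and every geodesic ray of $D_\infty$ issuing from $q$ makes angle at least $\pi/2-\del$ with $\frakn^{C_m}_p$, hence so does any full geodesic $\ell\subset D_\infty$ through $q=\pi_m(p)$, being a union of two such rays, which is (3). Since $p$ was arbitrary in $\wtilde\Sig_{C_m}\cap Z$, this also shows $\wtilde\Sig_{C_m}\cap Z\subset A_{C_m}$.

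For the final assertion I would apply (2) with the fixed value $\del=1$: there is an $m_0$ such that for $m\ge m_0$ every $p\in\wtilde\Sig_{C_m}\cap Z$ has $d(p,\pi_m(p))\le 1$; since $p\in Z$, the point $\pi_m(p)$ lies in the closed unit neighbourhood $B:=\overline{N_1(Z)}$ of $Z$, which is compact, so $\pi_m(\wtilde\Sig_{C_m}\cap Z)\subset D_\infty\cap B$ for all $m\ge m_0$.

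The whole argument is essentially bookkeeping once \cref{lem.tangentminimalsfc} and \cref{lem.closehyperplanes} are available; the two points requiring a little care are that the same radius $R$ may be used in both lemmas (it can, because the relevant point $p$ lies simultaneously on the minimal disk $\wtilde\Sig_{C_m}$ and on its tangent plane $D_p$), and the verification that $C_m\in\calU_{D_\infty,x_0,\eta',K'}$ for all large $m$, which combines $K_m\to 1$ with the fact, recalled after \cref{prop.seppi}, that the sets $\calU_{D,x,\eta}$ form a system of neighbourhoods of $\Lam(D)$ in $\calQC$. I do not anticipate any substantive geometric obstacle beyond what those two lemmas already encapsulate.
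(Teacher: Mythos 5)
Your argument is correct and follows the same route as the paper's proof: choose $\rho$ from \cref{lem.closehyperplanes}, then $K',\eta'$ from \cref{lem.tangentminimalsfc}, use the neighbourhood-basis property of $(\calU_{D,x,\eta})_\eta$ to get $C_m\in\calU_{D_\infty,x_0,\eta',K'}$ for large $m$, and feed the resulting tangent-plane control back into \cref{lem.closehyperplanes}. You are slightly more explicit than the paper about the order in which the two lemmas are quantified and about how item (2) with $\del=1$ yields the final compactness statement, which is a welcome clarification rather than a deviation.
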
 

\begin{proof}
    Fix a point $x\in D_\infty$ and assume $Z$ is contained in the ball of radius $R$ around $x$. For $\del>0$, let $\rho=\rho(\al,R)>0$ be given by \cref{lem.closehyperplanes}. For this $\rho$ there exists $m_1$ such that $m\geq m_1$ implies $C_m\in \calU_{D_\infty,x,\eta',K'}$, where $\eta'=\eta'(\rho,R)$ and $K'=K'(\rho,R)$ are given by \cref{lem.tangentminimalsfc}. That lemma then implies that for $p\in Z\cap \wtilde{\Sig}_{C_m}$, if $D$ is the totally geodesic plane tangent to $\wtilde{\Sig}_{C_m}$ at $p$, then $D\subset U_{D_\infty,x,\rho}$ and $\Lam(D)\in \calU_{D_\infty,x,\rho}$. The conclusion then follows by \cref{lem.closehyperplanes}.
\end{proof}

\begin{lemma}\label{lem.02}
    For any compact $Z\subset \Hy^3$ there exists a compact set $B$ such that for all $m$ large enough we have
    \[D_\infty\cap Z \subset \pi_m(A_{C_m}\cap B).\]
\end{lemma}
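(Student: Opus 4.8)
The plan is to use the normal geodesics of $\wtilde{\Sig}_{C_m}$ in reverse: rather than pushing a point of $\wtilde{\Sig}_{C_m}$ along its normal until it hits $D_\infty$, we start from a point $q\in D_\infty\cap Z$ and produce a point $p_m(q)\in\wtilde{\Sig}_{C_m}$ whose normal geodesic passes through $q$, namely a nearest point of $\wtilde{\Sig}_{C_m}$ to $q$. Once $p_m(q)$ is shown to lie in a fixed compact set and in $A_{C_m}$, the very definition of $\pi_m$ forces $\pi_m(p_m(q))=q$.

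First I would fix $x\in D_\infty$ and $R>0$ with $Z\subset\ov{B}(x,R)$, and set $B:=\ov{B}(x,R+1)$, which will be the compact set in the statement. The crucial preliminary step is the uniform estimate $\sup_{q\in D_\infty\cap Z}d(q,\wtilde{\Sig}_{C_m})\to 0$ as $m\to\infty$. Given $q\in D_\infty\cap Z$, let $\gamma_q$ be the complete geodesic of $\Hy^3$ orthogonal to $D_\infty$ at $q$. Its two ideal endpoints lie in the two distinct components of $\bbS^2\smallsetminus C_\infty$: a geodesic ray from an interior point of a totally geodesic plane to an ideal point of that plane is contained in the plane, so orthogonality of $\gamma_q$ to $D_\infty$ at $q$ forbids either endpoint of $\gamma_q$ from lying on $C_\infty=\Lam(D_\infty)$, and the two rays of $\gamma_q$ emanating from $q$ enter the two half-spaces bounded by $D_\infty$. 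By compactness of $D_\infty\cap Z$ these endpoints stay at a definite spherical distance from $C_\infty$, so since $C_m\to C_\infty$ in the Hausdorff topology, for all $m$ large (uniformly in $q$) the endpoints of $\gamma_q$ lie in the two distinct components of $\bbS^2\smallsetminus C_m$. As the embedded minimal disk $\wtilde{\Sig}_{C_m}$ (with $\Lam(\wtilde{\Sig}_{C_m})=C_m$) separates $\ov{\Hy}^3$ into two closed balls meeting $\bbS^2$ in the closures of the two Jordan domains bounded by $C_m$, the arc $\gamma_q$ must cross $\wtilde{\Sig}_{C_m}$ at some point $p'_{m,q}$. Now $p'_{m,q}\in\wtilde{\Sig}_{C_m}\subset\Hull(C_m)$ by \cref{prop.seppi}, and $\Hull(C_m)\to\Hull(C_\infty)=D_\infty$ in the Hausdorff topology on compacta by \cite[Thm.~1.4]{bowditch.convex}; since $\gamma_q$ meets $D_\infty$ only at $q$ and the geometry is uniform over the compact set $D_\infty\cap Z$, it follows that $p'_{m,q}\to q$ uniformly in $q$, whence $d(q,\wtilde{\Sig}_{C_m})\le d(q,p'_{m,q})\to 0$.

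Granting this, take $m$ large enough that $d(q,\wtilde{\Sig}_{C_m})<1$ for every $q\in D_\infty\cap Z$, and (by \cref{lem.01}\,(1) applied to the compact set $B$) also that $\wtilde{\Sig}_{C_m}\cap B\subset A_{C_m}$. Fix $q\in D_\infty\cap Z$ and let $p_m(q)$ be a nearest point of the properly embedded surface $\wtilde{\Sig}_{C_m}$ (a closed subset of $\Hy^3$) to $q$; then $d(q,p_m(q))=d(q,\wtilde{\Sig}_{C_m})<1$, so $p_m(q)\in\wtilde{\Sig}_{C_m}\cap B$. Being a nearest point on a smooth submanifold, $p_m(q)$ is joined to $q$ by a geodesic segment (possibly degenerate) orthogonal to $\wtilde{\Sig}_{C_m}$ at $p_m(q)$, so $q$ lies on the normal geodesic $\frakn^{C_m}_{p_m(q)}$. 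Since $p_m(q)\in A_{C_m}$, this normal geodesic meets $D_\infty$ in exactly one point, namely $\pi_m(p_m(q))$; as $q\in D_\infty$ lies on it, $\pi_m(p_m(q))=q$. Hence $q\in\pi_m(A_{C_m}\cap B)$, and since $q$ was arbitrary, $D_\infty\cap Z\subset\pi_m(A_{C_m}\cap B)$ for all $m$ large, which is the claim.

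I expect the only genuinely delicate point to be the preliminary estimate $d(q,\wtilde{\Sig}_{C_m})\to 0$: it requires combining the separation property of the embedded minimal disk with the Hausdorff convergence $\Hull(C_m)\to D_\infty$, while keeping every estimate uniform over $q$ in the compact set $D_\infty\cap Z$. Everything afterwards is bookkeeping resting on \cref{lem.01} and elementary properties of nearest-point projection onto a smooth closed surface.
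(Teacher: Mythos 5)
Your argument is correct and takes a genuinely different, more direct route than the paper's. The paper argues by contradiction: supposing some $q_m\in D_\infty\cap Z$ has $q_m\notin\pi_m(A_{C_m}\cap Z_m)$ for balls $Z_m$ of growing radius, it uses the equidistant foliation of \cref{rmk.foliation} to produce the unique $p_m\in\wtilde{\Sig}_{C_m}$ on whose normal geodesic $q_m$ sits, deduces $d(p_m,q_m)\to\infty$, and then contradicts the convexity of the equidistant tubes by exhibiting a totally geodesic plane that separates $p_m$ from a point of $\wtilde{\Sig}_{C_m}\cap Z$ and yet cannot meet $\Hull(C_m)$. You instead construct the preimage directly: a Jordan-separation argument along the geodesic $\gamma_q$ normal to $D_\infty$ at $q$ yields $d(q,\wtilde{\Sig}_{C_m})\to 0$ uniformly over $D_\infty\cap Z$, so the nearest point $p_m(q)\in\wtilde{\Sig}_{C_m}$ lands in the fixed compact $B$, \cref{lem.01}~(1) places it in $A_{C_m}$, and orthogonality of the minimizing geodesic forces $\pi_m(p_m(q))=q$. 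Your route avoids both the contradiction and the foliation machinery, which is cleaner; what it costs is that you must establish the uniform proximity of $\wtilde{\Sig}_{C_m}$ to $D_\infty$ up front, whereas the paper's argument lets the escaping behaviour do the work.

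One step merits tightening: you infer $p'_{m,q}\to q$ uniformly from ``$\Hull(C_m)\to D_\infty$ in the Hausdorff topology on compacta,'' but local Hausdorff convergence alone does not preclude the crossing point $p'_{m,q}$ from escaping to infinity along $\gamma_q$. The cleanest repair is the slab estimate the paper uses elsewhere: since $C_m\to C_\infty$ in $\calQC$ and the sets $\calU_{D_\infty,x,\rho}$ form a neighborhood basis of $C_\infty$, one has $C_m\in\calU_{D_\infty,x,\rho_m}$ with $\rho_m\to 0$, hence $\Hull(C_m)\subset U_{D_\infty,x,\rho_m}$ by convexity of the slab; then $\gamma_q\cap\Hull(C_m)\subset\gamma_q\cap U_{D_\infty,x,\rho_m}$, which, by orthogonality of $\gamma_q$ to $D_\infty$ at $q$ and compactness of $D_\infty\cap Z$, is an arc around $q$ whose length tends to $0$ uniformly in $q$. (Alternatively, convexity of $\Hull(C_m)$ makes $\gamma_q\cap\Hull(C_m)$ a single bounded interval, after which the local Hausdorff convergence does suffice.) With that patch in place the rest of your argument is sound.
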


\begin{proof}
Assume that $Z$ is the ball of radius $R$ around $x\in D_\infty$. For the sake of contradiction, suppose that, after extracting a subsequence and reindexing, there exists a sequence $(q_m)_m\subset D_\infty \cap Z$ such that $q_m\notin \pi_m(A_{C_m}\cap Z_m)$, with $Z_m$ being the ball of radius $m$ around $x$.  Also, since $K_m \to 1$, for $m$ large enough the surfaces $(\wtilde{\Sig}_{C_m}^{(t)})_t=(\{ \frakn_{p}^{C_m}(t)\colon p\in \wtilde{\Sig}_{C_m}\})_t$ form a foliation of $\Hy^3$ by smooth surfaces, see \cref{rmk.foliation}. Therefore, for large enough $m$ we have $q_m\in \frakn_{p_m}^{C_m}$ for a unique $p_m \in \wtilde{\Sig}_{C_m}$, and by our assumptions we have $d(p_m,q_m) \to \infty$. Also, by \cref{prop.seppi} we have $\wtilde{\Sig}_{C_m}\subset \Hull(C_m)\subset U_{D_\infty,x,\rho_m}$ for a sequence $(\rho_m)_m$ converging to $0$, and so for all $m$ large enough there exists some $r_m\in \wtilde{\Sig}_{C_m}\cap Z$.

Now, for $m$ large enough consider the point $z_m=\frakn_{p_m}^{C_m}(t_m)\in \frakn_{p_m}^{C_m}$ between $p_m$ and $q_m$ and at distance $2R+1$ from $q_m$, and let $D_m$ be the totally geodesic plane orthogonal to $\frakn_{p_m}^{C_m}$ at this point. Note that $|t_m| \to \infty$ and that $D_m$ separates $p_m$ from $Z$, so in particular $p_m$ and $r_m$ belong to different components of $\Hy^3 \bs D_m$. This is our desired contradiction, since for $m$ large enough and $|t|\geq 1$ the region $\bigcup_{|s|\leq |t|}{\wtilde{\Sig}_{C_m}^{(s)}}$ is convex in $\Hy^3$ (see \cref{rmk.foliation}), and hence $D_m$, being the totally geodesic plane tangent to $\wtilde{\Sig}_{C_m}^{(t_m)}$ at $z_m$, does not intersect $\Hull(C_m)$.
\end{proof}

\begin{lemma}\label{lem.jac}
    For any compact set $Z\subset \Hy^3$ we have
    \[\lim_{m \to \infty}{\sup_{p\in Z\cap \wtilde{\Sig}_{C_m}}{||\Jac \pi_m(p)|-1|}}=0.\]
    In particular, for any compact set $Z\subset \Hy^3$ we have
    \[\lim_{m \to \infty}{\sup_{q\in Z\cap D_\infty}{||\Jac \pi_m^{-1}(q)|-1|}}=0.\]
\end{lemma}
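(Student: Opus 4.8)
The strategy is to compare the map $\pi_m \colon A_{C_m} \to D_\infty$ with the nearest-point projection onto $D_\infty$ in the limit, using that $\wtilde{\Sig}_{C_m}$ converges to $D_\infty$ in $C^\infty_{\mathrm{loc}}$. First I would record the key convergence input: by \cref{prop.seppi}, for $C_m \in \calQC_{K_m}$ with $K_m \to 1$ and $C_m \to C_\infty$, the principal curvatures of $\wtilde{\Sig}_{C_m}$ are bounded by $a(K_m) \to 0$ on all of $\wtilde{\Sig}_{C_m}$, and $\wtilde{\Sig}_{C_m} \subset \Hull(C_m) \subset U_{D_\infty,x,\rho_m}$ with $\rho_m \to 0$ (see \cref{rmk.foliation}); standard elliptic-regularity/Schauder bootstrapping for the minimal surface equation then upgrades this to $C^\infty_{\mathrm{loc}}$ convergence $\wtilde{\Sig}_{C_m} \to D_\infty$ (as graphs over $D_\infty$ in Fermi coordinates along $D_\infty$, once \cref{lem.01} guarantees that over any compact set $\wtilde{\Sig}_{C_m}$ is such a graph). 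In particular, for $p \in Z \cap \wtilde{\Sig}_{C_m}$, the unit normal $\frakn_p^{C_m}$ converges uniformly to the unit normal of $D_\infty$ at $\pi_m(p)$, and by \cref{lem.01}(2)--(3), $d(p,\pi_m(p)) \to 0$ and the normal geodesic meets $D_\infty$ almost orthogonally.

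Next I would compute $\Jac \pi_m(p)$ explicitly in Fermi coordinates. Write $\wtilde{\Sig}_{C_m}$ locally as the graph $\{\exp_{q}(u_m(q)\,\nu(q)) : q \in D_\infty\}$ where $\nu$ is the unit normal field of $D_\infty$ and $u_m \to 0$ in $C^\infty_{\mathrm{loc}}$. The map $\pi_m$ sends the graph point over $q$ to the point of $D_\infty$ hit by the $\wtilde{\Sig}_{C_m}$-normal geodesic through it. Its differential is a composition of: the graph parametrization $q \mapsto p$ (whose Jacobian is a smooth function of $u_m, \nabla u_m$, tending to $1$), the "follow the normal geodesic back to $D_\infty$" map (a smooth function of $u_m$, $\nabla u_m$, and the second fundamental form, tending to the identity because the travel time $\tau_m$ and the normal tilt both go to $0$, uniformly on $Z$). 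Assembling these, $|\Jac \pi_m(p)| = 1 + O(\|u_m\|_{C^1(Z')} + \sup|II_m|)$ uniformly for $p \in Z \cap \wtilde{\Sig}_{C_m}$, where $Z'$ is a slightly larger compact set supplied by \cref{lem.01} (so that the relevant graph data over a neighborhood of $\pi_m(Z \cap \wtilde{\Sig}_{C_m})$ is controlled). Since both error terms tend to $0$, the first displayed limit follows.

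For the second displayed statement, by \cref{lem.02} there is a compact $B$ with $D_\infty \cap Z \subset \pi_m(A_{C_m} \cap B)$ for large $m$, so every $q \in D_\infty \cap Z$ is $\pi_m(p)$ for some $p \in A_{C_m} \cap B$; then $\pi_m$ is a local diffeomorphism near such $p$ (for large $m$, by the foliation property of \cref{rmk.foliation} and near-orthogonality), $|\Jac \pi_m^{-1}(q)| = |\Jac \pi_m(p)|^{-1}$, and the first limit applied with $Z$ replaced by $B$ gives uniform convergence to $1$.

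The main obstacle is the elliptic-regularity step: one must verify that the curvature bound plus the containment in a thin neighborhood $U_{D_\infty,x,\rho_m}$ actually forces $C^\infty_{\mathrm{loc}}$ (or at least $C^2_{\mathrm{loc}}$, which suffices for a Jacobian estimate) convergence to $D_\infty$, rather than merely Hausdorff or $C^0$ convergence. This is where one invokes Seppi's framework \cite{seppi} (and the interior Schauder estimates for the minimal surface equation together with the uniform second-fundamental-form bound, which controls the nonlinearity): the point is that a minimal graph with small gradient and small $C^0$ norm over a fixed ball has, by interior estimates, small $C^{2,\alpha}$ norm, and this is exactly the setting after passing to Fermi coordinates on a compact piece of $D_\infty$. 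Once this is in hand, everything else is a bounded, uniform perturbation computation and the two limits follow formally.
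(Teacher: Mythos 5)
Your proposal is correct in outline, but it takes a genuinely different route from the paper. You parametrize $\wtilde{\Sig}_{C_m}$ as a graph over $D_\infty$ in Fermi coordinates along $D_\infty$, invoke elliptic/Schauder estimates for the minimal surface equation to get $C^2_{\mathrm{loc}}$ convergence of the graph functions $u_m \to 0$, and then read off the Jacobian as a product of two factors (graph parametrization, then ``return along the $\wtilde{\Sig}_{C_m}$-normal'') each tending to $1$. The paper instead works in Fermi coordinates on the \emph{other} side, foliating $\Hy^3$ by the surfaces equidistant to $\wtilde{\Sig}_{C_m}$ (justified by \cref{rmk.foliation}), where the hyperbolic metric has the explicit Riccati form $g_t = (\cosh t\,\Id + \sinh t\,\msf{A}_m)^2$. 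It then differentiates $\tau_m$ and computes $\Jac\pi_m$ directly from this formula, using only three already-established facts: the uniform principal-curvature bound $\|\lam_m\|\to 0$ from \cref{prop.seppi}, the uniform distance bound $|\tau_m|\to 0$ from \cref{lem.01}(2), and the near-orthogonality of the normal geodesics to $D_\infty$ from \cref{lem.01}(3). The trade-off is as you anticipated: your route is more conceptual but requires discharging the $C^2_{\mathrm{loc}}$ convergence as a separate elliptic step, whereas the paper's computation is longer but avoids any new appeal to Schauder theory beyond what is already packaged in Seppi's curvature estimate. For the second displayed limit, you and the paper handle it the same way, via \cref{lem.02}. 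One small point to be explicit about in your first step: the mere containment $\wtilde{\Sig}_{C_m}\subset U_{D_\infty,x,\rho_m}$ with $\rho_m\to 0$ gives a $C^0$ bound on the graph function, and the near-orthogonality of \cref{lem.01}(3) is what gives the gradient smallness needed to set up the minimal-graph PDE with controlled nonlinearity; you implicitly use this but it is the hinge on which the Schauder argument turns.
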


\begin{proof}
    It is enough to prove the first equality since the second one follows from it and \cref{lem.02}. By \cref{lem.01} we can always assume that $Z\cap D_\infty \subset A_{C_m}$. We first claim that
    \[\sup_{p\in B \cap \wtilde{\Sig}_{C_m}}{|d\tau_m(p)|} \to 0\]
    as $m$ tends to $\infty$.
    To show this, for $m$ large enough consider the foliation $(\wtilde{\Sig}_{C_m}^{(t)})_t$ of $\Hy^3$ by smooth equidistant surfaces as in \cref{rmk.foliation}. This gives us coordinates $(u,v,t)$ on $\Hy^3$, where $(u,v)$ are coordinates on $\wtilde{\Sig}_{C_m}$ such that $\partial_u,\partial_v$ form an orthonormal frame and $(u,v,t)=\frakn_{(u,v)}^{C_m}(t)$. 
    
    Now let $\gam:(-\ep,\ep) \ra \wtilde{\Sig}_{C_m}$ be a smooth curve with $\gam(0)=p\in B$ and $\dot{\gam}(0)=v$ a unit tangent vector, and define $\om(t):=\pi_m(\gam(t))=(\gam(t),s(t))$, for $s(t)=\tau_m(\gam(t))$. Also, let $\thet(t)$ be the angle at $\pi_m(\gam(t))$ between $\dot{\om}(t)$ and $\partial _t$, and set $\thet_{m,p}(v)=\thet(0)$. In the coordinates $(u,v,t)$, the Riemannian metric on $\Hy^3$ can be described as
    \[G_t=g_t+dt^2,\]
   where $g_t$ is the Riemannian metric on $\wtilde{\Sig}_{C_m}$ described in the coordinates $(u,v)$ by the matrix
   \begin{equation}\label{eq.riemmetric}
       g_t=(\cosh{(t)}\Id+\sinh{(t)}\msf{A}_m)^2,
   \end{equation}
   with $\msf{A}_n$ representing the second fundamental form of $\wtilde{\Sig}_{C_m}$ (see Section 5 in \cite{uhlenbeck} for details). In particular, we can compute
   \begin{align*}
       \dot{s}(t)^2 & =G_{s(t)}(\dot{s}(t)\partial_t,\partial_t)^2 \\  
       & = G_{s(t)}(\dot{\om}(t),\partial_t)^2 \\
       & =\cos^2(\thet(t)) \cdot G_{s(t)}(\dot{\om}(t),\dot{\om}(t))\\
       & =\cos^2(\thet(t))\cdot (\cosh^2{(s(t))}\inn{\dot{\gam}(t)}{\dot{\gam}(t)}+\sinh{(2s(t))}\inn{\dot{\gam}(t)}{\msf{A}_m\dot{\gam}(t)} \\ 
        & \qquad +\sinh^2{(s(t))}\inn{\dot{\gam}(t)}{\msf{A}_m^2\dot{\gam}(t)}), 
   \end{align*}
   and hence 
   \begin{align*}
       |d\tau_m(p)(v)| & =|\dot{s}(0)| \\
       & \leq |\cos (\thet_{m,p}(v))|\cdot (\cosh^2{(\tau_m(p))}+\sinh{(2|\tau_m(p)|)}|\lam_m(p)|\\ & \qquad+\sinh^2{(\tau_m(p))}|\lam_m(p)|^2)^{1/2},
   \end{align*}
   where $\inn{\cdot}{\cdot}$ denotes the metric on $\wtilde\Sig_{C_m}$ and $\pm\lam_m(p)$ are the principal curvatures of $\wtilde{\Sig}_{C_m}$ at $p$. 
   The claim then follows from the convergences \[\|\lam_m\|_{\wtilde{\Sig}_{C_m}}\to 0, \hspace{2mm} |\tau_m|_{B\cap \wtilde{\Sig}_{C_m}}\to 0 \ \text{ and } \ \inf_{p\in B\cap \wtilde{\Sig}_{C_m}}{\left(\inf_{v\in T^1_{p}\wtilde{\Sig}_{C_m}}{\thet_{m,p}(v)}\right)} \to \pi/2, \] which follow from \cref{prop.seppi} ii) and Items (2) and (3) of \cref{lem.01} respectively.  

   Now, for $m$ large enough and $p=(u,v)\in B\cap \wtilde{\Sig}_{C_m}$ we note the identity
   \begin{align*}
       \Jac \pi_m(p)& =\area_{\pi_m(p)}(d\pi_m(p)(\partial_u),d\pi_m(p)(\partial_v))\\ & =\area_{\pi_m(p)}(\partial_u+d\tau_m(\partial_u)\partial_t,\partial_v+d\tau_m(\partial_v)\partial_t)\\
       & =\area_{\pi_m(p)}(\partial_u,\partial_v)+\area_{\pi_m(p)}(d\tau_m(\partial_u)\partial_t,\partial_v)+\area_{\pi_m}(\partial_u,d\tau_m(\partial_v)\partial_t),
   \end{align*}
   where ``$\area$'' denotes the signed area, chosen so that $\area_{(u,v,0)}(\partial_u,\partial_v)=1$. 
   
   From \eqref{eq.riemmetric} we also have
   \begin{align*}
       |G_{\tau_m(p)}(\partial_u,\partial_v)| & =|(\cosh^2{(\tau_m(p))}\inn{\partial_u}{\partial_v}+\sinh{(2\tau_m(p))}\inn{\partial_u}{\msf{A}_m\partial_v} \\ & \qquad+\sinh^2{(\tau_m(p))}\inn{\partial_u}{\msf{A}_m^2\partial_v})|\\
    & \leq |\sinh{(2\tau_m(p))}||\lam_m(p)|+\sinh^2{(\tau_m(p))}|\lam_m(p)|^2,
   \end{align*}
   and
    \begin{align*}
       |G_{\tau_m(p)}(\partial_u,\partial_u)-1| & =|(\cosh^2{(\tau_m(p))}\inn{\partial_u}{\partial_u}+\sinh{(2\tau_m(p))}\inn{\partial_u}{\msf{A}_m\partial_u}\\ & \qquad  +\sinh^2{(\tau_m(p))}\inn{\partial_u}{\msf{A}_m^2\partial_u})-1|\\
    & \leq |\cosh^2(\tau_m(p))-1| +|\sinh{(2\tau_m(p))}||\lam_m(p)| \\ & \qquad +\sinh^2{(\tau_m(p))}|\lam_m(p)|^2.
   \end{align*}
    As a consequence of \cref{prop.seppi} (i) and \cref{lem.01} (2) we deduce that the term $G_{\tau_m(p)}(\partial_u,\partial_v)$ converges to zero uniformly for $p\in B$, and similarly that the norms 
    $G_{\tau_m(p)}(\partial_u,\partial_u)^{1/2}$ and $G_{\tau_m(p)}(\partial_v,\partial_v)^{1/2}$ converge to $1$ uniformly. This gives us the uniform convergence $|\area_{\pi_m(p)}(\partial_u,\partial_v)| \to 1$ for $p\in B$.
    
    Similarly, since $G_{\tau_m(p)}(\partial_t,\partial_u)=G_{\tau_m(p)}(\partial_t,\partial_v)=0$ and $G_{\tau_m(p)}(\partial_t,\partial_t)=1$, by a similar argument and our claim we can deduce that the term $\area(d\tau_m(\partial_u)\partial_t,\partial_v)+\area_{\pi_m}(\partial_u,d\tau_m(\partial_v)\partial_t)$ converges to 0 uniformly for $p\in B$. This implies the uniform convergence $|\Jac \pi_m(p)|\to 1$ and concludes the proof of the lemma.
\end{proof}

\begin{lemma}\label{lem.cont}
    For any compact $Z\subset \Hy^3$ and any $F\in C_c(\Gr_2(\Hy^3))$ we have
    \[\lim_{m \to \infty}{\sup_{q\in Z\cap \wtilde{\Sig}_{C_\infty}}{|F(T_{\pi^{-1}_m(q)}\wtilde{\Sig}_{C_m})-F(T_qD_\infty})|}=0.\]
\end{lemma}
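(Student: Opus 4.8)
The plan is to combine the tangent-plane estimate of \cref{lem.tangentminimalsfc} with a compactness argument, using that $F$ is uniformly continuous (being continuous with compact support on the metrizable space $\Gr_2(\Hy^3)$); the heart of the matter is to prove the uniform convergence $T_{\pi_m^{-1}(q)}\wtilde{\Sig}_{C_m}\to T_qD_\infty$ in $\Gr_2(\Hy^3)$. I would begin with some routine reductions. Fix $x\in D_\infty$ and $R>0$ with $Z$ inside the closed ball of radius $R$ about $x$. For $m$ large the hypotheses of \cref{rmk.foliation} hold, so the normal geodesics of $\wtilde{\Sig}_{C_m}$ foliate $\Hy^3$ and $\pi_m$ is injective on its domain; combining \cref{lem.01}~(1) and \cref{lem.02} one gets a compact set $B\subset \Hy^3$, independent of $m$, with $Z\cap D_\infty\subset \pi_m(A_{C_m}\cap B)$ for all large $m$, so that $\pi_m^{-1}(q)$ is a well-defined point of $\wtilde{\Sig}_{C_m}\cap B$ for every $q\in Z\cap D_\infty$. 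Enlarging $B$, \cref{lem.01}~(2) provides a sequence $\delta_m\to 0$ with $d(p,\pi_m(p))\le \delta_m$ on $\wtilde{\Sig}_{C_m}\cap B$, hence $d(\pi_m^{-1}(q),q)\le \delta_m$ uniformly in $q\in Z\cap D_\infty$; also pick $R'>0$ with $B$ inside the ball of radius $R'$ about $x$.

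Next I would establish the uniform control of tangent planes. For $q\in Z\cap D_\infty$ let $D^{(m)}_q$ be the totally geodesic plane tangent to $\wtilde{\Sig}_{C_m}$ at $\pi_m^{-1}(q)$, so that $T_{\pi_m^{-1}(q)}\wtilde{\Sig}_{C_m}=T_{\pi_m^{-1}(q)}D^{(m)}_q$; the goal is a sequence $\rho_m\to 0$ with $\Lam(D^{(m)}_q)\in \calU_{D_\infty,x,\rho_m}$ for all $q\in Z\cap D_\infty$ and all large $m$. Fixing $\rho>0$ and letting $K',\eta'$ be the constants of \cref{lem.tangentminimalsfc} applied with $D_0=D_\infty$, $x_0=x$ and radius $R'$, one notes that $C_m\in \calU_{D_\infty,x,\eta',K'}$ for all large $m$: this uses $K_m\to 1$, the convergence $C_m\to C_\infty=\Lam(D_\infty)$, and that $C_\infty$ sits at the core of the annulus $\Lam(U_{D_\infty,x,\eta'})$ while the sets $\calU_{D_\infty,x,\eta}$ form a system of neighborhoods of $C_\infty$ in $\calQC$. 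Applying \cref{lem.tangentminimalsfc} to the point $\pi_m^{-1}(q)\in B$ then yields $\Lam(D^{(m)}_q)\in \calU_{D_\infty,x,\rho}$, and a diagonal argument over $\rho=1/j$ gives the desired $\rho_m\to 0$. Since the totally geodesic planes bounding $U_{D_\infty,x,\rho}$ tend to $D_\infty$ as $\rho\to 0$, the Hausdorff distance from $\Lam(U_{D_\infty,x,\rho})$ — hence from any $\pi_1$-injectively embedded quasicircle it contains — to $C_\infty$ tends to $0$; so $\Lam(D^{(m)}_q)\to C_\infty$ in the Hausdorff topology uniformly in $q$, whence $D^{(m)}_q=\Hull(\Lam(D^{(m)}_q))\to D_\infty=\Hull(C_\infty)$ uniformly on compact subsets of $\Hy^3$ by \cite[Thm.~1.4]{bowditch.convex}.

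I would then conclude by contradiction. If the asserted supremum did not tend to $0$, there would be $\epsilon>0$, a subsequence $m_k\to\infty$ and points $q_k\in Z\cap D_\infty$ with $|F(T_{\pi_{m_k}^{-1}(q_k)}\wtilde{\Sig}_{C_{m_k}})-F(T_{q_k}D_\infty)|\ge \epsilon$. Setting $p_k=\pi_{m_k}^{-1}(q_k)\in B$ and passing to a further subsequence, $q_k\to q\in Z\cap D_\infty$ and $p_k\to p\in B$; the reductions give $d(p_k,q_k)\le\delta_{m_k}\to 0$, so $p=q$. With $D_k=D^{(m_k)}_{q_k}$ one has $T_{p_k}\wtilde{\Sig}_{C_{m_k}}=T_{p_k}D_k$ and, by the previous step, $D_k\to D_\infty$ uniformly on compacts; since $p_k\in D_k$, $p_k\to q\in D_\infty$, and the map sending a totally geodesic plane together with one of its points to the corresponding tangent $2$-plane is continuous into $\Gr_2(\Hy^3)$, it follows that $T_{p_k}D_k\to T_qD_\infty$. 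Continuity of $F$ then gives $F(T_{p_k}\wtilde{\Sig}_{C_{m_k}})\to F(T_qD_\infty)$ and $F(T_{q_k}D_\infty)\to F(T_qD_\infty)$, contradicting the displayed inequality.

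The hard part will be the uniform application of \cref{lem.tangentminimalsfc}, which only controls one tangent plane at a time: one must check that the hypothesis $C_m\in\calU_{D_\infty,x,\eta',K'}$ holds for all large $m$ — resting on $K_m\to 1$, on $C_m\to C_\infty$, and on the fact that a quasicircle Hausdorff-close to the core of a thin annulus is $\pi_1$-injective there — and then upgrade the resulting pointwise estimate to a uniform one via the diagonal argument. The remaining ingredients are standard: compactness of the space of totally geodesic planes meeting a fixed ball, continuity of the convex hull, and the basic correspondence between totally geodesic planes in $\Hy^3$, their limit circles, and points of $\Gr_2(\Hy^3)$.
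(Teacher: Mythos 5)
Your proof is correct, but it takes a more roundabout route than the paper's, which disposes of the lemma in two lines: there, the uniform closeness of $T_{p}\wtilde{\Sig}_{C_m}$ and $T_{\pi_m(p)}D_\infty$ in $\Gr_2(\Hy^3)$ is read off directly from \cref{lem.01} (items (2) and (3): base points $\del$-close and the normal geodesic nearly orthogonal to every geodesic of $D_\infty$ through $\pi_m(p)$), and the conclusion follows from uniform continuity of $F$ together with \cref{lem.02}. You instead make no use of the angle estimate \cref{lem.01}(3) and re-derive the tangent-plane convergence from scratch: you apply \cref{lem.tangentminimalsfc} a second time, at the points $\pi_m^{-1}(q)$, to trap the ideal boundary of each tangent plane in the shrinking annuli $\Lam(U_{D_\infty,x,\rho_m})$, pass from boundary circles to planes via continuity of convex hulls \cite[Thm.~1.4]{bowditch.convex}, and finish with a compactness/contradiction argument (which lets you get away with plain continuity of $F$ instead of uniform continuity). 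This is a legitimate alternative and is somewhat more self-contained, but it is also redundant: \cref{lem.01} was proved from \cref{lem.tangentminimalsfc} precisely to package this information, and your route forces you to re-verify the hypothesis $C_m\in\calU_{D_\infty,x,\eta',K'}$, which the paper only has to confront once, inside the proof of \cref{lem.01}.

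On that hypothesis, one caveat. Your parenthetical justification, that a quasicircle Hausdorff-close to the core of a thin annulus is $\pi_1$-injective there, is false as stated: a smooth Jordan curve that runs along the core and doubles back on itself is a quasicircle (with large constant), Hausdorff-close to the core, yet inessential in the annulus. What rescues the step is the other ingredient you list, $K_m\to 1$ (a uniform bound on dilatation suffices): choosing $K_m$-quasiconformal maps $\phi_m$ with $\phi_m(S_0)=C_m$, normalized at three points as in the proof of \cref{lem.QCKsystemofspheres}, compactness of quasiconformal families gives uniform convergence of $\phi_m$ and $\phi_m^{-1}$ to a M\"obius map and its inverse, so any two points on opposite sides of $C_\infty$ eventually lie on opposite sides of $C_m$; hence $C_m$ separates the two boundary circles of the annulus and is essential. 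Since the paper asserts the same membership $C_m\in\calU_{D_\infty,x,\eta',K'}$ without further comment in the proof of \cref{lem.01}, I would count this as a point to make precise rather than a genuine gap, but the justification you wrote down is not the right one.
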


\begin{proof}
    Let $d'$ be any metric on $\Gr_2(\Hy^3)$ inducing its topology. By \cref{lem.01}, for any $\del>0$ and $m$ large enough and any fixed compact $B\subset \Hy^3$ we have \[d'(T_{p}\wtilde{\Sig}_{C_m},T_{\pi_m(p)}D_\infty)\leq \del\] for all $p\in \wtilde{\Sig}_{C_m}\cap B$. The conclusion then follows from the uniform continuity of $F$ and \cref{lem.02}.
\end{proof}

\begin{proof}[Proof of \cref{prop.convergenceintegral}]
    Let $F\in C_c(\Gr_2(\Hy^3))$ be arbitrary, inducing $\hat{F}:\calQC_{K_0} \ra \R$, and let $Z\subset \Hy^3$ be the compact set of all points $p$ with $(p,P)\in \supp(F)$ for some plane $P$ tangent at $p$. By Lemmas \ref{lem.01} and \ref{lem.02} we can find a compact set $B\subset \Hy^3$ satisfying:
    \begin{itemize}
        \item $Z\subset B$;
        \item $\wtilde{\Sig}_{C_m}\cap Z \subset A_{C_m}$ for all $m$ large enough;
        \item $\pi_m(\wtilde{\Sig}_{C_m}\cap Z)\subset D_\infty \cap B$ for all $m$ large enough; and,
        \item $D_\infty\cap B \subset \pi_m(\wtilde{\Sig}_{C_m})$ for all $m$ large enough.
    \end{itemize}    
From this, for all $m$ large enough we have
\begin{align*}
    \hat{F}(C_m)=\int{F}{d\wtilde{\nu}_{C_m}} & =\int_{\wtilde{\Sig}_{C_m}\cap Z}{F(f_{C_m}(p))}{d\wtilde{\mu}_{C_m}(p)}\\ 
    &=\int_{\pi_m(\wtilde{\Sig}_{C_m}\cap Z)}{F(f_{C_m}(\pi_m^{-1}(q)))}{d{\pi_m}_\ast(\wtilde{\mu}_{C_m})(q)}\\
    &=\int_{D_\infty\cap B}{F(f_{C_m}(\pi_m^{-1}(q)))}{|\Jac \pi_m^{-1}(q)|d\wtilde{\mu}_{C_\infty}(q)}, 
\end{align*}
and hence
\small\begin{align*}
    |\hat{F}(C_m)-\hat{F}(C_\infty)|& =\left|\int_{D_\infty\cap B}{(F\circ f_{C_m}\circ \pi_m^{-1})}{|\Jac \pi_m^{-1}|{d\wtilde{\mu}}_{C_\infty}}-\int_{D_\infty\cap B}{(F\circ f_{C_\infty})}{d\wtilde{\mu}_{C_\infty}}\right| \\
    & \leq \int_{D_\infty\cap B}{|(F\circ f_{C_m}\circ \pi_m^{-1})-(F\circ f_{C_\infty})|}{d\wtilde{\mu}_{C_\infty}}\\ 
    & \qquad+\int_{D_\infty\cap B}{|(F\circ f_{C_m}\circ \pi_m^{-1})|||\Jac \pi_m^{-1}|-1|}{d\wtilde{\mu}_{C_\infty}}\\ 
    & \leq \wtilde{\mu}_{C_\infty}(B)\cdot \|(F\circ f_{C_m}\circ \pi_m^{-1})-(F \circ f_{C_\infty})\|_{D_\infty\cap B}\\
    & \qquad+\|F\|_\infty\cdot \||\Jac \pi_m^{-1}|-1\|_{D_\infty\cap B}.
\end{align*}
\normalsize
The last terms tend to 0 as $m$ tends to infinity by Lemmas \ref{lem.jac} and \ref{lem.cont}, which gives us the desired convergence $\hat{F}(C_m)\to \hat{F}(C_\infty)$.
\end{proof}


\section{Approximating convex-cocompact representations}\label{sec.quasifuchsian}

In this section we study the behavior of general convex-cocompact groups of isometries of $\Hy^n$ ($n\geq 2$), and their induced metric structures. When $n=3$, we combine \cref{prop.homeoccpt} below with \cref{thm.main.3manifold} and a theorem of Brooks \cite{brooks} to approximate torsion-free convex-cocompact subgroups of $\PSL(2,\C)$ by cubulations, deducing Propositions \ref{prop.main.quasifuchsian} and \ref{prop.QFintCurr=Teich}.

Let $\G$ be a non-elementary group without torsion, and let $\Isom^+(\Hy^n)$ be the group of orientation-preserving isometries of $\Hy^n$. Recall that a representation $\pi:\G \ra \Isom^+(\Hy^n)$ is \emph{convex-cocompact} if the inclusion $\G \ra \Hy^n$ given by $g \mapsto \pi(g)x$ is a quasi-isometric embedding for some (any $x\in \Hy^n$). The ($n$-dimensional) convex-cocompact space of $\G$ is the quotient $\mathscr{CC}^n_\G$ of the space all the convex-cocompact representations $\pi:\G \ra \Isom^+(\Hy^n)$, where two such representations $\pi,\pi'$ are equivalent if they are conjugate in $\Isom^+(\Hy^n)$. When $n=3$ and $\G$ is a surface group, a convex-cocompact representation $\pi:\G \ra \Isom^+(\Hy^3)=\PSL(2,\C)$ is called \emph{quasiFuchsian}, and $\scrQ\scrF_\G=\mathscr{CC}_\G^3$ is the \emph{quasiFuchsian space} of $\G$. 

If $\scrC\scrC^n_\G$ is non-empty, then $\G$ is hyperbolic and each convex-cocompact representation $\pi$ determines a point $\rho_{\pi}\in \scrD_\G$ as follows: if $x$ is any point of $\Hy^n$, then the metric $d_\pi(g,h):=d_{\Hy^n}(\pi(g)x,\pi(h)x)$ belongs to $\calD_\G$ and the class $\rho_\pi=[d_\pi]$ is independent of the point $x$. Indeed, this gives a well-defined map $\scrC\scrC^n_\G \ra \scrD_\G$ that is injective by \cite[Thm.~1]{burger}. When $\scrC\scrC^n_\G$ is equipped with the quotient topology from the compact-open topology on the space of convex-cocompact representations, the next proposition relates this topology and the topology induced by the inclusion into $\scrD_\G$.

\begin{proposition}\label{prop.homeoccpt}
    For each $n$, the map $\scrC\scrC^n_\G \ra \scrD_\G$ is continuous. Moreover, if $\G$ is not a free group, then this inclusion is a homeomorphism onto a closed subset of $\scrD_\G$.
\end{proposition}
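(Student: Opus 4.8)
The plan is to reduce both assertions to statements about the convex cores $C_\pi:=\Hull(\Lam_{\pi(\G)})\subset\Hy^n$. If $\pi\in\scrC\scrC^n_\G$ and $x\in C_\pi$, the orbit map $g\mapsto\pi(g)x$ is a quasi-isometry onto $C_\pi$, and $C_\pi$ is a geodesic space (being convex in $\Hy^n$) on which $\G$ acts geometrically; thus $\rho_\pi=\rho_{C_\pi}$, and by the description of $\Del$ recalled in the introduction, $\Del(\rho_{\pi'},\rho_\pi)$ is the infimum of $\log\lam$ over $\G$-equivariant $\lam$-quasi-isometries $C_{\pi'}\to C_\pi$.

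\emph{Continuity.} Let $\pi_m\to\pi$ in the compact-open topology. Convex-cocompactness is an open condition with locally uniform constants (structural stability of the orbit maps), so for $m$ large $\pi_m\in\scrC\scrC^n_\G$, the boundary maps $\x_{\pi_m}\colon\partial\G\to\bbS^{n-1}$ converge uniformly to $\x_\pi$, hence $\Lam_{\pi_m(\G)}\to\Lam_{\pi(\G)}$ in the Hausdorff topology, and therefore $C_{\pi_m}\to C_\pi$ in the Hausdorff topology on compact subsets of $\Hy^n$ by \cite[Thm.~1.4]{bowditch.convex}. Fix a $\pi(\G)$-invariant triangulation of $C_\pi$ with finitely many orbits of simplices, trivial simplex stabilisers, and arbitrarily small mesh $\d_m\to 0$; send each orbit-representative vertex $v$ to a point $v^{(m)}\in C_{\pi_m}$ with $d_{\Hy^n}(v,v^{(m)})\le\ep_m\to 0$ (possible since $C_{\pi_m}\to C_\pi$ and the fundamental domain is bounded), extend equivariantly, and straighten each simplex to the geodesic simplex on its moved vertices (which lies in $C_{\pi_m}$ by convexity). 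Since straightening commutes with isometries, the resulting map $\f_m\colon C_\pi\to C_{\pi_m}$ is $\G$-equivariant; choosing $\ep_m\ll\d_m$ it is $(1+o(1))$-bi-Lipschitz on each simplex, hence globally (as $C_\pi$ is a length space), so it is a $\lam_m$-quasi-isometry with $\lam_m\to 1$ and $\Del(\rho_{\pi_m},\rho_\pi)\le\log\lam_m\to 0$.

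\emph{Injectivity and the reduction of the \enquote{moreover}.} Injectivity of $\scrC\scrC^n_\G\to\scrD_\G$ is Burger's theorem \cite{burger}. Together with continuity, both the continuity of the inverse and the closedness of the image follow from the single claim: \emph{if $(\pi_m)_m\subset\scrC\scrC^n_\G$ and $(\rho_{\pi_m})_m$ is $\Del$-convergent, then after conjugating by isometries a subsequence of $(\pi_m)_m$ converges in the compact-open topology to some $\pi_\infty\in\scrC\scrC^n_\G$.} Indeed, if $\rho_{\pi_m}\to\rho$ this produces $\pi_\infty$ with $\rho_{\pi_\infty}=\rho$ by continuity, so $\rho$ lies in the image; and if in addition $\rho=\rho_\pi$, then $\pi_\infty=\pi$ by injectivity, whence every subsequence has a sub-subsequence converging to $\pi$, i.e. $\pi_m\to\pi$.

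\emph{Proof of the claim, and where \enquote{not free} is used.} Pick representatives $\hat d_m=t_m\,d_{\pi_m}^{x_m}$ of $\rho_{\pi_m}$ converging roughly uniformly to a fixed $\hat d\in\calD_\G$ representing $\rho$; comparing translation lengths, $t_m\,\ell_{\Hy^n}(\pi_m(g))\to\ell_{\hat d}[g]\in(0,\infty)$ for every $g\neq o$. After conjugating each $\pi_m$ so that the minimal-displacement basepoint is a fixed $x$, one must exclude two degenerations. If $t_m\to\infty$ then all translation lengths of $\pi_m$ tend to $0$, which is impossible for discrete, faithful, non-elementary representations by the Margulis lemma. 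If $t_m\to 0$ then the minimal displacement of a generating set blows up, and a Bestvina--Paulin argument yields an isometric action of $\G$ on an $\R$-tree whose translation length function is a positive multiple of $\ell_{\hat d}$, forcing $\hat d$ to be $0$-hyperbolic, hence $\G$ to be quasi-isometric to a tree, i.e. virtually free --- contradicting the hypothesis that $\G$ is not free. (This is precisely the role of the hypothesis; for free $\G$ one genuinely gains tree actions in the closure, e.g. by inflating the convex core, so the image is not closed.) With both cases excluded, $t_m$ stays in a compact subset of $(0,\infty)$ and the conjugated $\pi_m$ have bounded displacement, so a subsequence converges in the compact-open topology to $\pi_\infty\colon\G\to\Isom^+(\Hy^n)$; passing to the limit in the uniform quasi-isometric-embedding estimate furnished by $\rho_{\pi_m}\to\rho$ shows $\pi_\infty$ is discrete, faithful and quasi-isometrically embedded, hence convex-cocompact. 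The main obstacle is exactly this no-degeneration analysis: controlling the additive constants in the normalisation $\hat d_m\to\hat d$ and making the Bestvina--Paulin/Margulis dichotomy precise so that the dichotomy above is exhaustive.
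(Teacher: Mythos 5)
Your treatment of the \enquote{moreover} half is essentially the paper's argument: conjugate using displacement-almost-minimizing basepoints (Breuillard--Fujiwara), rule out degeneration of the scale by producing a $0$-hyperbolic limit pseudo metric in the class of $\rho$, which would force $\G$ to be quasi-isometric to a tree and hence (being torsion-free) free, and then extract an algebraically convergent subsequence and identify it through translation lengths; the only differences are cosmetic (the paper normalizes by the critical exponent, so the analogue of your $t_m\to\infty$ case is excluded for free by $v_m\le n-1$ rather than by a Margulis/J{\o}rgensen argument, and the pointwise control of scalars that you call \enquote{rough uniform convergence} is supplied by \cite[Lem.~3.6]{oregon-reyes.ms}). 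The details you flag yourself (additive constants, exhaustiveness of the dichotomy, why the limit is convex-cocompact) are exactly the ones the paper fills in, so this half is acceptable as a sketch.

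The continuity half, however, has a genuine gap. Your map $\f_m\colon C_\pi\to C_{\pi_m}$, obtained by moving the vertices of an equivariant triangulation and straightening, can at best be shown to be $(1+o(1))$-Lipschitz globally: the length-space argument you invoke only propagates the \emph{upper} bound from simplices to the whole space, since a map that is bi-Lipschitz on each simplex may still fold and contract distances between far-apart points. Consequently your construction yields only one of the two factors in $\exp\Del(\rho_{\pi_m},\rho_\pi)$, namely $\sup_g \ell_{\pi_m}[g]/\ell_{\pi}[g]\le 1+o(1)$; an equivariant $(1+o(1))$-Lipschitz map in one direction is perfectly compatible with $\Del(\rho_{\pi_m},\rho_\pi)$ staying large (the target metric could simply be much smaller). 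To close the gap you would need the reverse inequality, e.g.\ by building maps $C_{\pi_m}\to C_\pi$ with the same quality, and this requires geometric control on the \emph{varying} convex cores that is uniform in $m$ --- uniformly bounded fundamental domains (this is precisely \cref{lem.unifcobounded} in the paper) and uniformly non-degenerate equivariant triangulations of $C_{\pi_m}$ --- none of which is addressed in your sketch. The paper avoids the two-sided map construction altogether: it converts the problem into precompactness of $(\rho_{\pi_m})_m$ in $\scrD_\G$, using uniform hyperbolicity and rough-geodesicity constants (via \cref{lem.unifcobounded} and \cite{BCGS}), convergence of critical exponents (McMullen and Bishop--Jones), and the properness of $\scrD_\G^{\del,\al}$ together with the proper cocompact $\Out(\G)$-action (\cref{lem.precompactness}), and then identifies every subsequential limit through the pointwise convergence of normalized stable translation lengths. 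Either route can work, but as written your argument does not establish the lower bound, and this is the substantive step of the continuity statement.
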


\begin{remark}
    The assumption of $\G$ not being free in \cref{prop.homeoccpt} cannot be dropped. For instance, let $\G=\genby{a,b}$ be a rank-2 free group and for $t>0$ consider the isometries 
    \[f_t(z)=\frac{z+t}{tz+1}, \ \ g_t(z)=\frac{z+it}{-itz+1}\]
    on $\Hy^2$, when considered with the Poincar\'e disk model. Then the representation $\pi_t:\G \ra \Isom^+(\Hy^2)$ that maps $a$ to $f_t$ and $b$ to $g_t$ is convex-cocompact for $t$ large enough. Moreover, as $t$ tends to infinity, the metric structures $\rho_{\pi_t}\in \scrD_\G$ converge to the metric structure $\rho_S$ induced by the word metric on $\G$ with respect to the generating set $S=\{a^{\pm},b^{\pm}\}$. The image of the translation length function $\ell_S$ is contained in a discrete subgroup of $\R$, which is not the case for the translation lengths of Fuchsian representations \cite{dalbo}. This implies that the metric structure $\rho_S$ does not belong to the image of $\scrC\scrC_\G^n$ for any $n$. Analogous examples can be constructed for higher-rank free groups in higher-dimensional hyperbolic spaces. 
\end{remark}

For the proof of \cref{prop.homeoccpt} we require the following lemma.

\begin{lemma}\label{lem.unifcobounded}
Let $(\pi_m)_m$ be a sequence of convex-cocompact representations of $\G$ into $\Isom^+(\Hy^n)$ and assume that they converge in the compact-open topology to the convex-cocompact representation $\pi_\infty:\G\ra \Isom^+(\Hy^n)$. For each $m$, let $\calC_m\subset \Hy^n$ be the convex hull of the limit set $\Lam_m$ of $\pi_m(\G)$. Then there exist closed subsets $B_m\subset \calC_m$ such that $\pi_m(\G)\cdot B_m=\calC_m$ and satisfying  
$$\sup_{m}{\diam(B_m)}<\infty.$$
\end{lemma}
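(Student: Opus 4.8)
\medskip

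The plan is to argue by contradiction, extracting a "blow-up" sequence of points in the convex hulls that escape every uniformly-sized fundamental domain, and then to use the convergence $\pi_m \to \pi_\infty$ together with the thick-part structure of the quotient hyperbolic manifolds (orbifolds) to derive a contradiction with the convex-cocompactness of $\pi_\infty$. Concretely: fix a basepoint $x_0 \in \Hy^n$. For each $m$ the quotient $\pi_m(\G)\bs \calC_m$ is compact, so a smallest-diameter fundamental domain exists; suppose for contradiction that after passing to a subsequence these diameters tend to $\infty$. This means there are points $y_m \in \calC_m$ whose distance to the $\pi_m(\G)$-orbit of $x_0$ inside $\calC_m$ tends to infinity — equivalently, the injectivity radius of $\pi_m(\G)\bs\calC_m$ at the image of $y_m$, measured along the thick part, can be forced to degenerate, or the $\pi_m(\G)$-translates of a fixed ball fail to cover $\calC_m$ with bounded multiplicity near $y_m$.

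First I would translate by suitable $\gamma_m \in \G$ so that $\pi_m(\gamma_m)^{-1} y_m$ stays in a fixed compact subset of $\Hy^n$ (possible since $x_0$'s orbit is cobounded in $\calC_m$ in the crude sense that every point of $\calC_m$ is within bounded distance of \emph{some} orbit point — but the bound a priori depends on $m$, which is precisely the quantity we are trying to control; so instead I renormalize so that $y_m$ itself lies in a fixed compact set and ask how the group translates behave). Then I would consider the conjugated representations $\pi_m' = \pi_m(\gamma_m)^{-1}\pi_m(\cdot)\pi_m(\gamma_m)$; since all $\pi_m$ are within bounded distance of $\pi_\infty$ in the compact-open topology only up to conjugacy, I must be careful — the right move is to use that the $\pi_m$ themselves (not just up to conjugacy) converge, so the orbit maps $g \mapsto \pi_m(g)x_0$ converge uniformly on finite subsets of $\G$ to the quasi-isometric embedding $g \mapsto \pi_\infty(g)x_0$. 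Quasi-isometric embeddings are quantitatively stable: there exist $\lambda \geq 1$, $C \geq 0$ and $m_0$ such that for all $m \geq m_0$, the orbit map $g \mapsto \pi_m(g)x_0$ is a $(\lambda, C)$-quasi-isometric embedding of $(\G, d_{\mathrm{word}})$ into $\Hy^n$, with the \emph{same} constants. This uniformity is the key technical input, and it follows from the fact that convex-cocompactness (equivalently, the Morse/stability property) is an open condition with locally uniform constants — one can cite \cite[Thm.~1]{burger} or the openness of the Anosov/convex-cocompact condition, or give a direct Švarc–Milnor argument using that $\Lambda_m \to \Lambda_\infty$ in the Hausdorff topology.

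Granting the uniform quasi-isometric-embedding constants $(\lambda, C)$: the convex hull $\calC_m = \Hull(\Lambda_m)$ lies within the $D$-neighbourhood of $\pi_m(\G)\cdot x_0$ for a constant $D = D(\lambda, C, n)$ independent of $m$, because in $\Hy^n$ the convex hull of a subset of the boundary lies within bounded distance of any orbit that is "coarsely dense enough" on that boundary set, and the coarse density is controlled purely by $(\lambda,C)$ (this is the standard fact that the weak convex hull of the limit set of a $(\lambda,C)$-quasiconvex orbit is at bounded Hausdorff distance from the orbit). Therefore every point of $\calC_m$ is within distance $D$ of some $\pi_m(g)x_0$. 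Taking $B_m := \calC_m \cap \overline{B(x_0, D + 1)}$ (intersected with a fundamental domain if one wants exact equality $\pi_m(\G)\cdot B_m = \calC_m$, which is then arranged by a standard Dirichlet-domain truncation), we get closed sets with $\pi_m(\G)\cdot B_m = \calC_m$ and $\diam(B_m) \leq 2(D+1)$, contradicting the assumed divergence of diameters and completing the proof.

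\medskip

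\emph{Main obstacle.} The crux is establishing that the quasi-isometric-embedding constants of the orbit maps of $\pi_m$ can be chosen uniformly in $m$ (for large $m$), i.e. that convex-cocompactness does not degenerate along a convergent sequence. Once that uniformity is in hand, the bound $\calC_m \subset \mathcal{N}_D(\pi_m(\G)x_0)$ with $D$ independent of $m$ is routine hyperbolic geometry. I expect the cleanest route to the uniformity is to invoke the openness of the convex-cocompact condition together with a compactness argument on the (precompact, by convergence) set $\{\pi_m\}\cup\{\pi_\infty\}$ of representations, or alternatively to cite the stability of quasi-isometric embeddings into Gromov hyperbolic spaces under the nearby-action, which the paper can reasonably take as standard.
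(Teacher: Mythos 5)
Your proposal is correct in spirit and would work, but it takes a genuinely different route from the paper's proof, and it leaves the load-bearing step explicitly unproven.

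The paper's argument is essentially a soft compactness argument with Dirichlet domains. It fixes a basepoint $x\in\calC_\infty$, takes $B_\infty$ to be the closed Dirichlet domain for $\pi_\infty(\G)$ on $\calC_\infty$ centered at $x$, and then takes $B_m$ to be the Dirichlet domain for $\pi_m(\G)$ on $\calC_m$ centered at a nearby point $x_m\to x$. The uniform bound on $\diam(B_m)$ then follows from the claim that $B_m\subset N_\ep(B_\infty)$ for $m$ large, which is proved by extracting a convergent sequence $y_m\in B_m\smallsetminus N_\ep(B_\infty)$ and contradicting the Dirichlet inequality in the limit. The nontrivial external inputs are McMullen's theorem (algebraic convergence of geometrically finite groups implies Hausdorff convergence of limit sets) and Bowditch's result that Hausdorff convergence of limit sets implies local Hausdorff convergence of convex hulls.

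Your route instead asserts that the orbit maps $g\mapsto\pi_m(g)x_0$ are $(\lambda,C)$-quasi-isometric embeddings with constants uniform in $m$, uses this to bound the Hausdorff distance from $\calC_m$ to $\pi_m(\G)x_0$ by a uniform $D$, and then takes $B_m:=\calC_m\cap\overline{B(x_0,D+1)}$. The downstream steps are fine (including the observation that the Hausdorff-distance bound depends only on the quasiconvexity constant and the curvature of $\Hy^n$), but the uniform-constants claim is exactly the assertion that needs proof, and you acknowledge this without supplying the argument. Citing \cite[Thm.~1]{burger} is not appropriate here (that is a marked length spectrum rigidity statement, not a uniformity-of-constants statement); the correct route is the local-to-global principle for quasi-geodesics in a hyperbolic space, which shows that a map agreeing with a $(\lambda,C)$-QI embedding on $R$-balls for suitable $R$ is a global $(\lambda',C')$-QI embedding, combined with pointwise (on finite sets) convergence of the orbit maps. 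That argument is standard but needs to be stated and cited precisely, as it is the whole content of the lemma; as written, your proposal shifts the entire difficulty into a flagged-but-unverified claim.

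One more small issue: your early "blow-up" setup (translating by $\gamma_m$ so that $\pi_m(\gamma_m)^{-1}y_m$ stays in a fixed compact set, then considering conjugated representations) is correctly identified by you as circular, and you eventually abandon it in favor of the QI-constants argument; the final version should excise that detour entirely, since as written it creates the impression of a two-pronged proof when only the second prong carries weight.
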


\begin{proof}
    Let $\Lam_\infty\subset \partial \Hy^n$ be the limit set of $\pi_\infty(\G)$ and $\calC_\infty$ its convex hull. Fix a point $x\in \calC_\infty$ and let $B_\infty$ be the closed Dirichlet domain for the action of $\pi_\infty(\G)$ on $\calC_\infty$ centered at $x$. That is,
    \[B_\infty=\{z\in \calC_\infty : d_{\Hy^n}(x,z)\leq d_{\Hy^n}(x,\pi_\infty(g)z) \text{ for all }g\in \G\}.\]
   Note that $B_\infty$ is compact and convex since $\pi_\infty$ is convex-cocompact. 

   By \cite[Thm.~7.1]{mcmullen}, the sequence of limit sets $(\Lam_m)_m$ converges to $\Lam_\infty$ in the Hausdorff topology of compact subsets of $\partial \Hy^n$, and hence $\calC_m$ converges to $\calC_\infty$ in the Hausdorff topology when restricted to any compact subset of $\Hy^n$ by \cite[Thm.~1.4]{bowditch.convex}. 
   
   We consider a sequence $(x_m)_m$ converging to $x$ and such that $x_m\in \calC_m$ for each $m$, and let $B_m\subset \calC_m$ be the closed Dirichlet domain for the action of $\pi_m(\G)$ on $\calC_m$ centered at $x_m$. The sets $B_m$ are also convex and compact, and satisfy $\pi_m(\G)\cdot B_m=\calC_m$.

   We claim that for any $\ep>0$ and for any $m$ large enough, the set $B_m$ is contained in the $\ep$-neighborhood $N_\ep(B_\infty)$ of $B_\infty$ in $\Hy^n$. This assertion easily implies the conclusion of the lemma. Suppose for the sake of contradiction that the claim does not hold and let $\ep>0$ be such that $B_m \bs N_\ep(B_\infty)$ is non-empty for infinitely many $m$. Convexity and compactness of $B_\infty$ and $B_m$ implies that, up to taking a subsequence and reindexing, there exists a sequence $(y_m)_m$ with each $y_m$ in $B_m \bs N_\ep(B_\infty)$ and converging to a point $y$. The convergence $C_m \to C_\infty$ implies $y\in \calC_\infty\bs B_\infty$, and hence there exists $g\in \G$ such that $d_{\Hy^n}(x,\pi_\infty(g)y)<d_{\Hy^n}(x,y)$.
   The convergences $\pi_m \to \pi_\infty$, $x_m \to x$ and $y_m\to y$ then imply
   \[d_{\Hy^n}(x_m,\pi_m(g)y_m)<d_{\Hy^n}(x_m,y_m)\]
   for all $m$ large enough, contradicting that $y_m\in B_m$.
\end{proof}

We will also need a lemma about precompactness of sequences of metric structures induced by pseudo metrics with uniformly bounded geometry. If $\G$ is an arbitrary non-elementary hyperbolic group, a pseudo metric $d\in \calD_\G$ is $\al$-\emph{roughly geodesic} ($\al\geq 0$) if for all $g,h\in \G$ we can find a sequence $g=g_0,\dots,g_k=h$ in $\G$ satisfying 
\[|d (g_i,g_j)-|i-j||\leq \al\]
for all $0\leq i,j\leq k$. For $\del,\al\geq 0$, we let $\scrD_\G^{\del,\al}\subset \scrD_\G$ be the set of all the metric structures $[d]$, where $d$ is $\del$-hyperbolic, $\al$-roughly geodesic and has exponential growth rate 1. The subspace $\scrD_\G^{\del,\al}$ proper \cite[Thm.~1.5]{oregon-reyes.ms}, and is invariant under the natural isometric action of $\Out(\G)$ on $\scrD_\G$. Moreover, if $\G$ is torsion-free then the action if $\Out(\G)$ on $\scrD_\G^{\del,\al}$ is proper and cocompact \cite[Thm.~1.6 \& Thm.~1.7]{oregon-reyes.ms}.

For two pseudo metrics $d,d'\in \calD_\G$ their \emph{dilation} is given by the quantity
\[\Dil(d,d'):=\sup_{g}{\frac{\ell_d[g]}{\ell_{d'}[g]}},\]
where the supremum is taken over all the non-torsion elements $g\in \G$. Note that $\Del([d],[d'])$ equals $\log(\Dil(d,d')\cdot \Dil(d',d))$ for all $d,d'\in \calD_\G$ \cite[Prop.~3.5]{oregon-reyes.ms}. For two metric structures $\rho,\rho'\in \scrD_\G$ we also define $\Dil(\rho,\rho')=\Dil(d,d')$, where $\rho=[d],\rho'=[d']$ and $d,d'$ have exponential growth rate 1.

\begin{lemma}\label{lem.precompactness}
    Let $\G$ be a torsion-free, non-elementary hyperbolic group and let $(d_m)_m$ be a sequence of pseudo metrics in $\calD_\G$. Suppose that all these pseudo metrics have exponential growth rate 1, and that there exist $\del,\al$ such that each $d_m$ is $\del$-hyperbolic and $\al$-roughly geodesic. 
    If the sequence $(\Dil(d_m,d_0))_m$ is bounded for some $d_0\in \calD_\G$, then the sequence $([d_m])_m$ of metric structures is precompact in $\scrD_\G$. 
\end{lemma}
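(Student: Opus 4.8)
The plan is to use the properness of the subspace $\scrD_\G^{\del,\al}$ together with the invariance and cocompactness of the $\Out(\G)$-action on it, in order to reduce the precompactness of $([d_m])_m$ to a statement about a bounded region of $\scrD_\G^{\del,\al}$. First I would observe that each $[d_m]$ lies in $\scrD_\G^{\del,\al}$, since by hypothesis each $d_m$ is $\del$-hyperbolic, $\al$-roughly geodesic, and has exponential growth rate $1$. By \cite[Thm.~1.6]{oregon-reyes.ms} the group $\Out(\G)$ acts cocompactly on $\scrD_\G^{\del,\al}$, so there is a compact set $\calK\subset \scrD_\G^{\del,\al}$ with $\Out(\G)\cdot \calK=\scrD_\G^{\del,\al}$; fix such a $\calK$ and choose $\Phi_m\in \Out(\G)$ with $\Phi_m\cdot [d_m]\in \calK$ for each $m$. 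After passing to a subsequence we may assume $\Phi_m\cdot[d_m]$ converges in $\calK$ to some metric structure $\rho_\infty$.

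The key point is then to show that the sequence $(\Phi_m)_m$ itself stays in a finite subset of $\Out(\G)$ (up to passing to a further subsequence), which by properness of the $\Out(\G)$-action on $\scrD_\G^{\del,\al}$ \cite[Thm.~1.7]{oregon-reyes.ms} will follow once we bound the distance $\Del(\Phi_m\cdot\rho_0',\rho_0')$ for a fixed basepoint $\rho_0'\in \scrD_\G^{\del,\al}$. To obtain such a bound I would use the hypothesis that $(\Dil(d_m,d_0))_m$ is bounded. Since $\Del([d_m],[d_0])=\log(\Dil(d_m,d_0)\cdot\Dil(d_0,d_m))$ by \cite[Prop.~3.5]{oregon-reyes.ms}, it suffices to also bound $\Dil(d_0,d_m)$; but $\Dil(d_0,d_m)=\Dil(d_0,d)$ where $d$ is a rescaling of $d_m$ to have exponential growth rate $1$ — and since every $d_m$ already has growth rate $1$, no rescaling is needed. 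Now $\Dil(d_0,d_m)\leq v_{d_m}/v_{d_0}$-type comparisons: more precisely, because $d_0$ and $d_m$ are both quasi-isometric to a word metric with multiplicative constant controlled by $\del,\al$ and the (fixed) word metric, and $d_m$ has fixed exponential growth rate $1$, the reverse dilation $\Dil(d_0,d_m)$ is automatically bounded in terms of $\del,\al,d_0$ alone. Hence $(\Del([d_m],[d_0]))_m$ is bounded, say by $R$.

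Putting this together: $\Del$ is $\Out(\G)$-invariant, so $\Del(\Phi_m\cdot[d_m],\Phi_m\cdot[d_0])\leq R$ for all $m$, and since $\Phi_m\cdot[d_m]$ converges (hence stays in a bounded set), the points $\Phi_m\cdot[d_0]$ stay in a bounded subset of $\scrD_\G^{\del,\al}$ as well (after replacing $[d_0]$ by a nearby element of $\scrD_\G^{\del,\al}$ if $d_0$ is not itself roughly geodesic — which one can do at the cost of enlarging $R$, or one applies properness directly to the $\Out(\G)$-orbit of $\rho_0'$). By properness of the $\Out(\G)$-action on $\scrD_\G^{\del,\al}$, only finitely many $\Phi_m$ can occur among a subsequence; passing to that subsequence and relabelling, all $\Phi_m=\Phi$ are equal, so $[d_m]=\Phi^{-1}\cdot(\Phi_m\cdot[d_m])$ converges to $\Phi^{-1}\cdot\rho_\infty$. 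This proves precompactness.

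\textbf{Main obstacle.} The delicate step is the bound on the reverse dilation $\Dil(d_0,d_m)$ (equivalently, controlling $\Del([d_m],[d_0])$ from the one-sided hypothesis together with the fixed growth rate): one needs to know that a $\del$-hyperbolic, $\al$-roughly geodesic pseudo metric of exponential growth rate $1$ cannot be ``arbitrarily small'' compared to $d_0$ in every direction, which is exactly where the normalization of the growth rate is essential and where a submultiplicativity/packing argument for the growth rate must be invoked. The rest is bookkeeping with the $\Out(\G)$-action and the cited properness and cocompactness results.
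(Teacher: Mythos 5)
Your proposal shares the broad architecture of the paper's proof (pass to $\scrD_\G^{\del,\al}$, use cocompactness to find $\Phi_m\in\Out(\G)$ carrying $[d_m]$ into a compact set $\calK$, then show the $\Phi_m$ range over a finite set), but the decisive step is different and has a genuine gap.

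You aim to bound the symmetric distance $\Del([d_m],[d_0])=\log(\Dil(d_m,d_0)\cdot\Dil(d_0,d_m))$, which requires a bound on the \emph{reverse} dilation $\Dil(d_0,d_m)$ in addition to the given $\Dil(d_m,d_0)\leq C$. You assert this is ``automatically bounded'' because $d_m$ and $d_0$ are ``both quasi-isometric to a word metric with multiplicative constant controlled by $\del,\al$ and the (fixed) word metric.'' This is false: membership in $\scrD_\G^{\del,\al}$ does \emph{not} give a uniform bound on the quasi-isometry constant to a fixed word metric. Indeed $\scrD_\G^{\del,\al}$ is unbounded in $(\scrD_\G,\Del)$ whenever $\Out(\G)$ is infinite, since the $\Out(\G)$-action on $\scrD_\G^{\del,\al}$ is proper, so orbits leave every bounded set; and the $\Out(\G)$-action preserves both $\del$-hyperbolicity, $\al$-rough geodesicity, and exponential growth rate. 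In fact, given the hypotheses plus the growth-rate normalization, the bound on $\Dil(d_0,d_m)$ is essentially \emph{equivalent} to the precompactness you are trying to prove (bounded $\Del$ together with properness of $\scrD_\G^{\del,\al}$ gives precompactness, and conversely), so asserting it as an intermediate step is close to circular. You do flag this as the ``main obstacle,'' but the proposed fix (a submultiplicativity/packing argument) is not worked out, and the justification actually offered is incorrect.

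The paper avoids this entirely by never bounding the reverse dilation. Instead it works with the one-sided quantity throughout: writing $\phi_m(\rho_m)\in K$ with $K$ compact and $R:=\sup_{\rho\in K}\Dil(\rho_0,\rho)<\infty$, the multiplicative triangle inequality and $\Out(\G)$-invariance of $\Dil$ give
\[
\Dil(\phi_m^{-1}(\rho_0),\rho_0)=\Dil(\rho_0,\phi_m(\rho_0))\leq \Dil(\rho_0,\phi_m(\rho_m))\,\Dil(\rho_m,\rho_0)\leq R\,\Dil(d_m,d_0),
\]
which uses \emph{only} the given one-sided hypothesis. The paper then invokes the nontrivial fact that the one-sided map $\phi\mapsto\Dil(\phi(\rho_0),\rho_0)$ is a proper function on $\Out(\G)$ (argued as in \cite[Thm.~1.7]{oregon-reyes.ms}), which is strictly weaker than, and more directly applicable than, properness of the symmetrized metric. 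This one-sided properness is the key ingredient your proposal is missing; once you have it, the rest is bookkeeping as you describe.
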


\begin{proof}
    For each $m$ we let $\rho_m=[d_m]$, so that $(\rho_m)_m$ is a sequence in  $\scrD_\G^{\del,\al}$. We also define $\rho_0=[d_0]$, and without loss of generality assume that $d_0$ has exponential growth rate 1. Since $\scrD_\G^{\del,\al}$ is proper \cite[Thm.~1.6]{oregon-reyes.ms}, it is enough to show that the sequence $(\rho_m)_m$ is bounded in $\scrD_\G$. Our assumption of $\G$ being torsion-free and \cite[Thm.~1.7]{oregon-reyes.ms} give us a compact subset $K\subset \scrD_\G^{\del,\al}$ such that $\Out(\G) \cdot K=\scrD_\G^{\del,\al}$. We set $R:=\sup_{\rho\in K}{\Dil(\rho_0,\rho)}$, which is bounded since $\rho \mapsto \Dil(\rho_0,\rho)$ is continuous. 
    
    We can find a sequence $(\phi_m)_m$ in $\Out(\G)$ such that $\phi_m(\rho_m)\in K$ for all $m$. Then we have 
    \begin{align*}
        \Dil(\phi_m^{-1}(\rho_0),\rho_0)& = \Dil(\rho_0,\phi_m(\rho_0)) \\
        & \leq \Dil(\rho_0,\phi_m(\rho_m))\Dil(\phi_m(\rho_m),\phi_m(\rho_0))\\
        & \leq R \Dil(\rho_m,\rho_0)\\
        & =R\Dil(d_m,d_0)
    \end{align*}
    for all $m$. Our assumptions on $(d_m)_m$ then imply that the sequence $(\Dil(\phi_m^{-1}(\rho_0),\rho_0))_m$ is bounded. But the map $\Out(\G) \ra \R$ that sends $\phi$ to $\Dil(\phi(\rho_0),\rho_0)$ is proper, as can be proven by the same argument as in the proof of \cite[Thm.~1.7]{oregon-reyes.ms}. We conclude that the set $\calF=\{\phi^{-1}_m\}_m$ is finite, and hence the sequence $(\rho_m)_m$ is contained in the bounded set $\calF \cdot K$. 
\end{proof}

\begin{proof}[Proof of \cref{prop.homeoccpt}]
We fix a base point $x_0\in \Hy^n$ and a finite symmetric generating set $S\subset \G$. We also let $\ell_S$ denote the stable translation length associated to the word metric $d_S$. 

We first prove that the inclusion $\scrC\scrC_\G^n \ra \scrD_\G$ is continuous, so suppose $([\pi_m])_m\subset \scrC\scrC^n_\G$ is a sequence converging to $[\pi_\infty]$. Up to conjugation, we can assume that the representations $\pi_m$ converge to $\pi_\infty$ in the compact-open topology of $\Hy^n$. In particular, the distances
$d_m(g,h):=d_{\Hy^n}(\pi_m(g)x_0,\pi_m(h)x_0)$ on $\G$ pointwise converge to the metric $d_\infty(g,h):=d_{\Hy^n}(\pi_\infty(g)x_0,\pi_\infty(h)x_0)$.  

If $\ell_m:\G \ra \R$ denotes the stable translation length function of $d_m$, we claim that $\ell_m$ pointwise converges to $\ell_\infty$, the stable translation length of $d_\infty$. Indeed, this can be done as in the proof of Claim 2 in \cite[Prop.~5.6]{oregon-reyes.ms}, using the facts that the metrics $d_m,d_\infty$ are $\log{2}$-hyperbolic. Moreover, since $\pi_m(\G)$ converges to $\pi_\infty(\G)$ algebraically, by \cite[Thm.~7.1]{mcmullen} and \cite[Thm.~1.1]{bishop-jones} we have that the exponential growth rates $v_m$ of the metrics $d_m$ converge to the exponential growth rate $v_\infty$ of $d_\infty$. We let $\hat d_m=v_m d_m$ and $\hat d_\infty=v_\infty d_\infty$, with translation length functions $\hat \ell_m$ and $\hat \ell_\infty$ respectively.  

By \cref{lem.unifcobounded} and \cite[Lem.~4.6]{BCGS} it can be proven that the metrics $\hat d_m$ are $\al$-roughly geodesic for some $\al$ independent of $m$. We also have that the metrics $\hat d_m$ have exponential growth rate 1 and are $\hat\del$-hyperbolic for some $\hat\del$ independent of $m$. Moreover, for all $m$ and all $g\in \G$ we have
\begin{equation*}
    d_m(g,o)\leq \left(\max_{s\in S}\{d_m(s,o)\} \right)d_S(g,o),
\end{equation*}
and hence 
\begin{equation*}
    \hat\ell_m[g]\leq  \left(v_m\max_{s\in S}\{d_m(s,o)\} \right)\ell_S[g].
\end{equation*}
But the sequence $(v_m\max_{s\in S}\{d_m(s,o)\})_m$ is bounded because $d_m$ pointwise converges to $d_\infty$ and  
$\sup_m v_m \leq n-1$ for all $m$ (see for example \cite[Thm.~2.1]{paulin}). Therefore, the sequence $(\Dil(\hat{d}_m,d_S))_m$ is bounded, and \cref{lem.precompactness} implies that the sequence of metric structures $(\rho_{\pi_m})_m=([\hat d_m])_m$ is precompact in $(\scrD_\G,\Del)$. 

We consider an arbitrary subsequence $([\hat d_{m_k}])_k$ with $m_k$ tending to infinity, and assume this subsequence converges to $\rho=[d]\in \scrD_\G$. If $d$ has exponential growth rate 1, then the translation lengths $\hat \ell_{m_k}$ pointwise converge to $\ell_d$ by \cite[Prop.~3.5 \& Lem.~3.6]{oregon-reyes.ms}. This implies that $\ell_d$ equals $\hat\ell_\infty$ and hence $\rho=\rho_{\pi_\infty}:=[\hat d_\infty]$. Since this holds for any convergent subsequence of $(\rho_{\pi_m})_m$, we deduce the convergence $\rho_{\pi_m} \to \rho_{\pi_\infty}$ in $(\scrD_\G,\Del)$ and the continuity of the inclusion $\scrC\scrC_\G^n \ra \scrD_\G$.

Suppose now that $\G$ is not a free group. To prove that the inclusion $\scrC\scrC_\G^n \ra \scrD_\G$ is a homeomorphism into its image and that the image is closed in $\scrD_\G$, let $[\pi_m]_m$ be a sequence in $\scrC\scrC^n_\G$ such that $\rho_m:=\rho_{\pi_m}$ converges to $\rho_\infty\in \scrD_\G$. We claim that $\rho_\infty=\rho_{\pi_\infty}$ for some $[\pi_\infty]\in \scrC\scrC^n_\G$ and that $[\pi_m]$ converges to $[\pi_\infty]$ in $\scrC\scrC^n_\G$. To do this, for each $m$ let $\pi_m:\G \ra \Isom^+(\Hy^n)$ be a convex-cocompact representation inducing $\rho_m$, with exponential growth rate $v_m$ and translation length function $\ell_m$. By assumption the sequence $(\rho_m)_m$ is bounded, so by \cite[Lem.~3.6]{oregon-reyes.ms} there exists $L>0$ such that 
\begin{equation}\label{eq.boundedmetricstructure}
    L^{-1}\ell_S[g]\leq v_m\ell_m[g]\leq L\ell_S[g]
\end{equation}
for all $g\in \G$. In addition, by \cite[Thm.~1.4]{breuillard-fujiwara} there exists a constant $C>0$ such that for each $m$ we can find a point $x_m\in \Hy^n$ satisfying
    \begin{equation}\label{eq.ineqreps}
        \max_{s\in S}{\{d_{\Hy^n}(\pi_m(s)x_m,x_m)\}}\leq \frac{1}{2}\max_{s_1,s_2\in S}{\{\ell_{m}[s_1s_2]\}}+C.
    \end{equation}
We let $(f_m)_m\subset \Isom^+(\Hy^n)$ be a sequence of isometries of $\Hy^n$ such that $f_mx_0=x_m$ for all $m$, and define $\ov{\pi}_m(g)=f_m\circ \pi_m(g)\circ f_m^{-1}$ for $g\in \G$. We also define $$d_m(g,h):=d_{\Hy^m}(\pi_m(g)x_m,\pi_m(g)x_m) \ \text{ and } \ \hat d_m(g,h) :=v_md_m(g,h)$$ for $g,h\in \G$.

We claim that $\inf_m{v_m}$ is positive. Otherwise, after taking a subsequence and reindexing we can assume $v_m \to 0$. Then from \eqref{eq.boundedmetricstructure} and \eqref{eq.ineqreps} we have
\begin{align*}
    \hat d_m(g,h) & \leq (v_m \max_{s\in S}\{d_{\Hy^n}(\pi_m(s)x_m,x_m)\} )\cdot d_S(g,h) \\
    & \leq \left(\frac{1}{2}\max_{s_1,s_2\in S}\{v_m\ell_{m}[s_1s_2]\}+Cv_m \right)\cdot d_S(g,h) \\
    & \leq \left(\frac{L}{2}\max_{s_1,s_2\in S}\{\ell_{S}[s_1s_2]\}+\sup_m{v_m}C\right)\cdot d_S(g,h)\\
    & \leq (L+C\sup_m v_m )\cdot d_S(g,h)
\end{align*}
for all $g,h\in \G$. In particular, the metrics $\hat d_m$ are pointwise bounded, so up to taking a new subsequence and reindexing we can assume that $\hat d_m$  converges to the pseudo metric $\hat d_\infty$ on $\G$. Since each $\hat d_m$ is $v_m \log{2}$-hyperbolic and $v_m \to 0$, we have that $\hat d_\infty$ is $0$-hyperbolic. Moreover, as in the proof of Claim 2 in \cite[Prop.~5.6]{oregon-reyes.ms}, we can prove that $v_m\ell_m$ pointwise converges to $\hat \ell_\infty$, the translation length function of $\hat d_\infty$. This implies that $[\hat d_\infty]=\rho_\infty$, and it can be proven that $\rho_\infty$ is a metric structure represented by a geometric action of $\G$ on a metric (simplicial) tree. This is impossible since we assume that $\G$ is torsion-free but not free. 



Proven our claim, we set $c:=\inf_m{v_m}>0$. From \eqref{eq.boundedmetricstructure} and \eqref{eq.ineqreps} we also deduce
    \begin{equation*}
        \max_{s\in S}\{d_{\Hy^n}(\ov{\pi}_m(s)x_0,x_0)\}\leq \frac{1}{2}\max_{s_1,s_2\in S}\{\ell_{m}[s_1s_2]\}+C \leq c^{-1}L+C,
    \end{equation*}
    and the sequence $(\ov{\pi}_m)_m$ is bounded.  Therefore, it has a subsequence converging algebraically to the representation $\ov{\pi}_\infty:\G \ra \Isom^+(\Hy^n)$. 
    
    In addition, the metrics 
    $d_m$ pointwise converge to the metric $$d_\infty(g,h):=d_{\Hy^n}(\ov{\pi}_\infty(g)x_0,\ov\pi_\infty(h)x_0),$$ and as before we can prove that the translation length functions $\ell_{m}$ pointwise converge to the translation length function $\ell_{\infty}$ induced by the action of $\G$ on $\Hy^n$ via $\ov{\pi}_\infty$.

    On the other hand, the functions $v_m\ell_{\ov\pi_m}=v_m\ell_m$ pointwise converge to the translation length $\hat\ell_\infty$ of a metric $\hat d_\infty\in \calD_\G$ representing $\rho_\infty$ and with exponential growth rate 1. We also know that $\sup_m v_m \leq n-1$ (see for example \cite[Thm.~2.1]{paulin}), and hence $\ell_\infty$ and $\hat \ell_\infty$ are homothetic, implying that $\ov\pi_\infty$ is convex-cocompact and that $\rho_\infty$ is induced by the class $[\ov\pi_\infty]\in \scrC\scrC^n_\G$. The exact same argument also implies that any subsequence of $(\pi_m)_m$ has a subsequence converging (up to taking conjugates) to a representation with stable translation length function homothetic to that of $\hat\ell_\infty$. Since $\scrC\scrC_\G^n \ra \scrD_\G$ is injective, we conclude that the sequence $[\pi_m]_m$ converges to $[\ov\pi_\infty]$, as desired. 
\end{proof}

From the proof of \cref{prop.homeoccpt} we also deduce the following when $\G$ is a free group.

\begin{corollary}\label{coro.contfreegroup}
    If $\G$ is a free group and $([\pi_m])_m$ is a sequence in $\scrC\scrC_\G^n$ such that $\rho_m:=\rho_{\pi_m}$ converges to $\rho_\infty$ in $\scrD_\G$, then either:
    \begin{itemize}
        \item $[\pi_m]$ converges to $[\pi_\infty]$ in $\scrC\scrC_\G^n$ and $\rho_\infty=\rho_{\pi_\infty}$; or,
        \item $\rho_\infty$ is a point in the Outer space $\scrC\scrV_\G$.
    \end{itemize}
\end{corollary}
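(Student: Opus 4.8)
```latex
The plan is to revisit the proof of \cref{prop.homeoccpt}, which was carried out in two phases (first for arbitrary $\G$, then under the assumption that $\G$ is not free), and extract from the first phase the precise dichotomy in the free case. Recall that the continuity argument---producing, from a sequence $([\pi_m])_m$ with $\rho_{\pi_m} \to \rho_\infty$, a subsequential limit of the normalized pseudo metrics---used nothing about $\G$ being free or not: it only used \cref{lem.precompactness}, \cref{lem.unifcobounded}, and the bounds on exponential growth rates. So I would first re-run that machinery verbatim. Up to conjugating each $\pi_m$ by an isometry $f_m$ sending a fixed basepoint $x_0$ to the point $x_m$ provided by \cite[Thm.~1.4]{breuillard-fujiwara}, and after normalizing by the exponential growth rates $v_m$ (which satisfy $\sup_m v_m \leq n-1$), the metrics $\hat d_m = v_m d_m$ are $\hat\del$-hyperbolic, $\al$-roughly geodesic, of exponential growth rate $1$, with $(\Dil(\hat d_m, d_S))_m$ bounded. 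By \cref{lem.precompactness} the metric structures $\rho_{\pi_m}$ are precompact, and since $\rho_{\pi_m} \to \rho_\infty$ by hypothesis, every subsequential limit of $(\hat d_m)_m$ (along a suitable subsequence) is a pseudo metric $\hat d_\infty \in \ov\calD_\G$ representing $\rho_\infty$ and of exponential growth rate $1$.

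Next comes the branch point. As in the first phase of the proof of \cref{prop.homeoccpt}, one checks that $\inf_m v_m$ is either positive or---after passing to a subsequence---tends to $0$, and this is exactly the dichotomy. If $\inf_m v_m = c > 0$, then the inequality coming from \eqref{eq.ineqreps},
\[
\max_{s\in S}\{d_{\Hy^n}(\ov\pi_m(s)x_0, x_0)\} \leq \tfrac12 \max_{s_1,s_2\in S}\{\ell_m[s_1 s_2]\} + C \leq c^{-1}L + C,
\]
shows the sequence $(\ov\pi_m)_m$ is bounded in $\Hom(\G, \Isom^+(\Hy^n))$, hence has a subsequence converging algebraically to some $\ov\pi_\infty : \G \to \Isom^+(\Hy^n)$; comparing translation length functions (the $\ell_m$ converge pointwise to $\ell_{\ov\pi_\infty}$, while $v_m \ell_m$ converge to $\hat\ell_\infty$, and $\sup_m v_m < \infty$ forces $\ell_{\ov\pi_\infty}$ and $\hat\ell_\infty$ to be homothetic) shows $\ov\pi_\infty$ is convex-cocompact and $\rho_\infty = \rho_{\ov\pi_\infty}$. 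Injectivity of $\scrC\scrC_\G^n \to \scrD_\G$ (by \cite[Thm.~1]{burger}) then upgrades this to $[\pi_m] \to [\ov\pi_\infty]$ in $\scrC\scrC_\G^n$: this is the first alternative. If instead $v_m \to 0$ along a subsequence, then since each $\hat d_m$ is $v_m\log 2$-hyperbolic we get that the limit $\hat d_\infty$ is $0$-hyperbolic, i.e.\ $(\G, \hat d_\infty)$ is an $\R$-tree, and $v_m\ell_m$ converges to its translation length function; thus $\rho_\infty$ is represented by a free (since $\G$ is torsion-free) isometric action of $\G$ on a metric tree, with exponential growth rate $1$. By the standard identification of such metric structures with (projectivized) points of Culler--Vogtmann Outer space, $\rho_\infty \in \scrC\scrV_\G$, which is the second alternative.

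The only genuine subtlety is that in the tree case I must argue that $\rho_\infty$ really lands in $\scrC\scrV_\G$ rather than merely being represented by \emph{some} tree action: this requires knowing that the action is minimal and that the length function has the right normalization, but minimality is automatic after restricting to the minimal subtree (which does not change the translation length function), and the exponential-growth-rate-$1$ normalization is exactly the one that cuts out the unprojectivized Outer space inside the space of length functions. The bulk of the work is thus a careful bookkeeping of which steps of the proof of \cref{prop.homeoccpt} are freeness-independent; the main obstacle---such as it is---is making the identification of a minimal, small, free $\G$-action on an $\R$-tree of normalized exponential growth rate $1$ with a point of $\scrC\scrV_\G$ precise, which one does by invoking the fact that for a free group all such actions are geometric (simplicial, after the argument already in the proof of \cref{prop.homeoccpt} shows the limit metric has integer-like combinatorial structure when $\pi_m$ degenerate along $S$) or, more cleanly, by citing the general theory of the Outer space compactification. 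Everything else is a direct transcription of the first half of the proof of \cref{prop.homeoccpt}.
```
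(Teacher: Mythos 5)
Your proposal is correct and takes essentially the same route as the paper, which deduces the corollary directly from the proof of \cref{prop.homeoccpt}: the dichotomy $\inf_m v_m>0$ versus $v_m\to 0$ along a subsequence gives, respectively, algebraic convergence to a convex-cocompact representative (upgraded to convergence of classes $[\pi_m]$ via injectivity of $\scrC\scrC^n_\G\to\scrD_\G$) or a $0$-hyperbolic limit pseudo metric representing a geometric action on a metric simplicial tree, i.e.\ a point of $\scrC\scrV_\G$. The only place you are brief is the identification of the degenerate limit with a point of Outer space, where the paper is equally terse; the essential point to record there is that $\rho_\infty$ lies in $\scrD_\G$ by hypothesis, so the limit $0$-hyperbolic pseudo metric is quasi-isometric to a word metric, which makes the induced free action on the $\R$-tree proper and cobounded and thereby excludes the non-simplicial free actions that occur only in the boundary of Outer space.
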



\begin{proof}[Proof of \cref{prop.main.quasifuchsian}]
Let $\pi:\G\ra \PSL(2,\C)$ be a convex-cocompact representation with $\G$ torsion-free.  
By \cite[Thm.~1]{brooks} there exists a sequence $(\pi_m)_m$ of convex-cocompact representations algebraically converging to $\pi$ and satisfying the following. For each $m$ there exists an embedding $\G \hookrightarrow \hat \G_m$ into the fundamental group $\hat \G_m$ of a closed hyperbolic 3-manifold and a representative of the lattice action $\hat \pi_m:\hat \G_m \ra \PSL(2,\C)$ that coincides with $\pi_m$ when restricted to $\G$. 

By \cref{thm.main.3manifold}, for each $m$ let $\rho_{\wtilde \calX_m}\in \scrD_{\hat \G_m}$ be a metric structure induced by a proper and cocompact action of $\hat\G_m$ on a $\CAT(0)$ cube complex $\wtilde\calX_m$ and such that $\Del_m(\rho_{\hat\pi_m},\rho_{\wtilde\calX_m})\leq 1/m$ for $\Del_m$ being the distance on $\scrD_{\hat\G_m}$.

Since each $\pi_m$ is convex-cocompact, $\G$ embeds as a quasiconvex subgroup of $\hat\G_m$, and hence by \cite[Prop.~13.7]{haglund-wise.special} it has a \emph{convex-core} for its action on $\wtilde\calX_m$. That is, there exists a $\G$-invariant convex subcomplex $\wtilde\calY_m\subset \wtilde\calX_m$ such that the action of $\G$ on $\wtilde\calY_m$ is cocompact. We let $\rho_m:=\rho_{\wtilde\calY_m}\in \scrD_\G$ be the metric structure induced by the action on this subcomplex, for which it is clear that $\Del(\rho_{\pi_m},\rho_{\hat\calY_m})\leq 1/m$ for $\Del$ being the distance on $\scrD_{\G}$. \cref{prop.homeoccpt} implies that $\rho_{\pi_m}$ converges to $\rho_\pi$ in $\scrD_\G$, and hence $\rho_{\wtilde\calY_m}$ also converges to $\rho_\pi$, as desired.
\end{proof}

We end this section with the proof of \cref{prop.QFintCurr=Teich}, which is just an adaptation of the proof of \cite[Thm.~A]{fricker-furman}.

\begin{proof}[Proof of \cref{prop.QFintCurr=Teich}] First we prove Item (1). Let $\rho\in \scrQ\scrF_\G\subset \scrD_\G$ be induced by the quasiFuchsian representation $\pi:\G \ra \PSL(2,\C)$, and let $\ell_{\pi}$ be the stable translation length of $\G$ given by the corresponding action on $\Hy^3$. If $\pi$ is not Fuchsian, then the proof of \cite[Thm.~A]{fricker-furman} gives us a pair of non-trivial group elements $a,b\in \G$ and a constant $c>0$ such that
   \begin{equation}\label{eq.1pfqf}
       \ell_{\pi}[a]+\ell_{\pi}[b]-\ell_{\pi}[ab]\geq c.
   \end{equation}
Moreover, the pairs of fixed points $(a^{-\infty},a^{\infty}), (b^{-\infty},b^{\infty})$ are \emph{unlinked and aligned} in $\partial \G$, in the nomenclature of \cite[Sec.~2]{fricker-furman}. Now, for a fixed discrete and faithful action of $\G$ on $\Hy^2$, the proof of \cite[Thm.~2.1]{fricker-furman} gives us a point $x$ lying on the axis $A_{(ab)}\subset \Hy^2$ for $ab$, and so that $bx$ and $abx$ also lie on this geodesic with $bx$ between $x$ and $abx$. If $\rho_{[\eta]}\in \scrD_\G$ is induced by the filling geodesic current $\eta$, we consider the $\G$-invariant pseudo metric $d_\eta$ on $\Hy^2$ introduced in \cite[Sec.~4]{BIPP}. One feature of this pseudo metric is that 
\[d_{\eta}(x,z)=d_{\eta}(x,y)+d_{\eta}(y,z)\]
whenever $x,y,z$ lie on a geodesic in $\Hy^2$ with $y$ between $x$ and $z$ \cite[Prop.~4.1]{BIPP}. Another feature of $d_\eta$ is that it is dual to $\eta$ in the sense that 
\[i(\eta,\beta)=\ell_\eta(\beta) \]
for all $\beta\in \Curr(\G)$, where $i$ is the Bonahon's intersection number and $\ell_\eta$ is the stable translation length for the action of $\G$ on $(\Hy^2,d_\eta)$. 

In particular we have $\ell_{\eta}[ab]=d_\eta(abx,x)$ and we deduce
\begin{equation}\label{eq.2pfqf}
    \ell_{\eta}[ab] = d_{\eta}(abx, x) = d_{\eta}(a(bx), bx) + d_{\eta}(bx, x) \geq \ell_{\eta}[a] + \ell_{\eta}[b].
\end{equation}
Since $\eta$ is filling, we deduce that $ab$ is also non-trivial, and combining \eqref{eq.1pfqf} and \eqref{eq.2pfqf} we conclude
\begin{equation*}
    \Del(\rho,\rho_{[\eta]})\geq \log\left(\frac{(\ell_\pi[a]+\ell_\pi[b])}{(\ell_\mu[a]+\ell_\mu[b])}\cdot \frac{\ell_\mu[ab]}{\ell_{\pi}[ab]} \right)\geq \log\left(1+\frac{c}{\ell_{\pi}[ab]}\right)=:\lam_{\rho}>0.
\end{equation*}

Item (2) then follows from Item(1) and \cref{prop.homeoccpt}.
\end{proof}


\section{Proof of applications}\label{sec.applications}

In this section we prove Corollaries \ref{coro.genericquotient} and \ref{cor.nogrowthgap}. We first prove \cref{coro.genericquotient}, for which we rely on the results of Futer and Wise in \cite{futer-wise}.
\begin{proof}[Proof of \cref{coro.genericquotient}]
In the three cases considered, there exists a sequence $(\rho_m)_m=(\rho_{\wtilde\calX_m})_m$ of metric structures in $\scrD_\G$ converging to $\rho=\rho_X$ that are induced by geometric actions of $\G$ on the $\CAT(0)$ cube complexes $\wtilde\calX_m$, and such that there exist $\G$-equivariant $\lam_m$-quasi-isometries from $X$ to $\wtilde\calX_m$ with $\lam_m$ converging to 1.

Suppose first that $X$ is a negatively curved Riemannian surface. By \cref{coro.fillingcurrents=cubulations} we can choose the cubulations $\wtilde\calX_m$ so that all their hyperplane stabilizers are cyclic, so they have exponential growth rate zero for the action of $\G$ on $X$. Since $c<v_X/41$, we have
\[c<\min\left\{\frac{v_X}{20\lam_m},\frac{v_X}{40\lam_m+1}\right\}\]
for all $m$ large enough. Then the assumptions of \cite[Thm.~1.3]{futer-wise} are satisfied and with overwhelming probability as $\ell \to \infty$, for any set  
$[g_1],\dots, [g_k]$ of conjugacy classes of $\G$ with translation lengths $\ell_X[g_i]\leq \ell$ the group $\ov\G = \G/\genby{\genby{g_1,\dots, g_k}}$ is hyperbolic and cubulated.

If $X=\Hy^3$, then from the proof of \cref{thm.main.3manifold} we see that we can choose each $\wtilde\calX_m$ to have $K_m$-quasiFuchsian surface groups as hyperplane stabilizers with $K_m$ tending to 1. This implies that if $a_m$ is the maximal exponential growth rate with respect to $X$ of a hyperplane stabilizer of $\wtilde\calX_m$, then $a_m$ converges to 1 as $m$ tends to infinity. Therefore, if $c<2/41=v_X/41$ then
\[c<\min\left\{\frac{v_X-a_m}{20\lam_m},\frac{v_X}{40\lam_m+1}\right\}\]
for all $m$ large enough. In this case the assumptions of \cite[Thm.~1.3]{futer-wise} are also satisfied and we conclude as in the first case. 

The case $X=\Hy^n$ and $\G$ arithmetic of simplest type follows by the exact same argument. We only need to note from the proof of \cref{thm.main.arithmetic} that we can choose each $\wtilde X_m$ to have hyperplane stabilizers acting properly and cocompactly on codimension-1 totally geodesic hypersurfaces of $X$, so they have exponential growth rate $n-1=v_X-1$ with respect to the action on $X$. 
\end{proof}

We continue with the proof of \cref{cor.nogrowthgap}, whose proof uses the main result of Li and Wise in \cite{li-wise}.

\begin{proof}[Proof of \cref{cor.nogrowthgap}]
Let $\G$ act on $X$ as in the statement of the corollary and let $\rho=\rho_X\in \scrD_\G$ be the induced metric structure. Then there exists a sequence $(\rho_m)_m\subset \scrD_\G$ of metric structures converging to $\rho$ such that each $\rho_m$ is induced by the geometric action of $\G$ on the $\CAT(0)$ cube complex $\wtilde\calX_m$. 

If $H<\G$ is any subgroup, from \eqref{eq.EGR} we deduce that 
\[ e^{-\Del(\rho_m,\rho)}\frac{v_X(H)}{v_X}\leq \frac{v_{\wtilde \calX_m}(H)}{v_{\wtilde\calX_m}}\leq e^{\Del(\rho_m,\rho)}\frac{v_X(H)}{v_X}\]
for all $m$. Also, by Agol's theorem \cite[Thm.~1.1]{agol.haken} and \cite[Main~Theorem~1.5]{li-wise}, for each $m$ we can find a quasiconvex subgroup $H_m<\G$ of infinite index such that $v_{\wtilde \calX_m}(H_m) \geq  e^{-\Del(\rho_m,\rho)}v_{\wtilde\calX_m}$. It follows that $v_X(H_m)$ converges to $v_X$, so that the action of $\G$ on $X$ has no growth-gap.
\end{proof}


\section{Questions and future directions}\label{sec.questions}

In this section we pose some questions related to our work. 

Recall that $\rho\in \scrD_\G$ is approximable by cubulations if it lies in the closure of $\scrD_\G^{cub}$. One might ask whether every closed hyperbolic manifold is approximable by cubulations. However, even the following is open:

\begin{question}
    Is every closed hyperbolic 4-manifold cubulable?
\end{question}

\begin{question}
    Let $\G$ be a uniform lattice in $\Hy^n$ and suppose that every round hypersphere in $\bbS^{n-1}$ is the limit of a sequence of limit sets of quasiconvex codimension-1 subgroups of $\G$. Is $\rho_{\Hy^n}\in \scrD_\G$ approximable by cubulations?
\end{question}

\begin{question}
    Suppose $\frakg$ is a negatively curved Riemannian metric on the closed 3-manifold $M$, not necessarily of constant curvature. Is the point $\rho_{\frakg}\in \scrD_{\pi_1(M)}$ induced by the action on $(\wtilde M,\wtilde\frakg)$ approximable by cubulations? 

    There are some chances for this to be true if $\frakg$ is close enough to the constant curvature metric. See for instance \cite{lowe}.
\end{question}

\begin{question}
    If $M$ is a closed hyperbolic manifold obtained via Gromov--Piatetski-Shapiro surgery \cite{GPS}, is $M$ approximable by cubulations? Note that $\pi_1(M)$ embeds as a (non-totally geodesic) quasiconvex subgroup in an arithmetic lattice of simplest type \cite[Prop.~9.1]{BHW}.
\end{question}

By the work of Parry \cite{parry} we know that the stable translation length functions for isometric actions on $\R$-trees can be characterized axiomatically. It would be desirable to have similar result for actions on $\CAT(0)$ cube complexes.

\begin{question}
    Is there a set of axioms that characterizes length functions $\ell:\G \ra \R$ of the form $\ell=\ell_{\wtilde\calX}$ for a cubical action of $\G$ on the $\CAT(0)$ cube complex $\wtilde{\calX}$?
\end{question}

The next questions ask about the flexibility of approximating by cubulations among points in $\scrD_\G$.

\begin{question}
    Let $\G$ be a cubulable hyperbolic group with a geometric action on a space with measured walls. Is this action approximable by cubulations?
\end{question}

\begin{question}
Let $d_1,d_2\in \calD_\G$ be such that both $[d_1]$ and $[d_2]$ are approximable by cubulations. It is not hard to show that $[td_1+d_2]$ is also approximable by cubulations for any $t>0$. If $t>0$ is large enough, then $td_1-d_2$ also determines a metric structure $[td_1-d_2]\in \scrD_\G$ \cite[Prop.~4.1]{cantrell-reyes.manhattan}. Is $[td_1-d_2]$ approximable by cubulations?
\end{question}

\begin{question}
    Let $\G$ be cubulable hyperbolic and let $\rho\in \scrD_\G$ be approximable by cubulations. Is $\rho$ approximable by cubulations with a single $\G$-orbit of hyperplanes? 
\end{question}

The next question is a rephrasing of \cite[Question.~7.9]{futer-wise} in the language of metric structures.
\begin{question}
    Let $\G$ be a hyperbolic cubulable group. Is every point in $\scrD_\G$ approximable by cubulations?
\end{question}

Generalizing the hyperbolic case, we say that an isometric action of a (non-necessarily hyperbolic) group $\G$ on the space $X$ is approximable by cubulations if for any $\lam>1$ there exists a $\G$-equivariant $\lam$-quasi-isometric embedding from $X$ into a $\CAT(0)$ cube complex on which $\G$ acts.

\begin{question}
    Is the action of a finite volume hyperbolic 3-manifold group $\G$ on $\Hy^3$ approximable by cubulations? Since actions on $\CAT(0)$ cube complexes do not admit parabolic elements \cite{haglund}, probably we need to consider the action of $\G$ on $\Hy^3\bs \{\text{horoballs}\}$. It would be instructive to consider first the case of cusped hyperbolic surfaces.
\end{question}

\begin{question}
    Suppose $\Gamma$ is not virtually free and let $\rho\in \scrD_\G$ be approximable by cubulations. If $\rho$ is represented by a geometric action on a $\CAT(-1)$ space (or more generally by a \emph{strongly hyperbolic} metric in $\calD_\G$), must the dimension of the cubulations of any approximating sequence go to infinity? From \cite{GMM} we know that the marked length spectrum of such action is not contained in a discrete subgroup of $\R$. 
\end{question}

The next question is related to the inclusion $\scrC\scrC_\G^n \ra \scrD_\G$ for $\G$ a free group. In this case, from \cref{coro.contfreegroup} we know that the closure of $\scrC\scrC^n_\G$ equals $\scrC\scrC_\G^n\cup \scrP_n$ for $\scrP_n$ a subset of the Outer space $\scrC\scrV_\G$ of $\G$. 

\begin{question}
    Do we have $\scrP_n=\scrC\scrV_\G$ for some $n$? If that is the case, can we compute such a minimal $n$ in terms of the rank of $\G$? Can we always take $n=2$?
\end{question}

\medskip



\printbibliography














\noindent\small{Nic Brody}\\
\noindent\small{UC Santa Cruz, USA, 95060}\\
\small{\textit{Email address}: \texttt{nic@ucsc.edu}}\\

\noindent\small{Eduardo Reyes}\\
\noindent\small{Max Planck Institute for Mathematics, Bonn, Germany, 53111}\\
\small{\textit{Email address}: \texttt{eoregon@mpim-bonn.mpg.de}}\\

\end{document}